\documentclass[10pt]{article}

\RequirePackage{amsthm,amsmath,amsfonts,amssymb}
\RequirePackage{graphicx}

\usepackage[left=2cm,right=2.5cm]{geometry}
\usepackage{authblk} 
\usepackage{epsfig}
\usepackage{color}
\usepackage{dsfont}
\usepackage{bm}
\usepackage{verbatim}
\usepackage{stmaryrd}
\usepackage{bbm}			
\usepackage{enumerate} 
\usepackage{nicefrac}
\usepackage{xfrac}
\usepackage{units} 
\usepackage{mathtools}
\usepackage{pdfcomment}
\usepackage{url}
\usepackage{soul}
\usepackage[labelfont={bf,sf},font={footnotesize},labelsep=space,format=plain]{caption}
\usepackage{acronym}
\usepackage{mathrsfs}
\usepackage[T1]{fontenc}
\usepackage{caption, booktabs}
\usepackage{textcomp}
\usepackage{multirow}
\usepackage{lipsum}
\usepackage{makecell}
\usepackage[section]{placeins}
\usepackage{hologo}
\usepackage{xcolor}
\usepackage{dutchcal}
\usepackage[title]{appendix}%
\usepackage[authoryear,round]{natbib}
\usepackage{lipsum} 
\usepackage{authblk}
\usepackage{float}

 \theoremstyle{plain}
 \newtheorem{theorem}{Theorem}[section]
 \newtheorem{Lemma}[theorem]{Lemma}
 \newtheorem{Cor}[theorem]{Corollary}

 \theoremstyle{definition}

 \newtheorem{Rem}[theorem]{Remark}
 \newtheorem{?}[theorem]{Problem}
 \newtheorem{Ex}[theorem]{Example}

 \newcommand{\ff}{g}
\newcommand{\fg}{f}

\providecommand{\keywords}[1]
{
  \small	
  \textit{Keywords}: \hangindent=7em \hangafter=1 #1
}

\providecommand{\msc}[1]
{
  \small	
  \textit{2020 MSC}: #1
}


\makeatother

\title{\LARGE{On bivariate Archimax copulas: Level sets, mass distributions and related results}}

\date{}

\author{
        \centering
        Nicolas Pascal Dietrich\thanks{Department for Artificial Intelligence and Human Interfaces, University of Salzburg, Hellbrunner Strasse 34, Salzburg, 5020, Salzburg, Austria, nicolaspascal.dietrich@plus.ac.at}
}


\sloppy
\begin{document}

\maketitle

\begin{abstract}
\noindent Motivated by the results in \citep{Mai2011,evc-mass}, which examine the way bivariate Extreme Value copulas distribute their mass, we extend these findings to the larger family of bivariate Archimax copulas $\mathcal{C}_{am}$. Working with Markov kernels (conditional distributions), we analyze the mass distributions of Archimax copulas $C \in \mathcal{C}_{am}$ and show that the support of $C$ is determined by some functions $f^0$, $g^L$ and $g^R$. Additionally, we prove that the discrete component (if any) of $C$ concentrates its mass on the graphs of certain convex functions $f^s$ or non-decreasing functions $g^t$.\\
Investigating the level sets $L^t$ of Archimax copulas $C \in \mathcal{C}_{am}$, we establish that these sets can also be characterized in terms of the afore-mentioned functions $f^s$ and $g^t$. Furthermore, recognizing the close relationship between the level sets $L^t$ of a copula $C$ and its Kendall distribution function $F_C^K$, we provide an alternative proof for the representation of $F_C^K$ for arbitrary Archimax copulas $C\in \mathcal{C}_{am}$ and derive simple expressions for the level set masses $\mu_C(L^t)$.\\
Building upon the fact that Archimax copulas $C \in \mathcal{C}_{am}$ can be represented via two univariate probability measures $\gamma$ and $\vartheta$ — so-called Williamson and Pickands dependence measures — we show that absolute continuity, discreteness and singularity properties of these measures $\gamma$ and $\vartheta$ carry over to the corresponding Archimax copula $C_{\gamma, \vartheta}$. Finally, we derive conditions on $\gamma$ and $\vartheta$ such that the support of the absolutely continuous, discrete or singular component of $C_{\gamma, \vartheta}$ coincides with the support of $C_{\gamma, \vartheta}$.
\end{abstract}

\keywords{Archimax copula, Extreme Value copula, Archimedean copula, Markov kernel, Pickands dependence function}

\msc{62G32, 62H05, 26A30, 60A10, 60E05}

\section{Introduction\label{sec:1}}
Examining maxima of independent, identically distributed sequences of random variables, Extreme Value copulas (EVCs) arise naturally and are a well-suited tool for modeling dependence between extreme events. Including various parametric sub-families, such as Gumbel–Hougaard and Galambos copulas \citep{gudendorf2011}, EVCs are applied in numerous fields, notably  finance \citep{longin,mcneil2005} and hydrology \citep{salvadori}. Moreover, EVCs are also theoretically appealing, since they are characterized by a single univariate function $A$.
To be more precise, according to \citep{dur_princ,haan1977,nelsen2006,Pickands} a copula $B$ is an EVC if, and only if there exists a Pickands dependence function $A$, i.e., a convex function $A \colon [0,1] \rightarrow [\frac{1}{2},1]$ fulfilling $\max\{1-x,x\} \leq A(x) \leq 1$ for every $x \in [0,1]$ such that the identity
\begin{equation*}
    B(x,y) = \exp\left((\log(x) + \log(y))A\left(\frac{\log(x)}{\log(x) + \log(y)}\right)\right)
\end{equation*}
holds for every $x,y \in (0,1)$. The family of EVCs has been the subject of extensive study, yielding key results on convergence, mass distributions, and other analytic and measure-theoretic properties \citep{bernoulli, Mai2011, evc-mass}.\\
Considering so-called Archimedean generators $\psi$ (defined in Section \ref{subsec:intro_archimedean}) now allows to naturally extend the family of EVCs to the much larger family of Archimax copulas. Following \citep{caperaa2000}, a copula $C$ is called an Archimax copula if, and only if there exists a convex Archimedean generator $\psi \colon [0,\infty) \rightarrow [0,1]$ with pseudo-inverse $\varphi$ and a Pickands dependence function $A \colon [0,1] \rightarrow [\frac{1}{2},1]$ such that
\begin{equation*}
C(x,y) = \psi\left((\varphi(x) + \varphi(y))A\left(\frac{\varphi(x)}{\varphi(x) + \varphi(y)}\right)\right)
\end{equation*}
holds for every $x,y \in (0,1)$. It was proved in \citep{caperaa2000} and later in \citep{dur_archimax} that a function $C$, defined in the afore-mentioned way indeed constitutes a bona fide copula.
As the name suggests, the family of Archimax copulas $\mathcal{C}_{am}$ encompasses both the family of EVCs $\mathcal{C}_{ev}$ and the family of Archimedean copulas $\mathcal{C}_{ar}$ as sub-families and can therefore be seen as an extension of these two families of copulas. For more background on Archimedean copulas and their applications, we refer to \citep{dur_princ, neslehova,nelsen2006,mult_arch}.
Viewing Archimax copulas from an application perspective, they offer the flexibility to model asymptotic dependence in extreme events while simultaneously capturing joint risks at moderate levels \citep{est-archimax}. For further research in the field of bivariate Archimax copulas we refer to \citep{bacigal2011,caperaa2000,genest2020}. Moreover, an extension to the multivariate setting was considered in \citep{mult-archimax,Mesiar2013}. Regarding inference, an estimator for a multivariate, semi-parametric framework was provided in \citep{est-archimax}.\\
While it has already been established how Archimedean copulas and EVCs distribute their mass \citep{neslehova,mult_arch,Mai2011,evc-mass}, and the level sets of Archimedean copulas have been investigated in \citep{neslehova,mult_arch}, comparable results for the larger family of Archimax copulas $\mathcal{C}_{am}$ remain, to the best of the author's knowledge, unstudied. The current paper addresses this gap by analyzing the mass distributions and level sets of bivariate Archimax copulas, providing several new insights and related findings.\\
Motivated by the fact that the level sets of Archimedean copulas can be represented by convex functions \citep{nelsen2006}, we prove that the level sets of Archimax copulas can similarly be characterized by two functions $f^s$ and $g^t$ (specified in Section \ref{sec:lvl_sets_archimax}) for some $s,t \in [0,1)$. Moreover, we derive conditions on the Pickands dependence function $A$, such that the level sets $L^t$ of the corresponding $C \in \mathcal{C}_{am}$ coincide with the graphs of functions $f^t$ and show that these functions are convex.\\
Building upon the fact that both Archimedean copulas and EVCs can be characterized by univariate probability measures, so-called Williamson measures $\gamma$ and Pickands dependence measures $\vartheta$, respectively \citep{dietrich2024,mult_arch,neslehova,Pickands}, we demonstrate that Archimax copulas can similarly be defined via these measures. Taking this into consideration and providing an explicit representation of (a version of) the Markov kernel $K_C$ of $C \in \mathcal{C}_{am}$, we study the support of $C$ and show how $C$ distributes its mass. To be more precise, we prove that if either of the measures $\gamma$ or $\vartheta$ has full support, the support of $C_{\gamma,\vartheta} \in \mathcal{C}_{am}$ is fully characterized by the afore-mentioned functions $f^s$ and $g^t$ for some $s,t \in [0,1)$.
Furthermore, we derive conditions on $\gamma$ and $\vartheta$ such that $C_{\gamma,\vartheta} \in \mathcal{C}_{am}$ has full support and establish that the discrete component (as specified in Section \ref{subsection:general_notation} and if non-degenerated) of $C_{\gamma,\vartheta}$ concentrates its mass on the graphs of functions $f^s$ or $g^t$ for some $s,t \in [0,1)$. As direct consequence of this result, we establish that $\gamma$ or $\vartheta$ has a point mass if, and only if the corresponding $C_{\gamma,\vartheta} \in \mathcal{C}_{am}$ has non-degenerated discrete component.\\
Given that the level sets $L^t$ of a copula $C$ are closely related to its Kendall distribution function $F_C^K$, working with Markov kernels again, we provide an alternative proof for the representation of $F_C^K$ in the case that $C \in \mathcal{C}_{am}$ \citep{caperaa2000}. Building upon this representation then allows for a simple calculation of the level set masses $\mu_C(L^t)$ of $C\in \mathcal{C}_{am}$.\\
Triggered by the fact that for both Archimedean copulas and Extreme Value copulas $C$, absolute continuity, discreteness and singularity of $C$ (in the sense specified in Section \ref{subsection:general_notation}) is closely related to absolute continuity, discreteness and singularity of the corresponding Williamson measure $\gamma$ or Pickands dependence measure $\vartheta$ (singularity in
the sense that the measure $\gamma$ (or $\vartheta$) has no point masses and its corresponding distribution function $F_\gamma$ (or $F_\vartheta$) has derivative $0$ almost everywhere) \citep{dietrich_arch2024, dietrich2024, mult_arch}, we prove that in the setting of Archimax copulas absolute continuity, discreteness and singularity of $\gamma$ and $\vartheta$ carries over to the corresponding $C_{\gamma,\vartheta} \in \mathcal{C}_{am}$. Finally, we show that if the Williamson measure $\gamma$ is absolutely continuous, has full support, and some minor regularity assumption on the Pickands dependence measure $\vartheta$ (specified in Section \ref{subsec:intro_evc}) is satisfied, then the support of the absolutely continuous component of the corresponding $C_{\gamma,\vartheta} \in \mathcal{C}_{am}$ coincides with the support of $C_{\gamma,\vartheta}$. Analogous results are then established for the discrete and the singular components of $C_{\gamma,\vartheta}$. The afore-mentioned finding generalizes the result in \citep[Corollary 5]{evc-mass}, saying that when $C$ is an EVC and is not the Fréchet-Hoeffding upper bound $M$, the support of $C$ coincides with the support of the absolutely continuous component of $C$.\\
\\
The reminder of this paper is organized as follows: Section \ref{section:notation} contains notations and definitions used throughout this work and is divided into two parts. The first subsection presents general notation and definitions, while
the second revisits key properties of Archimedean and Extreme Value copulas, emphasizing their connections to Williamson and Pickands dependence measures. Additionally, Archimax copulas are introduced and it is established that they can be represented by Williamson and Pickands dependence measures. Section \ref{sec:lvl_sets_archimax} studies the level sets of Archimax copulas. After showing that the level sets can be characterized using functions $f^s$ and $g^t$ for some $s,t \in [0,1)$, we prove that the functions $f^s$ are convex and that the functions $g^t$ are non-decreasing. In Section \ref{sec:markov_mass_dist_discr} a version of the Markov kernel $K_C$ for a given Archimax copula $C$ is provided and the support of $C$, and its relationship to the corresponding Williamson measure $\gamma$ and Pickands dependence measure $\vartheta$ is studied. 
Showing that the discrete component of $C$ (if any) always concentrates its mass on graphs of functions $f^s$ or $g^t$ for some $s,t \in [0,1)$, it is proved that the discrete component of $C$ is non-degenerated if, and only if $\gamma$ or $\vartheta$ has a point mass. In Section \ref{sec:kendall_copula_mass} an explicit representation for the Kendall distribution function $F_C^K$ of an Archimax copula $C$ is provided and this representation is used to calculate the level set mass $\mu_C(L^t)$ of $C$. What is more, the mass $\mu_C(\Gamma(g^t))$ of the graph $\Gamma(g^t)$ is calculated for every $t \in (0,1)$. Finally, Section \ref{section:regularity_results} studies absolute continuity/discreteness/singularity of Archimax copulas $C$ and the support of the absolutely continuous/discrete/singular component of $C$ is investigated.\\
To enhance readability, certain technical lemmas and proofs have been deferred to the Appendix. Additionally, several examples and graphics are provided to clarify the selected procedures and convey some of the underlying concepts.


\section{Notation and preliminaries}\label{section:notation}
\subsection{General notation}\label{subsection:general_notation}
Throughout this contribution we will denote by $\mathcal{C}$ the family of all bivariate copulas and $\mathbb{I} := [0,1]$ will denote the unit interval. For a given random vector $(X,Y)$ on a probability space $(\Omega, \mathcal{F}, \mathbb{P})$ and some copula $C \in \mathcal{C}$, we will write $(X,Y) \sim C$ if $C$ is the joint distribution function of $(X,Y)$ restricted to $\mathbb{I}^2$. The doubly stochastic measure corresponding to the copula $C\in\mathcal{C}$ will be denoted by $\mu_C$, i.e., $\mu_C([0,x] \times [0,y]) := C(x,y)$ 
for every $(x,y) \in \mathbb{I}^2$.
Considering an arbitrary topological space $(S,\tau)$, the Borel $\sigma$-field on $S$ will be denoted by $\mathcal{B}(S)$, and $\mathcal{P}(S)$ denotes the class of all probability measures on $\mathcal{B}(S)$. Furthermore, for a given measure $\nu$ on $\mathcal{B}(S)$, the support of $\nu$ is the complement of the union of all open sets $U$, which fulfill $\nu(U) = 0$ and will be denoted by $\mathrm{supp}(\nu)$.
In this article, the support of a copula $C \in \mathcal{C}$ is defined as the support of its corresponding doubly stochastic measure $\mu_C$. For further information on copulas and doubly stochastic measures, see \citep{dur_princ,nelsen2006}.
We denote the $2$-dimensional Lebesgue measure on $\mathcal{B}(\mathbb{I}^2)$ by $\lambda_2$ and write $\lambda$ for the univariate Lebesgue measure. Moreover, for an arbitrary $x \in S$, the Dirac measure in $x \in S$ will be denoted by $\delta_x$. 
Let $(S,d)$ and $(S',d')$ be two metric spaces, \mbox{$T:S \rightarrow S'$} be a Borel-measurable transformation and  $\nu \in \mathcal{P}(S)$ be a probability measure. Then the push-forward measure $\nu$ under $T$, denoted by $\nu^T$, is defined by
$\nu^T(F):=\nu(T^{-1}(F))$ for every $F \in \mathcal{B}(S')$.\\
In the sequel, conditional distributions and Markov kernels will be key concepts.
A map $K: \mathbb{R}\times\mathcal{B}(\mathbb{R}) \rightarrow \mathbb{I}$ is called a \emph{Markov kernel} from $\mathbb{R}$ to $\mathbb{R}$ if, and only if the function $x\mapsto K(x,E)$ is Borel-measurable for every $E\in\mathcal{B}(\mathbb{R})$ and the map $E\mapsto K(x,E)$ is a probability measure on $\mathcal{B}(\mathbb{R})$ for every $x\in\mathbb{R}$. 
We call $K$ a sub-Markov kernel (or simply sub-kernel) if, for fixed $x \in \mathbb{R}$, the measure $E\mapsto K(x,E)$ only satisfies $K(x,\mathbb{I})\leq 1$ (instead of $K(x,\mathbb{I})=1$). \\
Given a random vector $(X,Y)$ on a probability space $(\Omega,\mathcal{F},\mathbb{P})$, we will call a Markov kernel $K(\cdot,\cdot)$ a \textit{regular conditional distribution} of
$Y$ given $X$ if for every set $E \in \mathcal{B}(\mathbb{R})$ the equation
$$
K(X(\omega), E) = \mathbb{E}(\mathbf{1}_E \circ Y | X)(\omega)
$$
holds for $\mathbb{P}$-almost every $\omega \in \Omega$.
It is a well known result that for every random vector $(X,Y)$, there exists a regular conditional 
distribution $K(\cdot, \cdot)$ of $Y$ given $X$, which is unique for $\mathbb{P}^{X}$-almost every $x\in \mathbb{R}$.
In the case that $C \in \mathcal{C}$ and  $(X,Y) \sim C$, we will denote by $K_{C}:\mathbb{I} \times \mathcal{B}(\mathbb{I}) \to \mathbb{I}$ (a version of) the regular conditional 
distribution of $Y$ given $X$ and call it the \emph{Markov kernel} of $C$.
Considering $x \in \mathbb{I}$, we define the $x$-section $G_x$ of a given set $G \in\mathcal{B}(\mathbb{I}^2)$ by $G_{x}:=\{y
\in \mathbb{I}: (x,y) \in G\}\in\mathcal{B}(\mathbb{I})$. Using \emph{disintegration} (see \citep[Section 5]{Kallenberg} and \citep[Section 8]{Klenke}), the subsequent identity holds for all $G \in \mathcal{B}(\mathbb{I}^2)$:
\begin{align}\label{eq:DI}
	\mu_C(G) = \int_{\mathbb{I}} K_{C}(x,G_x)
	\, \mathrm{d}\lambda(x).
\end{align}
Considering $d \in \{1,2\}$, in this contribution, a measure $\nu$ on $\mathcal{B}(S)$, whereby $S=\mathbb{I}^d$ or $S=[0,\infty)$, is referred to as singular (w.r.t. $\lambda_d$ or $\lambda$)
if, and only if it has no point-masses and there exists a set $G \in \mathcal{B}(S)$ such that 
$\lambda_d(G) = 0$  and $\nu(G) = \nu(S)$. For the sake of simplicity, we will use the term 'singular' for the afore-mentioned measures, rather than the longer expression 'singular without point-masses'. This aligns with the concept of singular distribution functions and is justified by the following straightforward observation: 
The doubly stochastic measure $\mu_C$ corresponding to the copula $C \in \mathcal{C}$ has no point masses and therefore always has degenerated discrete component. Following the same approach as in \citep{evc-mass,mult_arch} and decomposing the Markov kernel $K_C$ corresponding to $C$,
we obtain a decomposition of the measure $\mu_C$ into three components, which may not be degenerated.
To be more precise, we denote by $K_C^{abs}, K_C^{dis}, K_C^{sing}: \mathbb{I} \times \mathcal{B}(\mathbb{I})
\rightarrow \mathbb{I}$ the absolutely continuous, discrete and singular sub-kernels of $K_C$, respectively, and obtain, according to \citep{Lange}, that
\begin{equation}\label{eq:lebesgue_decomp_markov}
K_C(x,F) = K_C^{abs}(x,F) + K_C^{dis}(x,F) + K_C^{sing}(x,F)
\end{equation}
holds for every $x \in \mathbb{I}$ and every $F \in \mathcal{B}(\mathbb{I})$. Denoting by $\mathcal{k}_C$ the Radon-Nikodym derivative of $\mu_C$ w.r.t. $\lambda_2$ (almost everywhere), uniqueness of $K_C$ yields that the measures $K_C^{abs}(x,\cdot)$ and $E \mapsto \int_E \mathcal{k}_C(x,y) \mathrm{d}\lambda(y)$ coincide for $\lambda$-almost every $x \in \mathbb{I}$ \citep{evc-mass}.
Applying  eq. \eqref{eq:lebesgue_decomp_markov} and using disintegration, we define the absolutely continuous, the discrete and the singular components $\mu_C^{abs}$, $\mu_C^{dis}$ and $\mu_C^{sing}$ of $\mu_C$ by
\begin{align}\label{eq:def_abs_dis_sing_copula}
\nonumber\mu_C^{abs}(E\times F) &:= \int_E K_C^{abs}(x,F) \mathrm{d}\lambda(x),\\
\nonumber\mu_C^{dis}(E\times F) &:= \int_E K_C^{dis}(x,F) \mathrm{d}\lambda(x),\\
\mu_C^{sing}(E\times F) &:= \int_E K_C^{sing}(x,F) \mathrm{d}\lambda(x)
\end{align}
for every $E \in \mathcal{B}(\mathbb{I})$ and every $F \in  \mathcal{B}(\mathbb{I})$. We extend  $\mu_C^{abs}$, $\mu_C^{dis}$ and $\mu_C^{sing}$ to full
$\mathcal{B}(\mathbb{I}^2)$ in the standard way.
In this contribution a copula $C$ will be called absolutely continuous, discrete or singular if, and only if $\mu_C^{abs}(\mathbb{I}^2)=1$, $\mu_C^{dis}(\mathbb{I}^2)=1$ or $\mu_C^{sing}(\mathbb{I}^2)=1$ holds. Furthermore, we will denote $\mu_C^{abs}$, $\mu_C^{dis}$, $\mu_C^{sing}$ as the absolutely continuous, the discrete, and the singular components of $C$ (or $\mu_C$), respectively.\\
It is important to note that, when considering a copula $C$, the singular component $\mu_C^{sing}$ of $\mu_C$ does not correspond to the usual definition of the singular component of $\mu_C$ (in the sense that there exists some $N \in \mathcal{B}(\mathbb{I}^2)$ such that $\mu_C^{sing}(N) = \mu_C^{sing}(\mathbb{I}^2)$ and $\lambda_2(N) = 0$). For instance, when $C = M$ ($M$ being the Fréchet-Hoeffding upper bound), its corresponding Markov kernel $K_C$ is the Dirac measure and is therefore discrete, implying that $\mu_C^{dis}(\mathbb{I}^2) = 1$. In contrast, according to the standard definition, $C$ would be considered singular.\\
Given a copula $C \in \mathcal{C}$, and a random vector $(X,Y) \sim C$, then according to \citep[Definition 3.9.5]{dur_princ} the Kendall distribution function of $C$ is defined by
\begin{equation}\label{eq:kendall_dist}
F_C^K(t) := \mathbb{P}(C(X,Y) \leq t)
\end{equation}
for every $t \in \mathbb{I}$. Moreover, following
\citep{genest1993}, Kendall's tau of $C$ is defined by
\begin{equation}\label{eq:kendalls_tau_general}
    \tau(C) := 4\int_{\mathbb{I}^2}C(s,t) \mathrm{d}\mu_C(s,t) - 1.
\end{equation}
Furthermore, we will refer to a function $G\colon \mathbb{I} \rightarrow \mathbb{I}$ as distribution function/measure generating function, if its extension, in the sense that $G(x)=0$ for every $x<0$ and $G(x)=1$ for every $x>1$, is a distribution function/measure generating function. Similarly, we call a function $H\colon [0,\infty) \rightarrow \mathbb{I}$ a distribution function, if its extension via setting $H(x)=0$ for every $x<0$ is a distribution function.
Given an arbitrary interval $I \subseteq \mathbb{R}$ and a function $u \colon I \rightarrow \mathbb{R}$ , we will call $u$ singular if, and only if there exists a set $N\in \mathcal{B}(I)$ with $\lambda(N) = 0$ such that for all $x \in I \setminus N$ the derivative $u'(x)$ exists, $u'(x) = 0$ holds and
$u$ is non-constant on $I$. Moreover, (assuming their existence) the right-hand/left-hand derivative of $u$ will be denoted by $D^+u$ and $D^-u$, respectively. Considering some set $E \subseteq \mathbb{R}^2$ and a function $v \colon E \rightarrow \mathbb{R}$, emphasizing that the right-hand/left-hand derivative is taken w.r.t. a specific variable (and assuming their existence in $(x,y)\in E$), we will sometimes write $\partial_x^+v(x,y)$ or $\partial_x^-v(x,y)$ for the right-hand/left-hand partial derivative of $v$ in $(x,y)\in E$. Given a set $F \subseteq \mathbb{R}$, its complement will be denoted by $F^c$  and the graph of a function $u \colon F \rightarrow \mathbb{R}$ is denoted by $\Gamma(u)$.
Finally, we note that throughout this contribution we consistently use the convention that $a\cdot\infty = \infty$, $\frac{a}{\infty} = 0$ and $\frac{a}{0} = \infty$ for every $a \in \mathbb{R} \setminus \{0\}$.
\subsection{Archimedean, Extreme Value and Archimax copulas}
\subsubsection{Archimedean copulas}\label{subsec:intro_archimedean}
In the sequel, a non-increasing and continuous function $\psi \colon [0,\infty) \rightarrow \mathbb{I}$ is called an Archimedean generator if, and only if it fulfills $\psi(0) = 1$, $\lim_{z \rightarrow \infty}\psi(z) = 0 =: \psi(\infty)$ and it is
strictly decreasing on $[0,\inf\{x\colon \psi(x) = 0\})$. Throughout this contribution, we will refer to Archimedean generators simply as generators. Moreover, we define the pseudo-inverse $\varphi \colon \mathbb{I} \rightarrow [0,\infty]$ of a generator $\psi$ by $\varphi(y) := \inf\{z \in [0,\infty] \colon \psi(z) = y\}$ for every $y \in \mathbb{I}$. It can be easily proved that $\varphi$ is strictly decreasing on $(0,1]$, 
right-continuous at $0$ and satisfies the condition $\varphi(1) = 0$ \citep{mult_arch}. 
A generator $\psi$, or its pseudo-inverse $\varphi$, is referred to as strict if $\varphi(0) = \infty$ and otherwise as non-strict. Note, that strictness of $\psi$ is equivalent to the fact that $\psi(z) > 0$ for every $z \in [0,\infty)$. 
A copula $C \in \mathcal{C}$ is called Archimedean if, and only if there exists a generator $\psi$ with pseudo-inverse $\varphi$ such that
\begin{equation}\label{eq:def_arch_cop}
C(x,y) := \psi\left(\varphi(x) + \varphi(y)\right)
\end{equation}
holds for every $(x,y) \in \mathbb{I}^2$.
Emphasizing the correspondence between generator $\psi$ and copula 
$C$,  we occasionally use $C_\psi$ instead of $C$ in the following. The class of all bivariate Archimedean copulas will be denoted by $\mathcal{C}_{ar}$.
Furthermore, given that $\psi$ is a generator, applying \citep[Proposition 2.1]{neslehova}, the function $C_\psi$ defined according to eq. \eqref{eq:def_arch_cop} is a bivariate Archimedean copula if, and only if $\psi$ is convex.
Convexity of $\psi$ ensures the existence of both $D^-\psi$ and $D^+\psi$ on $(0,\infty)$. According to \citep[Theorem 3.7.4]{Kannan1996} and \citep[Appendix C]{Pollard2001}, the one-sided derivatives coincide for all but at most countably many points in $(0,\infty)$ and $D^-\psi(z) = D^+\psi(z)$ holds for every point of continuity $z$ of $D^-\psi$. 
Furthermore, as shown in \citep{mult_arch}, every convex generator $\psi$ fulfills $D^-\psi(\infty):=\lim_{z \rightarrow \infty}D^-\psi(z) = 0$ and $D^-\psi(z) = 0$ holds for every $z > \varphi(0)$. Finally, note that if $\psi$ is a convex generator also its pseudo-inverse $\varphi$ is convex.
Throughout this article the family of all convex generators will be denoted by $\mathbf{\Psi}$.\\
Following \citep{mult_arch,neslehova,schilling2012}, Archimedean copulas can also be obtained using univariate probability measures $\gamma \in \mathcal{P}([0,\infty))$ fulfilling $\gamma(\{0\}) = 0$, so-called Williamson measures. We define the family of all such measures by
$$
\mathcal{P}_\mathcal{W} := \{\gamma \in \mathcal{P}([0,\infty)) \colon \gamma(\{0\}) = 0\}.
$$
Considering $\gamma\in \mathcal{P}_\mathcal{W}$, we use the convention that $\gamma(\{\infty\}) = 0$. 
Throughout this article we denote the family of all absolutely continuous/discrete and singular Williamson measures by $\mathcal{P}_\mathcal{W}^{abs}$, $\mathcal{P}_\mathcal{W}^{dis}$ and $\mathcal{P}_\mathcal{W}^{sing}$, respectively.
According to \citep{mult_arch,neslehova,schilling2012} there is a one-to-one correspondence between the family of convex generators $\mathbf{\Psi}$ and the family of Williamson measures $\mathcal{P}_\mathcal{W}$. To be more precise, for every $\psi \in \mathbf{\Psi}$, there exists a unique $\gamma \in \mathcal{P}_\mathcal{W}$ such that
\begin{equation}\label{eq:rel_psi_gamma}
    \psi(z) = \int_{[0,\infty)}(1- tz)_+ \mathrm{d}\gamma(t)
\end{equation}
holds for every $z>0$ \citep[Theorem 5.1]{mult_arch}. Moreover, if $\gamma \in \mathcal{P}_\mathcal{W}$ and $\psi \in \mathbf{\Psi}$ is its corresponding generator, then, according to \citep{mult_arch}, the following representation of the left-hand derivative $D^-\psi$ of $\psi$ in terms of $\gamma$ holds for every $z > 0$:
\begin{equation}\label{eq:rel_deriv_psi_gamma}
    D^-\psi(z) = - \int_{[0,\frac{1}{z}]}t \mathrm{d}\gamma(t).
\end{equation}
Finally, we refer to $\gamma$ as strict if its support contains the point $0$ and otherwise as non-strict. It has been established in \citep[Lemma 5.5]{mult_arch} that $\gamma$ is strict if, and only if its corresponding generator $\psi$ is strict. If $\gamma$ is non-strict and $\varphi$ is the pseudo-inverse of the corresponding generator $\psi$, then $\gamma([0,z]) = 0$ holds for every $z < \frac{1}{\varphi(0)}$ \citep{dietrich_arch2024}.

\subsubsection{Extreme Value Copulas}\label{subsec:intro_evc}
Given a copula $C \in \mathcal{C}$, we call $C$ an Extreme Value copula (EVC) if there exists a $B \in \mathcal{C}$ such that
$$
C(x,y)  = \lim_{n \rightarrow \infty} B^n(x^\frac{1}{n},y^\frac{1}{n})
$$
holds for every $x,y \in \mathbb{I}$. Throughout this contribution we denote the family of all bivariate EVCs by $\mathcal{C}_{ev}$. Furthermore, as established in \citep{dur_princ,haan1977,nelsen2006,Pickands}, the following assertions are equivalent:
\begin{enumerate}
    \item $C \in \mathcal{C}_{ev}$
    \item $C$ is max-stable, i.e., $C(x,y) = C^k(x^\frac{1}{k},y^\frac{1}{k})$ for all $k \in \mathbb{N}$ and $x,y \in \mathbb{I}$.
    \item There exists a Pickands dependence function $A$, i.e., a convex function $A \colon \mathbb{I} \rightarrow \mathbb{I}$ fulfilling $\max\{1-x,x\} \leq A(x) \leq 1$ for every $x \in \mathbb{I}$, such that
    \begin{equation}\label{eq:map_eq_pick_copula}
    C(x,y) = (xy)^{A\left(\frac{\log(x)}{\log(xy)}\right)}
    \end{equation}
    holds for all $x,y \in (0,1)$.
\end{enumerate}
We will denote by $\mathcal{A}$ the family of all Pickands dependence functions and let $C_A$ denote the unique EVC induced by $A \in \mathcal{A}$.
Convexity implies that for every Pickands dependence function $A \in \mathcal{A}$ the right-hand derivative $D^+A(x)$ exists for every $x \in [0,1)$ and the left-hand derivative $D^-A(x)$ exists for every $x \in (0,1]$. Moreover, $D^+A(x)=D^-A(x)$ holds for all but at most countably many $x \in (0,1)$ and therefore $A$ is differentiable on the complement of a countable set in $(0,1)$. Furthermore, $D^+A$ is non-decreasing and right-continuous on $[0,1)$ and $D^-A$ is non-decreasing and 
left-continuous on $(0,1]$ \citep{Kannan1996,Pollard2001}. 
We define $D^+A(1):=D^-A(1)$ and therefore obtain that $D^+A$ is a non-decreasing and right-continuous function on the interval $\mathbb{I}$. Considering that $\max\{1-x,x\} \leq A(x) \leq 1$ for every $x \in \mathbb{I}$, it follows that $D^+A(x) \in [-1,1]$ holds for every $x \in \mathbb{I}$.\\
Following \citep{beirlant2004, dietrich2024, haan1977, Pickands}, EVCs can also be characterized via probability measures $\vartheta$ on $\mathbb{I}$ with expected value $\mathbb{E}(\vartheta) := \int_\mathbb{I}x \mathrm{d}\vartheta(x) = \frac{1}{2}$, so-called Pickands dependence measures. Throughout this paper we define the family of all such measures by
$$
\mathcal{P}_\mathcal{A} := \left\{\vartheta \in \mathcal{P}(\mathbb{I}) \colon \int_\mathbb{I}x \mathrm{d}\vartheta(x) = \frac{1}{2}\right\}.
$$
We define the family of all absolutely continuous, discrete and singular Pickands dependence measures by $\mathcal{P}_\mathcal{A}^{abs}$, $\mathcal{P}_\mathcal{A}^{dis}$ and $\mathcal{P}_\mathcal{A}^{sing}$, respectively.
It is well known that the Pickands dependence function associated with the Fréchet-Hoeffding upper bound $M$ is given by $A_M(t) := \max\{1-t,t\}$ for every $t\in \mathbb{I}$ and that the Pickands dependence function associated with the independence copula $\Pi$ is defined by $A_\Pi(t) := 1$ for all $t \in \mathbb{I}$. Furthermore, it is straightforward to see that the Pickands dependence measure $\vartheta_M := \delta_\frac{1}{2}$ corresponds to $M$ and that the measure
$\vartheta_\Pi :=\frac{1}{2}(\delta_{0} + \delta_{1})$ corresponds to $\Pi$.
According to \citep{dietrich2024}, there is a one-to-one correspondence between the family of all Pickands dependence functions $\mathcal{A}$ and the family of all Pickands dependence measures $\mathcal{P}_\mathcal{A}$. To be more precise, given $\vartheta \in \mathcal{P}_\mathcal{A}$ and applying \citep[Lemma Appendix A.2]{dietrich2024}, the Pickands dependence function $A$ corresponding to $\vartheta$ is given by
\begin{equation}\label{eq:rel_pickands_fct_measure}
    A(t) = 1-t +2\int_{[0,t]}\vartheta([0,z]) \mathrm{d}\lambda(z),
\end{equation}
for every $t \in \mathbb{I}$. Let  $\vartheta \in \mathcal{P}_\mathcal{A}$ and denote by $A\in \mathcal{A}$ its corresponding Pickands dependence function. Using \citep[Lemma 4.3]{dietrich2024}, the right-hand derivative $D^+A$ of $A$ can be represented in terms of the Pickands dependence measure $\vartheta$. The following identity holds for every $t \in [0,1)$:
\begin{equation}\label{eq:right_der_pickands_measure}
D^+A(t) = 2\vartheta([0,t]) - 1. 
\end{equation} 
Following \citep{evc-mass} and considering $A \in \mathcal{A}$, we define 
\begin{equation}\label{eq:definition_L_R}
    L := \max\{s \in \mathbb{I} \colon A(s) = 1-s\} \text{ and } R := \min\{s \in \mathbb{I} \colon A(s) = s\}.
\end{equation}
As established in \citep{dietrich2024}, eq. \eqref{eq:definition_L_R} can also be expressed in terms of the corresponding $\vartheta \in \mathcal{P}_\mathcal{A}$:
\begin{equation}\label{eq:definition_L_R_measure}
L = \sup\{x \in \mathbb{I}\colon \vartheta([0,x]) = 0\} \text{ and } R = \inf\{x \in \mathbb{I}\colon \vartheta([0,x]) = 1\},
\end{equation}
with the convention $\sup \emptyset := 0$ and $\inf \emptyset := 1$.
Furthermore, considering $A \in \mathcal{A}$ we define the function $h_A \colon \mathbb{I} \rightarrow [1,\infty]$ by
\begin{equation}\label{eq:fct_h}
h_A(t) := \begin{cases}
    \infty,&\text{if } t = 0,\\
    \frac{A(t)}{t}, &\text{if } t\in (0,1]
\end{cases}
\end{equation}
for every $t \in \mathbb{I}$. Let $L$, $R$ be defined as in eq. \eqref{eq:definition_L_R}, then assuming $L > 0$ and using the fact that $A(t) = 1 - t$ for every $t \in [0,L]$ implies that $h_A(t) = \frac{1-t}{t}$ for every $t \in (0,L]$. What is more, considering $t \in [R,1]$ and applying that $A(t) = t$, obviously yields $h_A(t) = 1$. Lastly, since $A(t) \geq \max\{1-t,t\}$ holds for every $t \in \mathbb{I}$, we conclude that $h_A(t) \geq \frac{\max\{1-t,t\}}{t}$ for every $t \in (0,1]$.\\
According to Lemma \ref{lem:regularity_fcts} the function $h_A$ is strictly decreasing on $(0,\inf\{x \in \mathbb{I} \colon h_A(x) = 1\}) = (0,R)$ and therefore invertible on $(0,R)$ with inverse function $h_A^{-1}$ defined on $(1,\infty)$. Considering that $h_A(s) = 1$ holds for every $s \in [R,1]$, we define the pseudo-inverse $h_A^{[-1]} \colon [1,\infty] \rightarrow \mathbb{I}$ of $h_A$ by 
$$
h_A^{[-1]}(z) :=
\begin{cases}
    R, &\text{if } z = 1,\\
    h_A^{-1}(z), &\text{if } z \in (1,\infty),\\
    0, &\text{if } z = \infty,
\end{cases}
$$
for every $z \in [1,\infty]$. Note, that $h_A^{[-1]}(z) = h_A^{-1}(z)$ holds for every $z \in [1,\infty)$ in the case that $R = 1$. Building upon the afore-mentioned properties of the function $h_A$ and again assuming that $L>0$, it is straightforward to see that $h_A^{[-1]}(z) = \frac{1}{z+1}$ holds for every $z \in\left[\frac{1-L}{L},\infty\right)$. Using that $h_A(t) \geq \frac{1-t}{t}$ for every $t \in (0,1]$ immediately implies $h_A^{[-1]}(z) \geq \frac{1}{z+1}$ for every $z \in [1,\infty)$.\\
Similar to \citep{fuchs_tschimpke,evc-mass}, fixing $A \in \mathcal{A}$, we define the function $G_A \colon \mathbb{I} \rightarrow \mathbb{I}$ by
\begin{equation}\label{eq:funct_f_a}
G_A(t) :=
\begin{cases}
A(t) + D^+A(t)(1-t),& \text{if } t \in [0,1),\\
1,& \text{if } t = 1
\end{cases}
\end{equation}
for every $t \in \mathbb{I}$.
Concerning the regularity (in the sense of monotonicity and convexity) of the functions defined in this section, we prove the following lemma. The lemma serves as an auxiliary result, used to establish the findings of the next sections. For the sake of readability, the proof is provided in the Appendix.
\begin{Lemma}\label{lem:regularity_fcts}
Let $A \in \mathcal{A}$, $h_A$ be defined as in eq. \eqref{eq:fct_h} and let $L$, $R$ be defined according to eq. \eqref{eq:definition_L_R}. Then the following assertions hold:
   \begin{itemize}
        \item[$(i)$] $h_A$ is a non-increasing, convex function on $(0,1]$, which is strictly decreasing on $(0,R]$ and fulfills $h_A(t) = 1$ for every $t \in [R,1]$;
        \item[$(ii)$] $h_A^{[-1]}$ is a strictly decreasing and convex function on $[1,\infty)$;
        \item[$(iii)$] $G_A$ is a non-negative, non-decreasing and right-continuous function on $\mathbb{I}$, which is positive on $(L,1]$. Furthermore, if $L>0$, $G_A(t) = 0$ holds for every $t \in [0,L)$ and $G_A(t) = 1$ holds for every $t \in [R,1]$.
    \end{itemize}
\end{Lemma}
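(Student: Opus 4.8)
The plan is to reduce all three items to elementary properties of the convex function $A$ and of its right-hand derivative $D^+A$, using throughout the bounds $A(t)\ge\max\{1-t,t\}$ and $D^+A(t)\in[-1,1]$ recorded in Section~\ref{subsec:intro_evc}, together with the standard facts that $A$ is locally Lipschitz and almost everywhere twice differentiable on $(0,1)$ and satisfies $A(t_2)-A(t_1)=\int_{t_1}^{t_2}D^+A(s)\,\mathrm d s$; this lets computations carried out at points of differentiability be promoted to the stated monotonicity and convexity assertions.

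For $(i)$: at a point of differentiability $t\in(0,1]$ one has $h_A'(t)=\frac{tA'(t)-A(t)}{t^{2}}\le\frac{t-A(t)}{t^{2}}\le 0$, using $A'(t)\le1$ and $A(t)\ge t$; hence $h_A$ is non-increasing on $(0,1]$. For convexity, the same differentiation gives $t^{3}h_A''(t)=t^{2}A''(t)+2\bigl(A(t)-tA'(t)\bigr)\ge t^{2}A''(t)+2t\bigl(1-A'(t)\bigr)\ge 0$ (reading $A''$ as the non-negative measure $\mathrm d D^+A$); equivalently, $h_A(t)=\phi\bigl(\tfrac1t-1\bigr)$ with $\phi$ the convex, non-decreasing function constructed in $(ii)$ composed with the convex map $t\mapsto\tfrac1t-1$. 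For the endpoint behaviour: the chord of $A$ through $(R,R)$ and $(1,1)$ is $s\mapsto s$, so convexity together with $A(t)\ge t$ forces $A(t)=t$, i.e.\ $h_A(t)=1$, for every $t\in[R,1]$; since $h_A(t)\ge\max\{1-t,t\}/t\ge1$ on $(0,1]$, this gives $\inf\{x\in\mathbb I:h_A(x)=1\}=R$. Finally, a non-increasing function can fail to be strictly decreasing on $(0,R]$ only by being constant on some $[t_1,t_2]\subseteq(0,R]$; there $A(t)=ct$, so convexity on $[t_2,1]$ and $A(1)=1$ give $c\le1$ while $h_A\ge1$ gives $c\ge1$, whence $A(t_1)=t_1$ with $t_1<R$, contradicting the minimality of $R$.

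For $(ii)$, which I expect to be the crux: the inverse of a convex \emph{decreasing} function need not be convex, so $h_A$ has to be brought into a better form first, via the change of variables $t=\tfrac1{s+1}$. Define $\phi\colon[0,\infty)\to[1,\infty)$ by $\phi(s):=h_A\bigl(\tfrac1{s+1}\bigr)=(s+1)\,A\bigl(\tfrac1{s+1}\bigr)$. A direct computation gives, with $t=\tfrac1{s+1}$, $\phi'(s)=A(t)-tA'(t)$ and hence $\phi''(s)=t^{3}A''(t)\ge0$, so $\phi$ is convex; it is non-decreasing by $(i)$, with $\phi(0)=1$ and $\lim_{s\to\infty}\phi(s)=\lim_{t\to0^{+}}A(t)/t=\infty$. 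By $(i)$, $\phi$ restricted to $[\tfrac1R-1,\infty)$ is a strictly increasing bijection onto $[1,\infty)$, so $\phi^{-1}$ is strictly increasing and, being the inverse of a convex increasing function, concave. Since $\phi(s)=z$ is equivalent to $\tfrac1{s+1}=h_A^{-1}(z)$, we get $h_A^{[-1]}(z)=\tfrac1{\phi^{-1}(z)+1}$ for every $z\in[1,\infty)$. As $v\mapsto\tfrac1v$ is convex and decreasing on $(0,\infty)$ and $\phi^{-1}+1$ is concave and strictly positive, the composition $h_A^{[-1]}$ is convex on $[1,\infty)$; it is strictly decreasing because $\phi^{-1}$ is strictly increasing.

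For $(iii)$: right-continuity of $G_A$ on $[0,1)$ is inherited from that of $A$ and of $D^+A$, and $D^+A(t)(1-t)\to0$ as $t\uparrow1$ (since $D^+A$ is bounded), so $G_A$ is continuous at $1$ with value $1$. Monotonicity on $[0,1)$ follows by writing, for $t_1<t_2<1$,
\[
G_A(t_2)-G_A(t_1)=\Bigl(\textstyle\int_{t_1}^{t_2}D^+A(s)\,\mathrm d s-D^+A(t_1)(t_2-t_1)\Bigr)+(1-t_2)\bigl(D^+A(t_2)-D^+A(t_1)\bigr),
\]
where both bracketed terms are $\ge0$ because $D^+A$ is non-decreasing; together with $G_A(1)=1$ this gives monotonicity on $\mathbb I$. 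The sign statements all come from the single bound $G_A(t)\ge A(t)-(1-t)$ for $t\in[0,1)$ (valid since $D^+A(t)\ge-1$): it yields $G_A\ge0$ on $\mathbb I$, and $G_A(t)>0$ for $t\in(L,1)$ because there $A(t)>1-t$, hence $G_A>0$ on $(L,1]$; if $L>0$ then on $[0,L)$ one has $A(t)=1-t$ and $D^+A(t)=-1$, so $G_A\equiv0$, while on $[R,1]$ one has $A(t)=t$ and $D^+A(t)=1$ (with $D^+A(1):=D^-A(1)=1$), so $G_A\equiv1$. Assembling these proves $(iii)$.
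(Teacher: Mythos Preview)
Your argument is correct. The main novelty relative to the paper is your treatment of $(ii)$: the paper proves convexity of $h_A^{[-1]}$ by invoking an inverse-function rule for one-sided derivatives (their Lemma~\ref{lem:properties_left_hand_der}) to write $D^-h_A^{[-1]}(t)=1/D^+h_A(h_A^{[-1]}(t))$ and then checking monotonicity, whereas you reparametrize via $\phi(s)=(s+1)A\!\left(\tfrac{1}{s+1}\right)$, establish that $\phi$ is convex and increasing, and recover $h_A^{[-1]}=1/(\phi^{-1}+1)$ as a convex-decreasing function composed with a concave one. Your route is more conceptual and sidesteps the one-sided derivative calculus entirely; it also yields your alternative convexity proof for $(i)$ via $h_A(t)=\phi(1/t-1)$. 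For $(i)$ the paper instead shows $D^+h_A(t_1)\le D^+h_A(t_2)$ by a direct algebraic estimate on the numerator; your second-derivative computation $t^3h_A''=t^2A''+2(A-tA')$ is equivalent once ``$A''$'' is read as the Stieltjes measure $\mathrm d D^+A$ (so that jumps of $D^+A$ contribute non-negative atoms), and your composition argument makes this interpretation unnecessary in any case. For $(iii)$ the paper cites \citep{evc-mass} for non-negativity and monotonicity and proves positivity on $(L,1]$ by first arguing $D^+A(s)>-1$; your explicit decomposition $G_A(t_2)-G_A(t_1)=\bigl(\int_{t_1}^{t_2}D^+A-D^+A(t_1)(t_2-t_1)\bigr)+(1-t_2)(D^+A(t_2)-D^+A(t_1))$ and the bound $G_A(t)\ge A(t)-(1-t)$ give self-contained and slightly shorter arguments for the same conclusions.
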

\begin{proof}
    See Appendix \ref{proof:regularty_fct}.
\end{proof}
Next, we give an explicit example of the functions $h_A$ and $h_A^{[-1]}$ for a fixed $A \in \mathcal{A}$.
\begin{Ex}\label{ex:ex_ha_and_inv}
We define the Pickands dependence function
\begin{align*}
    A(t)&:= \begin{cases}
        1-t,& \text{ if } t\in [0,\frac{1}{8}),\\
        t^2-t+\frac{63}{64},& \text{ if } t\in[\frac{1}{8},\frac{1}{4}),\\
        \frac{105}{128}-\frac{3}{32}t,& \text{ if } t\in [\frac{1}{4},\frac{3}{4}),\\
         t,& \text{ if } t\in [\frac{3}{4},1]r
    \end{cases}
\end{align*}
for every $t \in \mathbb{I}$. The function $h_A$ defined as in eq. \eqref{eq:fct_h} is therefore given by
\begin{align*}
    h_A(t)&= \begin{cases}
        \frac{1}{t}-1,& \text{ if } t\in (0,\frac{1}{8}),\\
        t-1+\frac{63}{64t},& \text{ if } t\in[\frac{1}{8},\frac{1}{4}),\\
        \frac{105}{128t}-\frac{3}{32},& \text{ if } t\in [\frac{1}{4},\frac{3}{4}),\\
         1,& \text{ if } t\in [\frac{3}{4},1]
    \end{cases}
\end{align*}
for every $t \in (0,1]$ and we set $h_A(0) = \infty$. Calculating the pseudo-inverse $h_A^{[-1]}$ of $h_A$ yields
\begin{align*}
    h_A^{[-1]}(z)& = \begin{cases}
        \frac{1}{z+1},& \text{ if } z\in [7,\infty),\\
        \frac{1}{2}\left(z+1-\sqrt{(z+1)^2-\frac{63}{16}}\right),& \text{ if } z\in[\frac{51}{16},7),\\
         \frac{105}{128z + 12},& \text{ if } z\in [1,\frac{51}{16}]
    \end{cases}
\end{align*}
for every $z \in [1,\infty)$. Moreover, from eq. \eqref{eq:definition_L_R}, it follows that $L = \frac{1}{8}$ and $R = \frac{3}{4}$.
\end{Ex}
\begin{figure}[!ht]
	\centering
	\includegraphics[width=1\textwidth]{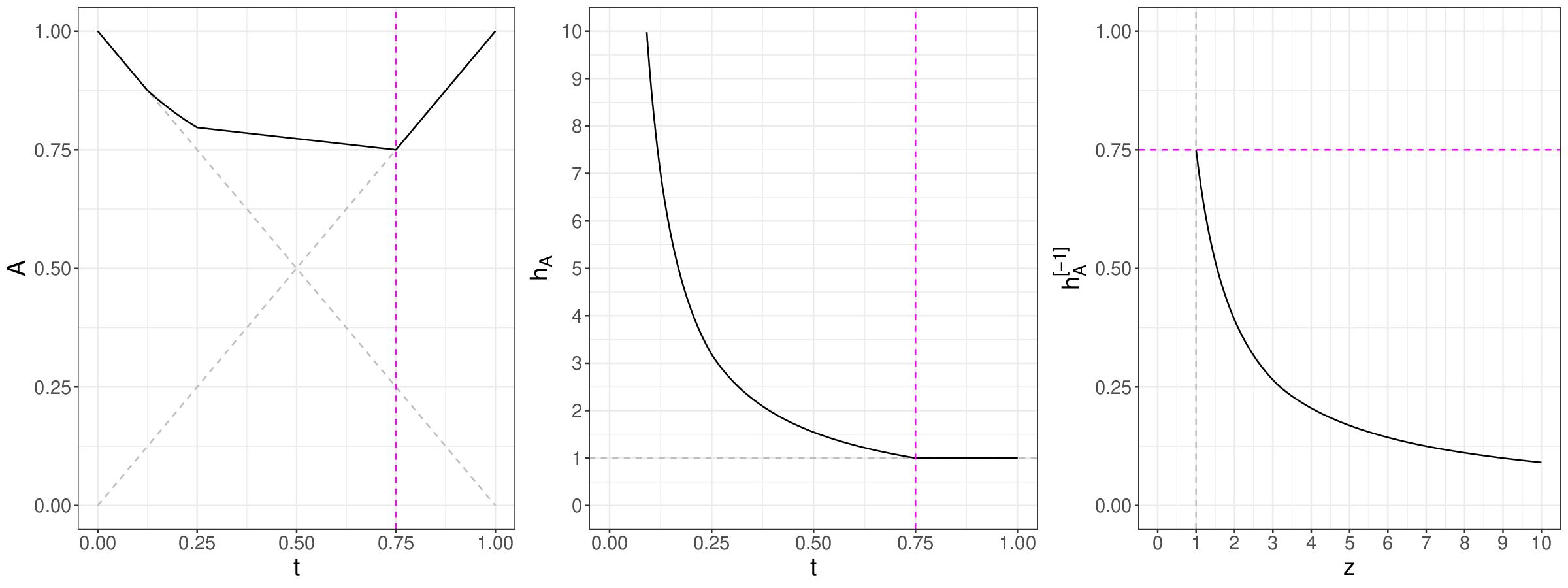}
	\caption{Plots of the Pickands dependence function $A$ (left) as defined in Example \ref{ex:ex_ha_and_inv}, the corresponding function $h_A$ (middle) and its pseudo-inverse $h_A^{[-1]}$ (right). The dashed magenta lines mark $R = \frac{3}{4}$ according to eq. \eqref{eq:definition_L_R}.}
	\label{fig:schneise}
\end{figure}
\subsubsection{Archimax copulas}
Archimax copulas arise when combining convex generators $\psi$ and Pickands dependence functions $A$. To be more precise, a copula $C \in \mathcal{C}$ is called an Archimax copula, if there exists a generator $\psi \in \mathbf{\Psi}$ with pseudo-inverse $\varphi$ and a Pickands dependence function $A \in \mathcal{A}$ such that
\begin{equation}\label{eq:def_archimax}
C(x,y) = \psi\left((\varphi(x) + \varphi(y))A\left(\frac{\varphi(x)}{\varphi(x) + \varphi(y)}\right)\right)
\end{equation}
holds for every $(x,y) \in (0,1)^2$. It can be shown that a function defined as in eq. \eqref{eq:def_archimax} is indeed a bona fide copula \citep{caperaa2000,dur_archimax}.
Stressing the relationship between $\psi$, $A$ and the corresponding copula $C$, we frequently write $C_{\psi,A}$ instead of $C$ throughout this article.
The class of all bivariate Archimax copulas is denoted by $\mathcal{C}_{am}$. Recalling the definition given in eq. \eqref{eq:def_abs_dis_sing_copula}, we denote by $\mathcal{C}_{am}^{abs}$, $\mathcal{C}_{am}^{dis}$ and $\mathcal{C}_{am}^{sing}$ the family of all absolutely continuous, discrete and singular Archimax copulas.
Given an arbitrary $\psi \in \mathbf{\Psi}$ and setting $A(x) = 1$ for every $x \in \mathbb{I}$, it is straightforward to prove that $C_{\psi,A} = C_\psi \in \mathcal{C}_{ar}$ holds. Setting $A(x) = \max\{1-x,x\}$ for every $x\in\mathbb{I}$, it is easily proved \citep{est-archimax} that $C_{\psi,A}(x,y) = M(x,y) := \min\{x,y\}$ holds for every $(x,y) \in \mathbb{I}^2$, regardless of the generator $\psi \in \mathbf{\Psi}$. Lastly, considering $\psi(z) = \exp(-z)$ for every $z \in [0,\infty)$, we obtain that $C_{\psi,A} = C_A \in \mathcal{C}_{ev}$ holds for every $A \in \mathcal{A}$ \citep{caperaa2000}.
\\
Let $\gamma \in \mathcal{P}_\mathcal{W}$ and $\vartheta \in \mathcal{P}_\mathcal{A}$. Building upon the one-to-one correspondences between $\mathcal{P}_\mathcal{W}$ and $\mathbf{\Psi}$, and between $\mathcal{P}_\mathcal{A}$ and $\mathcal{A}$, as discussed in Sections \ref{subsec:intro_archimedean} and \ref{subsec:intro_evc}, we see that $\gamma$ corresponds to a generator $\psi_\gamma$, and $\vartheta$ corresponds to a Pickands dependence function $A_\vartheta$. According to eq. \eqref{eq:def_archimax}, these then induce an Archimax copula $C_{\psi_\gamma,A_\vartheta} \in \mathcal{C}_{am}$. Put differently, every pair $(\gamma,\vartheta)$ of $\gamma \in \mathcal{P}_\mathcal{W}$ and $\vartheta \in \mathcal{P}_\mathcal{A}$ induces a unique Archimax copula $C_{\gamma,\vartheta} \in \mathcal{C}_{am}$.


\section{Level sets of Archimax copulas}\label{sec:lvl_sets_archimax}
\subsection{Representation of level sets via graphs of functions}
Following \citep{dur_princ}, the $t$-level set of a copula $C \in \mathcal{C}$ is defined by
\begin{equation}\label{eq:t-level_set}
L^t := \{(x,y) \in \mathbb{I}^2 \colon C(x,y) = t\}
\end{equation}
for every $t \in \mathbb{I}$.
Moreover, we define the lower $t$-cut of $C$ by
\begin{equation}\label{eq:lower_t_cut}
[C]_t := \{(x,y) \in \mathbb{I}^2 \colon C(x,y) \leq t\}
\end{equation}
for every $t \in \mathbb{I}$. Throughout this section we let $\psi\in \mathbf{\Psi}$, $A \in \mathcal{A}$, $C_{\psi,A} \in \mathcal{C}_{am}$ and $C_\psi \in \mathcal{C}_\psi$. Distinguishing between the level sets of $C_{\psi,A}$ and $C_\psi$, we will denote the $t$-level set of $C_{\psi,A}$ by $L_{\psi,A}^t$ and the $t$-level set of $C_\psi$ by $L_{\psi}^t$ for every $t \in \mathbb{I}$.\\
Motivated by the well known fact that in the Archimedean case the $t$-level set $L_\psi^t$ coincides with the graph of a convex function for every $t \in (0,1]$ \citep{mult_arch,neslehova,nelsen2006}, the primary goal of this section is to establish similar results for the family of Archimax copulas. To be more precise, we will prove that for fixed $t \in \mathbb{I}$, the $t$-level set $L_{\psi,A}^t$ can be represented by the two functions $f^t$ and $g^t$ specified in eq. \eqref{eq:fct_g_t} and \eqref{eq:fct_f}, respectively. Denoting by $\varphi$ the pseudo-inverse of $\psi$ and utilizing the function $h_A$ defined as in eq. \eqref{eq:fct_h}, the copula $C_{\psi,A}$ can also be represented as
\begin{equation}\label{eq:alternative_archimax_copula}
    C_{\psi,A}(x,y) = \psi\left(\varphi(x)h_A\left(\frac{\varphi(x)}{\varphi(x) + \varphi(y)}\right)\right)
\end{equation}
for every $x,y \in (0,1)$. Therefore, fixing $t \in (0,1)$ and $(x,y) \in L_{\psi,A}^t$ yields $C_{\psi,A}(x,y) = t$ and thus, formally solving the afore-mentioned equation for $y$, we arrive at the definition of the following functions.\\
Using the pseudo-inverse $h_A^{[-1]}$ of $h_A$ and considering $t \in (0,1)$ if $\psi$ is strict and $t \in [0,1)$ if $\psi$ is non-strict, we define the function $\fg^t \colon [t,1] \rightarrow \mathbb{I}$ by
\begin{equation}\label{eq:fct_g_t}
\fg^t(x) := 
\begin{cases}
\psi\left(\left(\frac{1}{h_A^{[-1]}\left(\frac{\varphi(t)}{\varphi(x)}\right)}-1\right)\varphi(x)\right), &\text{if } x \in [t,1),\\
t, &\text{if } x =1
\end{cases}
\end{equation}
for every $x \in [t,1]$. Furthermore, if $\psi$ is strict, we define the function $\fg^0 \colon \mathbb{I} \rightarrow \mathbb{I}$ by $\fg^0(x) := 0$ for every $x \in \mathbb{I}$. Finally, the function $\fg^1\colon \{1\} \rightarrow \mathbb{I}$ is defined by $\fg^1(1) := 1$.\\
Applying the fact that $h_A^{[-1]}(s) \geq \frac{1}{s+1}$ for every $s \in [1,\infty)$, it follows that for all $t \in \mathbb{I}$ the inequality $\fg^t(x) \geq t$ holds for every $x\in [t,1]$. The next example illustrates that in the Archimedean case $\Gamma(\fg^t)$ coincides with the $t$-level set $L_\psi^t$ of the corresponding $C_\psi \in \mathcal{C}_{ar}$ for every $t \in(0,1]$.
\begin{Ex}[Archimedean copulas and EVCs]\label{ex:basic_ex_lvl_sets}
    \leavevmode
    \begin{enumerate}
        \item Let $\psi \in \mathbf{\Psi}$ be arbitrary, $A(x) = 1$ for every $x \in \mathbb{I}$ and fix $t \in (0,1]$. Then  $C_{\psi,A} = C_\psi \in \mathcal{C}_{ar}$, $h_A(s) = \frac{1}{s}$ for every $s \in (0,1]$ and $h_A^{[-1]}(z) = \frac{1}{z}$ for every $z \in [1,\infty)$. Using the afore-mentioned facts, we therefore obtain that $\fg^t(x) = \psi(\varphi(t) - \varphi(x))$ for every $x \in [t,1]$. The fact that $\Gamma(\fg^t) = L_\psi^t$ is now obvious.
        In the case that $t = 0$ and $\psi$ is non-strict, $\fg^0(x) = \psi(\varphi(0) - \varphi(x))$ holds for every $x \in \mathbb{I}$ and thus, $L_{\psi}^0 = \{(x,y) \in \mathbb{I}^2 \colon y \leq \fg^0(x)\}$.
        \item Let $\psi(z) = \exp(-z)$ for every $z \in [0,\infty)$ with inverse $\varphi(x) = -\log(x)$ for every $x \in (0,1]$, $A \in \mathcal{A}$ be arbitrary and $t \in (0,1)$. Then $C_{\psi,A} = C_A \in \mathcal{C}_{ev}$ and
        $$
        \fg^t(x) = \begin{cases}
        \exp\left(\left(\frac{1}{h_A^{[-1]}\left(\frac{\log(t)}{\log(x)}\right)}-1\right)\log(x)\right),& \text{if } x\in[t,1),\\
        t,&  \text{if } x = 1 
        \end{cases}
        $$
        for every $x \in [t,1]$.
        \item Let $\psi \in \mathbf{\Psi}$ be arbitrary, $A(x) = \max\{1-x,x\}$ for every $x \in \mathbb{I}$ and  $t \in \mathbb{I}$, then $C_{\psi,A} = M$ as well as $\fg^t(x) = t$ for every $x \in [t,1]$.
        \end{enumerate}
\end{Ex}
Establishing the results of the next sections, the following function will play a key role. Considering $\psi\in \mathbf{\Psi}$ and $t \in (0,1)$, the function $\ff^t \colon \mathbb{I} \rightarrow \mathbb{I}$ is defined by
\begin{equation}\label{eq:fct_f}
\ff^t(x) := \psi\left(\left(\frac{1}{t} -1\right)\varphi(x)\right)
\end{equation}
for every $x \in \mathbb{I}$. Furthermore, we define the functions $\ff^0(x) := 0$ and $\ff^1(x) := 1$ for every $x \in \mathbb{I}$. Given $A \in \mathcal{A}$, fixing $t \in \mathbb{I}$, considering $R$ defined as in eq. \eqref{eq:definition_L_R} and applying the fact that $h_A^{[-1]}(z) \leq R$ holds for every $z\in [1,\infty)$ yields that $\fg^t(x) \leq \ff^R(x)$ for every $x \in [t,1]$.
The next example demonstrates that when the Archimax copula is an EVC, the function $\ff^t$ simplifies to $\ff^t(x) = x^{\frac{1}{t}-1}$ for every $x \in \mathbb{I}$. This is noteworthy since, according to \citep{evc-mass}, the discrete component of an EVC always concentrates its mass on $\Gamma(\ff^t)$. 
\begin{Ex}\label{ex:evc_fct_f_t}
    Let $A \in \mathcal{A}$ be arbitrary, $\psi(z) = \exp(-z)$ for every $z \in [0,\infty)$ with inverse $\varphi(x) = -\log(x)$ for every $x \in (0,1]$ and fix $t \in (0,1)$. Then $C_{\psi,A} = C_A \in \mathcal{C}_{ev}$ as well as $\ff^t(x) = x^{\frac{1}{t}-1}$ for every $x \in \mathbb{I}$.
\end{Ex}
Given $\psi \in \mathbf{\Psi}$, $A \in \mathcal{A}$ and $t \in (0,1]$, the $t$-level set $L_{\psi,A}^t$ of $C_{\psi,A} \in \mathcal{C}_{am}$ can be represented in terms of the functions $\fg^t$ and $\ff^R$, whereby $R\in [\frac{1}{2},1]$ is defined as in eq. \eqref{eq:definition_L_R}. Unlike the Archimedean case \citep{mult_arch,neslehova,nelsen2006}, the $t$-level sets $L_{\psi,A}^t$ do not necessarily correspond to $\Gamma(\fg^t)$. However, as proved in the next lemma, if $R = 1$, $L_{\psi,A}^t$ indeed coincides with $\Gamma(\fg^t)$.
\begin{Lemma}\label{lem:t-lvl_sets}
Let $\psi \in \mathbf{\Psi}$, $A \in \mathcal{A}$, $C_{\psi,A} \in \mathcal{C}_{am}$, $t \in (0,1)$, $\fg^t$ be defined as in eq. \eqref{eq:fct_g_t}, $R$ be defined according to eq. \eqref{eq:definition_L_R} and $g^R$ be defined as in eq. \eqref{eq:fct_f}. Then the following assertions hold:
\begin{itemize}
    \item[$(i)$] $L_{\psi,A}^t = \Gamma(\fg^t) \cup (\{t\} \times [\ff^R(t),1])$;
    \item[$(ii)$] $L_{\psi,A}^t =  \Gamma(\fg^t)$ if, and only if $R = 1$.
    \end{itemize}
\end{Lemma}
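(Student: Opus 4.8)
The plan is to compute, for each fixed $x\in\mathbb{I}$, the $x$-section $(L_{\psi,A}^t)_x=\{y\in\mathbb{I}\colon C_{\psi,A}(x,y)=t\}$ of the level set and then reassemble $L_{\psi,A}^t$ from these sections. Since every copula satisfies $C_{\psi,A}(x,y)\le\min\{x,y\}$, the equation $C_{\psi,A}(x,y)=t$ forces $x\ge t$ and $y\ge t$; hence the section is empty for $x<t$, and the copula identities $C_{\psi,A}(1,y)=y$, $C_{\psi,A}(x,1)=x$ dispose of the boundary lines $x=1$ and $y=1$ (they contribute exactly $(1,t)$ and $(t,1)$). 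On the remaining range $x\in[t,1)$, $y\in(0,1)$ I would use the representation \eqref{eq:alternative_archimax_copula}: setting $s:=\frac{\varphi(x)}{\varphi(x)+\varphi(y)}\in(0,1)$ and using that, for $t\in(0,1)$, the generator $\psi$ restricts to a strictly decreasing bijection of $[0,\varphi(0))$ onto $(0,1]$, the equation $C_{\psi,A}(x,y)=t$ becomes the scalar equation
\begin{equation*}
h_A(s)=\frac{\varphi(t)}{\varphi(x)},
\end{equation*}
whose right-hand side is finite, is $\ge1$ because $x\ge t$ gives $\varphi(x)\le\varphi(t)$, and equals $1$ exactly when $x=t$.

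\textbf{Solving the scalar equation.} I would then split according to whether $x>t$ or $x=t$. For $x\in(t,1)$ one has $\varphi(t)/\varphi(x)\in(1,\infty)$, and Lemma~\ref{lem:regularity_fcts}$(i)$ shows that the continuous function $h_A$ decreases strictly from $+\infty$ to $1$ on $(0,R]$, hence maps $(0,R)$ bijectively onto $(1,\infty)$; there is therefore a unique $s=h_A^{[-1]}\bigl(\varphi(t)/\varphi(x)\bigr)\in(0,R)$ solving it, and resolving $s=\varphi(x)/(\varphi(x)+\varphi(y))$ for $y$ and applying $\psi\circ\varphi=\mathrm{id}_{\mathbb{I}}$ gives the single admissible value $y=\psi\bigl((\tfrac1s-1)\varphi(x)\bigr)=\fg^t(x)$ (which lies in $(0,1)$ since $t\le\fg^t(x)<1$). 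Running the chain of equivalences backwards (with $h_A\circ h_A^{[-1]}=\mathrm{id}$ on $(1,\infty)$) confirms that this $y$ satisfies $C_{\psi,A}(x,y)=t$, so $(L_{\psi,A}^t)_x=\{\fg^t(x)\}$. For $x=t$ the equation becomes $h_A(s)=1$, which by Lemma~\ref{lem:regularity_fcts}$(i)$ holds precisely for $s\in[R,1]$; translating back through $s=\varphi(t)/(\varphi(t)+\varphi(y))$, this corresponds to $\varphi(y)\in[0,(\tfrac1R-1)\varphi(t)]$, that is, $y\in[\ff^R(t),1]$ with $\ff^R(t)=\psi\bigl((\tfrac1R-1)\varphi(t)\bigr)=\fg^t(t)$, so $(L_{\psi,A}^t)_t=[\ff^R(t),1]$. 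Collecting all sections, and recording $\fg^t(1)=t$, yields $(i)$.

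\textbf{Deriving $(ii)$.} For the second assertion I would note that $\Gamma(\fg^t)$, being the graph of a function, meets the line $\{t\}\times\mathbb{I}$ only in the point $(t,\fg^t(t))=(t,\ff^R(t))$; by $(i)$, $L_{\psi,A}^t=\Gamma(\fg^t)$ therefore holds if and only if the segment $\{t\}\times[\ff^R(t),1]$ collapses to that point, i.e.\ if and only if $\ff^R(t)=1$. Since $\varphi(t)>0$ for $t<1$ and $\psi(z)=1$ only for $z=0$, definition \eqref{eq:fct_f} gives $\ff^R(t)=\psi\bigl((\tfrac1R-1)\varphi(t)\bigr)<1$ whenever $R\in[\tfrac12,1)$, whereas $\ff^1\equiv1$; hence $\ff^R(t)=1$ if and only if $R=1$, which is $(ii)$.

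\textbf{Main obstacle.} The step I expect to be most delicate is handling the non-injectivity of $h_A$ at the value $1$: by Lemma~\ref{lem:regularity_fcts}$(i)$, $h_A$ is constant and equal to $1$ on the whole interval $[R,1]$, and it is exactly this flat part that makes the section over $x=t$ an interval rather than a single point, thereby producing the vertical segment glued to $\Gamma(\fg^t)$ (and, conversely, its collapse precisely when $R=1$). Beyond that, the argument requires careful bookkeeping at the boundary of $\mathbb{I}^2$ and with the strict versus non-strict dichotomy for $\psi$, so that $\psi\circ\varphi=\mathrm{id}_{\mathbb{I}}$ and the reduction to the scalar equation above remain valid.
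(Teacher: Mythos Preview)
Your proposal is correct and follows essentially the same route as the paper: both reduce the level-set condition to the scalar equation $h_A(s)=\varphi(t)/\varphi(x)$ via~\eqref{eq:alternative_archimax_copula} and then exploit that $h_A$ is strictly decreasing on $(0,R]$ and constant on $[R,1]$. The only organizational difference is that the paper splits the case analysis according to whether $s<R$ or $s\ge R$, whereas you split according to whether $x>t$ (so $\varphi(t)/\varphi(x)>1$) or $x=t$ (so $\varphi(t)/\varphi(x)=1$); given the equation, these two splits are equivalent. Your argument for $(ii)$---reducing to $\ff^R(t)=1$ and reading this off from~\eqref{eq:fct_f}---is slightly more direct than the paper's proof by contradiction, but the content is the same.
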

\begin{proof}
We prove the first assertion and show that $(x,y) \in L_{\psi,A}^t$ if, and only if $(x,y) \in\Gamma(\fg^t) \cup (\{t\} \times [\ff^R(t),1])$. Consider $(x,y) \in \mathbb{I}^2$, then in the cases $x = 0$ or $y = 0$ or $(x,y) = (1,1)$, both $(x,y) \notin L_{\psi,A}^t$ and $(x,y) \notin \Gamma(\fg^t) \cup (\{t\} \times [\ff^R(t),1])$ holds, due to the fact that $t \in (0,1)$.\\
If $(x,y) \in \{1\} \times (0,1)$ or $(x,y) \in (0,1) \times \{1\}$, then applying $\fg^t(1) = t$ and using the fact that $\fg^t(x) =1$ implies $x = t$, it is straightforward to prove that $(x,y) \in L_{\psi,A}^t$ is equivalent to $(x,y) \in \Gamma(\fg^t) \cup (\{t\} \times [\ff^R(t),1])$.\\
From now on let $(x,y) \in (0,1)^2$. Considering $h_A$ defined according to eq. \eqref{eq:fct_h}, taking into account that $(x,y) \in L_{\psi,A}^t$ implies $h_A\left(\frac{\varphi(x)}{\varphi(x) + \varphi(y)}\right)\varphi(x) < \varphi(0)$ and using eq. \eqref{eq:alternative_archimax_copula}, we obtain that $(x,y) \in L_{\psi,A}^t$ if, and only if the subsequent identity holds:
\begin{equation}\label{eq_equivalence}
h_A\left(\frac{\varphi(x)}{\varphi(x) + \varphi(y)}\right) = \frac{\varphi(t)}{\varphi(x)}.
\end{equation}
We distinguish two cases:
\begin{itemize}
    \item[$(a)$] If $R \leq \frac{\varphi(x)}{\varphi(x) + \varphi(y)}$, then using the fact that $h_A(s) = 1$ for every $s \in [R,1]$, we find that eq. \eqref{eq_equivalence} is equivalent to $x = t$. Together with the observation that the condition $R \leq \frac{\varphi(x)}{\varphi(x) + \varphi(y)}$ is equivalent to $\ff^R(x) \leq y$, this implies that eq. \eqref{eq_equivalence} holds if, and only if $(x,y) \in \{t\}\times [\ff^R(t),1]$.
    \item[$(b)$] If $\frac{\varphi(x)}{\varphi(x) + \varphi(y)} < R$, then applying the fact that $h_A$ is invertible on $(0,R)$ with inverse $h_A^{[-1]} = h_A^{-1}$ on $(1,\infty)$, and using that $\varphi(x)\left(\frac{1}{h_A^{[-1]}\left(\frac{\varphi(t)}{\varphi(x)}\right)}-1\right) < \varphi(0)$ whenever $y = \fg^t(x)$, equivalence preserving transformations show that eq. \eqref{eq_equivalence} holds if, and only if $y = \fg^t(x)$.
\end{itemize}
Now, assume that $(x,y) \in L_{\psi,A}^t$. Then, eq. \eqref{eq_equivalence} holds and in both cases, $(a)$ and $(b)$, we have $(x,y) \in \Gamma(\fg^t) \cup (\{t\}\times [\ff^R(t),1])$.
For $(x,y) \in \{t\}\times [\ff^R(t),1]$, it is clear that $R \leq \frac{\varphi(x)}{\varphi(x) + \varphi(y)}$, and consequently, $(a)$ implies that eq. \eqref{eq_equivalence} holds, yielding $(x,y) \in L_{\psi,A}^t$.\\
At last, we assume that $(x,y) \in \Gamma(\fg^t)$. It is always the case that $y = \fg^t(x) \leq \ff^R(x)$ and therefore $\frac{\varphi(x)}{\varphi(x) + \varphi(y)} \leq R$ holds. We distinguish two cases:
\begin{itemize}
    \item If $\frac{\varphi(x)}{\varphi(x) + \varphi(y)} < R$, $(b)$ directly yields $(x,y) \in L_{\psi,A}^t$.
\item If $\frac{\varphi(x)}{\varphi(x) + \varphi(y)} = R$, then it is obvious that $y = \ff^R(x)$, and hence $\fg^t(x) = \ff^R(x)$, which implies $x = t$. This results in $(x,y) = (t,\ff^R(t)) \in \{t\} \times [\ff^R(t),1]$. Applying $(a)$, we conclude that $(x,y) \in L_{\psi,A}^t$.
\end{itemize}
Combining all of the above cases proves the first assertion.\\
Considering the second assertion and assuming $R = 1$, it follows directly from the first assertion that $L_{\psi,A}^t = \Gamma(\fg^t)$. Proving the other direction, let $L_{\psi,A}^t = \Gamma(\fg^t)$ and suppose that $R < 1$. Then $\ff^R(t) <1$ would hold and thus, there would exist a $y_0 \in \mathbb{I}$ such that $\ff^R(t) < y_0 <1$. Applying the first assertion yields $\Gamma(\fg^t) = \Gamma(\fg^t) \cup (\{t\} \times [\ff^R(t),1])$ and therefore $(t,y_0) \in \{t\} \times [\ff^R(t),1] \subseteq \Gamma(\fg^t)$ would hold, implying that $\ff^R(t)<y_0 = \fg^t(t)$. This contradicts the fact that $\fg^t(x) \leq \ff^R(x)$ for every $x\in [t,1]$ and hence, $R = 1$ follows.
\end{proof}
Let $A\in \mathcal{A}$ be arbitrary and $\psi \in \mathbf{\Psi}$ be a strict generator, then $\psi(z) > 0$ holds for every $z \in [0,\infty)$ and thus, the $0$-level set $L_{\psi,A}^0$ of $C_{\psi,A}\in \mathcal{C}_{am}$ boils down to
$$
L_{\psi,A}^0 = (\{0\} \times \mathbb{I}) \cup (\mathbb{I} \times \{0\}).
$$
In the case that the generator $\psi$ is non-strict, the connection between the $0$-level set $L_{\psi,A}^0$ and the function $\fg^0$ is summarized in the following lemma.
\begin{Lemma}\label{lem:0-lvl_sets}
Let $\psi \in \mathbf{\Psi}$ be a non-strict generator, $A \in \mathcal{A}$, $C_{\psi,A} \in \mathcal{C}_{am}$, $\fg^0$ be defined as in eq. \eqref{eq:fct_g_t}, $R$ be defined according to eq. \eqref{eq:definition_L_R} and $g^R$ be defined as in eq. \eqref{eq:fct_f}. Then the following assertions hold:
\begin{itemize}
        \item[$(i)$] $L_{\psi,A}^0 = \{(x,y) \in \mathbb{I}^2 \colon y \leq \fg^{0}(x)\} \cup (\{0\} \times [\ff^R(0),1])$;
        \item[$(ii)$] $L_{\psi,A}^0 = \{(x,y) \in \mathbb{I}^2 \colon y \leq \fg^{0}(x)\}$ if, and only if $R = 1$.
\end{itemize}
\end{Lemma}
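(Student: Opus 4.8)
The plan is to follow the proof of Lemma~\ref{lem:t-lvl_sets} almost verbatim, the essential difference being that at level $0$ the condition $C_{\psi,A}(x,y)=0$ is an \emph{inequality} rather than an equation: since $\psi$ is non-strict, $\psi(w)=0$ holds precisely for $w\ge\varphi(0)$, which is exactly why the graph $\Gamma(\fg^0)$ gets replaced here by the sub-level region $\{(x,y)\in\mathbb I^2\colon y\le\fg^0(x)\}$. For assertion $(i)$ I would prove the set equality pointwise, splitting $\mathbb I^2$ into its boundary and the open square $(0,1)^2$.

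On the boundary everything follows from the copula identities $C(0,\cdot)=C(\cdot,0)=0$, $C(1,y)=y$, $C(x,1)=x$, together with $\fg^0(1)=0$, $\fg^0\ge 0$, and the one small identity worth isolating, namely $\fg^0(0)=\ff^R(0)$. The latter is immediate from $h_A^{[-1]}(1)=R$ and the definitions in eq.~\eqref{eq:fct_g_t} and eq.~\eqref{eq:fct_f} (here non-strictness guarantees $0<\varphi(0)<\infty$, so $\varphi(0)/\varphi(0)=1$ is legitimate), and it is precisely what makes $[0,\fg^0(0)]\cup[\ff^R(0),1]=\mathbb I$, matching $\{0\}\times\mathbb I\subseteq L_{\psi,A}^0$; the same identity disposes of the corner $(0,1)$ both when $R=1$ and when $R<1$.

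For $(x,y)\in(0,1)^2$ I would start from eq.~\eqref{eq:alternative_archimax_copula} and observe that $(x,y)\in L_{\psi,A}^0$ is equivalent to $h_A\!\left(\tfrac{\varphi(x)}{\varphi(x)+\varphi(y)}\right)\ge\tfrac{\varphi(0)}{\varphi(x)}$. Since $\varphi$ is strictly decreasing, the right-hand side is strictly larger than $1$ for $x\in(0,1)$; hence in the regime $\tfrac{\varphi(x)}{\varphi(x)+\varphi(y)}\ge R$, equivalently $y\ge\ff^R(x)$, where $h_A\equiv 1$, the inequality cannot hold, so only the regime $\tfrac{\varphi(x)}{\varphi(x)+\varphi(y)}<R$ contributes, and there $h_A$ is strictly decreasing and invertible by Lemma~\ref{lem:regularity_fcts}$(i)$. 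Passing through $h_A^{[-1]}$ (which, being decreasing, reverses the inequality) and then through the non-increasing $\psi$, the condition collapses to $y\le\fg^0(x)$; to close this one uses $\fg^0(x)<\ff^R(x)$ for $x\in(0,1)$, valid since $h_A^{[-1]}(c)=R$ forces $c=1$, i.e.\ $x=0$, which guarantees that $y\le\fg^0(x)$ already puts one in the regime $\tfrac{\varphi(x)}{\varphi(x)+\varphi(y)}<R$. This yields $L_{\psi,A}^0\cap(0,1)^2=\{(x,y)\in(0,1)^2\colon y\le\fg^0(x)\}$ and hence assertion $(i)$.

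Assertion $(ii)$ is then obtained as in Lemma~\ref{lem:t-lvl_sets}$(ii)$: if $R=1$ then $\ff^R\equiv 1$, so $\{0\}\times[\ff^R(0),1]=\{(0,1)\}$, and since $\fg^0(0)=\ff^1(0)=1$ this point already lies in $\{(x,y)\in\mathbb I^2\colon y\le\fg^0(x)\}$, whence $(i)$ reduces to $L_{\psi,A}^0=\{(x,y)\in\mathbb I^2\colon y\le\fg^0(x)\}$; conversely, if $R<1$ then $\ff^R(0)<1$, so choosing any $y_0\in(\ff^R(0),1)$ gives a point $(0,y_0)\in L_{\psi,A}^0$ with $y_0>\ff^R(0)=\fg^0(0)$, which contradicts the assumed equality. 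The only real work is bookkeeping: tracking the directions of the inequalities through the decreasing maps $h_A$, $h_A^{[-1]}$ and $\psi$, and handling the corner values $\varphi(1)=0$ and $\varphi(0)<\infty$; I therefore expect no genuine obstacle beyond the care already needed for Lemma~\ref{lem:t-lvl_sets}.
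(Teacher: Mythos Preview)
Your proposal is correct and follows essentially the same route as the paper: boundary bookkeeping using the identity $\fg^0(0)=\ff^R(0)$, then on $(0,1)^2$ the equivalence $(x,y)\in L_{\psi,A}^0 \Leftrightarrow h_A\bigl(\tfrac{\varphi(x)}{\varphi(x)+\varphi(y)}\bigr)\ge\tfrac{\varphi(0)}{\varphi(x)}$, split into the two regimes $\tfrac{\varphi(x)}{\varphi(x)+\varphi(y)}\ge R$ and $<R$. Your observation that $\fg^0(x)<\ff^R(x)$ \emph{strictly} for $x\in(0,1)$ lets you avoid the paper's separate treatment of the borderline $\tfrac{\varphi(x)}{\varphi(x)+\varphi(y)}=R$ in the reverse inclusion, which is a minor streamlining but not a different idea.
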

\begin{proof}
The proof is similar to the proof of Lemma \ref{lem:t-lvl_sets}. For the sake of completeness a proof is given in Appendix \ref{proof:0-lvl_sets}.
\end{proof}
\subsection{\texorpdfstring{Convexity and monotonicity properties of the functions $f^s$ and $g^t$}{Convexity and monotonicity properties of f and g}}
As shown in Example \ref{ex:basic_ex_lvl_sets}, in the case that $\psi \in \mathbf{\Psi}$ is arbitrary and $A(x) = 1$ holds for every $x \in \mathbb{I}$, $C_{\psi,A} = C_\psi \in \mathcal{C}_{ar}$ and the $t$-level set $L_\psi^t$ of $C_\psi$ coincides with $\Gamma(\fg^t)$ for every $t \in (0,1]$. It is well-known \citep[Theorem 4.3.2]{nelsen2006} that in this particular case the function $\fg^t$ is convex for every $t \in \mathbb{I}$.
The main goal of this subsection is to show that the function $\fg^t$ is non-increasing and convex for every $t \in [0,1)$ regardless of the choice of $\psi$ and $A$.\\
Let $\psi\in\mathbf{\Psi}$, $\varphi$ be its pseudo-inverse, $A \in \mathcal{A}$, $h_A$ be defined according to eq. \eqref{eq:fct_h} and $h_A^{[-1]}$ be its pseudo-inverse. Furthermore, we fix $t \in (0,1)$ if $\psi$ is strict and $t \in [0,1)$ if $\psi$ is non-strict. We define the function $\xi^t \colon (t,1) \rightarrow [0, \infty)$ by
    \begin{equation}\label{eq:fct_xi_t}
    \xi^t(x) := \left(\frac{1}{h_A^{[-1]}\left(\frac{\varphi(t)}{\varphi(x)}\right)}- 1\right) \varphi(x)
    \end{equation}
for every $x \in (t,1)$. Obviously, $\fg^t(x) = \psi(\xi^t(x))$ holds for every $x \in (t,1)$. Moreover, $\xi^t$ is a continuous function, as it is the composition of continuous functions. The next lemma proves that the functions $\xi^t$ are non-decreasing with non-increasing left-hand derivative $D^-\xi^t$, which together with the fact that $\psi$ is non-increasing and convex will prove that the functions $\fg^t$ are convex.
\begin{Lemma}\label{lem:auxiliary_lvl_curves_ii}
    Let $\psi \in \mathbf{\Psi}$, $\varphi$ be its pseudo-inverse, $A\in\mathcal{A}$, $h_A$ be defined according to eq. \eqref{eq:fct_h} and $L$, $R$ be defined as in eq. \eqref{eq:definition_L_R}. Let $t \in (0,1)$, if $\psi$ is strict and $t \in [0,1)$ if  $\psi$ is non-strict. Then the following assertions hold: 
    \begin{itemize}
        \item[$(i)$] $\xi^t$ is a non-decreasing function;
    \item[$(ii)$] If $0<L<R$, then $\xi^t$ is strictly increasing on $\left(t,\psi\left(\frac{L\varphi(t)}{1-L}\right)\right)$ and $\xi^t(x) = \varphi(t)$ holds for every $x \in \left[\psi\left(\frac{L\varphi(t)}{1-L}\right),1\right)$;
    \item[$(iii)$] $D^-\xi^t$ is a non-increasing function.
    \end{itemize}
\end{Lemma}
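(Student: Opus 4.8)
The plan is to recast $\xi^t$ in terms of the function $\ell_A\colon[0,\infty)^2\to[0,\infty)$ defined by $\ell_A(u,v):=(u+v)A\!\left(\frac{u}{u+v}\right)=u\,h_A\!\left(\frac{u}{u+v}\right)$, whose convexity drives all three assertions. First I would note that for $x\in(t,1)$ one has $\frac{\varphi(t)}{\varphi(x)}\in(1,\infty)$, hence $p(x):=h_A^{[-1]}\!\left(\frac{\varphi(t)}{\varphi(x)}\right)=h_A^{-1}\!\left(\frac{\varphi(t)}{\varphi(x)}\right)\in(0,R)$; from $h_A(p(x))=\frac{A(p(x))}{p(x)}=\frac{\varphi(t)}{\varphi(x)}$, i.e.\ $\varphi(x)=\varphi(t)\frac{p(x)}{A(p(x))}$, a short computation yields
\[
\xi^t(x)=\varphi(t)\,\frac{1-p(x)}{A(p(x))},\qquad \varphi(x)+\xi^t(x)=\frac{\varphi(t)}{A(p(x))},\qquad \frac{\varphi(x)}{\varphi(x)+\xi^t(x)}=p(x),
\]
so in particular $\ell_A\big(\varphi(x),\xi^t(x)\big)=\varphi(t)$. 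Since $\varphi$ restricts to a strictly decreasing continuous bijection of $(t,1)$ onto $(0,\varphi(t))$ and $h_A^{-1}$ is a strictly decreasing continuous bijection of $(1,\infty)$ onto $(0,R)$ by Lemma \ref{lem:regularity_fcts}$(i)$, the map $p(\cdot)$ is a strictly decreasing continuous bijection of $(t,1)$ onto $(0,R)$. Moreover, since $p(x)<R$ and $h_A$ is strictly decreasing on $(0,R)$, for every $v>\xi^t(x)$ we get $\frac{\varphi(x)}{\varphi(x)+v}<p(x)$, hence $\ell_A(\varphi(x),v)=\varphi(x)\,h_A\!\left(\frac{\varphi(x)}{\varphi(x)+v}\right)>\varphi(x)\,h_A(p(x))=\varphi(t)$ (and symmetrically $\ell_A(\varphi(x),v)<\varphi(t)$ for $0\le v<\xi^t(x)$, using also $\varphi(x)<\varphi(t)$); thus $\xi^t(x)$ is the unique $v\ge0$ with $\ell_A(\varphi(x),v)=\varphi(t)$.

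For $(i)$ and $(ii)$ I would work with $\Phi(p):=\frac{1-p}{A(p)}$ on $(0,R)$, so that $\xi^t=\varphi(t)\,(\Phi\circ p)$ by the display above. The key estimate is that for $0<p_1<p_2<R$, convexity of $A$ gives $A(p_2)\ge A(p_1)+D^+A(p_1)(p_2-p_1)$, whence
\[
(1-p_1)A(p_2)-(1-p_2)A(p_1)\ \ge\ (p_2-p_1)\big(A(p_1)+(1-p_1)D^+A(p_1)\big)=(p_2-p_1)\,G_A(p_1)\ \ge\ 0
\]
by Lemma \ref{lem:regularity_fcts}$(iii)$; dividing by $A(p_1)A(p_2)>0$ shows $\Phi$ is non-increasing on $(0,R)$, and since $p(\cdot)$ is decreasing and $\varphi(t)>0$, $\xi^t$ is non-decreasing, proving $(i)$. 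For $(ii)$, assume $0<L<R$; then necessarily $L<\tfrac12$, because $A(L)=1-L\ge L$ and the case $L=\tfrac12$ is excluded ($A(\tfrac12)=\tfrac12$ would force $R\le\tfrac12=L$). Hence $c:=\frac{L\varphi(t)}{1-L}\in(0,\varphi(t))$, $x_0:=\psi(c)\in(t,1)$, and $\varphi(x)\le c\iff x\ge x_0$. If $x\in(t,x_0)$ then $\frac{\varphi(t)}{\varphi(x)}\in\big(1,\frac{1-L}{L}\big)$, hence $p(x)\in(L,R)$ (using $h_A(L)=\frac{1-L}{L}$ and $h_A(R)=1$); on $(L,R)$ the estimate above is strict since $G_A>0$ there by Lemma \ref{lem:regularity_fcts}$(iii)$, so $\Phi$ is strictly decreasing on $(L,R)$, and composing with the strictly decreasing $p(\cdot)$ makes $\xi^t$ strictly increasing on $(t,x_0)$. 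If $x\in[x_0,1)$ then $p(x)\in(0,L]$, where $A(p)=1-p$ because $L>0$, so $\Phi(p(x))=1$ and $\xi^t(x)=\varphi(t)$.

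For $(iii)$ the crucial input is that $\ell_A$ is convex on $[0,\infty)^2$ and non-decreasing in each variable. Convexity follows from the representation
\[
\ell_A(u,v)=v+2\int_{[0,1]}\big(u(1-y)-vy\big)_+\,\mathrm{d}\vartheta(y),
\]
which is obtained from eq.\ \eqref{eq:rel_pickands_fct_measure} by Fubini's theorem ($\vartheta\in\mathcal{P}_\mathcal{A}$ being the Pickands dependence measure corresponding to $A$) and whose integrand $(u,v)\mapsto\big(u(1-y)-vy\big)_+$ is convex for every $y$; alternatively one may invoke \citep{caperaa2000,dur_archimax}. Monotonicity in each coordinate is immediate from this representation (equivalently, $\partial_u^+\ell_A(u,v)=G_A\!\left(\frac{u}{u+v}\right)\ge0$, and similarly for $v$). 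Now fix $x_1,x_2\in(t,1)$, $\lambda\in[0,1]$, and put $\bar x:=\lambda x_1+(1-\lambda)x_2$ and $\bar v:=\lambda\xi^t(x_1)+(1-\lambda)\xi^t(x_2)$. Convexity of $\ell_A$ with $\ell_A(\varphi(x_i),\xi^t(x_i))=\varphi(t)$ gives $\ell_A\big(\lambda\varphi(x_1)+(1-\lambda)\varphi(x_2),\,\bar v\big)\le\varphi(t)$; convexity of $\varphi$ gives $\varphi(\bar x)\le\lambda\varphi(x_1)+(1-\lambda)\varphi(x_2)$, so monotonicity of $\ell_A$ in its first argument yields $\ell_A(\varphi(\bar x),\bar v)\le\varphi(t)=\ell_A(\varphi(\bar x),\xi^t(\bar x))$. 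Since $\ell_A(\varphi(\bar x),v)>\varphi(t)$ for all $v>\xi^t(\bar x)$ (shown above), this forces $\bar v\le\xi^t(\bar x)$; hence $\xi^t$ is concave on $(t,1)$, so $D^-\xi^t$ exists and is non-increasing there, which is $(iii)$.

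The step I expect to be the main obstacle is isolating cleanly the facts used in $(iii)$ — convexity and coordinatewise monotonicity of $\ell_A$, convexity of the pseudo-inverse $\varphi$, and the uniqueness of the root $\xi^t(x)$ — since once these are in hand, $(i)$ and $(iii)$ follow directly from convexity, and $(ii)$ is the sharper variant that additionally exploits the positivity of $G_A$ on $(L,1]$ and the shape $A(p)=1-p$ for $p\le L$. A purely computational approach via one-sided derivatives of $\xi^t$ is conceivable but is complicated by possible non-differentiability of $A$ and $\psi$, so the convexity route is preferable.
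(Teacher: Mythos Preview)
Your proof is correct and takes a genuinely different route from the paper. The paper proceeds purely computationally: it calculates $D^-\xi^t$ explicitly via one-sided chain and inverse-function rules (its Lemma \ref{lem:properties_left_hand_der}), simplifies the result to
\[
D^-\xi^t(x)=\frac{D^-\varphi(x)\,G_A\!\big(h_A^{[-1]}(\varphi(t)/\varphi(x))\big)}{h_A^{[-1]}(\varphi(t)/\varphi(x))^2\,D^+h_A\!\big(h_A^{[-1]}(\varphi(t)/\varphi(x))\big)},
\]
reads off $(i)$ and $(ii)$ from the sign of $G_A$, and for $(iii)$ rewrites $D^-\xi^t$ once more and invokes an auxiliary monotonicity lemma (its Lemma \ref{lem:auxiliary_conv_level_set}) stating that $s\mapsto \frac{D^+A(s)}{sD^+A(s)-A(s)}$ is non-increasing on $(0,R)$. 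Your approach instead identifies $\xi^t(x)$ as the unique root of $\ell_A(\varphi(x),\cdot)=\varphi(t)$ and exploits convexity of $\ell_A$ and $\varphi$ directly: $(i)$ and $(ii)$ fall out of the elementary inequality $(1-p_1)A(p_2)-(1-p_2)A(p_1)\ge(p_2-p_1)G_A(p_1)$, and $(iii)$ becomes a clean implicit-function concavity argument that bypasses one-sided derivative calculus entirely. What the paper's approach buys is an explicit closed form for $D^-\xi^t$ (though the paper only uses the qualitative conclusions downstream); what your approach buys is robustness to non-differentiability of $A$ and $\psi$ and a shorter argument that does not need the auxiliary Lemma \ref{lem:auxiliary_conv_level_set}.
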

\begin{proof}
    See Appendix \ref{proof:auxiliary_lvl_curves_ii}.
\end{proof}
The subsequent theorem proves that the function $\fg^t$ is non-increasing and convex for every $t \in [0,1)$.
\begin{theorem}\label{thm:convexity_level_curves}
    Let $\psi \in \mathbf{\Psi}$, $A \in \mathcal{A}$, $t \in (0,1)$ if $\psi$ is strict and $t \in [0,1)$ if $\psi$ is non-strict. Moreover, let $\fg^t$ be defined as in eq. \eqref{eq:fct_g_t}, and $L$, $R$ be defined according to eq. \eqref{eq:definition_L_R}. Then the following assertions hold:
    \begin{itemize}
    \item[$(i)$] $\fg^t$ is a non-increasing function;
        \item[$(ii)$] If $0 < L < R$, then $\fg^t$ is strictly decreasing on $\left[t,\psi\left(\frac{L\varphi(t)}{1-L}\right)\right)$ and $\fg^t(x) = t$ holds for every $x \in \left[\psi\left(\frac{L\varphi(t)}{1-L}\right),1\right]$;
        \item[$(iii)$] $\fg^t$ is a convex function.
    \end{itemize}
\end{theorem}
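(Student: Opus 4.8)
The plan is to exploit the factorization $\fg^t=\psi\circ\xi^t$ on the open interval $(t,1)$, where $\xi^t$ is the function from eq.~\eqref{eq:fct_xi_t}, and to transport the structural properties of $\xi^t$ recorded in Lemma~\ref{lem:auxiliary_lvl_curves_ii} through the generator $\psi$; the two boundary points $x=t$ and $x=1$ are then handled by a short continuity argument. For $(i)$: $\psi$ is non-increasing on $[0,\infty)$ and, by Lemma~\ref{lem:auxiliary_lvl_curves_ii}$(i)$, $\xi^t$ is non-decreasing, so $\fg^t=\psi\circ\xi^t$ is non-increasing on $(t,1)$. At $x=t$, the continuity of $h_A^{[-1]}$ at $1$ together with $h_A^{[-1]}(1)=R$ gives $\lim_{x\to t^+}\xi^t(x)=(\tfrac1R-1)\varphi(t)$, so $\fg^t$ is continuous at $t$ with $\fg^t(t)=\ff^R(t)$; at $x=1$, since $\fg^t(x)\ge t=\fg^t(1)$ for all $x$, the (downward) behaviour at $1$ is also consistent with monotonicity. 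Hence $\fg^t$ is non-increasing on $[t,1]$.

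For $(ii)$, assume $0<L<R$; this forces $L<\tfrac12$ (otherwise $A(\tfrac12)=\tfrac12$ would give $R\le\tfrac12\le L$), so $\tfrac{L\varphi(t)}{1-L}<\varphi(t)\le\varphi(0)$. By Lemma~\ref{lem:auxiliary_lvl_curves_ii}$(ii)$, $\xi^t$ is strictly increasing on $\bigl(t,\psi(\tfrac{L\varphi(t)}{1-L})\bigr)$ with values in $[0,\varphi(0))$, on which $\psi$ is strictly decreasing; therefore $\fg^t=\psi\circ\xi^t$ is strictly decreasing there, and, using the continuous extension to $x=t$ from $(i)$, on $\bigl[t,\psi(\tfrac{L\varphi(t)}{1-L})\bigr)$. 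On $\bigl[\psi(\tfrac{L\varphi(t)}{1-L}),1\bigr)$ we have $\xi^t\equiv\varphi(t)$, hence $\fg^t\equiv\psi(\varphi(t))=t$, and since $\fg^t(1)=t$ this yields $\fg^t\equiv t$ on $\bigl[\psi(\tfrac{L\varphi(t)}{1-L}),1\bigr]$.

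The substance is in $(iii)$. By Lemma~\ref{lem:auxiliary_lvl_curves_ii}$(iii)$, $D^-\xi^t$ is non-increasing, which together with the continuity of $\xi^t$ means $\xi^t$ is \emph{concave} on $(t,1)$; by $(i)$ it is also non-decreasing. The elementary observation driving the proof is that a \emph{concave non-decreasing} inner function composed with a \emph{convex non-increasing} outer function yields a convex function. Explicitly, for $x_1<x_2$ in $(t,1)$, $\alpha\in[0,1]$, and $x_\alpha:=\alpha x_1+(1-\alpha)x_2$, concavity of $\xi^t$ gives $\xi^t(x_\alpha)\ge m:=\alpha\xi^t(x_1)+(1-\alpha)\xi^t(x_2)\ge 0$; since $\psi$ is non-increasing, $\psi(\xi^t(x_\alpha))\le\psi(m)$; and since $\psi$ is convex, $\psi(m)\le\alpha\psi(\xi^t(x_1))+(1-\alpha)\psi(\xi^t(x_2))$. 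Concatenating, $\fg^t(x_\alpha)\le\alpha\fg^t(x_1)+(1-\alpha)\fg^t(x_2)$, so $\fg^t$ is convex on $(t,1)$. To pass to $[t,1]$ one uses continuity of $\fg^t$ at both endpoints: at $t$ by $(i)$, and at $1$ because the bounds $\tfrac1{z+1}\le h_A^{[-1]}(z)\le\tfrac1z$ (the second from $A\le1$) force $\varphi(t)-\varphi(x)\le\xi^t(x)\le\varphi(t)$, so $\xi^t(x)\to\varphi(t)$ and $\fg^t(x)\to\psi(\varphi(t))=t=\fg^t(1)$ as $x\to1^-$; the convexity inequality then extends to the closed interval by taking limits.

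I expect the only real obstacle to be boundary bookkeeping — establishing continuity of $\xi^t$ (hence of $\fg^t$) at $x=t$ and $x=1$ and pinning down the limiting values $(\tfrac1R-1)\varphi(t)$ and $\varphi(t)$ — together with confirming that the composition argument in $(iii)$ survives the use of one-sided rather than two-sided derivatives; the latter it does, since the displayed chain of inequalities uses only monotonicity, convexity and concavity and never differentiability. Beyond Lemma~\ref{lem:auxiliary_lvl_curves_ii} there is no further analytic input needed.
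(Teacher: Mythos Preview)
Your proof is correct and follows the same overall architecture as the paper --- reduce to $\fg^t=\psi\circ\xi^t$ on $(t,1)$, invoke Lemma~\ref{lem:auxiliary_lvl_curves_ii}, and extend to the endpoints by continuity --- but the argument for $(iii)$ takes a genuinely different route. The paper computes $D^-\fg^t(x)=D^-\psi(\xi^t(x))\cdot D^-\xi^t(x)$ via a one-sided chain rule (its Lemma~\ref{lem:properties_left_hand_der}) and then argues that this product is non-decreasing; you instead observe that Lemma~\ref{lem:auxiliary_lvl_curves_ii}$(iii)$ makes $\xi^t$ concave and non-decreasing, and use the elementary composition inequality ``convex non-increasing $\circ$ concave non-decreasing is convex'' directly from the definitions. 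Your approach is cleaner: it sidesteps the one-sided calculus entirely, needs no auxiliary lemma on chain rules, and makes the role of Lemma~\ref{lem:auxiliary_lvl_curves_ii}$(iii)$ transparent (it is precisely concavity of $\xi^t$). The paper's derivative computation, on the other hand, makes the quantitative structure of $D^-\fg^t$ explicit, which could be useful elsewhere but is not needed for the theorem as stated. Your sandwich bound $\varphi(t)-\varphi(x)\le\xi^t(x)\le\varphi(t)$ for left-continuity at $1$ also reproduces the content of the paper's Lemma~\ref{lem:left_cont_g^t} without citing it.
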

\begin{proof}
 We fix $t \in (0,1)$ in the case that $\psi$ is strict and $t \in [0,1)$ in the case that $\psi$ is non-strict. Considering $\xi^t$ defined as in eq. \eqref{eq:fct_xi_t} and using that $\fg^t(x) = \psi(\xi^t(x))$ holds for every $x \in (t,1)$, the fact that $\fg^t$ is a non-increasing function on $(t,1)$ is a direct consequence of Lemma \ref{lem:auxiliary_lvl_curves_ii} and the non-increasingness of $\psi$. Applying Lemma \ref{lem:left_cont_g^t}, the function $\fg^t$ is obviously continuous on $[t,1]$ and therefore $\fg^t$ is non-increasing on the whole interval $[t,1]$, which proves the first assertion.\\
Having that $\xi^t$ is strictly increasing on $\left(t,\psi\left(\frac{L\varphi(t)}{1-L}\right)\right)$ and that $\psi$ is strictly decreasing on $[0,\varphi(0))$ implies strict decreasingness of $\fg^t$ on $\left[t,\psi\left(\frac{L\varphi(t)}{1-L}\right)\right)$. Moreover, taking into account that $\xi^t(x) = \varphi(t)$ for every $x \in \left[\psi\left(\frac{L\varphi(t)}{1-L}\right),1\right)$ and $\fg^t(1) = t$, we obtain that $\fg^t(x) = t$ holds for every $x \in \left[\psi\left(\frac{L\varphi(t)}{1-L}\right),1\right]$. This proves the second assertion.\\
 We show that $\fg^t$ is convex on $[t,1]$. Applying Lemma \ref{lem:auxiliary_lvl_curves_ii}, $\xi^t$ is a non-decreasing function whose left-hand derivative $D^-\xi^t$ exists. Hence, using convexity of $\psi$, applying Lemma \ref{lem:properties_left_hand_der} and calculating the left-hand derivative of $\fg^t$ yields
 $$
 D^-\fg^t(x) = D^-\psi(\xi^t(x))\cdot D^-\xi^t(x)
 $$
 for every $x \in (t,1)$. 
 Using the properties in Lemma \ref{lem:auxiliary_lvl_curves_ii} and the fact that $\psi$ is non-increasing and convex, we conclude that
 $D^-\fg^t$ is non-decreasing on $(t,1)$, which, according to \citep[Appendix C]{Pollard2001}, implies convexity of $\fg^t$ on $(t,1)$. Applying continuity of $\fg^t$, convexity of $\fg^t$ on the whole interval $[t,1]$ follows.
\end{proof}
The next theorem proves that the function $\ff^t$ is non-decreasing for every $t \in (0,1)$.
\begin{theorem}
Let $\psi \in \mathbf{\Psi}$, $\varphi$ be its pseudo-inverse, $t \in (0,1)$ and $\ff^t$ be defined according to eq. \eqref{eq:fct_f}, then 
$\ff^t$ is non-decreasing on $\mathbb{I}$, strictly increasing on $\left[\psi\left(\frac{t\varphi(0)}{1-t}\right),1\right]$ and $\ff^t(x) = 0$ holds for every $x \in \left[0,\psi\left(\frac{t\varphi(0)}{1-t}\right)\right]$.
\end{theorem}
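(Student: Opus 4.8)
The plan is to reduce the statement to elementary monotonicity properties of $\psi$ and its pseudo-inverse $\varphi$ recalled in Section~\ref{subsec:intro_archimedean}, by writing $\ff^t$ as a composition of two monotone maps. Set $c:=\tfrac1t-1=\tfrac{1-t}{t}$, which is strictly positive because $t\in(0,1)$; then, by eq.~\eqref{eq:fct_f}, $\ff^t(x)=\psi\bigl(c\,\varphi(x)\bigr)$ for every $x\in\mathbb{I}$, with the convention $c\cdot\infty=\infty$ at $x=0$ in the strict case so that $\ff^t(0)=\psi(\infty)=0$. Since $\varphi$ is non-increasing on $\mathbb{I}$ (strictly decreasing on $(0,1]$, $\varphi(1)=0$, and $\varphi(0)=\lim_{x\downarrow 0}\varphi(x)\ge\varphi(x)$) and $c>0$, the map $x\mapsto c\,\varphi(x)$ is non-increasing, and composing it with the non-increasing map $\psi$ immediately yields that $\ff^t$ is non-decreasing on $\mathbb{I}$.

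For the other two assertions I would introduce the threshold $z_0:=\tfrac{t\varphi(0)}{1-t}=\tfrac{\varphi(0)}{c}\in[0,\infty]$ and $x_0:=\psi(z_0)$, and first record three standard facts about a convex generator $\psi$: the zero set $\{w\colon\psi(w)=0\}$ equals $[\varphi(0),\infty]$, $\psi$ is strictly decreasing on $[0,\varphi(0)]$, and $\psi\circ\varphi=\mathrm{id}_{\mathbb{I}}$ while $\varphi\circ\psi=\mathrm{id}$ on $[0,\varphi(0)]$. Chaining these gives
$$
\ff^t(x)=0 \iff c\,\varphi(x)\ge\varphi(0) \iff \varphi(x)\ge z_0 \iff x\le x_0 ,
$$
where the first step uses the description of the zero set of $\psi$, the second is division by $c>0$, and the third is obtained by applying $\psi$ in one direction and $\varphi$ (via $\varphi\circ\psi=\mathrm{id}$ on $[0,\varphi(0)]$) in the other. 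Hence $\ff^t\equiv0$ on $[0,x_0]$; in the strict case $z_0=\infty$ and $x_0=0$, so this collapses to $\ff^t(0)=0$. For strict monotonicity on $[x_0,1]$, take $x_0\le x_1<x_2\le1$; one shows $c\,\varphi(x_2)<c\,\varphi(x_1)\le c\,\varphi(x_0)\le\varphi(0)$ (using that $\varphi$ is strictly decreasing and that $\varphi(x_0)=z_0$ when $z_0\le\varphi(0)$), so that both arguments lie in $[0,\varphi(0)]$, where $\psi$ is strictly decreasing; therefore $\ff^t(x_1)=\psi\bigl(c\,\varphi(x_1)\bigr)<\psi\bigl(c\,\varphi(x_2)\bigr)=\ff^t(x_2)$.

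The routine but slightly delicate point, which I expect to be the main obstacle, is the bookkeeping behind the third equivalence above and behind $c\,\varphi(x_0)\le\varphi(0)$: one has to separate the strict case ($\varphi(0)=\infty$, $z_0=\infty$, $x_0=0$) from the non-strict one and handle the arithmetic involving $\infty$ and the quotient $\varphi(0)/c$ in accordance with the conventions fixed in Section~\ref{subsection:general_notation}. This is precisely the type of case distinction already performed in the proof of Lemma~\ref{lem:t-lvl_sets}, and no genuinely hard estimate enters the argument.
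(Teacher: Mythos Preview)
Your proposal is correct and follows essentially the same route as the paper's proof: write $\ff^t=\psi\circ(c\varphi)$ with $c=\tfrac{1-t}{t}>0$, use that the composition of two non-increasing maps is non-decreasing, and then split into the strict and non-strict cases to exploit that $\psi$ is strictly decreasing on $[0,\varphi(0)]$ and $\varphi$ is strictly decreasing on $(0,1]$. The paper's argument is just a terser version of what you wrote; your explicit equivalence chain and the threshold $z_0=\varphi(0)/c$, $x_0=\psi(z_0)$ make the bookkeeping more transparent but add nothing new.
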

\begin{proof}
We fix $t \in (0,1)$. Using that $\psi$ and $\varphi$ are both non-increasing implies that $\ff^t$ is non-decreasing. If $\psi$ is strict, then $\psi\left(\frac{t\varphi(0)}{1-t}\right) = \psi(\infty) = 0$ holds and therefore applying the monotonicity properties of $\psi$ and $\varphi$ yields that $\ff^t$ is strictly increasing. If $\psi$ is non-strict and $x \in \left[\psi\left(\frac{t\varphi(0)}{1-t}\right),1\right]$, then $(\frac{1}{t}-1)\varphi(x) \leq \varphi(0)$ follows and hence, applying that $\psi$ is strictly decreasing on $[0,\varphi(0)]$ proves that $\ff^t$ is strictly increasing on $\left[\psi\left(\frac{t\varphi(0)}{1-t}\right),1\right]$. Using that  $\ff^t(x) = 0$ whenever $\varphi(0) \leq \frac{(1-t)\varphi(x)}{t}$, implies that $\ff^t(x) = 0$ for every $x \in \left[0,\psi\left(\frac{t\varphi(0)}{1-t}\right)\right]$. This proves the result.
\end{proof}
The subsequent example illustrates some of the results of this section.
\begin{Ex}\label{ex:lvl_sets_example}
Let $A \in \mathcal{A}$, $h_A$ and its pseudo-inverse $h_A^{[-1]}$ be as in Example \ref{ex:ex_ha_and_inv}. Moreover, we define the generator $\psi \in \mathbf{\Psi}$ by
\begin{align*}
    \psi(z) := \begin{cases}
        0,& \text{ if } z \in (8,\infty),\\
        \frac{4}{7}(1-\frac{z}{8}),& \text{ if } z \in (1,8],\\
        \frac{6z+1}{14z},& \text{ if } z \in (\frac{1}{2},1],\\
        \frac{-8z^2+18z+1}{28z},& \text{ if } z \in (\frac{1}{3},\frac{1}{2}],\\
        \frac{1}{28}(28-29z),& \text{ if } z \in [0,\frac{1}{3}]\\
    \end{cases}
\end{align*}
for every $z \in [0,\infty)$. The function $\fg^0$ defined as in eq. \eqref{eq:fct_g_t} is given by
$$
\fg^0(x) = \psi\left(\left(\frac{1}{h_A^{[-1]}\left(\frac{8}{\varphi(x)}\right)}-1\right)\varphi(x)\right)
$$
for every $x \in [0,1)$ and we have $\fg^0(1) = 0$. Recall, that in this example $L = \frac{1}{8}$, $R = \frac{3}{4}$ holds and therefore the functions $\ff^R$ and $\ff^L$ defined as in eq. \eqref{eq:fct_f} are given by $\ff^L(x) = \psi(7\varphi(x))$ and $\ff^R(x) = \psi\left(\frac{\varphi(x)}{3}\right)$ for every $x \in \mathbb{I}$. Furthermore, the $t$-level sets $L_{\psi,A}^t$ of the copula $C_{\psi,A} \in \mathcal{C}_{am}$ are depicted in Figure \ref{fig:lvl_sets_copula} for some $t \in \mathbb{I}$.
\end{Ex}
\begin{figure}[!ht]
	\centering
	\includegraphics[width=13cm,height=13cm]{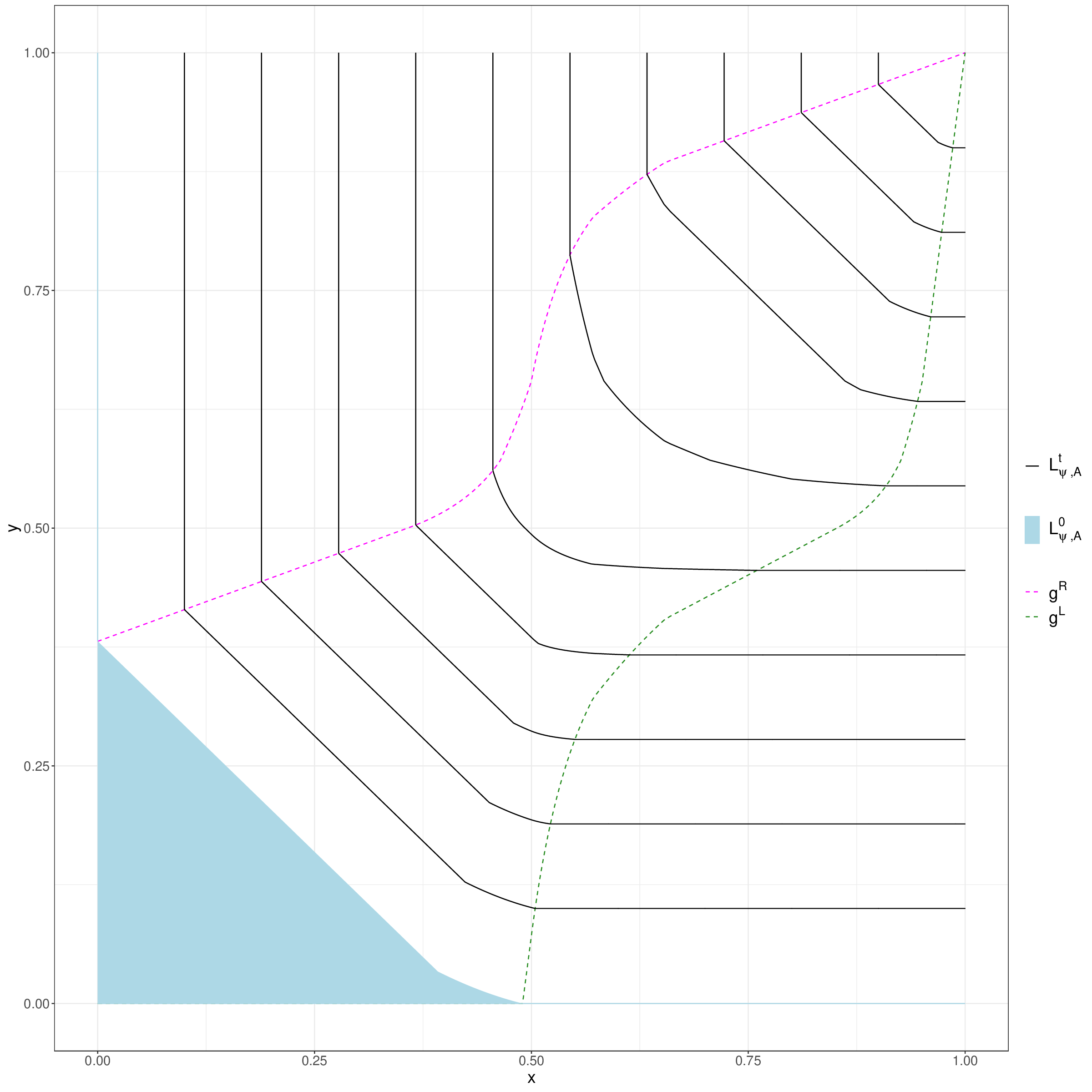}
	\caption{Plots of the $0$-level set $L_{\psi,A}^0$ (lightblue) and $t$-level sets  $L_{\psi,A}^t$ (black) of the Archimax-copula $C_{\psi,A}$ with generator $\psi$ and Pickands dependence function $A$ as defined in Example \ref{ex:lvl_sets_example} for $t \in \{\frac{1}{10},\frac{17}{90},\frac{5}{18},\frac{11}{30},\frac{41}{90},\frac{49}{90},\frac{19}{30},\frac{13}{18},\frac{73}{90},\frac{9}{10}\}$, as well as plots of the functions $g^R$ (dashed magenta) and $g^L$ (dashed green) with $R = \frac{3}{4}$ and $L = \frac{1}{8}$.}
	\label{fig:lvl_sets_copula}
\end{figure}


\section{Markov kernels, mass distributions and discrete components of Archimax copulas} \label{sec:markov_mass_dist_discr}
\subsection{Markov kernels and mass distributions}
Proving the results of the next sections, Markov kernels will be an essential tool. Let $\psi \in \mathbf{\Psi}$, $A \in \mathcal{A}$ and $C_{\psi,A} \in \mathcal{C}_{am}$. Then the next theorem constitutes a version of the Markov kernel $K_{\psi,A}$ of $C_{\psi,A}$.
Recall that we consistently use the convention that $a\cdot\infty = \infty$, $\frac{a}{\infty} = 0$ and $\frac{a}{0} = \infty$ for $a \in \mathbb{R} \setminus \{0\}$.
\begin{theorem}\label{thm:markov_kernel}
Let $\psi \in \mathbf{\Psi}$, $A \in \mathcal{A}$, $h_A$ be defined as in eq. \eqref{eq:fct_h}, $G_A$ be defined according to eq. \eqref{eq:fct_f} and $C_{\psi,A} \in \mathcal{C}_{am}$. Then the function $K_{\psi,A} \colon \mathbb{I}^2 \rightarrow \mathbb{I}$ defined by
\begin{align}\label{eq:markov-kernel}
K_{\psi,A}(x,[0,y]) := \begin{cases}
1,& \text{if } x\in\{0,1\},\\
\frac{D^{-}\psi\left(\varphi(x)h_A\left(\frac{\varphi(x)}{\varphi(x) + \varphi(y)}\right)\right)}{D^-\psi(\varphi(x))}\cdot G_A\left(\frac{\varphi(x)}{\varphi(x) + \varphi(y)}\right),& \text{if } (x,y) \in \Upsilon_0^{(1)},\\
0,& \text{if } (x,y) \in \Upsilon_0^{(2)},
\end{cases}
\end{align}
for every $(x,y) \in \mathbb{I}^2$ is a version of the Markov-kernel of $C_{\psi,A}$, whereby $\Upsilon_0^{(1)} := \{(x,y) \in (0,1) \times \mathbb{I}\colon \fg^0(x) \leq y\}$ and $\Upsilon_0^{(2)} := \{(x,y) \in (0,1) \times \mathbb{I}\colon \fg^0(x) > y\}$.
\end{theorem}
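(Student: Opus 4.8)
The plan is to verify that $K_{\psi,A}$ fulfils the three defining requirements of a (version of a) Markov kernel of $C_{\psi,A}$: (a) for each fixed $x\in\mathbb{I}$, $y\mapsto K_{\psi,A}(x,[0,y])$ is the distribution function of a probability measure on $\mathcal{B}(\mathbb{I})$; (b) for each fixed $y\in\mathbb{I}$, $x\mapsto K_{\psi,A}(x,[0,y])$ is Borel measurable; and (c) $\int_{[0,x]}K_{\psi,A}(s,[0,y])\,\mathrm{d}\lambda(s)=C_{\psi,A}(x,y)$ for all $(x,y)\in\mathbb{I}^2$. Once (a) and (b) hold, a monotone class argument upgrades (c) to the full disintegration identity \eqref{eq:DI}, and since the first marginal of a copula is $\lambda$, this identifies $K_{\psi,A}$ as a regular conditional distribution of $Y$ given $X$; equivalently I would invoke the standard fact that a jointly measurable version of $(x,y)\mapsto\partial_1 C(x,y)$ is a version of the Markov kernel of any copula $C$, together with $1$-Lipschitzness of $x\mapsto C(x,y)$, and reduce (c) to showing $K_{\psi,A}(x,[0,y])=\partial_1 C_{\psi,A}(x,y)$ for $\lambda$-a.e.\ $x$ and every $y$. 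Throughout I abbreviate $w=w(x,y):=\frac{\varphi(x)}{\varphi(x)+\varphi(y)}$ and use $C_{\psi,A}(x,y)=\psi(\varphi(x)h_A(w))$ from eq. \eqref{eq:alternative_archimax_copula}, i.e.\ $(\varphi(x)+\varphi(y))A(w)=\varphi(x)h_A(w)$.

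For property (a) I would first dispose of the $\lambda$-null set $\{0,1\}$, where $K_{\psi,A}(x,\cdot)=\delta_0$ is a probability measure. For $x\in(0,1)$ one has $\varphi(x)\in(0,\varphi(0))$, hence $D^-\psi(\varphi(x))<0$, the quotient in eq. \eqref{eq:markov-kernel} is well defined and no convention produces $\infty$. On $\Upsilon_0^{(1)}$ I would read $K_{\psi,A}(x,[0,y])$ as a function of $w$, noting $y\mapsto w(x,y)$ is continuous and non-decreasing on $\mathbb{I}$. Since $h_A\ge 1$ and $h_A$ is non-increasing (Lemma \ref{lem:regularity_fcts}$(i)$) while $D^-\psi$ is non-decreasing and non-positive, the factor $w\mapsto\frac{D^-\psi(\varphi(x)h_A(w))}{D^-\psi(\varphi(x))}$ takes values in $[0,1]$ and is non-decreasing, and $w\mapsto G_A(w)$ is non-negative and non-decreasing (Lemma \ref{lem:regularity_fcts}$(iii)$); hence the product is non-decreasing in $y$, and since $K_{\psi,A}\equiv 0$ on $\Upsilon_0^{(2)}$, the whole map $y\mapsto K_{\psi,A}(x,[0,y])$ is non-decreasing on $\mathbb{I}$. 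Right-continuity in $y$ follows from continuity of $h_A$ on $(0,1]$, left-continuity of $D^-\psi$, right-continuity of $G_A$ and monotonicity of $w(x,\cdot)$; at $y=\fg^0(x)$ (relevant only for non-strict $\psi$) the function jumps up from $0$ to $\frac{D^-\psi(\varphi(0))}{D^-\psi(\varphi(x))}G_A(w(x,\fg^0(x)))\ge 0$ — the atom created by non-strictness — which is compatible with a distribution function. Finally, for $w\ge R$, equivalently $y\ge\ff^R(x)$ (cf.\ case $(a)$ in the proof of Lemma \ref{lem:t-lvl_sets}), Lemma \ref{lem:regularity_fcts} gives $h_A(w)=1$ and $G_A(w)=1$, so $K_{\psi,A}(x,[0,y])=1$; in particular $K_{\psi,A}(x,[0,1])=1$.

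Property (b) is routine: $\varphi$, $h_A$, $D^-\psi$, $G_A$ and $\fg^0$ (continuous by Theorem \ref{thm:convexity_level_curves}) are Borel measurable, $\Upsilon_0^{(1)},\Upsilon_0^{(2)}$ are Borel, and measurability is preserved under composition, products and quotients with non-vanishing denominator. For (c) I would fix $y\in(0,1)$, write $C_{\psi,A}(x,y)=\psi\bigl((\varphi(x)+\varphi(y))A(w(x,y))\bigr)$ using eq. \eqref{eq:def_archimax}, and split $(0,1)$ at $x_0:=\psi\!\left(\frac{L}{1-L}\varphi(y)\right)$: for $x\in[x_0,1)$ one has $w(x,y)<L$, hence $C_{\psi,A}(\cdot,y)$ is constant and $G_A(w(x,y))=0$, so $\partial_1 C_{\psi,A}(x,y)=0=K_{\psi,A}(x,[0,y])$ (when $L=0$ this region is empty). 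On $(0,x_0)$ the map $x\mapsto(\varphi(x)+\varphi(y))A(w(x,y))$ is strictly decreasing and continuous, so the preimages under it, under $\varphi$, and under $w(\cdot,y)$ of the at most countable non-differentiability sets of $\psi$, $\varphi$ and $A$ are countable; off this countable set the chain rule applies and, using $\frac{\partial w}{\partial x}=\frac{\varphi'(x)\varphi(y)}{(\varphi(x)+\varphi(y))^2}$ and $A(w)+(1-w)A'(w)=G_A(w)$, yields $\partial_1 C_{\psi,A}(x,y)=\psi'(\varphi(x)h_A(w))\,\varphi'(x)\,G_A(w)$; substituting $\varphi'(x)=1/\psi'(\varphi(x))$ (valid a.e., $\psi$ and $\varphi$ being mutually inverse with non-vanishing derivative there) gives exactly eq. \eqref{eq:markov-kernel} on $\Upsilon_0^{(1)}$, while for $(x,y)\in\Upsilon_0^{(2)}$, $C_{\psi,A}\equiv 0$ on a neighbourhood of $x$ by Lemma \ref{lem:0-lvl_sets}, so $\partial_1 C_{\psi,A}(x,y)=0$. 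The boundary values $y\in\{0,1\}$ and $x\in\{0,1\}$ are checked directly, and invoking the quoted fact about $\partial_1 C$ concludes the proof.

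The main obstacle is not the generic interior chain-rule computation but the null-set bookkeeping around the degeneracies of eq. \eqref{eq:markov-kernel}: showing that all the constituent non-differentiability sets pull back to $\lambda$-null sets of $x$ — which, because $x\mapsto\varphi(x)h_A(w(x,y))$ is only non-increasing (constant where $w(x,y)<L$), forces one to isolate that flat region first — and then matching eq. \eqref{eq:markov-kernel} to the genuine conditional distribution at the places where the clean formula breaks down: the non-strict junction $y=\fg^0(x)$ (where an atom appears), the saturated region $w\in[R,1]$ (i.e.\ $y\ge\ff^R(x)$, where $h_A$ and $G_A$ both equal $1$), and the boundary values $x,y\in\{0,1\}$.
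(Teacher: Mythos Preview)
Your proposal is correct and follows essentially the same route as the paper: verify (a), (b), (c) in that order, with the core of (c) being the split at $x_0=\psi\!\left(\frac{L}{1-L}\varphi(y)\right)$, the observation that $C_{\psi,A}(\cdot,y)$ is constant on the flat region $w<L$, and the chain-rule computation on $(0,x_0)$ after arguing that the non-differentiability sets of $\psi$, $\varphi$, $A$ pull back to countable sets via strict monotonicity of $s\mapsto(\varphi(s)+\varphi(y))A(w(s,y))$. The paper packages this last strict-monotonicity step as a separate auxiliary lemma (Lemma~\ref{lem:auxiliary_markov_kernel}), and on $\Upsilon_0^{(2)}$ it argues via $D^-\psi\equiv 0$ beyond $\varphi(0)$ rather than via the $0$-level set, but these are cosmetic differences; one minor slip in your write-up is that at $x=x_0$ one has $w=L$, not $w<L$, so the flat region should be the open interval $(x_0,1)$.
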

\begin{proof}
Fix $x \in \{0,1\}$, then $y \mapsto K_{\psi,A}(x,[0,y])$ is obviously a distribution function.\\
Considering $x \in (0,1)$, we obtain that $K_{\psi,A}(x,[0,y]) = 0$ for every $y < \fg^0(x)$ and therefore the function $y \mapsto K_{\psi,A}(x,[0,y])$ is non-decreasing and right-continuous on $[0,\fg^0(x))$. Note that in the case that $\psi$ is strict $\fg^0(x) = 0$ holds for every $x \in \mathbb{I}$ and hence, the previous case does not occur.\\
We prove that $y \mapsto K_{\psi,A}(x,[0,y])$ is non-decreasing on $[\fg^0(x),1]$. Applying Lemma \ref{lem:regularity_fcts}, using convexity of $\psi$ and the fact that $\varphi$ is strictly decreasing implies non-increasingness of the function $y \mapsto D^-\psi\left(\varphi(x)h_A\left(\frac{\varphi(x)}{\varphi(x) + \varphi(y)}\right)\right)$. Thus, considering that $D^-\psi(\varphi(x))$ is negative, the first factor in the second line of eq. \eqref{eq:markov-kernel} is non-decreasing. Having that the function $G_A$ is non-decreasing (see Lemma \ref{lem:regularity_fcts}), it follows that also the second factor in eq. \eqref{eq:markov-kernel} is non-decreasing and therefore $y \mapsto K_{\psi,A}(x,[0,y])$ is a non-decreasing function on $[\fg^0(x),1]$.\\
The right-continuity of $y \mapsto K_{\psi,A}(x,[0,y])$ on $[\fg^0(x),1]$ follows from the right-continuity of $G_A$, the left-continuity of $D^-\psi$, and the fact that both $h_A$ and $\varphi$ are non-increasing functions. Combining all of the above observations, $y \mapsto K_{\psi,A}(x,[0,y])$ is a distribution function for fixed $x \in \mathbb{I}$. As usual, working with the extension of $K_{\psi,A}(x,\cdot)$ from $\{[0,y] \colon y \in \mathbb{I}\}$ to $\mathcal{B}(\mathbb{I})$ yields a unique probability measure $K_{\psi,A}(x,\cdot)$ for every $x \in \mathbb{I}$. Moreover, fixing $y \in \mathbb{I}$, the function $x \mapsto K_{\psi,A}(x,[0,y])$ is clearly Borel-measurable. Thus, using a standard Dynkin argument, $x \mapsto K_{\psi,A}(x,F)$ is Borel-measurable for every $F \in \mathcal{B}(\mathbb{I})$. Combining all of the afore-mentioned findings, we conclude that $K_{\psi,A}$ is indeed a Markov-kernel form $\mathbb{I}$ to $\mathcal{B}(\mathbb{I})$.\\
It is left to show that $K_{\psi,A}$ is (a version of) the Markov-kernel of $C_{\psi,A}$. We obtain that
\begin{align}\label{eq:proof_markov_kernel}
   \nonumber \int_{[0,x]}&K_{\psi,A}(s,[0,y]) \mathrm{d}\lambda(s) \\&\nonumber = \int_{(0,x] \cap \{v \in \mathbb{I} \colon \fg^0(v) \leq y\}}\frac{D^-\psi\left(\varphi(s)h_A\left(\frac{\varphi(s)}{\varphi(s) + \varphi(y)}\right)\right)}{D^-\psi(\varphi(s))} G_A\left(\frac{\varphi(s)}{\varphi(s) + \varphi(y)}\right)\mathrm{d}\lambda(s) \\&=
    \int_{(0,x]}\frac{D^-\psi\left(\varphi(s)h_A\left(\frac{\varphi(s)}{\varphi(s) + \varphi(y)}\right)\right)}{D^-\psi(\varphi(s))} G_A\left(\frac{\varphi(s)}{\varphi(s) + \varphi(y)}\right)\mathrm{d}\lambda(s),
\end{align}
for every $(x,y) \in \mathbb{I}^2$, whereby the last equation follows from the fact that $\varphi(s)h_A\left(\frac{\varphi(s)}{\varphi(s) + \varphi(y)}\right) > \varphi(0)$, whenever $y < \fg^0(s)$ and therefore $D^-\psi\left(\varphi(s)h_A\left(\frac{\varphi(s)}{\varphi(s) + \varphi(y)}\right)\right) = 0$ holds in the case that $y < \fg^0(s)$.\\
We prove that for fixed $y \in \mathbb{I}$ the function $s\mapsto C_{\psi,A}(s,y)$ is differentiable for all but at most countably many points $s \in \mathbb{I}$. If $y \in \{0,1\}$, then $s\mapsto C_{\psi,A}(s,y)$ is obviously differentiable on $(0,1)$. Considering $L \in [0,\frac{1}{2}]$ defined as in eq. \eqref{eq:definition_L_R}, assuming $L > 0$ and fixing $y\in(0,1)$, we observe that $\frac{\varphi(s)}{\varphi(s) + \varphi(y)} < L$ whenever $s \in \left(\psi\left(\frac{L\varphi(y)}{1-L}\right),1\right)$ holds. Recalling that $h_A(t) = \frac{1-t}{t}$ for every $t \in (0,L]$, it therefore follows that
$$
C_{\psi,A}(s,y) = \psi\left(\varphi(s)h_A\left(\frac{\varphi(s)}{\varphi(s) + \varphi(y)}\right)\right) = \psi(\varphi(y)) = y
$$
for every $s \in \left(\psi\left(\frac{L\varphi(y)}{1-L}\right),1\right)$ and thus, $\partial_1C_{\psi,A}(s,y) = 0$ holds for every $s \in \left(\psi\left(\frac{L\varphi(y)}{1-L}\right),1\right)$.\\
From now on we assume that $L \in \left[0,\frac{1}{2}\right]$.
Considering that both $\psi$ and $A$ are convex, the derivatives $A'$ and $\psi'$ exist except for at most countably many points and coincide (wherever they exist) with the left-hand and right-hand derivative, respectively.
Using Lemma \ref{lem:auxiliary_markov_kernel} yields strict decreasingness of the function $s \mapsto (\varphi(s) + \varphi(y))A\left(\frac{\varphi(s)}{\varphi(s) + \varphi(y)}\right)$ on the interval $\left(0,\psi\left(\frac{L\varphi(y)}{1-L}\right)\right)$. Furthermore, taking into account that both the functions $\varphi$ and $s \mapsto \frac{\varphi(s)}{\varphi(s) + \varphi(y)}$ are strictly decreasing, we obtain that the set $\Lambda \in \mathcal{B}(\mathbb{I})$ of all points $s \in \mathbb{I}$ at which the derivative $\partial_1C_{\psi,A}(s,y)$ of the function $s \mapsto C_{\psi,A}(s,y)$ does not exist is at most countable infinite. Applying the chain rule and using eq. \eqref{eq:proof_markov_kernel} finally yields
$$
 \int_{[0,x]}K_{\psi,A}(s,[0,y]) \mathrm{d}\lambda(s) = \int_{[0,x]}\partial_1 C_{\psi,A}(s,y) \mathrm{d}\lambda(s) = C_{\psi,A}(x,y)
$$
for every $x,y \in \mathbb{I}$. This proves the result.
\end{proof}
\begin{Rem}[Archimedean copulas and EVCs]
\leavevmode
\begin{enumerate}
    \item If $\psi \in \mathbf{\Psi}$ and $A(t) = 1$ for every $t \in \mathbb{I}$, then the Markov-kernel $K_{\psi,A}$ of $C_{\psi,A}\in \mathcal{C}_{am}$ defined in eq. \eqref{eq:markov-kernel} coincides with the Markov kernel of $C_\psi \in \mathcal{C}_{ar}$ in \citep[Theorem 3.1]{mult_arch}.
    \item If $\psi(z) = \exp(-z)$ for every $z \in [0,\infty)$ and $A\in \mathcal{A}$, then the Markov-kernel $K_{\psi,A}$ of $C_{\psi,A}\in \mathcal{C}_{am}$ given in eq. \eqref{eq:markov-kernel} coincides with the Markov kernel of $C_A \in \mathcal{C}_{ev}$ in \citep[Equation (19)]{evc-mass}.
\end{enumerate}
\end{Rem}
Consider $\psi \in \mathbf{\Psi}$, $A \in \mathcal{A}$, $C_{\psi,A} \in \mathcal{C}_{am}$ and let $L$, $R$ be defined as in eq. \eqref{eq:definition_L_R}. Furthermore let $\fg^0$, $\ff^L$ and $\ff^R$ be defined as in eq. \eqref{eq:fct_g_t} and \eqref{eq:fct_f}, respectively. We define the set
\begin{equation}\label{eq:defin_set_s_l_r}
S_{L,R} := \left\{(x,y) \in \mathbb{I}^2\colon \max\left\{\fg^0(x),\ff^L(x)\right\} \leq y \leq \ff^R(x)\right\}
\end{equation}
and prove that the support of $C_{\psi,A}$ is always a subset of $S_{L,R}$.
\begin{Lemma}\label{lemma:support}
    Let $\psi \in \mathbf{\Psi}$, $A \in \mathcal{A}$, $C_{\psi,A} \in \mathcal{C}_{am}$ and let $L$, $R$ be defined according to eq. \eqref{eq:definition_L_R}. Then 
    \begin{equation}\label{eq:support}
    \mathrm{supp}(\mu_{C_{\psi,A}}) \subseteq S_{L,R}
    \end{equation}
    holds.
\end{Lemma}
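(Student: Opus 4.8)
The plan is to establish the inclusion in two independent steps: (a) that $S_{L,R}$ is a closed subset of $\mathbb{I}^2$, and (b) that $\mu_{C_{\psi,A}}(\mathbb{I}^2\setminus S_{L,R})=0$. Since the support of a finite Borel measure is contained in every closed set of full measure, (a) and (b) together yield $\mathrm{supp}(\mu_{C_{\psi,A}})\subseteq S_{L,R}$. Moreover, since $\{0,1\}$ is $\lambda$-null, the disintegration identity~\eqref{eq:DI} lets me ignore the sections at $x\in\{0,1\}$ entirely and work only with $x\in(0,1)$.

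For step (a), I would note that $g^L$ and $g^R$ are continuous on $\mathbb{I}$, because $\psi$ is continuous and $\varphi$ is continuous on $\mathbb{I}$ (at $0$ in the extended sense, using the conventions $\tfrac a0=\infty$ and $a\cdot\infty=\infty$ when $\psi$ is strict), and that $f^0$ is continuous on $\mathbb{I}$ (it is $\equiv 0$ when $\psi$ is strict, and continuous on $[0,1]$ by Lemma~\ref{lem:left_cont_g^t} when $\psi$ is non-strict). Hence $x\mapsto\max\{f^0(x),g^L(x)\}$ and $x\mapsto g^R(x)$ are continuous, so $S_{L,R}=\{(x,y):y\ge\max\{f^0(x),g^L(x)\}\}\cap\{(x,y):y\le g^R(x)\}$ is closed. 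It is also useful to record that $\max\{f^0(x),g^L(x)\}\le g^R(x)$ for every $x$: the bound $f^0\le g^R$ was noted right after eq.~\eqref{eq:fct_f}, and $g^L\le g^R$ follows from the definitions and the monotonicity of $\psi$. Consequently the $x$-section of $\mathbb{I}^2\setminus S_{L,R}$ is exactly $[0,\max\{f^0(x),g^L(x)\})\cup(g^R(x),1]$.

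For step (b), I would insert the Markov kernel $K_{\psi,A}$ from Theorem~\ref{thm:markov_kernel} into~\eqref{eq:DI}; it then suffices to prove, for each fixed $x\in(0,1)$, that $K_{\psi,A}(x,[0,y])=0$ whenever $y<\max\{f^0(x),g^L(x)\}$ and that $K_{\psi,A}(x,[0,g^R(x)])=1$. The first claim splits: if $y<f^0(x)$ then $(x,y)\in\Upsilon_0^{(2)}$ and the kernel vanishes by definition; if $f^0(x)\le y<g^L(x)$ — which forces $L>0$ — then applying $\varphi$ to $y<g^L(x)=\psi((\tfrac1L-1)\varphi(x))$ on the strictly decreasing branch of $\psi$ gives $\varphi(y)>(\tfrac1L-1)\varphi(x)$, hence $\tfrac{\varphi(x)}{\varphi(x)+\varphi(y)}<L$, so the factor $G_A(\tfrac{\varphi(x)}{\varphi(x)+\varphi(y)})$ in~\eqref{eq:markov-kernel} equals $0$ by Lemma~\ref{lem:regularity_fcts}~$(iii)$, and thus $K_{\psi,A}(x,[0,y])=0$. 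Dually, for $g^R(x)\le y\le 1$ the analogous computation gives $\varphi(y)\le(\tfrac1R-1)\varphi(x)$, hence $\tfrac{\varphi(x)}{\varphi(x)+\varphi(y)}\ge R$; there $h_A$ and $G_A$ evaluated at that argument both equal $1$ (Lemma~\ref{lem:regularity_fcts}~$(iii)$), and since $f^0(x)\le g^R(x)\le y$ we have $(x,y)\in\Upsilon_0^{(1)}$, so — using $D^-\psi(\varphi(x))<0$ for $x\in(0,1)$ to avoid a division problem — the formula collapses to $\tfrac{D^-\psi(\varphi(x))}{D^-\psi(\varphi(x))}\cdot 1=1$. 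Taking $y=g^R(x)$ gives $K_{\psi,A}(x,[0,g^R(x)])=1$, and letting $y\uparrow\max\{f^0(x),g^L(x)\}$ gives $K_{\psi,A}(x,[0,\max\{f^0(x),g^L(x)\}))=0$. Hence the $x$-section of $\mathbb{I}^2\setminus S_{L,R}$ is $K_{\psi,A}(x,\cdot)$-null for every $x\in(0,1)$, so the integrand in~\eqref{eq:DI} is identically $0$ and $\mu_{C_{\psi,A}}(\mathbb{I}^2\setminus S_{L,R})=0$.

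Step (a) is routine given the regularity results already proved; the bulk of the work, and the main obstacle, is step (b). The conceptual core there is simple — the inequalities $\tfrac{\varphi(x)}{\varphi(x)+\varphi(y)}<L$ and $\tfrac{\varphi(x)}{\varphi(x)+\varphi(y)}\ge R$ are, after applying $\psi$, just the statements $y\ge g^L(x)$ and $y\le g^R(x)$ — but one has to handle the case split $\Upsilon_0^{(1)}$ versus $\Upsilon_0^{(2)}$ in the kernel together with whether $\psi$ is strict or non-strict and the degenerate configurations $L=0$, $R=1$, $g^L(x)=0$ and $g^R(x)=0$, checking in each that the quotient arithmetic governed by the conventions $\tfrac a0=\infty$, $\tfrac a\infty=0$ and $a\cdot\infty=\infty$ really produces the value $0$ or $1$ as required.
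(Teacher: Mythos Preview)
Your proposal is correct and follows essentially the same route as the paper's proof: both show $\mu_{C_{\psi,A}}(S_{L,R}^c)=0$ by verifying, via the Markov kernel of Theorem~\ref{thm:markov_kernel}, that $K_{\psi,A}(x,[0,\max\{f^0(x),g^L(x)\}))=0$ and $K_{\psi,A}(x,[0,g^R(x)])=1$ for every $x\in(0,1)$, and then disintegrate. Your step~(a), the explicit verification that $S_{L,R}$ is closed, is a small but genuine improvement --- the paper's argument tacitly needs $S_{L,R}^c$ to be open (so that $\mu(S_{L,R}^c)=0$ really forces the support into $S_{L,R}$) but never states this.
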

\begin{proof}
Note that in the case that $\psi$ is strict, $L= 0$ and $R= 1$, it is the case that $S_{L,R} = \mathbb{I}^2$ holds and therefore $\mathrm{supp}(\mu_{C_{\psi,A}}) \subseteq S_{L,R}$ is obvious.\\
We assume from now on that $\psi$ is non-strict or $L > 0$ or $R < 1$ and prove the desired result by contraposition, i.e., we prove that $(x,y) \in S_{L,R}^c$ implies $(x,y) \notin \mathrm{supp}(\mu_{C_{\psi,A}})$. It suffices to prove that $\mu_{C_{\psi,A}}(S_{L,R}^c) = 0$. We fix $x \in (0,1)$ and show that the Markov kernel $K_{\psi,A}$ of $C_{\psi,A}$ defined in eq. \eqref{eq:markov-kernel} fulfills
$$
K_{\psi,A}(x,[0,\max\{\fg^0(x),\ff^L(x)\})\cup (\ff^R(x),1]) = 0.
$$
Using eq. \eqref{eq:markov-kernel} obviously implies that $K_{\psi,A}(x,[0,\fg^0(x))) = 0$. Assuming $\fg^0(x) < \ff^L(x)$, applying eq. \eqref{eq:markov-kernel} and Lemma \ref{lem:regularity_fcts}, we obtain that
    $$
    K_{\psi,A}(x,[0,\ff^L(x))) =\frac{D^-\psi\left(\frac{\varphi(x)(1-L)}{L}+\right)}{D^-\psi(\varphi(x))}\underbrace{G_A(L-)}_{=0} = 0
    $$
    holds. Assuming $\ff^L(x) \leq \fg^0(x)$, obviously yields $K_{\psi,A}(x,[0,\ff^L(x))) = 0$. Finally, applying Lemma \ref{lem:regularity_fcts}, we conclude that
    $$
    K_{\psi,A}(x,[0,\ff^R(x)]) =\underbrace{\frac{D^-\psi(\varphi(x)h_A(R))}{D^-\psi(\varphi(x))}}_{=1}\underbrace{G_A(R)}_{=1} = 1
    $$
    and therefore $K_{\psi,A}(x,(\ff^R(x),1]) = 1- K_{\psi,A}(x,[0,\ff^R(x)]) = 0$ follows. Combining all of the above and applying disintegration proves that $$
    \mu_{C_{\psi,A}}(S_{L,R}^c) = \int_\mathbb{I} K_{\psi,A}(x,[0,\max\{\fg^0(x),\ff^L(x)\})\cup (\ff^R(x),1]) \mathrm{d}\lambda(x) = 0
    $$
    and hence $\mathrm{supp}(\mu_{C_{\psi,A}}) \subseteq S_{L,R}$ follows.
\end{proof}
Recall that, as discussed in Section \ref{section:notation}, every pair $(\gamma,\vartheta)$ of $\gamma \in \mathcal{P}_\mathcal{W}$ and $\vartheta \in \mathcal{P}_\mathcal{A}$ induces a copula $C_{\gamma,\vartheta} \in \mathcal{C}_{am}$.
Consider an arbitrary Williamson measure $\gamma \in \mathcal{P}_\mathcal{W}$ with corresponding generator $\psi$ and pseudo-inverse $\varphi$ and let $A \in \mathcal{A}$ with $h_A$ being defined as in eq. \eqref{eq:fct_h}. Then, for fixed $x \in (0,1)$ we define the function $H_x \colon \mathbb{I} \rightarrow \mathbb{I}$ by
\begin{equation}\label{eq:function_H_x}
H_x(y) := \frac{\int_{I_y}t \mathrm{d}\gamma(t)}{\int_{I_1}t\mathrm{d}\gamma(t)}
\end{equation}
with the notation $I_y := \left[0,\frac{1}{\varphi(x)h_A\left(\frac{\varphi(x)}{\varphi(x) + \varphi(y)}\right)}\right]$ for every $y \in \mathbb{I}$.
Utilizing eq. \eqref{eq:rel_deriv_psi_gamma} and fixing $x \in (0,1)$, we can now express the Markov kernel $K_{\psi,A}$ of $C_{\psi,A} \in \mathcal{C}_{am}$ given in eq. \eqref{eq:markov-kernel} by
\begin{equation}\label{eq:alternative_markov_kernel}
K_{\psi,A}(x,[0,y]) = H_x(y)G_A\left(\frac{\varphi(x)}{\varphi(x) + \varphi(y)}\right)
\end{equation}
for every $y \in \mathbb{I}$. Note that in the case that $y < \fg^0(x)$, obviously $\frac{1}{\varphi(x)h_A\left(\frac{\varphi(x)}{\varphi(x) + \varphi(y)}\right)} < \frac{1}{\varphi(0)}$ follows and therefore, applying that $\gamma([0,z]) = 0$ for every $z < \frac{1}{\varphi(0)}$, $H_x(y) = 0$ holds for every $y \in [0,\fg^0(x))$.
 Building upon the afore-mentioned facts and taking into account Lemma \ref{lemma:support}, the question arises naturally, under which conditions on $\gamma$ we have equality in eq. \eqref{eq:support}. The next theorem proves that if the support of $\gamma$ coincides with $\left[\frac{1}{\varphi(0)},\infty\right)$, then $\mathrm{supp}(\mu_{C_{\psi,A}}) = S_{L,R}$ holds.
\begin{theorem}\label{thm:arch_full_support_archimax}
Let $\gamma \in \mathcal{P}_\mathcal{W}$, $\psi$ be the corresponding generator, $\vartheta \in \mathcal{P}_\mathcal{A}$ and $A \in \mathcal{A}$ be the corresponding Pickands dependence function. Furthermore, let $C_{\psi,A} \in \mathcal{C}_{am}$, $L$, $R$ be defined according to eq. \eqref{eq:definition_L_R} and $S_{L,R}$ be defined as in eq. \eqref{eq:defin_set_s_l_r}. If $\mathrm{supp}(\gamma) = \left[\frac{1}{\varphi(0)},\infty\right)$, then
$$
\mathrm{supp}(\mu_{C_{\psi,A}}) = S_{L,R}.
$$
\end{theorem}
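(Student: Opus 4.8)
The plan is to establish the reverse inclusion to the containment $\mathrm{supp}(\mu_{C_{\psi,A}})\subseteq S_{L,R}$ supplied by Lemma~\ref{lemma:support}, after which equality follows. I would begin by disposing of the trivial case $A=A_M$: here $C_{\psi,A}=M$ for every $\psi\in\mathbf{\Psi}$, one has $L=R=\tfrac12$, so $\ff^L=\ff^R$ is the identity on $\mathbb{I}$ and $\fg^0\equiv0$, whence $S_{L,R}$ is the diagonal $\{(x,x):x\in\mathbb{I}\}=\mathrm{supp}(\mu_M)$; note that the hypothesis on $\gamma$ plays no role here. From now on assume $A\neq A_M$, equivalently $L<R$.

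Introduce the open ``core''
$$
S^{\circ}:=\bigl\{(x,y)\in(0,1)^2\colon \max\{\fg^0(x),\ff^L(x)\}<y<\ff^R(x)\bigr\}.
$$
Using that $\psi$ is strictly decreasing on $[0,\varphi(0))$ together with $L\le\tfrac12\le R$, one checks that $\fg^0(x)<\ff^R(x)$ and $\ff^L(x)<\ff^R(x)$ for every $x\in(0,1)$, so the fibre of $S^{\circ}$ over each $x\in(0,1)$ is a nonempty open interval; in particular $S^{\circ}\neq\emptyset$. Since $\fg^0$ (by Lemma~\ref{lem:left_cont_g^t}) and $\ff^L,\ff^R$ (by continuity of $\psi$ and $\varphi$) are continuous on $\mathbb{I}$, taking closures fibrewise and letting $x\to0^+$ and $x\to1^-$ yields $\overline{S^{\circ}}=S_{L,R}$. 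As $\mathrm{supp}(\mu_{C_{\psi,A}})$ is closed, it therefore suffices to prove $S^{\circ}\subseteq\mathrm{supp}(\mu_{C_{\psi,A}})$.

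The heart of the proof is a local positivity estimate for the Markov kernel $K_{\psi,A}$ of Theorem~\ref{thm:markov_kernel}, in the form $K_{\psi,A}(x,[0,y])=H_x(y)\,G_A(a_y)$ of eq.~\eqref{eq:alternative_markov_kernel}, where $a_y:=\varphi(x)/(\varphi(x)+\varphi(y))$ and $H_x,I_y$ are as in eq.~\eqref{eq:function_H_x}. Fix $(x_0,y_0)\in S^{\circ}$; by continuity of $\fg^0,\ff^L,\ff^R$ choose $\varepsilon,\delta>0$ so small that $B:=(x_0-\varepsilon,x_0+\varepsilon)\times(y_0-\delta,y_0+\delta)\subseteq(0,1)^2$ and, for every $x$ in the first factor, $\fg^0(x)<y_0-\delta$, $\ff^L(x)<y_0-\delta$, $y_0+\delta<\ff^R(x)$. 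Fix such an $x$ and put $c:=y_0-\tfrac{\delta}{2}$, $d:=y_0+\tfrac{\delta}{2}$. Since $c,d<\ff^R(x)$ and $c,d>0$ we have $a_y\in(0,R)$ for $y\in[c,d]$, where $h_A$ is strictly decreasing (Lemma~\ref{lem:regularity_fcts}); hence the upper endpoints of the intervals $I_y$ strictly increase on $[c,d]$, so $I_d\setminus I_c$ is a nondegenerate interval, and because $c>\fg^0(x)$ one has $\varphi(x)h_A(a_c)<\varphi(0)$, i.e.\ $I_d\setminus I_c\subseteq\bigl(\tfrac{1}{\varphi(0)},\infty\bigr)$. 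At this point the hypothesis $\mathrm{supp}(\gamma)=\bigl[\tfrac{1}{\varphi(0)},\infty\bigr)$ forces $\gamma(I_d\setminus I_c)>0$; since $\int_{I_1}t\,\mathrm{d}\gamma(t)=-D^-\psi(\varphi(x))\in(0,\infty)$ by eq.~\eqref{eq:rel_deriv_psi_gamma}, this gives $H_x(d)>H_x(c)\ge0$. Moreover $d>\ff^L(x)$ yields $a_d>L$, hence $G_A(a_d)>0$ by Lemma~\ref{lem:regularity_fcts}; using that $H_x$ and $y\mapsto G_A(a_y)$ are non-negative and non-decreasing,
$$
K_{\psi,A}(x,(c,d])=H_x(d)G_A(a_d)-H_x(c)G_A(a_c)\ \ge\ G_A(a_d)\bigl(H_x(d)-H_x(c)\bigr)\ >\ 0.
$$
Since $(c,d]\subseteq(y_0-\delta,y_0+\delta)$ and this holds for every $x\in(x_0-\varepsilon,x_0+\varepsilon)$, disintegration (eq.~\eqref{eq:DI}) gives $\mu_{C_{\psi,A}}(B)\ge\int_{(x_0-\varepsilon,x_0+\varepsilon)}K_{\psi,A}(x,(c,d])\,\mathrm{d}\lambda(x)>0$, so $(x_0,y_0)\in\mathrm{supp}(\mu_{C_{\psi,A}})$. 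Together with $\overline{S^{\circ}}=S_{L,R}$ and Lemma~\ref{lemma:support}, this proves the asserted equality.

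The step I expect to be the main obstacle is this chain of inequalities for the kernel increment: one must justify carefully that $I_d\setminus I_c$ genuinely lies strictly to the right of $\tfrac{1}{\varphi(0)}$ — this is exactly where the strict inequality $c>\fg^0(x)$ (hence $\varphi(x)h_A(a_c)<\varphi(0)$) is used, and it is precisely here that the full-support assumption on $\gamma$ enters — and that the product structure $K_{\psi,A}(x,[0,y])=H_x(y)G_A(a_y)$, with both factors non-negative and non-decreasing in $y$, permits the lower bound $G_A(a_d)(H_x(d)-H_x(c))$ without assuming continuity of $H_x$, which can fail when $\gamma$ carries atoms. The auxiliary fact $\overline{S^{\circ}}=S_{L,R}$ is routine given $\ff^L<\ff^R$ on $(0,1)$ and continuity of the three boundary functions, but it should be spelled out at the boundary fibres $x\in\{0,1\}$, where (in the non-strict case) $\ff^R(0)$ need not vanish.
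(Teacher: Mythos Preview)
Your proof is correct and follows essentially the same route as the paper's: both dispose of the case $L=R$ separately, then use the factorization $K_{\psi,A}(x,[0,y])=H_x(y)\,G_A(a_y)$ together with the full-support hypothesis on $\gamma$ to force $H_x$ strictly increasing on the relevant fibre, combine this with positivity of $G_A$ past $L$, and conclude via disintegration that open rectangles in $S_{L,R}$ carry positive mass. Your explicit lower bound $K_{\psi,A}(x,(c,d])\ge G_A(a_d)(H_x(d)-H_x(c))$ and your verification that $\overline{S^{\circ}}=S_{L,R}$ are slightly more detailed than the paper's phrasing (which simply asserts strict monotonicity of the product and that ``every non-empty open rectangle in $S_{L,R}$ has positive mass''), but the underlying argument is the same.
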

\begin{proof}
If $L = R$, then $A(x) =\max\{1-x,x\}$ for every $x \in \mathbb{I}$ and therefore $C_{\psi,A} = M$, $\fg^0(x) = 0$ and $\ff^L(x) = x = \ff^R(x)$ holds for every $x \in \mathbb{I}$. Thus, it follows that
$$
\mathrm{supp}(\mu_{C_{\psi,A}}) = \mathrm{supp}(\mu_M) = \{(x,y) \in \mathbb{I}^2 \colon y= x\} = S_{L,R}.
$$
If $L<R$, we fix $x \in (0,1)$ and prove that $y \mapsto K_{\psi,A}(x,[0,y])$ (according to eq. \eqref{eq:alternative_markov_kernel}) is strictly increasing on $(\max\{\fg^0(x),\ff^L(x)\},\ff^R(x))$. Considering $G_A$ defined as in eq. \eqref{eq:funct_f_a} and applying Lemma \ref{lem:regularity_fcts}, we obtain that the function $y \mapsto G_A\left(\frac{\varphi(x)}{\varphi(x) + \varphi(y)}\right)$ is non-decreasing and positive on $(\ff^L(x),1]$. Using the fact that $h_A$ is strictly decreasing on $(0,R]$ and that $\varphi$ is strictly decreasing on $(0,1]$ yields that the function $y \mapsto \frac{1}{\varphi(x)h_A\left(\frac{\varphi(x)}{\varphi(x) + \varphi(y)}\right)}$ is strictly increasing on $(0,\ff^R(x)]$. Moreover, since $\varphi(x)h_A\left(\frac{\varphi(x)}{\varphi(x) + \varphi(y)}\right) < \varphi(0)$ holds for every $y \in (\fg^0(x),1]$ and $\mathrm{supp}(\gamma) = \left[\frac{1}{\varphi(0)},\infty\right)$, it follows that the function $y \mapsto H_x(y) = \frac{\int_{I_y}t \mathrm{d}\gamma(t)}{\int_{I_1}t\mathrm{d}\gamma(t)}$ is strictly increasing and positive on $(\fg^0(x),\ff^R(x))$. Altogether, applying eq. \eqref{eq:alternative_markov_kernel}, we obtain that the function $y \mapsto K_{\psi,A}(x,[0,y])$ is strictly increasing on $(\max\{\fg^0(x),\ff^L(x)\},\ff^R(x))$, as it is the product of a positive non-decreasing and a positive strictly increasing function.\\
Now, let $(a,b) \times (c,d) \subset S_{L,R}$ with $\lambda_2((a,b) \times (c,d)) > 0$. Then obviously $(c,d) \subset (\max\{\fg^0(x),\ff^L(x)\},\ff^R(x))$ for every $x \in (a,b)$ and therefore applying disintegration, we conclude that
$$
\mu_{C_{\psi,A}}((a,b) \times (c,d)) = \int_{(a,b)} \underbrace{K_{\psi,A}(x,(c,d))}_{> 0} \mathrm{d}\lambda(x) > 0.
$$
Put differently, every non-empty open rectangle in $S_{L,R}$ has positive mass, implying that $S_{L,R} \subseteq \mathrm{supp}(\mu_{C_{\psi,A}})$. Utilizing Lemma \ref{lemma:support} finally yields $\mathrm{supp}(\mu_{C_{\psi,A}}) = S_{L,R}$.
\end{proof}
Similar to the previous Theorem we show that $\mathrm{supp}(\mu_{C_{\psi,A}}) = S_{L,R}$ if the Pickands dependence measure $\vartheta$ fulfills $\mathrm{supp}(\vartheta) = [L,R]$.
\begin{theorem}\label{thm:evc_full_support_archimax}
Let $\gamma \in \mathcal{P}_\mathcal{W}$, $\psi$ be its generator, $\vartheta \in \mathcal{P}_\mathcal{A}$, $A \in \mathcal{A}$ be its Pickands dependence function and $C_{\psi,A} \in \mathcal{C}_{am}$. Furthermore, let $L$, $R$ be defined according to eq. \eqref{eq:definition_L_R} and $S_{L,R}$ be defined as in eq. \eqref{eq:defin_set_s_l_r}. If $\mathrm{supp}(\vartheta) = [L,R]$, then
$$
\mathrm{supp}(\mu_{C_{\psi,A}}) = S_{L,R}.
$$
\end{theorem}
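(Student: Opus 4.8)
The plan is to mirror the proof of Theorem~\ref{thm:arch_full_support_archimax}, now drawing the required strict monotonicity of the Markov kernel from the Pickands dependence measure $\vartheta$ rather than from the Williamson measure $\gamma$. The degenerate case $L=R$ is settled exactly as before: then $A(x)=\max\{1-x,x\}$ on $\mathbb{I}$, so $C_{\psi,A}=M$ and $\fg^{0}(x)=0$, $\ff^{L}(x)=x=\ff^{R}(x)$ for every $x\in\mathbb{I}$, whence $\mathrm{supp}(\mu_{C_{\psi,A}})=\{(x,x)\colon x\in\mathbb{I}\}=S_{L,R}$. So assume $L<R$. By Lemma~\ref{lemma:support} it suffices to prove $S_{L,R}\subseteq\mathrm{supp}(\mu_{C_{\psi,A}})$, and, arguing exactly as in Theorem~\ref{thm:arch_full_support_archimax}, it is enough to show that for every fixed $x\in(0,1)$ the map $y\mapsto K_{\psi,A}(x,[0,y])$ is strictly increasing on the open interval $\bigl(\max\{\fg^{0}(x),\ff^{L}(x)\},\ff^{R}(x)\bigr)$; for then every non-empty open rectangle contained in $S_{L,R}$ has positive $\mu_{C_{\psi,A}}$-mass by disintegration, which yields $S_{L,R}\subseteq\mathrm{supp}(\mu_{C_{\psi,A}})$.

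For the strict monotonicity I would use eq.~\eqref{eq:markov-kernel} and, abbreviating $u(y):=\varphi(x)/(\varphi(x)+\varphi(y))$, write
$$
K_{\psi,A}(x,[0,y])=\frac{D^{-}\psi\bigl(\varphi(x)h_{A}(u(y))\bigr)}{D^{-}\psi(\varphi(x))}\cdot G_{A}\bigl(u(y)\bigr).
$$
On the interval in question $y>\fg^{0}(x)$, hence $\varphi(x)h_{A}(u(y))<\varphi(0)$, so the first factor is a quotient of two strictly negative numbers and is therefore strictly positive; moreover, by convexity of $\psi$, the monotonicity of $\varphi$ and $h_{A}$, and Lemma~\ref{lem:regularity_fcts}, it is non-decreasing in $y$. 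The map $u$ is continuous and strictly increasing with $u(y)\in(L,R)$ throughout the interval (since $u(\ff^{L}(x))\le L$, $u(\ff^{R}(x))=R$ and $u(y)>L$ for every $y>\ff^{L}(x)$), and $G_{A}$ is positive on $(L,1]$ by Lemma~\ref{lem:regularity_fcts}(iii). Consequently, once $G_{A}$ is known to be strictly increasing on $(L,R)$, the map $y\mapsto K_{\psi,A}(x,[0,y])$ is a product of a positive non-decreasing function and a positive strictly increasing one on $\bigl(\max\{\fg^{0}(x),\ff^{L}(x)\},\ff^{R}(x)\bigr)$, hence strictly increasing there.

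The crux --- and the only place where $\mathrm{supp}(\vartheta)=[L,R]$ is genuinely used --- is thus to show that $G_{A}$ is strictly increasing on $[L,R)$. Combining $G_{A}(t)=A(t)+D^{+}A(t)(1-t)$ from eq.~\eqref{eq:funct_f_a} with $D^{+}A(t)=2\vartheta([0,t])-1$ from eq.~\eqref{eq:right_der_pickands_measure} and $A(t)=1-t+2\int_{[0,t]}\vartheta([0,z])\,\mathrm{d}\lambda(z)$ from eq.~\eqref{eq:rel_pickands_fct_measure}, a short computation gives, for $L\le t_{1}<t_{2}<R$,
$$
G_{A}(t_{2})-G_{A}(t_{1})=2\int_{(t_{1},t_{2}]}(1-s)\,\mathrm{d}\vartheta(s),
$$
equivalently $\mathrm{d}G_{A}=2(1-s)\,\mathrm{d}\vartheta(s)$ on $[0,1)$. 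Since $(t_{1},t_{2})$ is a non-empty open subset of $\mathrm{supp}(\vartheta)=[L,R]$ it carries strictly positive $\vartheta$-mass, and $1-s>0$ on $(0,1)$, so the integral is strictly positive and $G_{A}(t_{2})>G_{A}(t_{1})$. Feeding this back into the preceding paragraph and invoking Lemma~\ref{lemma:support} yields $\mathrm{supp}(\mu_{C_{\psi,A}})=S_{L,R}$. I expect the main obstacle to be precisely this translation of the topological hypothesis on $\vartheta$ into the analytic statement about $G_{A}$; the remaining steps (positivity and monotonicity of the kernel factors, non-emptiness of the relevant rectangles) parallel Theorem~\ref{thm:arch_full_support_archimax}.
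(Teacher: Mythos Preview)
Your proof is correct and follows the same route as the paper: handle $L=R$ trivially, then for $L<R$ show that $y\mapsto K_{\psi,A}(x,[0,y])$ is strictly increasing on $(\max\{\fg^{0}(x),\ff^{L}(x)\},\ff^{R}(x))$ by writing the kernel as a product of a positive non-decreasing factor and a positive strictly increasing factor, the latter coming from strict monotonicity of $G_A$ on $(L,R)$. The only difference is in how that strict monotonicity of $G_A$ is obtained: the paper argues via the inequality $D^{+}A(t_1)<D^{+}A(t_2)$ (from $\mathrm{supp}(\vartheta)=[L,R]$) combined with a convexity estimate, whereas you derive the clean identity $G_A(t)=2\int_{[0,t]}(1-s)\,\mathrm{d}\vartheta(s)$, which makes $G_A(t_2)-G_A(t_1)=2\int_{(t_1,t_2]}(1-s)\,\mathrm{d}\vartheta(s)>0$ immediate; your computation is a pleasant alternative and in fact yields a slightly more transparent link between $G_A$ and $\vartheta$.
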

\begin{proof}
If $L = R$, then the same arguments as in the proof of Theorem \ref{thm:arch_full_support_archimax} yield $\mathrm{supp}(\mu_{C_{\psi,A}}) = S_{L,R}$.\\
If $L < R$, we fix $x \in (0,1)$, consider the Markov kernel $K_{\psi,A}$ of $C_{\psi,A}$ and prove that $y \mapsto K_{\psi,A}(x,[0,y])$ is strictly increasing on $(\max\{\fg^0(x),\ff^L(x)\},\ff^R(x))$. Applying similar arguments as in the proof of Theorem \ref{thm:arch_full_support_archimax} implies that $H_x$ defined as in eq. \eqref{eq:function_H_x} is non-decreasing and positive on $(\fg^0(x),\ff^R(x))$ and that the function $y \mapsto G_A\left(\frac{\varphi(x)}{\varphi(x) + \varphi(y)}\right)$ is positive on $(\ff^L(x),1]$.\\
We prove that $G_A$ is strictly increasing on $(L,R)$ and fix $t_1,t_2 \in (L,R)$ arbitrarily. Applying that $\mathrm{supp}(\vartheta) = [L,R]$ and using eq. \eqref{eq:right_der_pickands_measure}, we obtain that $D^+A(t_1) < D^+A(t_2)$. Therefore, using convexity of $A$, we conclude that
\begin{align*}
    G_A(t_2) - G_A(t_1) &= A(t_2) - A(t_1) + D^+A(t_2)(1-t_2) - D^+A(t_1)(1-t_1) \\& >
    A(t_2) - [A(t_1) + D^+A(t_1) (t_2-t_1)] \geq 0.
\end{align*}
Having that $\varphi$ is strictly decreasing yields that $y \mapsto G_A\left(\frac{\varphi(x)}{\varphi(x) + \varphi(y)}\right)$ is strictly increasing on $(\ff^L(x),\ff^R(x))$ and thus, utilizing eq. \eqref{eq:alternative_markov_kernel}, the function $y \mapsto K_{\psi,A}(x,[0,y])$ is strictly increasing on $(\max\{\fg^0(x),\ff^L(x)\},\ff^R(x))$. Applying the same chain of arguments as in the proof of Theorem \ref{thm:arch_full_support_archimax} shows that $\mathrm{supp}(\mu_{C_{\psi,A}}) = S_{L,R}$.
\end{proof}
Considering Theorem \ref{thm:arch_full_support_archimax} and Theorem \ref{thm:evc_full_support_archimax}, we determine under which conditions on $\gamma \in \mathcal{P}_\mathcal{W}$ and $\vartheta \in \mathcal{P}_\mathcal{A}$ the corresponding Archimax copula $C_{\gamma,\vartheta}$ has full support.
\begin{Cor}\label{cor:full_support_archimax}
    Let $\gamma \in \mathcal{P}_\mathcal{W}$, $\vartheta \in \mathcal{P_A}$, $C$ be its corresponding Archimax copula and let $L$, $R$ be defined according to eq. \eqref{eq:definition_L_R_measure}. Then the following assertions hold:
    \begin{itemize}
        \item[$(i)$] If $\gamma$ has full support and $L = 0$ as well as $R = 1$, then $\mathrm{supp}(\mu_{C}) = \mathbb{I}^2$.
        \item[$(ii)$] If $\gamma$ is strict and $\vartheta$ has full support, then $\mathrm{supp}(\mu_{C}) = \mathbb{I}^2$.
        \item[$(iii)$] If $\mathrm{supp}(\mu_{C}) = \mathbb{I}^2$, then $\gamma$ is strict and $L = 0$ as well as $R = 1$.
    \end{itemize}
\end{Cor}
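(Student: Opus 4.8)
The plan is to read off (i) and (ii) from Theorems \ref{thm:arch_full_support_archimax} and \ref{thm:evc_full_support_archimax}, and to prove (iii) by contraposition via Lemma \ref{lemma:support}; throughout, write $\psi$ (with pseudo-inverse $\varphi$) and $A$ for the generator and Pickands dependence function attached to $\gamma$ and $\vartheta$. In each case the work reduces to deciding when the set $S_{L,R}$ from eq. \eqref{eq:defin_set_s_l_r} equals $\mathbb{I}^2$, and the basic remark is that whenever $\psi$ is strict and $L=0$, $R=1$, then $S_{L,R}=\mathbb{I}^2$: in the strict case $\fg^0\equiv 0$ by definition, while $\ff^L=\ff^0\equiv 0$ and $\ff^R=\ff^1\equiv 1$, so $S_{L,R}=\{(x,y)\in\mathbb{I}^2\colon 0\le y\le 1\}=\mathbb{I}^2$. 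I would also record that $\gamma$ has full support precisely when $0\in\mathrm{supp}(\gamma)$, i.e., when $\gamma$ --- equivalently, by \citep[Lemma 5.5]{mult_arch}, $\psi$ --- is strict, so that $\varphi(0)=\infty$ and $\mathrm{supp}(\gamma)=[0,\infty)=\left[\frac{1}{\varphi(0)},\infty\right)$; and that, since eq. \eqref{eq:definition_L_R_measure} gives $\mathrm{supp}(\vartheta)\subseteq[L,R]\subseteq\mathbb{I}$, full support of $\vartheta$ forces $L=0$, $R=1$ and $\mathrm{supp}(\vartheta)=[L,R]$.

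With these observations, (i) is immediate: its hypotheses place us in the setting of Theorem \ref{thm:arch_full_support_archimax}, which yields $\mathrm{supp}(\mu_{C})=S_{L,R}$, and this equals $\mathbb{I}^2$ because $\psi$ is strict and $L=0$, $R=1$. Part (ii) is analogous: full support of $\vartheta$ places us in the setting of Theorem \ref{thm:evc_full_support_archimax}, giving $\mathrm{supp}(\mu_{C})=S_{L,R}$, and $\gamma$ strict together with the (now automatic) $L=0$, $R=1$ again makes $S_{L,R}=\mathbb{I}^2$.

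For (iii) I would start from Lemma \ref{lemma:support}: $\mathrm{supp}(\mu_{C})=\mathbb{I}^2$ forces $S_{L,R}=\mathbb{I}^2$, and it remains to deduce $R=1$, $L=0$ and $\psi$ strict, each from a boundary slice of $S_{L,R}$. If $R<1$, then for $x\in(0,1)$ one has $\left(\frac{1}{R}-1\right)\varphi(x)>0$, hence $\ff^R(x)=\psi\left(\left(\frac{1}{R}-1\right)\varphi(x)\right)<\psi(0)=1$, so $(x,1)\notin S_{L,R}$ --- a contradiction; thus $R=1$. If $L>0$, then for $x$ close to $1$ we have $\varphi(x)$ close to $\varphi(1)=0$, so $\left(\frac{1}{L}-1\right)\varphi(x)<\varphi(0)$ and therefore $\ff^L(x)=\psi\left(\left(\frac{1}{L}-1\right)\varphi(x)\right)>0$, so $(x,0)\notin S_{L,R}$ --- again a contradiction; thus $L=0$. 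If $\psi$ were non-strict, then $\varphi(0)\in(0,\infty)$, so $\frac{\varphi(0)}{\varphi(0)}=1$, and evaluating the definition of $\fg^0$ at $x=0$ with $h_A^{[-1]}(1)=R=1$ (from the first step) gives $\fg^0(0)=\psi\left(\left(\frac{1}{R}-1\right)\varphi(0)\right)=\psi(0)=1$, whence $(0,0)\notin S_{L,R}$ --- a contradiction; so $\psi$, and therefore $\gamma$, is strict.

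The step I expect to need the most attention is the last one --- forcing $\psi$ to be strict --- and its only delicate aspect is the sequencing: it relies on $R=1$ having already been established, since it is exactly $h_A^{[-1]}(1)=R=1$ that makes the argument of $\psi$ in $\fg^0(0)$ vanish. An alternative that does not use this ordering would invoke the strict estimate $h_A^{[-1]}(z)>\frac{1}{z+1}$ valid on $(1,\infty)$ --- which follows from $L=0$ via $A(t)>1-t$ and hence $h_A(t)=\frac{A(t)}{t}>\frac{1-t}{t}$ on $(0,1)$ --- to conclude $\fg^0(x)>0$ for every $x\in(0,1)$ when $\psi$ is non-strict. All remaining steps are routine unwindings of the definitions of $\ff^t$, $\fg^0$ and $S_{L,R}$ together with the monotonicity of $\psi$ and $\varphi$.
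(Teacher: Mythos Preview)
Your proof is correct and follows the paper's approach: parts (i) and (ii) via Theorems \ref{thm:arch_full_support_archimax} and \ref{thm:evc_full_support_archimax} after checking $S_{L,R}=\mathbb{I}^2$, and part (iii) by contraposition through Lemma \ref{lemma:support}, where your explicit boundary-point witnesses simply spell out what the paper asserts in one line (``$S_{L,R}\subsetneq\mathbb{I}^2$ in either of the cases''). One harmless slip: ``$\gamma$ has full support precisely when $0\in\mathrm{supp}(\gamma)$'' is false as a biconditional (strictness alone does not force full support), but you only ever use the correct implication, so the argument is unaffected.
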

\begin{proof}
    Proving the first assertion, applying that $\mathrm{supp}(\gamma) = [0,\infty)$ yields that $\gamma$ and therefore the corresponding generator $\psi$ is strict, which implies that $\fg^0(x) = 0$ holds for every $x \in \mathbb{I}$. Having that $L = 0$ and $R = 1$, we conclude that $\ff^L(x) = 0$ and $\ff^R(x) = 1$ for every $x \in \mathbb{I}$. Theorem \ref{thm:arch_full_support_archimax} then implies that $\mathrm{supp}(\mu_C) =S_{0,1} = \mathbb{I}^2$.\\
    Considering the second assertion, proceeding similarly to the proof of the first assertion and applying Theorem \ref{thm:evc_full_support_archimax} yields $\mathrm{supp}(\mu_{C}) = S_{0,1} = \mathbb{I}^2$.\\
    We prove the third assertion and assume that $\mathrm{supp}(\mu_{C}) = \mathbb{I}^2$. Suppose that $\gamma$ was non-strict or $L > 0$ or $R < 1$ would hold. Then according to Lemma \ref{lemma:support}, $\mathrm{supp}(\mu_{C}) \subseteq S_{L,R} \subsetneq \mathbb{I}^2$ would hold in either of the cases. A contradiction.
\end{proof}
Given $A\in \mathcal{A}$ and $C_A \in \mathcal{C}_{ev}$, we derive a characterization of $\mathrm{supp}(\mu_{C_A})$, which was first established in \citep{Mai2011} and later proved in \citep[Corollary 4]{evc-mass}. The next corollary is an immediate consequence of Theorem \ref{thm:arch_full_support_archimax}.
\begin{Cor}[Mai and Scherer (2011)]\label{cor:mai_scherer_supp_evc}
Let $A\in \mathcal{A}$, $C_{A} \in \mathcal{C}_{ev}$ and $L$, $R$ be defined as in eq. \eqref{eq:definition_L_R}. Then
$$
\mathrm{supp}(\mu_{C_A}) = \{(x,y)\in \mathbb{I}^2\colon \ff^L(x) \leq y \leq  \ff^R(x)\},
$$
whereby $\ff^L(x) = x^{\frac{1}{L}-1}$ and $\ff^R(x) = x^{\frac{1}{R}-1}$ for every $x \in \mathbb{I}$.
\end{Cor}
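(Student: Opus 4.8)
The plan is to read the statement off Theorem \ref{thm:arch_full_support_archimax}: it suffices to check that theorem's hypothesis on $\gamma$ for the generator that produces EVCs, and then to translate the resulting set $S_{L,R}$ into the announced description. By the discussion following eq. \eqref{eq:def_archimax}, every $C_A\in\mathcal{C}_{ev}$ equals $C_{\psi,A}$ for the strict generator $\psi(z)=\exp(-z)$, whose pseudo-inverse is $\varphi(x)=-\log(x)$ with $\varphi(0)=\infty$, so that $\frac{1}{\varphi(0)}=0$. Writing $\vartheta\in\mathcal{P}_\mathcal{A}$ for the Pickands dependence measure of $A$ and $\gamma\in\mathcal{P}_\mathcal{W}$ for the Williamson measure of $\psi$, the pair $(\gamma,\vartheta)$ induces $C_A$, and all that is needed is $\mathrm{supp}(\gamma)=[0,\infty)$.

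To establish this, I would use eq. \eqref{eq:rel_deriv_psi_gamma}: from $D^-\psi(z)=-\exp(-z)$ it gives $\int_{[0,1/z]}t\,\mathrm{d}\gamma(t)=\exp(-z)$ for every $z>0$. Re-parametrizing by $u:=1/z\in(0,\infty)$, the map $u\mapsto\int_{[0,u]}t\,\mathrm{d}\gamma(t)=\exp(-1/u)$ is strictly positive and strictly increasing on $(0,\infty)$. Strict positivity forces $\gamma([0,\varepsilon])>0$ for every $\varepsilon>0$, hence $0\in\mathrm{supp}(\gamma)$; strict monotonicity rules out any nonempty open interval $(a,b)\subseteq(0,\infty)$ with $\gamma((a,b))=0$, since on such an interval the integral would be constant. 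Therefore $\mathrm{supp}(\gamma)=[0,\infty)=\bigl[\frac{1}{\varphi(0)},\infty\bigr)$, and Theorem \ref{thm:arch_full_support_archimax} yields $\mathrm{supp}(\mu_{C_A})=S_{L,R}$.

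It then remains to identify $S_{L,R}$ from eq. \eqref{eq:defin_set_s_l_r} in this case. Since $\psi$ is strict, $\fg^0$ is by definition the zero function, so $\max\{\fg^0(x),\ff^L(x)\}=\ff^L(x)$; and, by Example \ref{ex:evc_fct_f_t} (read for the degenerate values $L,R\in\{0,1\}$ via the conventions of Section \ref{subsection:general_notation}), the functions of eq. \eqref{eq:fct_f} become $\ff^t(x)=x^{\frac{1}{t}-1}$ in the EVC setting, so $\ff^L(x)=x^{\frac{1}{L}-1}$ and $\ff^R(x)=x^{\frac{1}{R}-1}$. Substituting into eq. \eqref{eq:defin_set_s_l_r} gives precisely the asserted set $\{(x,y)\in\mathbb{I}^2\colon x^{\frac{1}{L}-1}\leq y\leq x^{\frac{1}{R}-1}\}$; the boundary case $L=R$ (equivalently $C_A=M$) is immediate, as then $\ff^L=\ff^R$ is the identity on $\mathbb{I}$ and both descriptions collapse to the diagonal. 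The main obstacle is the Williamson-measure computation of the middle paragraph; everything else is unwinding the definitions of $\fg^0$, $\ff^t$ and $S_{L,R}$.
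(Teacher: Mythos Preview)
Your proposal is correct and follows essentially the same approach as the paper: realize $C_A$ as $C_{\psi,A}$ with $\psi(z)=\exp(-z)$, verify that the associated Williamson measure has full support $[0,\infty)$, apply Theorem~\ref{thm:arch_full_support_archimax}, and unpack $S_{L,R}$. The only cosmetic difference is that the paper writes down the explicit distribution function $\gamma([0,z])=e^{-1/z}+z^{-1}e^{-1/z}$ to read off full support, whereas you argue the same fact indirectly via eq.~\eqref{eq:rel_deriv_psi_gamma} and strict monotonicity of $u\mapsto e^{-1/u}$; both verifications are equally valid.
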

\begin{proof}
Let $A\in \mathcal{A}$ and $C_{A} \in \mathcal{C}_{ev}$. We define $\psi(z) := \exp(-z)$ for every $z \in [0,\infty)$ and obtain that the corresponding $\gamma \in \mathcal{P}_\mathcal{W}$ is given by $\gamma([0,z]) = \exp(-\frac{1}{z}) + \exp(-\frac{1}{z})\frac{1}{z}$ for every $z \in [0,\infty)$. It is straightforward to see that $\gamma$ has full support and is therefore strict. Moreover, $C_A = C_{\psi,A} \in \mathcal{C}_{am}$, $\fg^0(x) = 0$, $\ff^L(x) = x^{\frac{1}{L}-1}$ and $\ff^R(x) = x^{\frac{1}{R}-1}$ holds for every $x \in \mathbb{I}$ and thus, applying Theorem \ref{thm:arch_full_support_archimax} yields
$$
\mathrm{supp}(\mu_{C_A}) = \mathrm{supp}(\mu_{C_{\psi,A}}) = S_{L,R} = \{(x,y)\in \mathbb{I}^2\colon \ff^L(x) \leq y \leq  \ff^R(x)\}.
$$
\end{proof}
\subsection{Discrete components of Archimax copulas}
Throughout this section let $\gamma \in \mathcal{P}_\mathcal{W}$, $\psi$ be its generator, $\vartheta \in \mathcal{P}_\mathcal{A}$ and $A \in \mathcal{A}$ be its Pickands dependence function. Furthermore, let $C_{\psi,A} \in \mathcal{C}_{am}$ with corresponding doubly stochastic measure $\mu_{C_{\psi,A}}$ and Markov kernel $K_{\psi,A}$ (defined as in eq. \eqref{eq:markov-kernel}). In this section we prove that the discrete component $\mu_{C_{\psi,A}}^{dis}$ of $\mu_{C_{\psi,A}}$ (if any) always concentrates its mass on the graphs of the functions $\fg^s$ or $\ff^t$ for some $s,t \in [0,1)$.\\
If $L<R$, the next lemma shows that for 'good' $x$ the probability measure $K_{\psi,A}(x,\cdot)$ has a point mass in $\fg^t(x)$ if, and only if $\gamma$ has a point mass in $\frac{1}{\varphi(t)}$.
\begin{Lemma}\label{lem:kernel_jump_on_graph_arch}
Let $\gamma \in \mathcal{P}_\mathcal{W}$, $\psi$ be its corresponding generator, $\varphi$ be its pseudo-inverse, $\vartheta \in \mathcal{P}_\mathcal{A}$, $A$ be the corresponding Pickands dependence function, $C_{\psi,A} \in \mathcal{C}_{am}$, $K_{\psi,A}$ be the Markov kernel of $C_{\psi,A}$ according to eq. \eqref{eq:markov-kernel} and $L$, $R$ be defined according to eq. \eqref{eq:definition_L_R} with $L<R$. 
Moreover, let $t \in (0,1)$ if $\psi$ is strict, $t \in [0,1)$ if $\psi$ is non-strict and let $\fg^t$ be defined according to eq. \eqref{eq:fct_g_t}. Then for all but at most countably many $x \in \left(t, \psi\left(\frac{L\varphi(t)}{1-L}\right)\right)$ the following equivalence holds:
\begin{equation*}
    K_{\psi,A}(x,\{\fg^t(x)\})>0 \text{ if, and only if } \frac{1}{\varphi(t)} \text{ is a point mass of } \gamma.
\end{equation*}
\end{Lemma}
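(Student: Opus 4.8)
The plan is to obtain the atom $K_{\psi,A}(x,\{\fg^t(x)\})$ directly from the product representation $K_{\psi,A}(x,[0,y]) = H_x(y)\,G_A\!\bigl(\tfrac{\varphi(x)}{\varphi(x)+\varphi(y)}\bigr)$ in eq.~\eqref{eq:alternative_markov_kernel}, by passing to the left limit at $y=\fg^t(x)$ and identifying which of the two factors can jump there. Throughout I fix $x\in\bigl(t,\psi(\tfrac{L\varphi(t)}{1-L})\bigr)$ (the assertion is vacuous if this interval is empty) and write $\sigma_x:=h_A^{[-1]}\bigl(\tfrac{\varphi(t)}{\varphi(x)}\bigr)$.

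First I would record the geometry at the point $\fg^t(x)$. The two endpoint constraints on $x$ translate, by strict monotonicity of $\varphi$, into $\tfrac{\varphi(t)}{\varphi(x)}\in\bigl(1,\tfrac{1-L}{L}\bigr)$, and then a short computation with Lemma~\ref{lem:regularity_fcts}(ii) together with $h_A^{[-1]}(1)=R$ and the explicit form of $h_A^{[-1]}$ near $\tfrac{1-L}{L}$ gives $\sigma_x\in(L,R)$. Since $\fg^t(x)\ge t>0$ one has $\psi(\xi^t(x))=\fg^t(x)>0$, hence $\xi^t(x)<\varphi(0)$, hence $\varphi(\fg^t(x))=\xi^t(x)=\bigl(\tfrac1{\sigma_x}-1\bigr)\varphi(x)$; rearranging yields $\tfrac{\varphi(x)}{\varphi(x)+\varphi(\fg^t(x))}=\sigma_x$ and, using that $h_A$ is invertible on $(0,R)$ and $\tfrac{\varphi(t)}{\varphi(x)}>1$, also $\varphi(x)\,h_A(\sigma_x)=\varphi(t)$. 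Finally $\fg^t(x)\in(0,1)$, so $\varphi$ is continuous at $\fg^t(x)$ and $\varphi(\fg^t(x))\in(0,\infty)$.

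Next comes the left limit $y\uparrow\fg^t(x)$. Continuity of $\varphi$ on $(0,1]$ gives $s_y:=\tfrac{\varphi(x)}{\varphi(x)+\varphi(y)}\uparrow\sigma_x$; since $\sigma_x\in(L,R)\subseteq(0,1)$ the convex function $h_A$ is continuous there, and it is strictly decreasing on $(0,R]$ by Lemma~\ref{lem:regularity_fcts}(i), so the right endpoint $r_y$ of the interval $I_y$ in eq.~\eqref{eq:function_H_x} increases continuously to $\tfrac1{\varphi(x)h_A(\sigma_x)}=\tfrac1{\varphi(t)}$ with $r_y<\tfrac1{\varphi(t)}$ for every $y<\fg^t(x)$. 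Hence $\int_{I_y}u\,\mathrm d\gamma(u)\uparrow\int_{[0,1/\varphi(t))}u\,\mathrm d\gamma(u)$, and, using that the denominator of $H_x$ equals $\int_{[0,1/\varphi(x)]}u\,\mathrm d\gamma(u)=-D^-\psi(\varphi(x))>0$ (eq.~\eqref{eq:rel_deriv_psi_gamma}), the jump of $H_x$ at $\fg^t(x)$ is $\tfrac1{\varphi(t)}\gamma(\{\tfrac1{\varphi(t)}\})\big/\bigl(-D^-\psi(\varphi(x))\bigr)$. For the second factor, $G_A(s_y)\to G_A(\sigma_x^-)$; since $A$ is continuous and $D^+A=2\vartheta([0,\cdot])-1$ (eq.~\eqref{eq:right_der_pickands_measure}) is non-decreasing, $G_A$ has at most countably many discontinuities, and because $x\mapsto\sigma_x$ is strictly decreasing (hence injective) on the given interval, the set of $x$ with $\sigma_x$ a discontinuity point of $G_A$ is at most countable. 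For $x$ outside this countable set one has $G_A(s_y)\to G_A(\sigma_x)$ with $G_A(\sigma_x)>0$ (Lemma~\ref{lem:regularity_fcts}(iii), as $\sigma_x\in(L,1]$), so
\[
K_{\psi,A}(x,\{\fg^t(x)\})=G_A(\sigma_x)\cdot\frac{\tfrac1{\varphi(t)}\,\gamma(\{\tfrac1{\varphi(t)}\})}{-D^-\psi(\varphi(x))},
\]
which is strictly positive if and only if $\gamma(\{\tfrac1{\varphi(t)}\})>0$, i.e.\ iff $\tfrac1{\varphi(t)}$ is a point mass of $\gamma$; this is the claimed equivalence.

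I expect the main obstacle to be the bookkeeping around the factor $G_A$: the atom of $K_{\psi,A}(x,\cdot)$ at $\fg^t(x)$ could in principle receive two contributions, one from a jump of $H_x$ (governed by $\gamma$ at $\tfrac1{\varphi(t)}$) and one from a jump of $y\mapsto G_A(s_y)$ (governed by $\vartheta$ at $\sigma_x$), and isolating the $\gamma$-contribution requires discarding the second one — which is exactly what the ``all but at most countably many $x$'' clause achieves, via injectivity of $x\mapsto\sigma_x$. The remaining steps are routine: checking $\sigma_x\in(L,R)$, $\fg^t(x)\in(0,1)$ and $\xi^t(x)<\varphi(0)$ so that the pseudo-inverse identities apply, and the one-sided continuity of $\varphi$, $h_A$ and of $r\mapsto\int_{[0,r]}u\,\mathrm d\gamma(u)$.
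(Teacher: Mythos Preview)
Your proposal is correct and follows essentially the same route as the paper: compute $K_{\psi,A}(x,[0,\fg^t(x)])-K_{\psi,A}(x,[0,\fg^t(x)))$ via the product representation \eqref{eq:alternative_markov_kernel}, discard the at most countably many $x$ for which $\sigma_x=h_A^{[-1]}(\varphi(t)/\varphi(x))$ is a discontinuity point of $G_A$ (equivalently, an atom of $\vartheta$), and then read off the atom as $G_A(\sigma_x)\cdot\tfrac{1}{\varphi(t)}\gamma(\{\tfrac{1}{\varphi(t)}\})/(-D^-\psi(\varphi(x)))$ with $G_A(\sigma_x)>0$ since $\sigma_x>L$. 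One small caveat: your justification ``$\fg^t(x)\ge t>0$'' does not cover the non-strict case $t=0$, but $\fg^0(x)>0$ still holds there because $\sigma_x>L$ forces $A(\sigma_x)>1-\sigma_x$ and hence $\xi^0(x)=(1-\sigma_x)\varphi(0)/A(\sigma_x)<\varphi(0)$.
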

\begin{proof}
We fix $t \in (0,1)$ in the case that $\psi$ is strict, $t\in [0,1)$ in the case that $\psi$ is non-strict and assume that $L<R$. We denote by $I$ the set of all point masses of $\vartheta$ in $(0,1)$. Note that $I$ is at most countable infinite and is empty if $\vartheta$ has no point masses in $(0,1)$. Let $h_A$ be defined as in eq. \eqref{eq:fct_h}, $h_A^{[-1]}$ be its pseudo-inverse and $G_A$ be defined as in eq. \eqref{eq:funct_f_a}. Applying eq. \eqref{eq:right_der_pickands_measure} and fixing $s_0 \in (0,1)$ yields that $s_0$ is a point mass of $\vartheta$ if, and only if $s_0$ is a point of discontinuity of $D^+A$. Using the afore-mentioned fact and assuming that $I \neq \emptyset$, we obtain that $h_A^{[-1]}\left(\frac{\varphi(t)}{\varphi(x)}\right)$ is a point of discontinuity of $D^+A$ (and therefore of $G_A$) if, and only if $x \in \bigcup_{s \in I}\left\{\psi\left(\frac{\varphi(t)}{h_A(s)}\right)\right\}$. If $I = \emptyset$, then $G_A$ is obviously continuous. We fix $x \in \left(t, \psi\left(\frac{L\varphi(t)}{1-L}\right)\right) \setminus \bigcup_{s \in I}\left\{\psi\left(\frac{\varphi(t)}{h_A(s)}\right)\right\}$ and note that $\bigcup_{s \in I}\left\{\psi\left(\frac{\varphi(t)}{h_A(s)}\right)\right\} = \emptyset$, if $I$ is empty. Applying the afore-mentioned arguments we obtain that $G_A$ is continuous in $h_A^{[-1]}\left(\frac{\varphi(t)}{\varphi(x)}\right)$ and thus, working with eq. \eqref{eq:alternative_markov_kernel} yields that
    \begin{align*}
        &K_{\psi,A}(x,\{\fg^t(x)\}) = K_{\psi,A}(x,[0,\fg^t(x)]) - K_{\psi,A}(x,[0,\fg^t(x))) \\& =\frac{\int_{[0,\frac{1}{\varphi(t)}]} u \mathrm{d}\gamma(u)}{\int_{[0,\frac{1}{\varphi(x)}]} u \mathrm{d}\gamma(u)}G_A\left(h_A^{[-1]}\left(\frac{\varphi(t)}{\varphi(x)}\right)\right)  - \frac{\int_{[0,\frac{1}{\varphi(t)})} u \mathrm{d}\gamma(u)}{\int_{[0,\frac{1}{\varphi(x)}]} u \mathrm{d}\gamma(u)}G_A\left(h_A^{[-1]}\left(\frac{\varphi(t)}{\varphi(x)}\right)-\right) \\& =
        \frac{\int_{\{\frac{1}{\varphi(t)}\}} u \mathrm{d}\gamma(u)}{\int_{[0,\frac{1}{\varphi(x)}]} u \mathrm{d}\gamma(u)}G_A\left(h_A^{[-1]}\left(\frac{\varphi(t)}{\varphi(x)}\right)\right).
    \end{align*}
    Since $x < \psi\left(\frac{L\varphi(t)}{1-L}\right)$, we obtain $L < h_A^{[-1]}\left(\frac{\varphi(t)}{\varphi(x)}\right)$ and therefore $G_A\left(h_A^{[-1]}\left(\frac{\varphi(t)}{\varphi(x)}\right)\right) > 0$ holds according to Lemma \ref{lem:regularity_fcts}. If $\frac{1}{\varphi(t)}$ is a point mass of $\gamma$, then obviously $K_{\psi,A}(x,\{\fg^t(x)\}) > 0$ holds. If $K_{\psi,A}(x,\{\fg^t(x)\}) > 0$, then $\int_{\{\frac{1}{\varphi(t)}\}} u \mathrm{d}\gamma(u) > 0$ follows and thus, $\frac{1}{\varphi(t)}$ is a point mass of $\gamma$. This proves the result.
\end{proof}
Similarly to Lemma \ref{lem:kernel_jump_on_graph_arch}, the next lemma shows that for 'good' $x$ the probability measure $K_{\psi,A}(x,\cdot)$ has a point mass in $\ff^t(x)$ if, and only if $\vartheta$ has a point mass in $t$.
\begin{Lemma}\label{lem:kernel_jump_on_graph_evc}
Let $\gamma \in \mathcal{P}_\mathcal{W}$, $\psi$ be the corresponding generator, $\vartheta \in \mathcal{P}_\mathcal{A}$, $A$ be the corresponding Pickands dependence function, $C_{\psi,A} \in \mathcal{C}_{am}$ and $K_{\psi,A}$ be the Markov kernel of $C_{\psi,A}$ according to eq. \eqref{eq:markov-kernel}.
Furthermore, let $h_A$ be defined as in eq. \eqref{eq:fct_h}, $t \in (0,1)$ and let $\ff^t$ be defined according to eq. \eqref{eq:fct_f}. Then for all but at most countably many $x \in \left( \psi\left(\frac{\varphi(0)}{h_A(t)}\right),1\right)$ the following equivalence holds:
\begin{equation*}
    K_{\psi,A}(x,\{\ff^t(x)\})>0 \text{ if, and only if } t \text{ is a point mass of } \vartheta.
\end{equation*}
\end{Lemma}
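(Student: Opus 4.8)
The plan is to mirror the proof of Lemma~\ref{lem:kernel_jump_on_graph_arch} with the roles of $\gamma$ and $\vartheta$ interchanged: the left-hand jump of $y\mapsto K_{\psi,A}(x,[0,y])$ at $\ff^t(x)$ will be shown to factor as a strictly positive ``Williamson part'' times the left-hand jump of $G_A$ at $t$, and the latter is positive precisely when $\vartheta$ has a point mass at $t$. Throughout I would work with the representation $K_{\psi,A}(x,[0,y])=H_x(y)\,G_A\!\bigl(\tfrac{\varphi(x)}{\varphi(x)+\varphi(y)}\bigr)$ from eq.~\eqref{eq:alternative_markov_kernel}, which is valid for every $y\in\mathbb{I}$ when $x\in(0,1)$.

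Fix $t\in(0,1)$ and $x\in\bigl(\psi(\tfrac{\varphi(0)}{h_A(t)}),1\bigr)$. First I would record the elementary facts forced by this range: since $\varphi$ is strictly decreasing on $(0,1]$ and $\varphi\circ\psi$ is the identity on $[0,\varphi(0))$, one gets $\varphi(x)<\tfrac{\varphi(0)}{h_A(t)}$, hence $\varphi(x)h_A(t)<\varphi(0)$; and since $h_A(t)\ge\tfrac{\max\{1-t,t\}}{t}\ge\tfrac1t-1$, also $\bigl(\tfrac1t-1\bigr)\varphi(x)<\varphi(0)$. Consequently $\ff^t(x)\in(0,1)$, $\varphi(\ff^t(x))=\bigl(\tfrac1t-1\bigr)\varphi(x)$, and $\tfrac{\varphi(x)}{\varphi(x)+\varphi(\ff^t(x))}=t$, so that the argument of $G_A$ at $y=\ff^t(x)$ in eq.~\eqref{eq:alternative_markov_kernel} is exactly $t$.

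Next I would isolate the ``good'' $x$. The map $x\mapsto\tfrac{1}{\varphi(x)h_A(t)}$ is injective on $(0,1)$ and $\gamma$ has at most countably many atoms, so for all but countably many $x$ in the stated interval $\gamma$ has no point mass at $\tfrac{1}{\varphi(x)h_A(t)}$; fix such an $x$. The inner endpoint $y\mapsto\tfrac{1}{\varphi(x)\,h_A(\varphi(x)/(\varphi(x)+\varphi(y)))}$ is a composition of continuous maps ($\varphi$ on $(0,1]$, the ratio, and $h_A$) and takes the value $\tfrac{1}{\varphi(x)h_A(t)}$ at $y=\ff^t(x)$, which by the choice of $x$ is not an atom of $\gamma$; hence $y\mapsto H_x(y)$ is continuous at $\ff^t(x)$. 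On the other hand, as $y\uparrow\ff^t(x)$ the ratio increases continuously to $t$, so $G_A\!\bigl(\tfrac{\varphi(x)}{\varphi(x)+\varphi(y)}\bigr)\to G_A(t-)$. Passing to the left-hand limit in eq.~\eqref{eq:alternative_markov_kernel} and subtracting it from the value at $\ff^t(x)$ gives
$$K_{\psi,A}(x,\{\ff^t(x)\})=H_x(\ff^t(x))\bigl(G_A(t)-G_A(t-)\bigr).$$

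It remains to identify the two factors. By eq.~\eqref{eq:rel_deriv_psi_gamma} the numerator and denominator of $H_x(\ff^t(x))$ equal $-D^-\psi(\varphi(x)h_A(t))$ and $-D^-\psi(\varphi(x))$, both strictly positive since $\varphi(x)h_A(t)$ and $\varphi(x)$ lie in $(0,\varphi(0))$; hence $H_x(\ff^t(x))>0$. For $G_A$, continuity of $A$ together with the monotonicity of $D^+A$ gives $G_A(t-)=A(t)+D^-A(t)(1-t)$, so $G_A(t)-G_A(t-)=(D^+A(t)-D^-A(t))(1-t)$; combining eq.~\eqref{eq:right_der_pickands_measure} with $\vartheta([0,s])\uparrow\vartheta([0,t))$ as $s\uparrow t$ yields $D^+A(t)-D^-A(t)=2\vartheta(\{t\})$, hence $G_A(t)-G_A(t-)=2(1-t)\vartheta(\{t\})$. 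Since $1-t>0$, this is strictly positive exactly when $\vartheta(\{t\})>0$, which proves the equivalence. I expect the main work to be the one-sided-limit bookkeeping in the third paragraph — in particular, checking that after discarding the countably many ``bad'' $x$ the Williamson factor $H_x$ is genuinely (two-sidedly) continuous at $\ff^t(x)$, so that the entire jump of $K_{\psi,A}(x,\cdot)$ is carried by $G_A$; once this is done, identifying that jump with $2(1-t)\vartheta(\{t\})$ is immediate from eq.~\eqref{eq:right_der_pickands_measure}.
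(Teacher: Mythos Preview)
Your proof is correct and follows essentially the same approach as the paper: exclude the countably many $x$ for which $\gamma$ has an atom at $\tfrac{1}{\varphi(x)h_A(t)}$, use the factorisation \eqref{eq:alternative_markov_kernel} to see that the jump of $K_{\psi,A}(x,\cdot)$ at $\ff^t(x)$ equals $H_x(\ff^t(x))\bigl(G_A(t)-G_A(t-)\bigr)$, then identify the two factors as a strictly positive Williamson term and $2(1-t)\vartheta(\{t\})$. Your additional bookkeeping (verifying $\ff^t(x)\in(0,1)$ and $\tfrac{\varphi(x)}{\varphi(x)+\varphi(\ff^t(x))}=t$) is a welcome explicit check that the paper leaves implicit.
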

\begin{proof}
Let $t \in (0,1)$ be arbitrary and let $J$ denote the set of all point masses of $\gamma$. Again, $J$ is at most countable infinite and $J = \emptyset$ holds if $\gamma$ has no point masses. It is straightforward to verify that $\gamma$ has a point mass in $\frac{1}{\varphi(x)h_A(t)}$ if, and only if $x \in \bigcup_{s \in J}\left\{\psi\left(\frac{1}{sh_A(t)}\right)\right\}$. We fix $x \in \left(\psi\left(\frac{\varphi(0)}{h_A(t)}\right),1\right)\setminus \bigcup_{s \in J}\left\{\psi\left(\frac{1}{sh_A(t)}\right)\right\}$ and obtain that in this case $\gamma$ has no point mass in $\frac{1}{\varphi(x)h_A(t)}$. Applying this fact and using eq. \eqref{eq:alternative_markov_kernel} again yields that
 \begin{align*}
     K_{\psi,A}(x,\{\ff^t(x)\}) &= K_{\psi,A}(x,[0,\ff^t(x)]) - K_{\psi,A}(x,[0,\ff^t(x))) \\&=
     \frac{\int_{[0,\frac{1}{\varphi(x)h_A(t)}]} u \mathrm{d}\gamma(u)}{\int_{[0,\frac{1}{\varphi(x)}]} u \mathrm{d}\gamma(u)}G_A(t) - \frac{\int_{[0,\frac{1}{\varphi(x)h_A(t)})} u \mathrm{d}\gamma(u)}{\int_{[0,\frac{1}{\varphi(x)}]} u \mathrm{d}\gamma(u)}G_A(t-) \\&=
     \underbrace{\frac{\int_{[0,\frac{1}{\varphi(x)h_A(t)}]} u \mathrm{d}\gamma(u)}{\int_{[0,\frac{1}{\varphi(x)}]} u \mathrm{d}\gamma(u)} (1-t)}_{=: w(x,t)}2\vartheta(\{t\}),
 \end{align*}
whereby in the last line we applied eq. \eqref{eq:right_der_pickands_measure} and used that $G_A(u) = D^+A(u)(1-u) + A(u)$ for every $u \in (0,1)$. Since $\psi\left(\frac{\varphi(0)}{h_A(t)}\right) < x$, we obtain that $\frac{1}{\varphi(0)} < \frac{1}{\varphi(x) h_A(t)}$ and therefore $w(x,t) > 0$ follows. The desired equivalence is now obvious.
\end{proof}
Lemma \ref{lem:kernel_jump_on_graph_arch} and Lemma \ref{lem:kernel_jump_on_graph_evc} are now used to prove the following theorem, stating that the discrete component $\mu_{C}^{dis}$ of an Archimax copula $C$ is non-degenerated if, and only if the corresponding $\gamma \in \mathcal{P}_{\mathcal{W}}$ or the corresponding $\vartheta \in \mathcal{P}_{\mathcal{A}}$ has a point mass.
\begin{theorem}
   Let $\gamma \in \mathcal{P}_\mathcal{W}$, $\vartheta \in \mathcal{P}_\mathcal{A}$ and $C$ be the corresponding Archimax copula. Then the following assertions are equivalent:
   \begin{itemize}
       \item[$(i)$] $\mu_C^{dis}(\mathbb{I}^2) > 0$;
       \item[$(ii)$] $\gamma$ has a point mass $z \in (0,\infty)$ or $\vartheta$ has a point mass $t \in (0,1)$.
   \end{itemize}
   Moreover, if $\mu_C^{dis}$ is non-degenerated, then its mass is concentrated on graphs $\Gamma(\fg^t)$ or $\Gamma(\ff^s)$ for some $s \in (0,1)$ or $t \in [0,1)$.
\end{theorem}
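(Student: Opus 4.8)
The plan is to read everything off the explicit Markov kernel $K_{\psi,A}$ of Theorem \ref{thm:markov_kernel}, used in the product form eq. \eqref{eq:alternative_markov_kernel}, namely $K_{\psi,A}(x,[0,y])=H_x(y)\,G_A(\omega_x(y))$ with $\omega_x(y):=\tfrac{\varphi(x)}{\varphi(x)+\varphi(y)}$ and $H_x$ as in eq. \eqref{eq:function_H_x}. The central step is a classification of the atoms of the slice measures $K_{\psi,A}(x,\cdot)$: \emph{for all but countably many $x\in(0,1)$, every atom of $K_{\psi,A}(x,\cdot)$ equals either $\fg^{t}(x)$ for some $t\in[0,1)$ with $\tfrac{1}{\varphi(t)}$ a point mass of $\gamma$, or $\ff^{s}(x)$ for some $s\in(0,1)$ which is a point mass of $\vartheta$.} Granting this, $(i)\Rightarrow(ii)$ and the concentration statement come essentially for free, while $(ii)\Rightarrow(i)$ reduces to Lemmas \ref{lem:kernel_jump_on_graph_arch} and \ref{lem:kernel_jump_on_graph_evc} (together with the trivial case $C=M$).

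For the classification, note first that $H_x$ and $y\mapsto G_A(\omega_x(y))$ are bounded non-decreasing functions on $\mathbb{I}$, so their product jumps at a point $y_0$ only if (at least) one of the two factors does. The maps $\omega_x$ and $e_x(y):=\bigl(\varphi(x)\,h_A(\omega_x(y))\bigr)^{-1}$ --- the upper endpoint of the interval $I_y$ in eq. \eqref{eq:function_H_x} --- are continuous, so a jump of $H_x$ at $y_0$ forces $\gamma$ to have a point mass at $z_0:=e_x(y_0)$; as $z_0\in\mathrm{supp}(\gamma)\subseteq[\tfrac{1}{\varphi(0)},\infty)$, one may set $t_0:=\psi(1/z_0)\in[0,1)$ (so that $\tfrac{1}{\varphi(t_0)}=z_0$), and from $h_A(\omega_x(y_0))=\tfrac{\varphi(t_0)}{\varphi(x)}\ge1$ together with the genuine invertibility of $h_A$ on $(0,R)$ (available as soon as $x\neq t_0$) one unwinds $\omega_x(y_0)=h_A^{[-1]}\bigl(\tfrac{\varphi(t_0)}{\varphi(x)}\bigr)$ into $y_0=\fg^{t_0}(x)$ via eq. \eqref{eq:fct_g_t}, exactly as in the proof of Lemma \ref{lem:kernel_jump_on_graph_arch}. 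Likewise, a jump of $y\mapsto G_A(\omega_x(y))$ forces a discontinuity of $G_A$, i.e., by eq. \eqref{eq:right_der_pickands_measure}, a point mass $s_0:=\omega_x(y_0)\in(0,1)$ of $\vartheta$, and $\tfrac{\varphi(x)}{\varphi(x)+\varphi(y_0)}=s_0$ rearranges into $y_0=\ff^{s_0}(x)$ by eq. \eqref{eq:fct_f}. Two boundary checks close the argument: $\mu_C(\mathbb{I}\times\{0\})=C(1,0)=0$ and $\mu_C(\mathbb{I}\times\{1\})=0$ give, by disintegration, $K_{\psi,A}(x,\{0\})=K_{\psi,A}(x,\{1\})=0$ for $\lambda$-a.e.\ $x$; and $G_A$ is continuous at $1$ while $\omega_x$ never attains $0$ for $y>0$, so only point masses of $\vartheta$ strictly inside $(0,1)$ can be relevant.

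Set $I_1:=\{\psi(1/z)\colon\gamma(\{z\})>0\}\subseteq[0,1)$ and $I_2:=\{s\in(0,1)\colon\vartheta(\{s\})>0\}$; both are at most countable. If $I_1=I_2=\emptyset$, the classification shows that $K_{\psi,A}(x,\cdot)$ is atomless for $\lambda$-a.e.\ $x$, so the discrete sub-kernel $K_C^{dis}(x,\cdot)$ vanishes for $\lambda$-a.e.\ $x$ and $\mu_C^{dis}(\mathbb{I}^2)=\int_{\mathbb{I}}K_C^{dis}(x,\mathbb{I})\,\mathrm{d}\lambda(x)=0$; contraposition gives $(i)\Rightarrow(ii)$. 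For the concentration statement put $G:=\bigcup_{t\in I_1}\Gamma(\fg^{t})\cup\bigcup_{s\in I_2}\Gamma(\ff^{s})$, a countable union of graphs of Borel functions, so $G\in\mathcal{B}(\mathbb{I}^2)$; since $K_C^{dis}(x,\cdot)$ is carried by the $x$-section $G_x$ for $\lambda$-a.e.\ $x$, disintegration yields $\mu_C^{dis}(G^c)=\int_{\mathbb{I}}K_C^{dis}(x,(G^c)_x)\,\mathrm{d}\lambda(x)=0$, i.e., $\mu_C^{dis}$ concentrates its mass on $G$, which is a countable union of graphs of the asserted form. Finally, for $(ii)\Rightarrow(i)$: if $L=R$ then $A=A_M$ and $C=M$, so $\mu_C^{dis}(\mathbb{I}^2)=1>0$ while $(ii)$ holds trivially since $\vartheta=\delta_{1/2}$ has the point mass $\tfrac12\in(0,1)$; if $L<R$, then necessarily $L<\tfrac12$, and if $\gamma$ has a point mass $z_0$ then $t_0:=\psi(1/z_0)$ is an admissible parameter for Lemma \ref{lem:kernel_jump_on_graph_arch}, the interval $\bigl(t_0,\psi(\tfrac{L\varphi(t_0)}{1-L})\bigr)$ being non-degenerate because $\tfrac{L\varphi(t_0)}{1-L}<\varphi(t_0)$; that lemma then makes $K_{\psi,A}(x,\{\fg^{t_0}(x)\})>0$ on a set of positive Lebesgue measure, whence $\mu_C^{dis}(\mathbb{I}^2)>0$. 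If instead $\vartheta$ has a point mass $t_0\in(0,1)$, the same argument with Lemma \ref{lem:kernel_jump_on_graph_evc} over the non-degenerate interval $\bigl(\psi(\tfrac{\varphi(0)}{h_A(t_0)}),1\bigr)$ applies.

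The main obstacle is the atom classification of the second paragraph: one must argue carefully that a jump of the product $H_x\cdot(G_A\circ\omega_x)$ is inherited from a single factor, pin that jump down exactly onto a graph $\Gamma(\fg^{t})$ or $\Gamma(\ff^{s})$ --- distinguishing the generic regime $\omega_x(y_0)\le R$, in which $h_A^{[-1]}$ is a genuine inverse, from the exceptional value $x=t_0$ at which $\omega_x(y_0)>R$ may occur --- and exclude atoms at $y\in\{0,1\}$ for $\lambda$-a.e.\ $x$. The downstream steps, in particular the non-degeneracy of the intervals appearing in Lemmas \ref{lem:kernel_jump_on_graph_arch} and \ref{lem:kernel_jump_on_graph_evc}, are routine bookkeeping.
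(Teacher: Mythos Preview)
Your proposal is correct and follows essentially the same route as the paper's proof: both arguments hinge on the product form $K_{\psi,A}(x,[0,y])=H_x(y)\,G_A(\omega_x(y))$ from eq.~\eqref{eq:alternative_markov_kernel}, trace atoms of $K_{\psi,A}(x,\cdot)$ to jumps of one of the two factors, identify those jumps with point masses of $\gamma$ (via $H_x$) or of $\vartheta$ (via $G_A$ and eq.~\eqref{eq:right_der_pickands_measure}), and then invoke Lemmas~\ref{lem:kernel_jump_on_graph_arch} and~\ref{lem:kernel_jump_on_graph_evc} for the converse direction. Your organization is slightly different---you front-load the atom classification and derive both $(i)\Rightarrow(ii)$ and the concentration statement from it, whereas the paper treats these two separately---and you are more explicit about excluding boundary atoms at $y\in\{0,1\}$ and about the countable exceptional set of $x$-values, but the substance is the same.
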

\begin{proof}
Let $\gamma \in \mathcal{P}_\mathcal{W}$, $\psi$ be its generator, $\vartheta \in \mathcal{P}_\mathcal{A}$ and $A$ be the the corresponding Pickands dependence function. Furthermore let $C_{\psi,A} \in \mathcal{C}_{am}$ with Markov kernel $K_{\psi,A}$ defined as in eq. \eqref{eq:markov-kernel} and let $L$, $R$ be defined according to eq. \eqref{eq:definition_L_R_measure}. At first, assume that $L = R$. Then $\vartheta = \delta_\frac{1}{2}$ and therefore $C_{\psi,A} = M$ holds. Since $\mu_M^{dis}(\mathbb{I}^2) = 1$, the desired equivalence follows.\\
    From now on we assume that $L< R$. If $z \in (0,\infty)$ is a point mass of $\gamma$, then using the fact that the corresponding $\varphi$ is strictly decreasing yields the existence of  $t \in [0,1)$ such that $z = \frac{1}{\varphi(t)}$ and therefore applying Lemma \ref{lem:kernel_jump_on_graph_arch} implies that $K_{\psi,A}(x,\{\fg^t(x)\}) > 0$ for $\lambda$-almost every $x \in \left(t, \psi\left(\frac{L\varphi(t)}{1-L}\right)\right)$.
    Assuming that $t \in (0,1)$ is a point mass of $\vartheta$ and using Lemma \ref{lem:kernel_jump_on_graph_evc}, we conclude that $K_{\psi,A}(x,\{\ff^t(x)\}) > 0$ for $\lambda$-almost every $x \in \left(\psi\left(\frac{\varphi(0)}{h_A(t)}\right),1\right)$. In either of the afore-mentioned cases disintegration proves $\mu_{C_{\psi,A}}^{dis}(\mathbb{I}^2) > 0$.\\
    We assume that $\mu_C^{dis}(\mathbb{I}^2) > 0$, then by definition there exists a set $\Lambda \in \mathcal{B}(\mathbb{I})$ with $\lambda(\Lambda) > 0$ such that for every $x \in \Lambda \cap (0,1)$ there exists a $y := y_x \in [\max\{\fg^0(x),g^L(x)\},g^R(x)]$ with $K_{\psi,A}(x,\{y\}) > 0$. Using eq. \eqref{eq:alternative_markov_kernel} yields that
    \begin{align*}
       0 < K_{\psi,A}(x,\{y\}) = H_x(y)\cdot G_A\left(\frac{\varphi(x)}{\varphi(x) + \varphi(y)}\right) - H_x(y-)\cdot G_A\left(\frac{\varphi(x)}{\varphi(x) + \varphi(y)}-\right)
    \end{align*}
    with $H_x(y) = \frac{\int_{I_y}u \mathrm{d}\gamma(u)}{\int_{I_1}u \mathrm{d}\gamma(u)}$ and $I_y = \left[0,\frac{1}{\varphi(x)h_A\left(\frac{\varphi(x)}{\varphi(x) + \varphi(y)}\right)}\right]$. Thus, $u = \frac{1}{\varphi(x)h_A\left(\frac{\varphi(x)}{\varphi(x) + \varphi(y)}\right)}$ is a point mass of $\gamma$ or $s = \frac{\varphi(x)}{\varphi(x) + \varphi(y)}$ is a point of discontinuity of $G_A$, otherwise $K_{\psi,A}(x,\{y\}) = 0$ would hold. If $u$ is a point mass of $\gamma$, obviously $(ii)$ holds. Assuming that $s$ is a point of discontinuity of $G_A$ yields that $s$ is a point of discontinuity of $D^+A$ and therefore applying eq. \eqref{eq:right_der_pickands_measure} proves $(ii)$. \\
    Proving the last assertion and using that $\frac{\varphi(x)}{\varphi(x) + \varphi(y)} \leq R$ whenever  $y \leq g^R(x)$, we obtain that $u = \frac{1}{\varphi(x)h_A\left(\frac{\varphi(x)}{\varphi(x) + \varphi(y)}\right)}$ implies $y = f^{\psi(\frac{1}{u})}(x)$ and $s = \frac{\varphi(x)}{\varphi(x) + \varphi(y)}$ implies $y = g^s(x)$. Assuming that $x \in (0,1)$ and that $y \in [\max\{\fg^0(x),g^L(x)\},g^R(x)]$ is a point mass of $K_{\psi,A}(x,\cdot)$, combining the afore-mentioned arguments yields that $y = f^t(x)$ or $y = g^s(x)$ for some $t \in [0,1)$ or $s \in (0,1)$. Applying disintegration proves the desired result.
\end{proof}
The next example illustrates the results of this subsection.
\begin{Ex}\label{ex:jumps_markov_kernel_graphs}
Consider the distribution function defined by
$$
F_\gamma(z) := \begin{cases}
    0,& \text{ if } z \in [0,\frac{1}{8}),\\
    \frac{4}{7},& \text{ if } z \in [\frac{1}{8},1),\\
    \frac{1}{7}z+\frac{3}{7},& \text{ if } z \in [1,2),\\
    \frac{1}{14}z+\frac{9}{14},& \text{ if } z \in [2,3),\\
    1, & \text{ if } z \in [3,\infty)
\end{cases}
$$
for every $z \in [0, \infty)$. It is straightforward to see that $F_\gamma$ induces a unique $\gamma \in \mathcal{P}_\mathcal{W}$ which corresponds to the generator $\psi$ in Example \ref{ex:lvl_sets_example}. Moreover, we define the distribution function $F_\vartheta$ by
$$
F_\vartheta(t) := \begin{cases}
    0,& \text{ if } t \in [0,\frac{1}{8}),\\
    t,& \text{ if } t \in [\frac{1}{8},\frac{1}{4}),\\
    \frac{29}{64},& \text{ if } t \in [\frac{1}{4},\frac{3}{4}),\\
    1, & \text{ if } t \in [\frac{3}{4},1]
\end{cases}
$$
for every $t \in \mathbb{I}$. Obviously $F_\vartheta$ induces a unique $\vartheta \in \mathcal{P}_\mathcal{A}$, which corresponds to the Pickands dependence function $A$ in Example \ref{ex:ex_ha_and_inv}. Note, that $L$, $R$ defined according to eq. \eqref{eq:definition_L_R} are given by $L = \frac{1}{8}$ and $R = \frac{3}{4}$, respectively, and therefore $L<R$ holds. Let $C_{\psi,A}$ be the Archimax copula corresponding to $\psi$, $A$ and let $K_{\psi,A}$ be its Markov kernel defined as in eq. \eqref{eq:markov-kernel}.
Obviously $\gamma$ has point masses $\{\frac{1}{8},2,3\}$ and therefore applying Lemma \ref{lem:kernel_jump_on_graph_arch} yields that for every $t \in \{0,\frac{4}{7}, \frac{55}{84}\}$
$$
K_{\psi,A}(x,\{\fg^t(x)\}) > 0
$$
holds for all but at most countably many $x \in \left(t, \psi\left(\frac{L\varphi(t)}{1-L}\right)\right)$. Taking into account that $\vartheta$ has point masses $\{\frac{1}{8},\frac{1}{4},\frac{3}{4}\}$, using Lemma \ref{lem:kernel_jump_on_graph_evc}, we conclude that for every $s \in \{\frac{1}{8},\frac{1}{4},\frac{3}{4}\}$
$$
K_{\psi,A}(x,\{\ff^s(x)\}) > 0
$$
holds for all but at most countably many $x \in \left( \psi\left(\frac{\varphi(0)}{h_A(s)}\right),1\right)$. A sample of $C_{\psi,A}$ is depicted in 
Figure \ref{fig:sample_first_ex_archimax_10000}.
\begin{figure}[!ht]
	\centering
\includegraphics[width=1\textwidth]{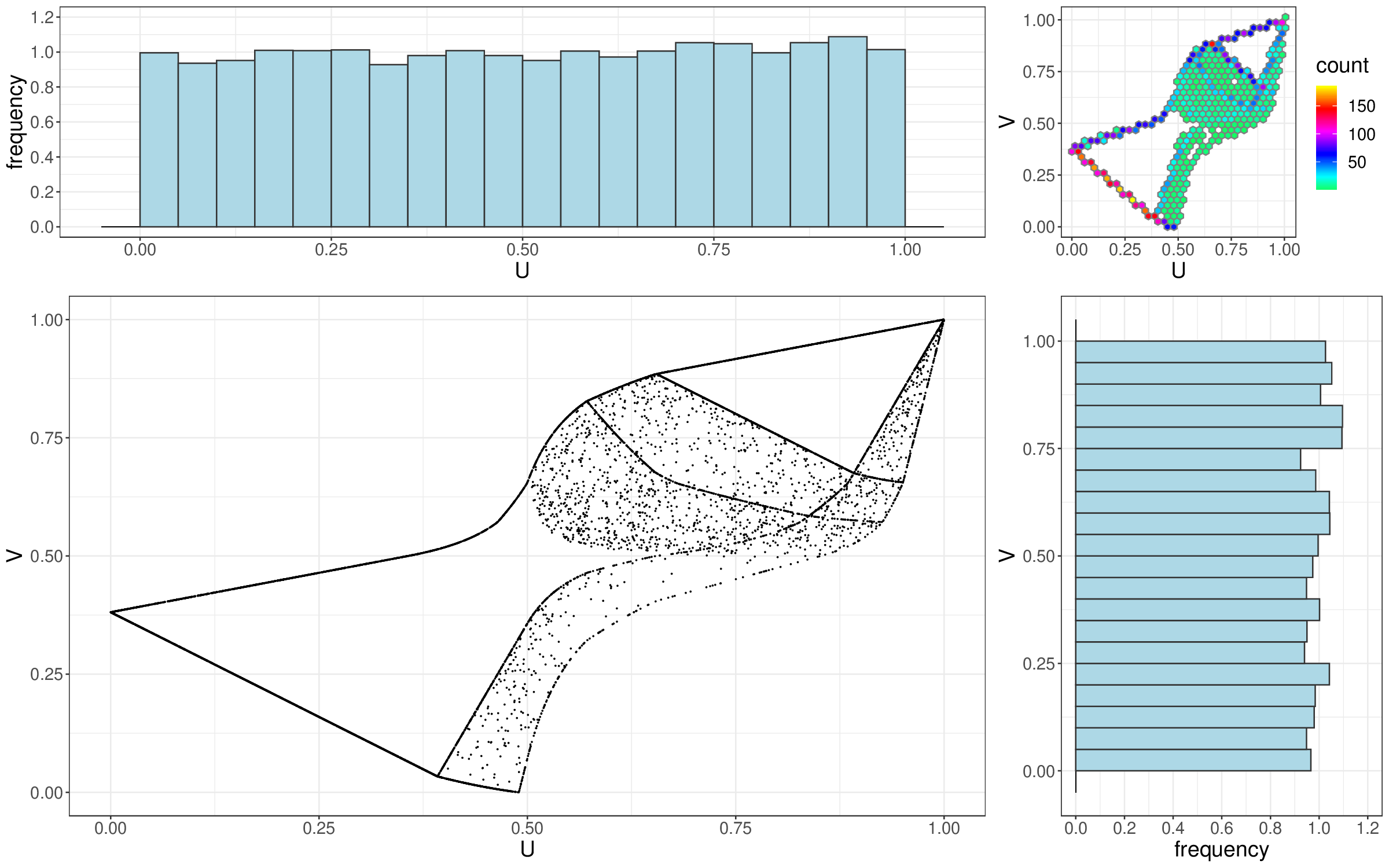}
	\caption{Sample of size 10.000 of the Archimax copula $C_{\psi,A}$ with $\psi$, $A$ being the generator and Pickands dependence function from Example \ref{ex:jumps_markov_kernel_graphs}, its histogram and
the two marginal histograms; sample generated via conditional inverse sampling.}\label{fig:sample_first_ex_archimax_10000}
\end{figure}
\end{Ex}


\section{Kendall distribution function and copula-mass of level sets}\label{sec:kendall_copula_mass}
Throughout this section we let $\gamma \in \mathcal{P}_\mathcal{W}$, $\psi$ be its corresponding generator, $\vartheta \in \mathcal{P}_\mathcal{A}$, $A$ be its corresponding Pickands dependence function and $C_{\psi,A} \in \mathcal{C}_{am}$. The goal of this section is to provide a new proof for the representation of the Kendall distribution function $F_{C_{\psi,A}}^K$ of $C_{\psi,A}$ in terms of $\psi$ and $A$. Building upon this representation, we then calculate the level set mass $\mu_{C_{\psi,A}}(L_{\psi,A}^t)$ for every $t \in [0,1)$ and show that $\mu_{C_{\psi,A}}(L_{\psi,A}^t)$ can be expressed in terms of $\gamma$ and $A$. Prior to proving the afore-mentioned results, we revisit representations of the Kendall distribution functions of Archimedean copulas and EVCs, as these are frequently used throughout this section.
We define the function $\beta_\psi \colon (0,1) \rightarrow (-\infty,0)$ by
\begin{equation}\label{eq:function_beta}
\beta_\psi(t) := D^-\psi(\varphi(t))\varphi(t)
\end{equation}
for every $t \in (0,1)$, set $\beta_\psi(0) := 0$ in the case that $\psi$ is strict, $\beta_\psi(0) := D^-\psi(\varphi(0))\varphi(0)$ in the case that $\psi$ is non-strict and set $\beta_\psi(1) := 0$. Moreover, we observe that, according to \citep{mult_arch, neslehova}, the mass of the $t$-level set $L_\psi^t$ of $C_\psi \in \mathcal{C}_{ar}$ with generator $\psi$ and pseudo-inverse $\varphi$ is given by
\begin{equation}\label{eq:mass_t-lvl_set_archimedean}
\mu_{C_\psi}(L_{\psi}^t) =  \gamma\left(\left\{\frac{1}{\varphi(t)}\right\}\right) = \beta_\psi(t-) - \beta_\psi(t)
\end{equation}
for every $t \in \mathbb{I}$ with the convention that $\beta_\psi(0-) = 0$. Again, following \citep{mult_arch, neslehova}, the Kendall distribution function of $C_\psi$ is given by
\begin{equation}\label{eq:kendall_arch}
F_\psi^K(t) = \gamma\left(\left[0,\frac{1}{\varphi(t)}\right]\right) = t - \beta_\psi(t)
\end{equation}
for every $t \in \mathbb{I}$.
Furthermore, according to \citep{ghoudi1998}, Kendall's tau $\tau_A := \tau(C_A)$ of $C_A$ can be written as
\begin{equation}\label{eq:kendalls_tau_evc}
    \tau_A = \int_{\mathbb{I}}\frac{t(1-t)}{A(t)} \mathrm{d}D^+A(t)
\end{equation}
and the Kendall distribution function $F_A^K := F_{C_A}^K$ is given by
\begin{equation}\label{eq:kendalls_dist_evc}
    F_A^K(t) = t - (1-\tau_A)t \log(t)
\end{equation}
for every $t \in (0,1]$ and $F_A^K(0) = 0$.\\
A representation of the Kendall distribution function $F_{\psi,A}^K$ of $C_{\psi,A}$ has already been provided in \citep{caperaa2000,joe}.
Taking into account that the proof in \citep{caperaa2000} is technically quite involved, we come up with a different proof for the explicit representation of $F_{\psi,A}^K$, relaying on the lower $t$-cut $[C_{\psi,A}]_t$ of $C_{\psi,A}$, eq. \eqref{eq:markov-kernel} and disintegration.
\begin{theorem}\label{thm:upper_t_cut_kendall}
    Let $\psi \in \mathbf{\Psi}$, $A \in \mathcal{A}$, $C_{\psi,A} \in \mathcal{C}_{am}$, $\beta_\psi$ be defined as in eq. \eqref{eq:function_beta}, $\tau_A$ be Kendall's tau of $C_A \in \mathcal{C}_{ev}$ and $F_\psi^K$ be the Kendall distribution function of $C_\psi \in \mathcal{C}_{ar}$. Then the
    Kendall distribution function $F_{\psi,A}^K$ of $C_{\psi,A}$ is given by 
\begin{equation}\label{eq:kendall_archimax}
F_{\psi,A}^K(t) = t + \beta_\psi(t)(\tau_A-1) = F_\psi^K(t) + \beta_\psi(t)\tau_A,
\end{equation}
for every $t \in (0,1]$. Furthermore, $F_{\psi,A}(0) = 0$ holds in the case that $\psi$ is strict and $F_{\psi,A}(0) = \beta_\psi(0)(\tau_A -1)$ holds in the case that $\psi$ is non-strict.
\end{theorem}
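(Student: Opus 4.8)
The plan is to compute $F_{\psi,A}^K(t)=\mathbb{P}(C_{\psi,A}(X,Y)\le t)=\mu_{C_{\psi,A}}([C_{\psi,A}]_t)$ by disintegration (eq.~\eqref{eq:DI}) against the explicit Markov kernel $K_{\psi,A}$ from Theorem~\ref{thm:markov_kernel}. First I would determine the vertical sections of the lower $t$-cut. Since $C_{\psi,A}(x,y)\le\min\{x,y\}$, for $x\in[0,t]$ one has $C_{\psi,A}(x,y)\le x\le t$ for every $y$, hence $([C_{\psi,A}]_t)_x=\mathbb{I}$. For $x\in(t,1)$ the map $y\mapsto C_{\psi,A}(x,y)$ is continuous, non-decreasing, runs from $0$ to $x>t$, and by Lemma~\ref{lem:t-lvl_sets}(i) the column $\{x\}\times\mathbb{I}$ meets $L_{\psi,A}^t$ only in the single point $(x,\fg^t(x))$; consequently $([C_{\psi,A}]_t)_x=[0,\fg^t(x)]$. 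The column $x=t$ (where the section is $\mathbb{I}$, not $[0,\fg^t(t)]$) and the endpoint $x=1$ are $\lambda$-null and may be discarded, so disintegration gives
\begin{equation*}
F_{\psi,A}^K(t)=t+\int_{(t,1)}K_{\psi,A}\bigl(x,[0,\fg^t(x)]\bigr)\,\mathrm{d}\lambda(x).
\end{equation*}

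Next I would evaluate the integrand with eq.~\eqref{eq:markov-kernel}. Since $[C_{\psi,A}]_0\subseteq[C_{\psi,A}]_t$ we have $\fg^0(x)\le\fg^t(x)$, so $(x,\fg^t(x))\in\Upsilon_0^{(1)}$. Writing $s_x:=\tfrac{\varphi(x)}{\varphi(x)+\varphi(\fg^t(x))}$ and using the definition~\eqref{eq:fct_g_t} of $\fg^t$ together with $\varphi\circ\psi=\mathrm{id}$ on $[0,\varphi(0))$ (valid because $\fg^t(x)\ge t>0$ forces $\xi^t(x)=\varphi(\fg^t(x))<\varphi(0)$), one obtains $s_x=h_A^{[-1]}\bigl(\tfrac{\varphi(t)}{\varphi(x)}\bigr)$; moreover $\tfrac{\varphi(t)}{\varphi(x)}>1$ for $x\in(t,1)$, so $s_x\in(0,R)$, $h_A(s_x)=\tfrac{\varphi(t)}{\varphi(x)}$ and $\varphi(x)h_A(s_x)=\varphi(t)$. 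Thus $K_{\psi,A}(x,[0,\fg^t(x)])=\dfrac{D^-\psi(\varphi(t))}{D^-\psi(\varphi(x))}\,G_A\!\bigl(h_A^{[-1]}(\tfrac{\varphi(t)}{\varphi(x)})\bigr)$. Now I would substitute $u=\varphi(x)$ — legitimate because $\psi$ restricted to $[0,\varphi(t)]$ is a strictly decreasing, absolutely continuous bijection onto $[t,1]$ with $\psi'=D^-\psi$ a.e.\ and $D^-\psi<0$ on $(0,\varphi(t))\subset(0,\varphi(0))$, so that the factor $1/D^-\psi(\varphi(x))$ cancels the Jacobian — and then rescale $u=\varphi(t)v$. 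This yields
\begin{equation*}
F_{\psi,A}^K(t)=t-D^-\psi(\varphi(t))\varphi(t)\,c_A=t-\beta_\psi(t)\,c_A,\qquad c_A:=\int_0^1 G_A\!\bigl(h_A^{[-1]}(1/v)\bigr)\,\mathrm{d}v,
\end{equation*}
where the constant $c_A$ depends only on $A$, not on $\psi$ or $t$.

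It remains to identify $c_A=1-\tau_A$. The cleanest route is to specialize to $\psi(z)=\exp(-z)$, for which $C_{\psi,A}=C_A\in\mathcal{C}_{ev}$ and a direct computation gives $\beta_\psi(t)=t\log t$; the formula just derived then reads $F_{C_A}^K(t)=t-c_A\,t\log t$, and comparison with the known representation $F_A^K(t)=t-(1-\tau_A)t\log t$ from eq.~\eqref{eq:kendalls_dist_evc}, evaluated at any $t\in(0,1)$ with $t\log t\ne0$, forces $c_A=1-\tau_A$. Substituting back gives $F_{\psi,A}^K(t)=t+\beta_\psi(t)(\tau_A-1)$ for all $t\in(0,1]$, and the second equality follows from $F_\psi^K(t)=t-\beta_\psi(t)$ (eq.~\eqref{eq:kendall_arch}). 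Finally, for $t=0$: when $\psi$ is strict, $[C_{\psi,A}]_0=(\{0\}\times\mathbb{I})\cup(\mathbb{I}\times\{0\})$ is $\mu_{C_{\psi,A}}$-null, so $F_{\psi,A}^K(0)=0=\beta_\psi(0)(\tau_A-1)$; when $\psi$ is non-strict, right-continuity of the distribution function $F_{\psi,A}^K$ together with left-continuity of $D^-\psi$ (so that $\beta_\psi(t)\to D^-\psi(\varphi(0))\varphi(0)=\beta_\psi(0)$ as $t\downarrow0$) yields $F_{\psi,A}^K(0)=\beta_\psi(0)(\tau_A-1)$.

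The main obstacle I anticipate is the bookkeeping around the change of variables and the degenerate situations rather than any deep difficulty: one must verify that $K_{\psi,A}(x,[0,\fg^t(x)])$ is genuinely governed by the middle branch of eq.~\eqref{eq:markov-kernel} (i.e.\ $\fg^0(x)\le\fg^t(x)$ and $\xi^t(x)<\varphi(0)$), justify the substitution $u=\varphi(x)$ in spite of $\psi$ having at most countably many non-differentiability points and $D^-\psi$ possibly vanishing near $0$ (handled by absolute continuity of convex generators and strict monotonicity of $\psi$ on $[0,\varphi(0))$), and check that the uniform formula also covers $L=R$, where $C_{\psi,A}=M$, $\tau_A=1$ and indeed $c_A=0$. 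The identity $c_A=1-\tau_A$ could alternatively be proved directly via the substitution $s=h_A^{[-1]}(1/v)$ and Stieltjes integration by parts against $\mathrm{d}D^+A$, but deducing it from the already-cited EVC Kendall formula avoids that computation entirely.
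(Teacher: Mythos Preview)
Your proposal is correct and follows essentially the same route as the paper: disintegrate $\mu_{C_{\psi,A}}([C_{\psi,A}]_t)$ against the explicit Markov kernel, use the identity $\varphi(x)h_A(s_x)=\varphi(t)$ for $s_x=h_A^{[-1]}(\varphi(t)/\varphi(x))$, and change variables $u=\varphi(x)$ to factor out $\beta_\psi(t)$ times a constant depending only on $A$, which is then identified as $1-\tau_A$ via the known EVC Kendall formula~\eqref{eq:kendalls_dist_evc}. The only notable difference is that the paper carries out one further substitution $s=h_A^{[-1]}(v)$ to rewrite $c_A$ as the integral $\int_{\mathbb{I}}\frac{A(s)-sD^+A(s)}{A(s)^2}G_A(s)\,\mathrm{d}\lambda(s)$ and then invokes the Ghoudi--Khoudraji--Rivest representation of $F_A^K$ in that form, whereas you short-circuit this by specializing your own formula to $\psi(z)=e^{-z}$ and reading off $c_A$ directly from~\eqref{eq:kendalls_dist_evc}; your version is slightly cleaner but relies on the same external input.
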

\begin{proof}
Let $\psi \in \mathbf{\Psi}$, $A \in \mathcal{A}$ and $C_{\psi,A} \in \mathcal{C}_{am}$. To simplify notation, we set $C := C_{\psi,A}$ throughout this proof.
At first, note that in the case that $L = R$ (whereby $L$, $R$ are defined as in eq. \eqref{eq:definition_L_R}), $C = M$ holds and therefore $F_{\psi,A}^K(t) = F_{M}^K(t) = t$ for every $t \in \mathbb{I}$, as well as $\tau_A = 1$, which proves eq. \eqref{eq:kendall_archimax}.\\
From now on we assume that $L < R$.
Moreover, we assume that $t \in (0,1]$ in the case that $\psi$ is strict and $t \in \mathbb{I}$ in the case that $\psi$ is non-strict.
It is straightforward to prove that the lower $t$-cut of $C$ defined as in eq. \eqref{eq:lower_t_cut} can be written as
$$
[C]_t = \{(x,y) \in [t,1] \times \mathbb{I} \colon y \in [0,\fg^t(x)]\} \cup ([0,t] \times \mathbb{I}),
$$
whereby the function $f^t$ is defined as in eq. \eqref{eq:fct_g_t}. Applying eq. \eqref{eq:markov-kernel} and disintegration yields that
\begin{align}\label{eq:proof_kendall_first}
   \nonumber F_{\psi,A}^K(t) &= \mu_{C}([C]_t) =
   \int_{[0,t]} K_{\psi,A}(x,\mathbb{I}) \mathrm{d}\lambda(x) + \int_{(t,1]} K_{\psi,A}(x,[0,\fg^t(x)]) \mathrm{d}\lambda(x)\\&=
   t + D^-\psi(\varphi(t))\underbrace{\int_{I_t}\frac{G_A\left(h_A^{[-1]}\left(\frac{\varphi(t)}{\varphi(x)}\right)\right)}{D^-\psi(\varphi(x))}\mathrm{d}\lambda(x)}_{=:q},
    \end{align}
    whereby $I_t := \left(t,\psi\left(\frac{L\varphi(t)}{1-L}\right)\right]$. In the last equality of eq. \eqref{eq:proof_kendall_first} we used that $\fg^t(x) = t$ for every $x \in \left[\psi\left(\frac{L\varphi(t)}{1-L}\right),1\right]$.
   Taking into account that the functions $\varphi$ and $h_A^{[-1]}$ are convex and therefore locally Lipschitz continuous, applying change of coordinates for locally Lipschitz continuous functions \citep[Theorem 3]{Hajlasz1993} with $z = \varphi(x)$, $v = \frac{\varphi(t)}{z}$ and $s = h_A^{[-1]}(v)$ implies that
   \begin{align}\label{eq:proof_kendall}
        \nonumber q &= -\int_{[\frac{L\varphi(t)}{1-L},\varphi(t)]}G_A\left(h_A^{[-1]}\left(\frac{\varphi(t)}{z}\right)\right)\mathrm{d}\lambda(z) \\&=
        \nonumber-\varphi(t)\int_{[1,\frac{1-L}{L}]}\frac{G_A\left(h_A^{[-1]}(v)\right)}{v^2}\mathrm{d}\lambda(v) \\& = 
        \nonumber \varphi(t)\int_{[L,R]}\frac{D^+h_A(s)}{h_A(s)^2}G_A(s) \mathrm{d}\lambda(s) \\& =
        \nonumber \varphi(t)\int_{\mathbb{I}}\frac{D^+h_A(s)}{h_A(s)^2}G_A(s) \mathrm{d}\lambda(s)\\& =
        -\varphi(t)\int_{\mathbb{I}}\frac{A(s) - s D^+A(s)}{A(s)^2}G_A(s) \mathrm{d}\lambda(s),
    \end{align}
    whereby the penultimate equality follows form the fact that $D^+h_A(u) = 0$ for every $u \in (R,1]$ and $G_A(u) = 0$ for every $u \in (0,L)$. Furthermore, we used that $h_A(u)= \frac{A(u)}{u}$ for every $u \in (0,1]$ in the last equality.
   Applying \citep[Appendix]{ghoudi1998} yields that the Kendall distribution function $F_A^K$ of $C_A$ can be represented as
   $$
   F_A^K(w) = w - w\log(w)\int_\mathbb{I}\frac{A(s) - sD^+A(s)}{A(s)^2}G_A(s) \mathrm{d}\lambda(s)
   $$
   for every $w \in (0,1]$ and therefore eq. \eqref{eq:kendalls_dist_evc} implies that the following identity holds:
   \begin{align*}
   1 - \tau_A = \int_\mathbb{I}\frac{A(s) - sD^+A(s)}{A(s)^2}G_A(s) \mathrm{d}\lambda(s).
   \end{align*}
   Using this and taking into account eq. \eqref{eq:proof_kendall}, it follows that $q = -\varphi(t)(1-\tau_A)$. Thus, utilizing eq. \eqref{eq:function_beta}, eq. \eqref{eq:proof_kendall_first} simplifies to $F_{\psi,A}(t) = t + \beta_\psi(t)(\tau_A-1)$. Applying eq. \eqref{eq:kendall_arch} then implies eq. \eqref{eq:kendall_archimax}.\\
   If $\psi$ is strict, then obviously $F_{\psi,A}^K(0) = \mu_{C}([C]_0) = \mu_{C}(L_{\psi,A}^0) = 0$ holds. This proves the result.
\end{proof}
The mass of the $t$-level set $L_{\psi,A}^t$ of $C_{\psi,A}$ can now conveniently be calculated using Theorem \ref{thm:upper_t_cut_kendall}. Its mass can be represented in terms of the corresponding Williamson measure $\gamma$.
\begin{Cor}\label{cor:level-set-mass}
    Let $\gamma \in \mathcal{P}_\mathcal{W}$, $\psi$ be its corresponding generator, $\vartheta \in \mathcal{P}_\mathcal{A}$, $A \in \mathcal{A}$ be its Pickands dependence function and let $C_{\psi,A} \in \mathcal{C}_{am}$. Then the mass of the $t$-level set $L_{\psi,A}^t$ is given by
    \begin{equation}\label{eq:t-level-set-mass_archimax}
    \mu_{C_{\psi,A}}(L_{\psi,A}^t) =
    \gamma\left(\left\{\tfrac{1}{\varphi(t)}\right\}\right)(1-\tau_A)
    \end{equation}
    for every $t \in \mathbb{I}$.
\end{Cor}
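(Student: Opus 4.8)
The plan is to read off the level-set mass as the jump of the Kendall distribution function $F_{\psi,A}^K$ at $t$ and then to feed in the explicit representation obtained in Theorem \ref{thm:upper_t_cut_kendall}. Since $(X,Y)\sim C_{\psi,A}$ yields $F_{\psi,A}^K(s)=\mu_{C_{\psi,A}}(\{(x,y)\in\mathbb{I}^2\colon C_{\psi,A}(x,y)\le s\})$, and since $\bigcup_{s<t}\{C_{\psi,A}\le s\}=\{C_{\psi,A}<t\}$ and $\{C_{\psi,A}\le t\}\setminus\{C_{\psi,A}<t\}=L_{\psi,A}^t$, continuity of measures from below gives
\[
\mu_{C_{\psi,A}}\big(L_{\psi,A}^t\big)=F_{\psi,A}^K(t)-F_{\psi,A}^K(t-)
\]
for every $t\in(0,1]$, with the convention $F_{\psi,A}^K(0-):=0$ (which is legitimate because $\{C_{\psi,A}<0\}=\emptyset$).

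First I would substitute $F_{\psi,A}^K=F_\psi^K+\beta_\psi\tau_A$ from eq. \eqref{eq:kendall_archimax}, so that the jump decomposes as $\big(F_\psi^K(t)-F_\psi^K(t-)\big)+\tau_A\big(\beta_\psi(t)-\beta_\psi(t-)\big)$. By eq. \eqref{eq:kendall_arch} one has $F_\psi^K(s)=s-\beta_\psi(s)$ for every $s\in\mathbb{I}$, hence $F_\psi^K(t)-F_\psi^K(t-)=\beta_\psi(t-)-\beta_\psi(t)$, and eq. \eqref{eq:mass_t-lvl_set_archimedean} identifies exactly this difference with $\mu_{C_\psi}(L_\psi^t)=\gamma(\{1/\varphi(t)\})$. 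Combining the two contributions yields
\[
\mu_{C_{\psi,A}}\big(L_{\psi,A}^t\big)=\big(\beta_\psi(t-)-\beta_\psi(t)\big)(1-\tau_A)=\gamma\!\left(\left\{\tfrac{1}{\varphi(t)}\right\}\right)(1-\tau_A),
\]
which is eq. \eqref{eq:t-level-set-mass_archimax} for $t\in(0,1)$ (and, reading the convention $\beta_\psi(0-)=0$ into eq. \eqref{eq:mass_t-lvl_set_archimedean}, also for $t=0$ when $\psi$ is non-strict, using the value $F_{\psi,A}^K(0)=\beta_\psi(0)(\tau_A-1)$ supplied by Theorem \ref{thm:upper_t_cut_kendall} together with $\mu_{C_\psi}(L_\psi^0)=-\beta_\psi(0)$).

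There is no genuine obstacle here; the only thing requiring care is the bookkeeping at the two endpoints and the consistent use of the arithmetic conventions. For $t=1$ one has $L_{\psi,A}^1=\{(1,1)\}$, so the left-hand side vanishes because $\mu_{C_{\psi,A}}$ carries no point masses, while $1/\varphi(1)=\infty$ and $\gamma(\{\infty\})=0$ by convention make the right-hand side vanish as well. For $t=0$ with $\psi$ strict, $\mu_{C_{\psi,A}}(L_{\psi,A}^0)=F_{\psi,A}^K(0)=0$ and $1/\varphi(0)=0$ with $\gamma(\{0\})=0$ since $\gamma\in\mathcal{P}_{\mathcal{W}}$, so both sides are $0$. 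Thus the fiddly part is simply matching one-sided limits of $\beta_\psi$ with the point masses of $\gamma$ and checking that the two degenerate cases $t\in\{0,1\}$ are consistent with the conventions $1/0=\infty$ and $\gamma(\{0\})=\gamma(\{\infty\})=0$.
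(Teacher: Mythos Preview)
Your proof is correct and follows essentially the same route as the paper: identify $\mu_{C_{\psi,A}}(L_{\psi,A}^t)$ as the jump $F_{\psi,A}^K(t)-F_{\psi,A}^K(t-)$, plug in the representation from Theorem \ref{thm:upper_t_cut_kendall}, and invoke eq.~\eqref{eq:mass_t-lvl_set_archimedean} to convert $\beta_\psi(t-)-\beta_\psi(t)$ into $\gamma(\{1/\varphi(t)\})$. The only cosmetic difference is that the paper uses the form $F_{\psi,A}^K(t)=t+\beta_\psi(t)(\tau_A-1)$ directly (so the factor $(1-\tau_A)$ appears in one step), whereas you pass through $F_{\psi,A}^K=F_\psi^K+\beta_\psi\tau_A$ and then $F_\psi^K(s)=s-\beta_\psi(s)$; your endpoint discussion for $t\in\{0,1\}$ is also more explicit than the paper's.
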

\begin{proof}
Applying eq. \eqref{eq:mass_t-lvl_set_archimedean} and Theorem \ref{thm:upper_t_cut_kendall}, we obtain that
\begin{align}\label{eq:level_set_mass_archimax}
\nonumber\mu_{C_{\psi,A}}(L_{\psi,A}^t) &= F_{\psi,A}^K(t) - F_{\psi,A}^K(t-) \\&=
\nonumber(\beta_\psi(t-) - \beta_\psi(t))(1-\tau_A) \\&=
\gamma\left(\left\{\frac{1}{\varphi(t)}\right\}\right)(1-\tau_A)
\end{align}
holds for every $t \in \mathbb{I}$ with the convention that $F_{\psi,A}^K(0-) = 0$.
\end{proof}
Considering $\vartheta \in \mathcal{P}_\mathcal{W}$, with Pickands dependence function $A \in \mathcal{A}$ and $C_A \in \mathcal{C}_{ev}$ then, as show in Example \ref{ex:evc_fct_f_t}, for fixed $t \in (0,1)$ the function $\ff^t$ (defined as in eq. \eqref{eq:fct_f}) simplifies to $\ff^t(x) = x^{\frac{1}{t}-1}$ for every $x \in \mathbb{I}$. According to \citep{dietrich2024,evc-mass} the discrete component $\mu_{C_A}^{dis}$ (if non-degenerated) of $\mu_{C_A}$ always concentrates its mass on the graphs $\Gamma(\ff^t)$ for some $t \in (0,1)$. Moreover, the mass of $\Gamma(\ff^t)$ is given by
\begin{equation*}
\mu_{C_A}(\Gamma(\ff^t)) = \frac{2t(1-t)}{A(t)}\vartheta(\{t\}),
\end{equation*}
for every $t \in (0,1)$ \citep{dietrich2024,evc-mass}. The next theorem shows that regardless of the choice of $\gamma \in \mathcal{P}_\mathcal{W}$ and $\vartheta \in \mathcal{P}_\mathcal{A}$ (and therefore regardless of the choice of $A$ and $\psi$), surprisingly, the mass of $\Gamma(\ff^t)$ w.r.t. $\mu_{C_{\psi,A}}$ will also evaluate to $\frac{2t(1-t)}{A(t)}\vartheta(\{t\})$.
\begin{theorem}\label{thm:mass_graph_f_t}
    Let $\gamma \in \mathcal{P}_\mathcal{W}$, $\psi$ be its generator, $\vartheta \in \mathcal{P}_\mathcal{A}$, $A \in \mathcal{A}$ be the corresponding Pickands dependence function, $C_{\psi,A} \in \mathcal{C}_{am}$. Then the following equation holds for every $t \in (0,1)$:
    \begin{equation}\label{eq:mass_graph_f_t}
    \mu_{C_{\psi,A}}(\Gamma(\ff^t)) =
    \frac{2t(1-t)}{A(t)}\vartheta(\{t\}).
    \end{equation}
\end{theorem}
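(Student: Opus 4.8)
The plan is to evaluate $\mu_{C_{\psi,A}}(\Gamma(\ff^t))$ by disintegration, concentrating everything on the atom that the Markov kernel $K_{\psi,A}$ of eq.~\eqref{eq:markov-kernel} places on the curve $\ff^t$. Since the $x$-section of $\Gamma(\ff^t)$ is the singleton $\{\ff^t(x)\}$ for every $x \in \mathbb{I}$, disintegration (eq.~\eqref{eq:DI}) gives $\mu_{C_{\psi,A}}(\Gamma(\ff^t)) = \int_{\mathbb{I}} K_{\psi,A}(x,\{\ff^t(x)\})\,\mathrm{d}\lambda(x)$, so the entire problem reduces to computing this integral. First I would dispose of two easy cases: if $\vartheta(\{t\}) = 0$, then Lemma~\ref{lem:kernel_jump_on_graph_evc}, together with the fact that $K_{\psi,A}(x,\{\ff^t(x)\}) = 0$ for $\lambda$-almost every $x$ with $\varphi(x)h_A(t) \geq \varphi(0)$ (since then $\tfrac{1}{\varphi(x)h_A(t)} \leq \tfrac{1}{\varphi(0)}$ and $\gamma$ assigns no mass below $\tfrac{1}{\varphi(0)}$), forces the integrand to vanish almost everywhere, so both sides of eq.~\eqref{eq:mass_graph_f_t} are $0$; and if $L = R$ then $C_{\psi,A} = M$, $\ff^{1/2}$ is the main diagonal while $\ff^t$ meets it in a $\mu_M$-null set for $t \neq \tfrac12$, and a direct check settles eq.~\eqref{eq:mass_graph_f_t}. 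Hence I may assume $\vartheta(\{t\}) > 0$.

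Next I would reuse the computation carried out inside the proof of Lemma~\ref{lem:kernel_jump_on_graph_evc}: for all but at most countably many $x \in \left(\psi\!\left(\tfrac{\varphi(0)}{h_A(t)}\right),1\right)$ one has
$$
K_{\psi,A}(x,\{\ff^t(x)\}) = 2(1-t)\,\vartheta(\{t\})\,\frac{\int_{[0,\frac{1}{\varphi(x)h_A(t)}]} u\,\mathrm{d}\gamma(u)}{\int_{[0,\frac{1}{\varphi(x)}]} u\,\mathrm{d}\gamma(u)},
$$
while on the complementary interval $\left(0,\psi\!\left(\tfrac{\varphi(0)}{h_A(t)}\right)\right]$ the kernel again has no atom at $\ff^t(x)$ (same reasoning as above, up to the single boundary point). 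Invoking the Williamson representation eq.~\eqref{eq:rel_deriv_psi_gamma}, the quotient of integrals equals $\dfrac{D^-\psi(h_A(t)\varphi(x))}{D^-\psi(\varphi(x))}$, and therefore
$$
\mu_{C_{\psi,A}}(\Gamma(\ff^t)) = 2(1-t)\,\vartheta(\{t\}) \int_{\left(\psi(\varphi(0)/h_A(t)),\,1\right)} \frac{D^-\psi(h_A(t)\varphi(x))}{D^-\psi(\varphi(x))}\,\mathrm{d}\lambda(x).
$$
It thus remains only to show that this integral equals $\tfrac{1}{h_A(t)} = \tfrac{t}{A(t)}$ (recall $h_A(t) = A(t)/t$ from eq.~\eqref{eq:fct_h}).

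For the last integral I would run the same change of variables used in the proof of Theorem~\ref{thm:upper_t_cut_kendall}: substitute $z = \varphi(x)$, so $x = \psi(z)$ and $\mathrm{d}\lambda(x) = -D^-\psi(z)\,\mathrm{d}\lambda(z)$ — legitimate by local Lipschitz continuity of $\varphi$ and $\psi$ and \citep[Theorem 3]{Hajlasz1993}. The Jacobian factor $-D^-\psi(z)$ cancels the denominator, and as $x$ runs over $\left(\psi(\varphi(0)/h_A(t)),1\right)$ the variable $z$ runs over $\left(0,\varphi(0)/h_A(t)\right)$, leaving $-\int_{(0,\varphi(0)/h_A(t))} D^-\psi(h_A(t)z)\,\mathrm{d}\lambda(z)$. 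A second substitution $w = h_A(t)z$ turns this into $\tfrac{1}{h_A(t)}\int_{(0,\varphi(0))}(-D^-\psi(w))\,\mathrm{d}\lambda(w) = \tfrac{1}{h_A(t)}\bigl(\psi(0) - \psi(\varphi(0))\bigr) = \tfrac{1}{h_A(t)}$, by the fundamental theorem of calculus for the convex function $\psi$ together with the convention $\psi(\varphi(0)) = \psi(\infty) = 0$ in the strict case. Substituting back, $\mu_{C_{\psi,A}}(\Gamma(\ff^t)) = 2(1-t)\vartheta(\{t\})\cdot\tfrac{t}{A(t)} = \tfrac{2t(1-t)}{A(t)}\vartheta(\{t\})$, which is eq.~\eqref{eq:mass_graph_f_t}.

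The step I expect to be the main obstacle is making this change of variables rigorous: $\psi$ is only locally Lipschitz on $[0,\varphi(0))$, and in the strict case $\varphi(0) = \infty$, so the substitution domain is unbounded and one has to pass to the limit over compact subintervals (monotone convergence). One also has to be careful that the exceptional sets genuinely contribute nothing — the at most countably many ``bad'' $x$ from Lemma~\ref{lem:kernel_jump_on_graph_evc} and the single boundary point $\psi(\varphi(0)/h_A(t))$ are $\lambda$-null, and the portion of $\Gamma(\ff^t)$ lying strictly below the zero-curve $\fg^0$ (which can occur when $\psi$ is non-strict) carries no kernel mass precisely because $\gamma$ vanishes below $\tfrac{1}{\varphi(0)}$. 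All remaining steps are routine manipulations of the already-established kernel formula and the Williamson representation eq.~\eqref{eq:rel_deriv_psi_gamma}.
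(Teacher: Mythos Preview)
Your proposal is correct and follows essentially the same route as the paper: disintegrate over $x$, use the explicit jump formula for $K_{\psi,A}(x,\{\ff^t(x)\})$ derived in the proof of Lemma~\ref{lem:kernel_jump_on_graph_evc} (valid for all but countably many $x$), rewrite the $\gamma$-integral quotient as $D^-\psi(h_A(t)\varphi(x))/D^-\psi(\varphi(x))$, and evaluate the resulting integral by the two substitutions $z=\varphi(x)$ and $w=h_A(t)z$ to obtain $1/h_A(t)=t/A(t)$. The only difference is cosmetic: the paper does not split off the cases $\vartheta(\{t\})=0$ and $L=R$ beforehand, since the factor $D^+A(t)-D^+A(t-)=2\vartheta(\{t\})$ already makes the computation collapse to zero automatically in those situations.
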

\begin{proof}
Let $t \in (0,1)$ be arbitrary and $J$ be the set of all point masses of $\gamma$. Then proceeding analogously as in Lemma \ref{lem:kernel_jump_on_graph_evc} and fixing $x \in \left(\psi\left(\frac{\varphi(0)}{h_A(t)}\right),1\right) \setminus \bigcup_{s \in J}\{\psi(\frac{1}{sh_A(t)})\}$, $\gamma$ has no point mass in $\frac{1}{\varphi(x)h_A(t)}$ and therefore, utilizing eq. \eqref{eq:rel_deriv_psi_gamma}, $D^-\psi$ is continuous in $\varphi(x) h_A(t)$. Using the fact that $\lambda\left(\bigcup_{s \in J}\{\psi(\frac{1}{sh_A(t)})\}\right) = 0$, applying eq. \eqref{eq:markov-kernel}, disintegration and change of coordinates with $z = \varphi(x)$ and $v = zh_A(t)$, we conclude that
    \begin{align*}
    \mu_{C_{\psi,A}}(\Gamma(\ff^t)) &= 
     \int_{\mathbb{I}} K_{\psi,A}(x, \{\ff^t(x)\}) \mathrm{d}\lambda(x) \\&=
    \int_{\left(\psi\left(\frac{\varphi(0)}{h_A(t)}\right),1\right)} K_{\psi,A}(x, \{\ff^t(x)\}) \mathrm{d}\lambda(x) \\& =
        \int_{\left(\psi\left(\frac{\varphi(0)}{h_A(t)}\right),1\right)\setminus \bigcup_{s \in J}\{\psi(\frac{1}{sh_A(t)})\}} K_{\psi,A}(x, \{\ff^t(x)\}) \mathrm{d}\lambda(x) \\& =
    (1-t)(D^+A(t) - D^+A(t-))\int_{\left[\psi\left(\frac{\varphi(0)}{h_A(t)}\right),1\right]}\frac{D^-\psi(\varphi(x)h_A(t))}{D^-\psi(\varphi(x))} \mathrm{d}\lambda(x) \\& =
    -(1-t)(D^+A(t) - D^+A(t-))\int_{\left[0,\frac{\varphi(0)}{h_A(t)}\right]}D^-\psi(zh_A(t)) \mathrm{d}\lambda(z) \\&=
    -\frac{1-t}{h_A(t)}(D^+A(t) - D^+A(t-))\underbrace{\int_{[0,\varphi(0)]}D^-\psi(v) \mathrm{d}\lambda(v)}_{= -1} \\&=
    \frac{t(1-t)}{A(t)}(D^+A(t) - D^+A(t-))\\&=
    \frac{2t(1-t)}{A(t)}\vartheta(\{t\}),
    \end{align*}
whereby in the second equality we used that $K_{\psi,A}(x,\{g^t(x)\}) = 0$ whenever $x < \psi\left(\frac{\varphi(0)}{h_A(t)}\right)$ and in the last equality we used that, according to eq. \eqref{eq:right_der_pickands_measure}, $D^+A(t) - D^+A(t-) = 2\vartheta(\{t\})$ holds for every $t \in (0,1)$.
\end{proof}
\begin{figure}[H]
	\centering
\includegraphics[width=1\textwidth]{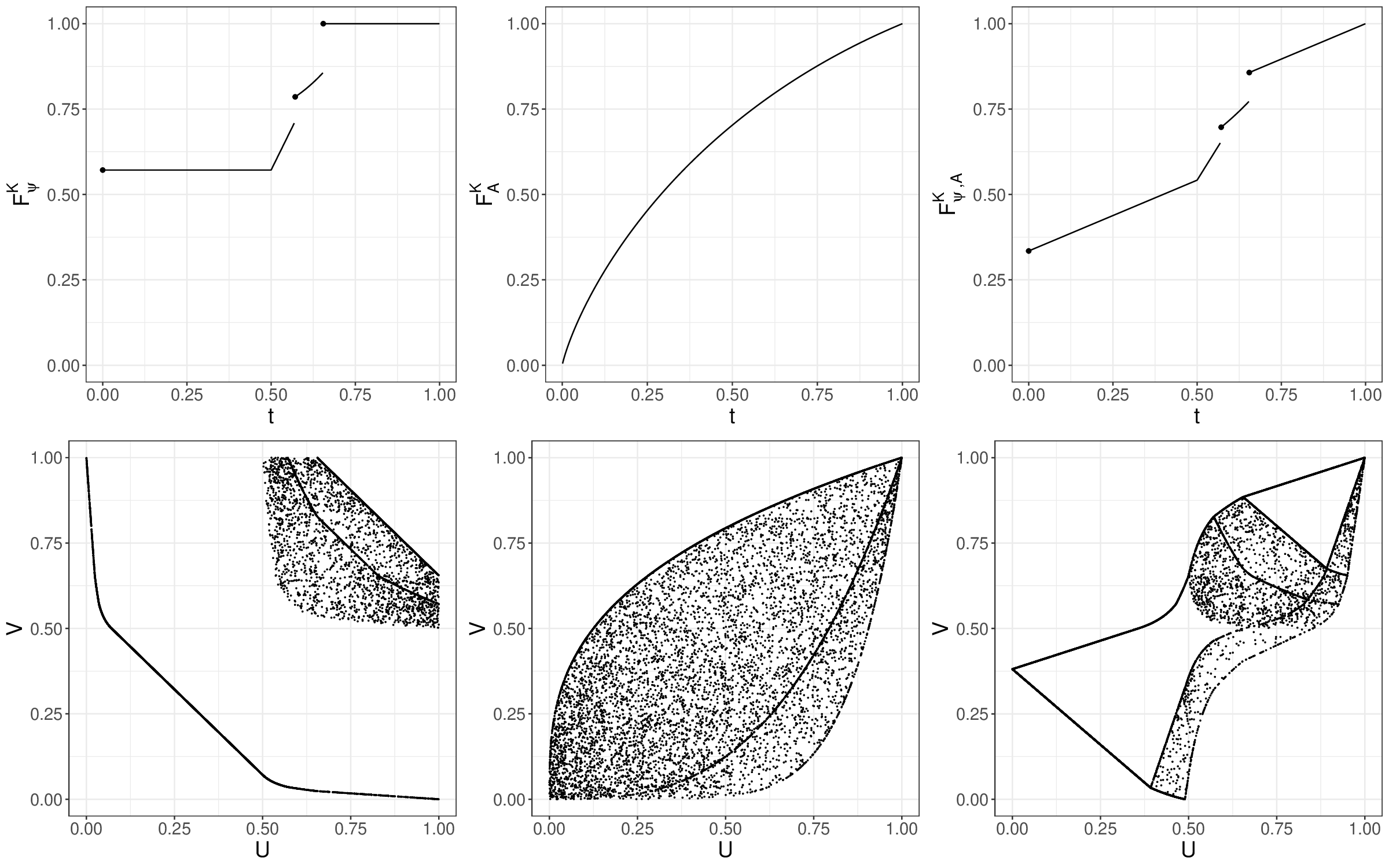}
	\caption{Plots of the Kendall distribution functions $F_\psi^K$ (top left) of $C_\psi$, $F_A^K$ (top middle) of $C_A$ and $F_{\psi,A}^K$ (top right) of $C_{\psi,A}$, as well as the samples of size 10.000 of the copulas $C_\psi$ (bottom left), $C_A$ (bottom middle) and $C_{\psi,A}$ (bottom right) with $\psi \in \mathbf{\Psi}$ defined as in Example \ref{ex:lvl_sets_example} and $A \in \mathcal{A}$ defined according to Example \ref{ex:ex_ha_and_inv}; samples generated via conditional inverse sampling.}\label{fig:first_ex_kendall_samples}
\end{figure}


\section{Absolute continuity and singularity aspects}\label{section:regularity_results}
\subsection{Absolutely continuous, discrete and singular Archimax copulas}
Considering $\gamma \in \mathcal{P}_{\mathcal{W}}$, it was proved in \citep{mult_arch} that absolute continuity, discreteness and singularity of $\gamma$ propagates to the corresponding $C_\gamma \in \mathcal{C}_{ar}$. In this section, the objective is to establish similar results for the larger family of Archimax copulas. To be more precise, considering $\gamma \in \mathcal{P}_\mathcal{W}$, $\vartheta \in \mathcal{P}_\mathcal{A}$ and letting $C$ be the corresponding Archimax copula, we show that absolute continuity, discreteness and singularity of $\gamma$ and $\vartheta$ carry over to the copula $C$.
Let $\psi$ and $A$ be the generator and Pickands dependence function corresponding to $\gamma$ and $\vartheta$, respectively, and let $h_A$ be defined as in eq. \eqref{eq:fct_h}. Then, recall that according to eq. \eqref{eq:function_H_x} for fixed $x \in (0,1)$ the function $H_x \colon \mathbb{I} \rightarrow \mathbb{I}$ is given by
\begin{equation*}
H_x(y) = \frac{\int_{I_y}t \mathrm{d}\gamma(t)}{\int_{I_1}t\mathrm{d}\gamma(t)}
\end{equation*}
with $I_y = \left[0,\frac{1}{\varphi(x)h_A\left(\frac{\varphi(x)}{\varphi(x) + \varphi(y)}\right)}\right]$ for every $y \in \mathbb{I}$. Using convexity of $\psi$ and the properties outlined in Lemma \ref{lem:regularity_fcts}, we deduce that $H_x$ is non-decreasing. Furthermore, the fact that $\gamma$ is a probability measure combined with the continuity of the functions $\varphi$ and $h_A$, ensures right-continuity of $H_x$. Observing that $H_x(1) = 1$, it follows that $H_x$ is a distribution function and consequently, $H_x$ induces a unique probability measure $\nu_{H_x}$.
    Moreover, fixing $x \in (0,1)$ and revisiting eq. \eqref{eq:alternative_markov_kernel}, the Markov kernel $K_{\psi,A}$ of $C_{\psi,A} = C$ is given by
    \begin{equation*}
    K_{\psi,A}(x,[0,y]) = H_x(y)\cdot G_A\left(\frac{\varphi(x)}{\varphi(x) + \varphi(y)}\right)
    \end{equation*}
    for every $y \in \mathbb{I}$. Notice that, using the properties in Lemma \ref{lem:regularity_fcts}, also $G_A$ is a distribution function and therefore corresponds to a unique probability measure $\kappa_{G_A}$ on $\mathbb{I}$.\\
    Establishing the main result of this section, we proceed as follows:
    We prove that absolute continuity/discreteness/singularity of the measures $\gamma$ and $\vartheta$ is preserved by the measures $\nu_{H_x}$ and $\kappa_{G_A}$, respectively. Hence, applying eq. \eqref{eq:alternative_markov_kernel}, these properties also extend to the measure $K_{\psi,A}(x,\cdot)$. Finally, by disintegrating, we obtain absolute continuity, discreteness, or singularity of the copula $C_{\psi,A}$.
Since the proof of the main theorem is technically quite involved, we split it into multiple lemmas.
The next two lemmas show that if the probability measures $\gamma$ and $\vartheta$ are absolutely continuous/discrete/singular, then so are the measures $\nu_{H_x}$ and $\kappa_{G_A}$. For the sake of readability the proofs are deferred to Appendix \ref{sec:appendix_proofs}.
\begin{Lemma}\label{lem:help_regularity_H_psi_A}
Let $\gamma \in \mathcal{P}_\mathcal{W}$, $\psi$ be the corresponding generator, $\varphi$ be its pseudo-inverse, $A \in \mathcal{A}$ and $x \in (0,1)$. Furthermore, let $H_x$ be defined according to eq. \eqref{eq:function_H_x} and $\nu_{H_x}$ be its corresponding probability measure. Then the following assertions hold:
\begin{itemize}
    \item[$(i)$] If $\gamma$ is absolutely continuous, then $\nu_{H_x}$ is absolutely continuous.
    \item[$(ii)$] If $\gamma$ is discrete, then $\nu_{H_x}$ is discrete.
    \item[$(iii)$] If $\gamma$ is singular, then $\nu_{H_x}$ is singular.
    \end{itemize}
\end{Lemma}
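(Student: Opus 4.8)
The plan is to realise $\nu_{H_x}$ as the image, under an explicit increasing homeomorphism of intervals, of a truncated and size-biased version of $\gamma$, and then to transfer absolute continuity, discreteness and singularity along that homeomorphism.

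First I would fix $x\in(0,1)$. Since $\psi$ is strictly decreasing on $[0,\varphi(0))$ and $\varphi(x)<\varphi(0)$, eq.~\eqref{eq:rel_deriv_psi_gamma} gives $c_x:=\int_{[0,1/\varphi(x)]}t\,\mathrm d\gamma(t)=-D^-\psi(\varphi(x))>0$, so $\gamma_x(B):=c_x^{-1}\int_{B\cap[0,1/\varphi(x)]}t\,\mathrm d\gamma(t)$ is a well-defined probability measure on $\mathcal B([0,\infty))$, concentrated on $[\tfrac1{\varphi(0)},\tfrac1{\varphi(x)}]$ (with $\tfrac1{\varphi(0)}=0$ when $\psi$ is strict) and satisfying $\gamma_x(\{0\})=0$. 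A direct check shows $\gamma_x$ inherits the type of $\gamma$: from $\mathrm d\gamma=f\,\mathrm d\lambda$ one gets $\mathrm d\gamma_x=c_x^{-1}\,t\,f(t)\mathbf{1}_{[0,1/\varphi(x)]}(t)\,\mathrm d\lambda(t)$; from $\gamma=\sum_i w_i\delta_{t_i}$ one gets $\gamma_x=\sum_{t_i\le 1/\varphi(x)}c_x^{-1}t_iw_i\,\delta_{t_i}$; and if $\gamma$ has no point masses and lives on a $\lambda$-null set $N$, then $t\,\mathrm d\gamma(t)$ has no point masses either (an atom at $t_0$ would force $t_0\gamma(\{t_0\})>0$) and $\gamma_x$ still lives on $N\cap[0,1/\varphi(x)]$.

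Next I would put $T_x(y):=\bigl(\varphi(x)\,h_A(\tfrac{\varphi(x)}{\varphi(x)+\varphi(y)})\bigr)^{-1}$ for $y\in\mathbb I$, with the standing conventions. Since $T_x(1)=1/\varphi(x)$ and $T_x$ is non-decreasing, the interval $I_y$ of eq.~\eqref{eq:function_H_x} is $[0,T_x(y)]\subseteq[0,1/\varphi(x)]$, so $H_x(y)=\gamma_x([0,T_x(y)])=F_{\gamma_x}(T_x(y))$, where $F_{\gamma_x}$ is the distribution function of $\gamma_x$. Using Lemma~\ref{lem:regularity_fcts} together with the strict monotonicity and continuity of $\varphi$ on $(0,1]$, one checks that $T_x$ is continuous and non-decreasing on $\mathbb I$, equals $1/\varphi(x)$ on $[\ff^R(x),1]$, restricts to a strictly increasing homeomorphism of $[a_x,b_x]$ onto $[\tfrac1{\varphi(0)},\tfrac1{\varphi(x)}]$ with $T_x(a_x)=1/\varphi(0)$, where $a_x:=\fg^0(x)$ (so $a_x=0$ if $\psi$ is strict) and $b_x:=\ff^R(x)$, and that $H_x\equiv 0$ on $[0,a_x)$ and $H_x\equiv 1$ on $[b_x,1]$. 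Hence $\nu_{H_x}$ is concentrated on $[a_x,b_x]$ and coincides there with the image of $\gamma_x$ under $\sigma_x:=(T_x|_{[a_x,b_x]})^{-1}$, i.e.\ $\nu_{H_x}(E)=\gamma_x(T_x(E\cap[a_x,b_x]))$ for Borel $E$. On every compact subinterval of $(a_x,b_x)\subseteq(0,1)$ the maps $\varphi$ and $h_A$ are Lipschitz (convex functions are locally Lipschitz on $(0,1]$, and $h_A\ge 1$) while $\tfrac{\varphi(x)}{\varphi(x)+\varphi(y)}$ stays in a compact subset of $(0,R]$, so $T_x$ is Lipschitz there; and since $\varphi$ and $h_A|_{(0,R)}$ are strictly decreasing and convex, their one-sided derivatives are $\lambda$-a.e.\ negative, whence $T_x'>0$ $\lambda$-a.e.\ on $(a_x,b_x)$. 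Consequently both $T_x$ and $\sigma_x$ are absolutely continuous on compact subintervals of $(a_x,b_x)$ and therefore map $\lambda$-null subsets of $(a_x,b_x)$ to $\lambda$-null sets (the change-of-variables formula \citep[Theorem 3]{Hajlasz1993} being the relevant tool for $\sigma_x$).

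The three assertions then follow. For $(ii)$, $\sigma_x$ is Borel, so the image $\nu_{H_x}$ of the discrete measure $\gamma_x$ is again discrete. For $(i)$ and $(iii)$, $\gamma$ having no point masses forces $\gamma_x(\{\tfrac1{\varphi(0)}\})=\gamma_x(\{\tfrac1{\varphi(x)}\})=0$, and computing the jumps of $H_x$ at $a_x$ and $b_x$ shows $\nu_{H_x}(\{a_x\})=\nu_{H_x}(\{b_x\})=0$, so it suffices to treat $\nu_{H_x}$ on $(a_x,b_x)$: if $\gamma$ (hence $\gamma_x$) is absolutely continuous, then $\lambda(E)=0$ gives $\lambda(T_x(E\cap(a_x,b_x)))=0$, so $\nu_{H_x}(E)=\gamma_x(T_x(E\cap(a_x,b_x)))=0$ and $\nu_{H_x}\ll\lambda$; if $\gamma$ is singular, then $\gamma_x$ is atomless and supported on a $\lambda$-null set $N'\subseteq(\tfrac1{\varphi(0)},\tfrac1{\varphi(x)})$, so $\sigma_x(N')$ is $\lambda$-null, $\nu_{H_x}$ is concentrated on it, and $\nu_{H_x}$ has no point masses as the injective image of an atomless measure. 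The technical heart — and the step I expect to cost the most work — is the careful verification in the previous paragraph that $T_x$ is an honest increasing homeomorphism with $T_x$ and $\sigma_x$ locally absolutely continuous, because of the degeneracies built into the conventions: when $\psi$ is strict one has $a_x=0$ with $\varphi$ unbounded near $0$, and when $R<1$ the function $h_A$, hence $T_x$, becomes constant beyond $\ff^R(x)$; both must be confined to the endpoints so they add nothing to $\nu_{H_x}$ beyond the (harmless, and only in the discrete case non-trivial) possible atoms at $a_x$ and $b_x$. Once this is established, the transfer of the three regularity properties is routine measure theory.
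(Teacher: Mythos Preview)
Your proposal is correct and rests on the same transformation the paper uses (your $T_x$ is the paper's $\iota$), but you organise the argument differently and, arguably, more cleanly: you introduce the size-biased truncation $\gamma_x$ once, recognise $\nu_{H_x}$ as the pushforward $\gamma_x^{\sigma_x}$, and then transfer all three regularity properties uniformly through the homeomorphism, whereas the paper treats $(i)$--$(iii)$ by separate ad~hoc computations on the distribution function $H_x$ (composition of absolutely continuous maps for $(i)$, an explicit step-function formula for $(ii)$, and a derivative-vanishing argument for $(iii)$). The one place where the paper is more direct is the regularity of the inverse: rather than inferring that $\sigma_x$ maps $\lambda$-null sets to $\lambda$-null sets from $T_x'>0$ a.e.\ via the area/change-of-variables formula, the paper writes the inverse explicitly as $\eta(z)=\fg^{\psi(1/z)}(x)$ and reads off local Lipschitz continuity from the convexity of $\psi$ and $h_A^{[-1]}$ (Lemma~\ref{lem:regularity_fcts}). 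Your route works too---$T_x$ locally Lipschitz with $T_x'>0$ a.e.\ does force the inverse to satisfy Lusin's~(N), hence to be absolutely continuous by Banach--Zarecki---but the paper's explicit formula for $\eta$ sidesteps that detour and is what actually makes the ``technical heart'' you flag quite short.
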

\begin{proof}
See Appendix \ref{proof:help_regularity_H_psi_A}.
\end{proof}
\begin{Lemma}\label{lem_help_regularity}
Let $\vartheta \in \mathcal{P}_\mathcal{A}$, $A \in \mathcal{A}$ be its corresponding Pickands dependence function, $G_A$ be defined according to eq. \eqref{eq:funct_f_a} and $\kappa_{G_A}$ be its corresponding probability measure. Then the following assertions hold:
\begin{itemize}
    \item[$(i)$] If $\vartheta$ is absolutely continuous, then $\kappa_{G_A}$ is absolutely continuous.
    \item[$(ii)$] If $\vartheta$ is discrete with $\vartheta(\{0\}) = 0 = \vartheta(\{1\})$, then $\kappa_{G_A}$ is discrete.
    \item[$(iii)$] If $\vartheta$ is singular, then $\kappa_{G_A}$ is singular.
    \end{itemize}
\end{Lemma}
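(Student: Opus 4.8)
The plan is to reduce $\kappa_{G_A}$ to an explicit measure built from $\vartheta$, and then read off the three assertions by the usual Lebesgue-decomposition bookkeeping, exactly paralleling the treatment of $\nu_{H_x}$ in Lemma \ref{lem:help_regularity_H_psi_A}. Writing $F_\vartheta(t):=\vartheta([0,t])$ and inserting the representation $A(t)=1-t+2\int_{[0,t]}F_\vartheta\,\mathrm{d}\lambda$ from eq. \eqref{eq:rel_pickands_fct_measure} and $D^+A(t)=2F_\vartheta(t)-1$ from eq. \eqref{eq:right_der_pickands_measure} into the definition of $G_A$ in eq. \eqref{eq:funct_f_a}, a one-line computation yields
\begin{equation*}
G_A(t)=2\int_{[0,t]}F_\vartheta(z)\,\mathrm{d}\lambda(z)+2(1-t)F_\vartheta(t)\qquad\text{for every }t\in[0,1),
\end{equation*}
together with $G_A(1)=1$ (so in particular $G_A(0)=2\vartheta(\{0\})$).

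The key step is to show that $\kappa_{G_A}$ coincides with the measure $B\mapsto 2\int_B(1-t)\,\mathrm{d}\vartheta(t)$ on $\mathcal{B}(\mathbb{I})$; equivalently, $\kappa_{G_A}$ is absolutely continuous with respect to $\vartheta$ with Radon--Nikodym derivative $t\mapsto 2(1-t)$. Since the intervals $[0,b]$, $b\in\mathbb{I}$, form a $\cap$-stable generator of $\mathcal{B}(\mathbb{I})$ containing $\mathbb{I}$, it suffices to check the identity there and invoke the uniqueness theorem for finite measures. Using $1-t=\lambda((t,1])$ together with Tonelli's theorem (and $F_\vartheta(z^-)=F_\vartheta(z)$ for $\lambda$-a.e.\ $z$ to treat the boundary term) one computes $2\int_{[0,b]}(1-t)\,\mathrm{d}\vartheta(t)=2\int_{[0,b]}F_\vartheta(z)\,\mathrm{d}\lambda(z)+2(1-b)F_\vartheta(b)=G_A(b)=\kappa_{G_A}([0,b])$; alternatively this identity is precisely the Lebesgue--Stieltjes product rule applied to $t\mapsto 1-t$ and $t\mapsto F_\vartheta(t)$. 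Note $2\int_{\mathbb{I}}(1-t)\,\mathrm{d}\vartheta(t)=2(1-\mathbb{E}(\vartheta))=1$, consistent with $\kappa_{G_A}\in\mathcal{P}(\mathbb{I})$.

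With $\mathrm{d}\kappa_{G_A}(t)=2(1-t)\,\mathrm{d}\vartheta(t)$ established, the assertions are immediate. For $(i)$, if $\vartheta$ has $\lambda$-density $p$, then $\kappa_{G_A}$ has $\lambda$-density $t\mapsto 2(1-t)p(t)$ and is hence absolutely continuous. For $(ii)$, the hypotheses force $\vartheta=\sum_i a_i\delta_{t_i}$ with all $t_i\in(0,1)$ and $\sum_i a_i=1$, so $\kappa_{G_A}=\sum_i 2(1-t_i)a_i\,\delta_{t_i}$ is concentrated on the countable set $\{t_i\}$ and therefore discrete. For $(iii)$, first $\kappa_{G_A}(\{t_0\})=2(1-t_0)\vartheta(\{t_0\})=0$ for every $t_0\in\mathbb{I}$, so $\kappa_{G_A}$ has no point masses; moreover, choosing $N\in\mathcal{B}(\mathbb{I})$ with $\lambda(N)=0$ and $\vartheta(N)=1$, we obtain $\kappa_{G_A}(\mathbb{I}\setminus N)=2\int_{\mathbb{I}\setminus N}(1-t)\,\mathrm{d}\vartheta(t)\leq 2\,\vartheta(\mathbb{I}\setminus N)=0$, whence $\kappa_{G_A}(N)=1=\kappa_{G_A}(\mathbb{I})$, i.e.\ $\kappa_{G_A}$ is singular.

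The only genuinely non-routine point is the second step: deriving $\mathrm{d}\kappa_{G_A}(t)=2(1-t)\,\mathrm{d}\vartheta(t)$ with the endpoints handled correctly --- in particular that $\kappa_{G_A}$ may carry the atom $G_A(0)=2\vartheta(\{0\})$ at $t=0$, and that the vanishing of the weight $2(1-t)$ at $t=1$ simply annihilates any mass $\vartheta$ places at $1$ (so the extra assumption $\vartheta(\{1\})=0$ in $(ii)$, while convenient, is not actually needed). Once this identity is in place, everything else is standard bookkeeping with the Lebesgue decomposition.
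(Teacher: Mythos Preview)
Your proof is correct and takes a genuinely different, more unified route than the paper. The paper treats the three cases separately: for $(i)$ it observes via eq.~\eqref{eq:right_der_pickands_measure} that $D^+A$ is absolutely continuous and $A$ Lipschitz, hence $G_A$ is absolutely continuous as a sum; for $(ii)$ it expands $A$ and $D^+A$ explicitly for a discrete $\vartheta=\sum_i\alpha_i\delta_{s_i}$ and simplifies to $G_A(t)=2\sum_i\alpha_i(1-s_i)\delta_{s_i}([0,t])$; for $(iii)$ it checks continuity of $G_A$ and computes $G_A'(t)=DD^+A(t)(1-t)=0$ $\lambda$-a.e. Your approach instead establishes the single identity $\mathrm{d}\kappa_{G_A}(t)=2(1-t)\,\mathrm{d}\vartheta(t)$ once and for all, after which each assertion is a one-line consequence of the corresponding property of $\vartheta$. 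This is cleaner and more conceptual: the absolute continuity of $\kappa_{G_A}$ with respect to $\vartheta$ (with bounded Radon--Nikodym derivative) makes the transfer of regularity transparent, and your formula in fact recovers the paper's explicit expression in case $(ii)$ as a special case. Your closing observation that the hypothesis $\vartheta(\{1\})=0$ is not actually needed for $(ii)$ is also correct --- the weight $2(1-t)$ kills any atom at $1$ --- and the same reasoning shows that $\vartheta(\{0\})=0$ is likewise unnecessary for $(ii)$, since an atom of $\vartheta$ at $0$ merely produces the atom $\kappa_{G_A}(\{0\})=2\vartheta(\{0\})$. The paper's case-by-case argument has the minor advantage of avoiding the Fubini/product-rule computation, but your derivation is both shorter and yields strictly more information.
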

\begin{proof}
See Appendix \ref{proof:reg_G_A}.
\end{proof}
\begin{Rem}
The restriction $\vartheta(\{0\}) = 0 = \vartheta(\{1\})$ in the second assertion of Lemma \ref{lem_help_regularity} is necessary, since the Pickands dependence measure $\vartheta = \frac{1}{2}\delta_0 + \frac{1}{2}\delta_1$ corresponds to the Pickands dependence function $A(t) = 1$ for every $t \in \mathbb{I}$. Therefore, $G_A(t) = 1$ holds for every $t \in \mathbb{I}$, which is obviously an absolutely continuous function on $\mathbb{I}$.
\end{Rem}
The next lemma shows that if $\gamma \in \mathcal{P}_\mathcal{W}$ and $\vartheta \in \mathcal{P}_\mathcal{A}$ are both absolutely continuous then the corresponding Archimax copula $C$ is absolutely continuous as well.
\begin{Lemma}\label{lem:gamma_theta_abs_cont}
Let $\gamma \in \mathcal{P}_\mathcal{W}$, $\vartheta \in \mathcal{P}_\mathcal{A}$ and $C$ be the corresponding Archimax copula. Then the following assertion holds:
$$
\text{If } \gamma \text{ and } \vartheta \text{ are absolutely continuous, then } C \in \mathcal{C}_{am}^{abs}.
$$
\end{Lemma}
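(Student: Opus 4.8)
The plan is to combine the factorised form of the Markov kernel in eq.~\eqref{eq:alternative_markov_kernel} with the two preceding auxiliary lemmas and then conclude by disintegration. Write $\psi$, $A$ for the generator and Pickands dependence function associated with $\gamma$, $\vartheta$, so that $C = C_{\psi,A}$, and let $\varphi$ denote the pseudo-inverse of $\psi$. For fixed $x\in(0,1)$, eq.~\eqref{eq:alternative_markov_kernel} reads
\[
K_{\psi,A}(x,[0,y]) \;=\; H_x(y)\cdot G_A\!\left(\tfrac{\varphi(x)}{\varphi(x)+\varphi(y)}\right) \;=\; H_x(y)\cdot (G_A\circ T_x)(y), \qquad y\in\mathbb{I},
\]
where $T_x(y):=\varphi(x)/(\varphi(x)+\varphi(y))$. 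Since $C$ is a copula, $\mu_C(\mathbb{I}\times\{0\}) = \mu_C(\mathbb{I}\times\{1\}) = 0$, so $K_{\psi,A}(x,\cdot)$ carries no atom at $0$ or $1$ for $\lambda$-almost every $x$; hence it suffices to show that $K_{\psi,A}(x,\cdot)\ll\lambda$ on $(0,1)$ for every $x\in(0,1)$, since the remaining base points $\{0,1\}$ form a $\lambda$-null set.

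The core step will be to prove that, for fixed $x\in(0,1)$, the distribution function $y\mapsto K_{\psi,A}(x,[0,y])$ is absolutely continuous (in the Vitali sense) on every compact subinterval $[a,b]\subset(0,1)$. Absolute continuity of $\gamma$ yields, via Lemma~\ref{lem:help_regularity_H_psi_A}$(i)$, that $\nu_{H_x}\ll\lambda$, so $H_x$ --- being the distribution function of $\nu_{H_x}$ --- is an absolutely continuous function on $\mathbb{I}$. Absolute continuity of $\vartheta$ yields, via Lemma~\ref{lem_help_regularity}$(i)$, that $\kappa_{G_A}\ll\lambda$, so $G_A$ is an absolutely continuous function on $\mathbb{I}$. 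To pass to $G_A\circ T_x$, I will use that $\varphi$ is convex (as the pseudo-inverse of the convex generator $\psi$) and hence Lipschitz on $[a,b]$, while $z\mapsto \varphi(x)/(\varphi(x)+z)$ is Lipschitz on $[0,\infty)$ for the fixed $x$ (its derivative is bounded in modulus by $1/\varphi(x)$); thus $T_x$ is Lipschitz and strictly increasing on $[a,b]$, and the composition of an absolutely continuous function with a monotone Lipschitz function is absolutely continuous, so $G_A\circ T_x$ is absolutely continuous on $[a,b]$. Finally, a product of two absolutely continuous functions on a bounded interval is absolutely continuous, so $y\mapsto H_x(y)\,(G_A\circ T_x)(y) = K_{\psi,A}(x,[0,y])$ is absolutely continuous on $[a,b]$; letting $[a,b]\uparrow(0,1)$ gives $K_{\psi,A}(x,\cdot)\ll\lambda$ on $(0,1)$, as desired.

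It then follows that $K_{\psi,A}^{abs}(x,\cdot) = K_{\psi,A}(x,\cdot)$ and $K_{\psi,A}^{dis}(x,\cdot) = K_{\psi,A}^{sing}(x,\cdot) = 0$ for every $x\in(0,1)$, hence for $\lambda$-almost every $x\in\mathbb{I}$. Inserting this into the disintegration formula for the absolutely continuous component in eq.~\eqref{eq:def_abs_dis_sing_copula} gives
\[
\mu_C^{abs}(\mathbb{I}^2) \;=\; \int_{\mathbb{I}} K_{\psi,A}^{abs}(x,\mathbb{I})\,\mathrm{d}\lambda(x) \;=\; \int_{\mathbb{I}} 1\,\mathrm{d}\lambda(x) \;=\; 1,
\]
that is, $C\in\mathcal{C}_{am}^{abs}$. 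I expect the middle paragraph --- verifying absolute continuity of the composite distribution function $H_x\cdot(G_A\circ T_x)$ --- to be the main obstacle; it is precisely for this reason that the inheritance of absolute continuity by $\nu_{H_x}$ and $\kappa_{G_A}$ was isolated in Lemmas~\ref{lem:help_regularity_H_psi_A} and \ref{lem_help_regularity}, leaving here only the elementary stability of absolute continuity under products and under precomposition with monotone Lipschitz maps. Note that the hypothesis ``$\vartheta$ absolutely continuous'' automatically excludes the degenerate case $\vartheta = \delta_{1/2}$ (equivalently $C = M$), so no separate treatment of $L = R$ is required.
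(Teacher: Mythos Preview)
Your proposal is correct and follows essentially the same route as the paper: invoke Lemmas~\ref{lem:help_regularity_H_psi_A} and~\ref{lem_help_regularity} to obtain absolute continuity of $H_x$ and $G_A$, show that $G_A\circ T_x$ is absolutely continuous via the monotone (locally) Lipschitz map $T_x$, multiply, and disintegrate. Your restriction to compact subintervals $[a,b]\subset(0,1)$ before passing to the limit is a bit more explicit than the paper's argument, but the underlying idea is identical.
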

\begin{proof}
Let $\gamma \in \mathcal{P}_\mathcal{W}^{abs}$, $\psi$ be its corresponding generator with pseudo-inverse $\varphi$, $\vartheta \in \mathcal{P}_\mathcal{A}^{abs}$, $A$ be its Pickands dependence function, $C_{\psi,A} \in \mathcal{C}_{am}$ and $K_{\psi,A}$ be the Markov kernel of $C_{\psi,A}$ defined as in eq. \eqref{eq:markov-kernel}. We fix $x \in (0,1)$. Applying Lemma \ref{lem_help_regularity}, $G_A$ is absolutely continuous. Furthermore, using that the function $y \mapsto \frac{\varphi(x)}{\varphi(x) + \varphi(y)}$ is obviously absolutely continuous and strictly increasing yields that the function $y \mapsto G_A\left(\frac{\varphi(x)}{\varphi(x) + \varphi(y)}\right)$ is absolutely continuous as well, see \citep[Proposition 129]{pap2002}.\\
Since $\gamma$ is absolutely continuous, Lemma \ref{lem:help_regularity_H_psi_A} yields absolute continuity of the function $H_x$ (defined as in eq. \eqref{eq:function_H_x}) on $\mathbb{I}$.
Using that products of absolutely continuous functions on compact intervals are absolutely continuous and working with eq. \eqref{eq:alternative_markov_kernel}, we conclude that the probability measure $K_{\psi,A}(x,\cdot)$ is absolutely continuous. Disintegration finally implies that $\mu_{C_{\psi,A}}^{abs}(\mathbb{I}^2) = 1$, which proves the result.
\end{proof}
We prove a similar result for the case that $\gamma$ and $\vartheta$ are discrete:
\begin{Lemma}\label{lem:gamma_theta_discrete}
Let $\gamma \in \mathcal{P}_\mathcal{W}$, $\vartheta \in \mathcal{P}_\mathcal{A}$ with $\vartheta(\{0\}) = 0 = \vartheta(\{1\})$ and $C$ be the corresponding Archimax copula. Then the following assertion holds:
$$
\text{If } \gamma \text{ and } \vartheta \text{ are discrete, then } C \in \mathcal{C}_{am}^{dis}.
$$
\end{Lemma}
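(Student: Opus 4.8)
The plan is to run the argument of Lemma~\ref{lem:gamma_theta_abs_cont} verbatim with "absolutely continuous" replaced by "discrete" throughout, the only new ingredient being the elementary fact that products and monotone continuous reparametrisations of discrete distribution functions are again discrete (playing the role that \citep[Proposition~129]{pap2002} plays in the absolutely continuous case). First I would dispose of the degenerate case: if $L=R$ then $\vartheta=\delta_{1/2}$, hence $C=M$, whose Markov kernel is a Dirac kernel, so $\mu_M^{dis}(\mathbb{I}^2)=1$ and $M\in\mathcal{C}_{am}^{dis}$. From now on assume $L<R$, let $\psi,\varphi$ be the generator and its pseudo-inverse attached to $\gamma$ and $A$ the Pickands dependence function attached to $\vartheta$, and fix $x\in(0,1)$. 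By Lemma~\ref{lem:help_regularity_H_psi_A}$(ii)$ the measure $\nu_{H_x}$ induced by $H_x$ (eq.~\eqref{eq:function_H_x}) is discrete, and by Lemma~\ref{lem_help_regularity}$(ii)$ — this is where the standing hypothesis $\vartheta(\{0\})=0=\vartheta(\{1\})$ enters — the measure $\kappa_{G_A}$ induced by $G_A$ is discrete.

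Next I would check that $y\mapsto G_A\!\left(\frac{\varphi(x)}{\varphi(x)+\varphi(y)}\right)$ is again a discrete distribution function. The inner map $\phi_x(y):=\frac{\varphi(x)}{\varphi(x)+\varphi(y)}$ is continuous and strictly increasing on $\mathbb{I}$ (recall $\varphi$ is strictly decreasing on $(0,1]$ and right-continuous at $0$, hence strictly decreasing on all of $[0,1]$), so the measure associated with $G_A\circ\phi_x$ is, up to a possible atom at the left endpoint, the push-forward of $\kappa_{G_A}$ under the continuous strictly monotone inverse of $\phi_x$; push-forwards of discrete measures are discrete, so $G_A\circ\phi_x$ is discrete. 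By eq.~\eqref{eq:alternative_markov_kernel} we then have $K_{\psi,A}(x,[0,y])=H_x(y)\,G_A(\phi_x(y))$, a product of two discrete distribution functions. That a product of discrete distribution functions is discrete is the one place requiring a short self-contained argument: writing $\nu_{H_x}=\sum_i p_i\delta_{a_i}$ and the measure of $G_A\circ\phi_x$ as $\sum_j q_j\delta_{b_j}$, one reads off directly from the definition of the Lebesgue–Stieltjes measure that $H_x(\cdot)\,G_A(\phi_x(\cdot))$ is the distribution function of $\sum_{i,j}p_iq_j\,\delta_{\max\{a_i,b_j\}}$ (equivalently, it is the law of $\max\{U,V\}$ for independent discrete $U,V$), which is a discrete probability measure. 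Hence $K_{\psi,A}(x,\cdot)$ is discrete for every $x\in(0,1)$, i.e. $K_{\psi,A}^{dis}(x,\mathbb{I})=1$ for $\lambda$-almost every $x$.

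Finally, disintegration applied to the discrete sub-kernel (cf.~eq.~\eqref{eq:DI} and eq.~\eqref{eq:def_abs_dis_sing_copula}) yields $\mu_{C_{\psi,A}}^{dis}(\mathbb{I}^2)=\int_{\mathbb{I}}K_{\psi,A}^{dis}(x,\mathbb{I})\,\mathrm{d}\lambda(x)=1$, so $C=C_{\psi,A}\in\mathcal{C}_{am}^{dis}$. The main obstacle is the bookkeeping of the middle step — making precise that reparametrising $G_A$ by $\phi_x$ and multiplying by $H_x$ creates no singular-continuous part, in particular that whatever mass $G_A$ may carry near its endpoints is either killed by the convention $K_{\psi,A}(x,[0,y])=0$ for $y<\fg^0(x)$ when $\psi$ is non-strict or else collapses to a genuine atom — and extracting the "max of independent discrete variables is discrete" fact cleanly enough that the parallel singular statement (Lemma on the singular case) can invoke the identical mechanism.
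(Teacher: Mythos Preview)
Your proposal is correct and follows essentially the same route as the paper: fix $x\in(0,1)$, invoke Lemma~\ref{lem:help_regularity_H_psi_A}$(ii)$ and Lemma~\ref{lem_help_regularity}$(ii)$ to get that $H_x$ and $G_A$ induce discrete measures, argue that the reparametrisation $G_A\circ\phi_x$ stays discrete, multiply, and disintegrate. The only cosmetic differences are that the paper computes the atoms of $G_A\circ\phi_x$ explicitly as $\ff^{s_i}(x)$ rather than using your push-forward formulation, and it simply asserts ``products of discrete distribution functions are discrete'' where you supply the $\max\{U,V\}$ justification; your separate treatment of the $L=R$ case is harmless but not needed, since the explicit computation covers it.
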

\begin{proof}
Let $\gamma \in \mathcal{P}_\mathcal{W}^{dis}$, $\psi$ be its corresponding generator with pseudo-inverse $\varphi$, $\vartheta \in \mathcal{P}_\mathcal{A}^{dis}$ with $\vartheta(\{0\}) = 0 = \vartheta(\{1\})$, $A$ be its Pickands dependence function, $C_{\psi,A} \in \mathcal{C}_{am}$ and $K_{\psi,A}$ be the Markov kernel of $C_{\psi,A}$ according to eq. \eqref{eq:markov-kernel}.\\
We show that for fixed $x \in (0,1)$ the probability measure $K_{\psi,A}(x,\cdot)$ is discrete on $\mathbb{I}$. Let $G_A$ be defined as in eq. \eqref{eq:funct_f_a} and $x\in (0,1)$ be arbitrary. We prove that the function $y \mapsto G_A\left(\frac{\varphi(x)}{\varphi(x) + \varphi(y)}\right)$ induces a discrete probability measure on $\mathbb{I}$.
Applying Lemma \ref{lem_help_regularity} yields discreteness of the corresponding probability measure $\kappa_{G_A}$ and thus, the existence of $\alpha_1,\alpha_2,... \in \mathbb{I}$ with $\sum_{i \in \mathbb{N}} \alpha_i = 1$ and $s_1,s_2,... \in (0,1)$ such that $G_A(t) = \sum_{i \in \mathbb{N}} \alpha_i\delta_{s_i}([0,t])$ holds for every $t \in \mathbb{I}$. Note, that in the case that $0<L$ or $R < 1$ holds, according to Lemma \ref{lem:regularity_fcts}, we obtain that $L \leq s_i$ or $s_i \leq R$. Using the fact that $s_i \in \left[0,\frac{\varphi(x)}{\varphi(x) + \varphi(y)}\right]$ is equivalent to $y \geq \ff^{s_i}(x)$ for every $y \in [\ff^L(x),\ff^R(x)]$ and $i \in \mathbb{N}$, implies that
$$
G_A\left(\frac{\varphi(x)}{\varphi(x) + \varphi(y)}\right) = \sum_{i \in \mathbb{N}}\alpha_i \mathbf{1}_{[\ff^{s_i}(x),1]}(y)
$$
for every $y \in [\ff^L(x),\ff^R(x)]$. Furthermore, using Lemma \ref{lem:regularity_fcts} again, $G_A\left(\frac{\varphi(x)}{\varphi(x) + \varphi(y)}\right) = 0$ holds for every $y \in [0,\ff^L(x))$ and $G_A\left(\frac{\varphi(x)}{\varphi(x) + \varphi(y)}\right) = 1$ holds for every $y \in [\ff^R(x),1]$. Overall we obtain that the distribution function $y \mapsto G_A\left(\frac{\varphi(x)}{\varphi(x) + \varphi(y)}\right)$ induces a unique, discrete probability measure on $\mathbb{I}$ with point masses $\alpha_i$ in $\ff^{s_i}(x)$.\\
Considering $H_x$ defined as in eq. \eqref{eq:function_H_x} and applying Lemma \ref{lem:help_regularity_H_psi_A}, it follows that $H_x$ induces a discrete probability measure $\nu_{H_x}$ on $\mathbb{I}$. Finally, utilizing that products of discrete distribution functions are discrete and using eq. \eqref{eq:alternative_markov_kernel}, we conclude that $K_{\psi,A}(x,\cdot)$ is a discrete probability measure on $\mathbb{I}$. Consequently, disintegration yields $C \in \mathcal{C}_{am}^{dis}$.
\end{proof}
At last, we prove that if $\gamma$ and $\vartheta$ are singular then the corresponding Archimax copula $C$ is singular.
\begin{Lemma}\label{lem:gamma_theta_singular}
Let $\gamma \in \mathcal{P}_\mathcal{W}$, $\vartheta \in \mathcal{P}_\mathcal{A}$ and $C$ be the corresponding Archimax copula. Then the following assertion holds:
$$
\text{If } \gamma \text{ and } \vartheta \text{ are singular, then } C\in\mathcal{C}_{am}^{sing}.
$$
\end{Lemma}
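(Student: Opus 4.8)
The plan is to follow the template of the proofs of Lemma~\ref{lem:gamma_theta_abs_cont} and Lemma~\ref{lem:gamma_theta_discrete}: I would fix $x \in (0,1)$, show that the probability measure $K_{\psi,A}(x,\cdot)$ is itself singular (no point masses, and full mass on a $\lambda$-null set), and then conclude by disintegration. Indeed, if $K_{\psi,A}(x,\cdot)$ is singular, then in its Lebesgue decomposition the absolutely continuous and discrete sub-kernels vanish, so $K_{\psi,A}^{sing}(x,\mathbb{I}) = 1$ for every $x \in (0,1)$, whence $\mu_C^{sing}(\mathbb{I}^2) = \int_{\mathbb{I}} K_{\psi,A}^{sing}(x,\mathbb{I})\,\mathrm{d}\lambda(x) = 1$, i.e.\ $C \in \mathcal{C}_{am}^{sing}$. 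As a preliminary, I would observe that a singular $\vartheta$ cannot equal $\delta_{1/2}$, so $L < R$ and the degenerate case $C = M$ is excluded from the outset.

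Let $\psi$, $\varphi$, $A$ denote the generator, its pseudo-inverse, and the Pickands dependence function attached to $\gamma$ and $\vartheta$, and recall from eq.~\eqref{eq:alternative_markov_kernel} that for fixed $x \in (0,1)$,
\[
K_{\psi,A}(x,[0,y]) = H_x(y)\,G_A\!\left(\phi_x(y)\right), \qquad \phi_x(y) := \tfrac{\varphi(x)}{\varphi(x)+\varphi(y)} .
\]
By Lemma~\ref{lem:help_regularity_H_psi_A}$(iii)$, singularity of $\gamma$ makes $\nu_{H_x}$ singular, so $H_x$ is continuous on $\mathbb{I}$ with $H_x(0) = 0$, $H_x(1) = 1$ and $H_x' = 0$ $\lambda$-a.e.; by Lemma~\ref{lem_help_regularity}$(iii)$, singularity of $\vartheta$ makes $\kappa_{G_A}$ singular, so $G_A$ is continuous on $\mathbb{I}$ with $G_A' = 0$ $\lambda$-a.e. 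Thus $K_{\psi,A}(x,[0,\cdot])$ is the product of the singular distribution function $H_x$ and the continuous non-decreasing function $y \mapsto G_A(\phi_x(y))$.

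The crucial step --- and the one I expect to be the main obstacle --- is to transfer the property ``$G_A' = 0$ $\lambda$-a.e.'' through the change of variable $\phi_x$, i.e.\ to show that $y \mapsto G_A(\phi_x(y))$ still has vanishing derivative $\lambda$-a.e.; this replaces the composition argument (\citep[Proposition 129]{pap2002}) used in the absolutely continuous case. Here convexity is essential: since $\psi$ and $\varphi$ are convex they are locally Lipschitz on the interiors of their domains, and since $\varphi(x)+\varphi(y)$ stays bounded away from $0$ and $\infty$ on compact $y$-intervals, the strictly increasing homeomorphism $\phi_x$ and its inverse $\phi_x^{-1}(s) = \psi\!\left(\varphi(x)\tfrac{1-s}{s}\right)$ are locally Lipschitz on the relevant open intervals. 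In particular $\phi_x^{-1}$ maps $\lambda$-null sets to $\lambda$-null sets, so if $E$ is a null set off which $G_A' = 0$, then $\{y : \phi_x(y) \in E\}$ is null, and at $\lambda$-a.e.\ $y$ the chain rule gives $(G_A\circ\phi_x)'(y) = G_A'(\phi_x(y))\,\phi_x'(y) = 0$. Consequently $F := K_{\psi,A}(x,[0,\cdot]) = H_x\cdot(G_A\circ\phi_x)$ is continuous (so $K_{\psi,A}(x,\cdot)$ has no point masses), non-constant (it increases from $F(0)=0$ to $F(1)=1$), and, by the product rule applied at the $\lambda$-a.e.\ points at which both factors are differentiable, satisfies $F' = 0$ $\lambda$-a.e. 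Hence $F$ is a singular distribution function and $K_{\psi,A}(x,\cdot)$ is a singular probability measure for every $x \in (0,1)$; disintegration as described above then yields $C \in \mathcal{C}_{am}^{sing}$.
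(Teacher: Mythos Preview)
Your proposal is correct and follows essentially the same route as the paper: fix $x\in(0,1)$, use Lemmas~\ref{lem:help_regularity_H_psi_A}$(iii)$ and~\ref{lem_help_regularity}$(iii)$ to get singularity of $H_x$ and $G_A$, push the property $G_A'=0$ $\lambda$-a.e.\ through the change of variable $\phi_x$ via local Lipschitz continuity of its inverse $\phi_x^{-1}(s)=\psi\bigl(\varphi(x)\tfrac{1-s}{s}\bigr)$, conclude that $K_{\psi,A}(x,\cdot)$ is singular as a product of two singular (continuous, $\lambda$-a.e.\ zero-derivative) distribution functions, and finish by disintegration. The paper argues the null-set preservation step by citing local absolute continuity of $\phi_x^{-1}$ and \citep[Theorem~3.41]{leoni2024}, which is the same mechanism you invoke via local Lipschitz.
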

\begin{proof}
Let $\gamma \in \mathcal{P}_\mathcal{W}^{sing}$, $\psi$ be its corresponding generator with pseudo-inverse $\varphi$, $\vartheta \in \mathcal{P}_\mathcal{A}^{sing}$, $A$ be its Pickands dependence function, $C_{\psi,A} \in \mathcal{C}_{am}$ and $K_{\psi,A}$ be its Markov kernel according to eq. \eqref{eq:markov-kernel}.\\
We prove that for fixed $x \in (0,1)$ the probability measure $K_{\psi,A}(x,\cdot)$ is singular on $\mathbb{I}$. Let $G_A$ be defined as in eq. \eqref{eq:funct_f_a} and $x \in (0,1)$ be arbitrary. We prove that the function $y \mapsto G_A\left(\frac{\varphi(x)}{\varphi(x) + \varphi(y)}\right)$ is singular.
Applying Lemma \ref{lem_help_regularity} yields that $G_A$ is singular and therefore defining the set
$$
\Lambda := \{s \in \mathbb{I} \colon G_A'(s) \text{ exists and } G_A'(s) = 0\},
$$
we conclude that $\lambda(\Lambda) = 1$. Denoting by $\iota$ the inverse function of $y \mapsto \frac{\varphi(x)}{\varphi(x) + \varphi(y)}$ and defining the set
$$
\Upsilon := \left\{y \in \mathbb{I} \colon \frac{\varphi(x)}{\varphi(x) + \varphi(y)} \in \Lambda\right\},
$$
we obtain that $\iota(z) = \psi\left(\left(\frac{1}{z}-1\right)\varphi(x)\right) = \ff^z(x)$ for every $z \in \left[\frac{\varphi(x)}{\varphi(x) + \varphi(0)},1\right]$ as well as $\Upsilon = \iota(\Lambda)$. Using that $\iota$ is locally absolutely continuous and applying \citep[Theorem 3.41]{leoni2024} yields that $\lambda(\Upsilon) = 1$. This implies that
$$
\frac{\partial}{\partial y}G_A\left(\frac{\varphi(x)}{\varphi(x) + \varphi(y)}\right) = G_A'\left(\frac{\varphi(x)}{\varphi(x) + \varphi(y)}\right)\frac{-\varphi(x)\varphi'(y)}{(\varphi(x) + \varphi(y))^2} = 0
$$
for every $y \in \Upsilon$. Observing that the function $y \mapsto G_A\left(\frac{\varphi(x)}{\varphi(x) + \varphi(y)}\right)$ is obviously continuous, we obtain that $y \mapsto G_A\left(\frac{\varphi(x)}{\varphi(x) + \varphi(y)}\right)$ induces a singular probability measure on $\mathbb{I}$.\\
Applying Lemma \ref{lem:help_regularity_H_psi_A}, we conclude that the function $H_x$ induces a singular probability measure on $\mathbb{I}$ as well. Using that products of singular distribution functions are singular and working with eq. \eqref{eq:alternative_markov_kernel}, we finally deduce that the probability measure $K_{\psi,A}(x,\cdot)$ is singular. Now disintegration yields the desired result.
\end{proof}
The next theorem summarizes Lemma \ref{lem:gamma_theta_abs_cont}, Lemma \ref{lem:gamma_theta_discrete} and Lemma \ref{lem:gamma_theta_singular}.
\begin{theorem}\label{thm:regularity_measures_copula}
Let $\gamma \in \mathcal{P}_\mathcal{W}$, $\vartheta \in \mathcal{P}_\mathcal{A}$ and $C$ be the corresponding Archimax copula. Then the following assertions hold:
    \begin{itemize}
        \item[$(i)$] If $\gamma$ and $\vartheta$ are absolutely continuous, then $C\in\mathcal{C}_{am}^{abs}$.
        \item[$(ii)$] If $\gamma$ is discrete and $\vartheta$ is discrete with $\vartheta(\{0\}) = 0 = \vartheta(\{1\})$, then $C\in\mathcal{C}_{am}^{dis}$.
        \item[$(iii)$] If $\gamma$ and $\vartheta$ are singular, then $C\in\mathcal{C}_{am}^{sing}$.
    \end{itemize}
\end{theorem}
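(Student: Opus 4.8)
The plan is to observe that each of the three assertions has already been isolated as a separate lemma, so the proof of the theorem amounts to assembling them. Concretely, assertion $(i)$ is exactly Lemma \ref{lem:gamma_theta_abs_cont}, assertion $(ii)$ is exactly Lemma \ref{lem:gamma_theta_discrete}, and assertion $(iii)$ is exactly Lemma \ref{lem:gamma_theta_singular}; invoking these three results in turn establishes the claim. Hence the proof of the theorem itself is a single paragraph of references, and all the substance lives in the lemmas.

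For completeness I would also recall the common strategy behind those three lemmas, since it explains why the statement splits into exactly these cases. Fixing $x \in (0,1)$, eq. \eqref{eq:alternative_markov_kernel} expresses the Markov kernel as the product
\[
K_{\psi,A}(x,[0,y]) = H_x(y)\cdot G_A\!\left(\frac{\varphi(x)}{\varphi(x)+\varphi(y)}\right),
\]
i.e. as a product of two distribution functions in $y$. Lemma \ref{lem:help_regularity_H_psi_A} transfers absolute continuity, discreteness or singularity of $\gamma$ to $H_x$, and Lemma \ref{lem_help_regularity} transfers the corresponding property of $\vartheta$ to $G_A$ (in the discrete case under the harmless normalization $\vartheta(\{0\}) = 0 = \vartheta(\{1\})$). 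Since $y \mapsto \varphi(x)/(\varphi(x)+\varphi(y))$ is absolutely continuous and strictly increasing on the relevant interval, composing $G_A$ with it preserves the regularity type, and since a product of two absolutely continuous (resp. discrete, resp. singular) distribution functions on a compact interval is again of that type, $K_{\psi,A}(x,\cdot)$ inherits the property for every $x \in (0,1)$. Disintegration via eq. \eqref{eq:DI} then passes the property from the kernel to $\mu_{C_{\psi,A}}$, giving $C \in \mathcal{C}_{am}^{abs}$, $C \in \mathcal{C}_{am}^{dis}$ or $C \in \mathcal{C}_{am}^{sing}$ respectively.

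The main obstacle is not in the theorem but in the singular case of the underlying lemmas: showing that composing a singular $G_A$ with the change of variables still yields a function with vanishing derivative almost everywhere and no point masses requires the Luzin $N$-property of locally absolutely continuous maps (this is where \citep[Theorem 3.41]{leoni2024} enters), and the claim that a product of singular distribution functions is again singular has to be handled with care rather than quoted as folklore. The discrete case needs the caveat $\vartheta(\{0\}) = 0 = \vartheta(\{1\})$, as the remark after Lemma \ref{lem_help_regularity} illustrates with $\vartheta = \tfrac{1}{2}\delta_0 + \tfrac{1}{2}\delta_1$. Once those points are settled inside the lemmas, the theorem follows immediately by case distinction.
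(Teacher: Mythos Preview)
Your proposal is correct and matches the paper exactly: the paper states the theorem immediately after Lemma \ref{lem:gamma_theta_singular} with the sentence ``The next theorem summarizes Lemma \ref{lem:gamma_theta_abs_cont}, Lemma \ref{lem:gamma_theta_discrete} and Lemma \ref{lem:gamma_theta_singular}'' and gives no separate proof. Your additional recap of the common strategy (product structure of the kernel via eq. \eqref{eq:alternative_markov_kernel}, the two transfer lemmas, the Luzin $N$-property in the singular case, and the caveat on $\vartheta(\{0\})=\vartheta(\{1\})=0$) is accurate and aligns with how the three lemmas are proved in the paper.
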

Next, we give an example of a discrete Archimax copula.
\begin{Ex}\label{ex:discrete_archimax}
Considering the probability measure $\gamma$ defined by
$$
\gamma := \frac{1}{10}\delta_\frac{1}{11} + \frac{1}{5}\delta_\frac{1}{7} + \frac{3}{10}\delta_\frac{1}{6} + \frac{1}{5}\delta_\frac{1}{5} + \frac{1}{5}\delta_\frac{1}{2},
$$
clearly $\gamma \in \mathcal{P}_\mathcal{W}$. Furthermore, defining the probability measure $\vartheta$ by
$$
\vartheta := \frac{1}{2}\delta_\frac{1}{4} + \frac{1}{2}\delta_\frac{3}{4},
$$
it is straightforward to see that $\vartheta \in \mathcal{P}_\mathcal{A}$. Obviously, $\gamma$ and $\vartheta$ are discrete and $\vartheta(\{0\}) = 0 = \vartheta(\{1\})$ holds. Therefore, Theorem \ref{thm:regularity_measures_copula} yields that the corresponding Archimax copula $C_{\gamma,\vartheta}$ fulfills $\mu_{C_{\gamma,\vartheta}}^{dis}(\mathbb{I}^2) = 1$. A sample of $C_{\gamma,\vartheta}$ is depicted in Figure \ref{fig:sample_discrete_archimax}.
\begin{figure}[H]
	\centering
\includegraphics[width=1\textwidth]{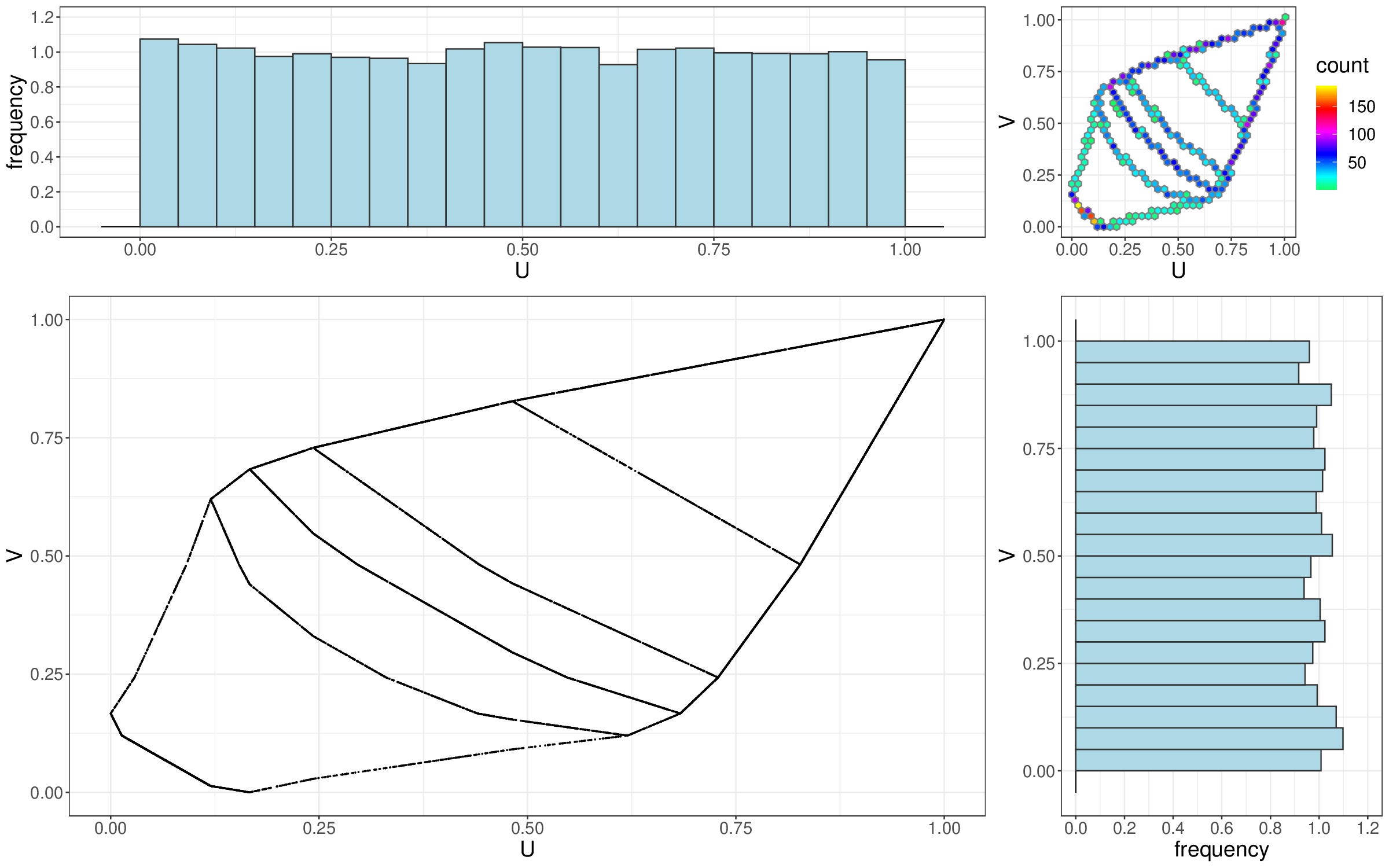}
	\caption{Sample of size 10.000 of the Archimax copula $C_{\gamma,\vartheta}$ with $\gamma$, $\vartheta$ being the Williamson measure and Pickands dependence measure from Example \ref{ex:discrete_archimax}, its histogram and
the two marginal histograms; sample generated via conditional inverse sampling.}\label{fig:sample_discrete_archimax}
\end{figure}
\end{Ex}

\subsection{Support of absolutely continuous, discrete and singular components of Archimax copulas}
In this subsection we generalize the result in  \citep[Corollary 5]{evc-mass}, saying that for $C \in \mathcal{C}_{ev}$ (and if $L<R$), the support of the doubly stochastic measure $\mu_C$ of $C$ coincides with the support of the absolutely continuous component $\mu_C^{abs}$ of $\mu_C$. To be more precise, we establish that if $L<R$ and the Williamson measure $\gamma$ satisfies $\mathrm{supp}(\gamma) = \left[\frac{1}{\varphi(0)}, \infty\right)$, and is either absolutely continuous, discrete or singular, then the support of the measure $\mu_C$ corresponding to $C \in \mathcal{C}_{am}$ coincides with the support of $\mu_C^{abs}$, $\mu_C^{dis}$ or $\mu_C^{sing}$, respectively. The prove of the afore-mentioned result is split into three lemmas.
\begin{Lemma}\label{lem:support_abs}
Let $\gamma \in \mathcal{P}_\mathcal{W}^{abs}$ with $\mathrm{supp}(\gamma) = \left[\frac{1}{\varphi(0)},\infty\right)$, $\vartheta \in \mathcal{P}_\mathcal{A}$ and $C$ be the corresponding Archimax copula. Furthermore, let $L$, $R$ be defined according to eq. \eqref{eq:definition_L_R_measure} with $L < R$.
Then,
$$
\mathrm{supp}(\mu_{C}^{abs}) = \mathrm{supp}(\mu_{C}).
$$
\end{Lemma}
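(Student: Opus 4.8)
The plan is to mirror the argument of the proof of Theorem~\ref{thm:arch_full_support_archimax}, but keeping track of the absolutely continuous sub-kernel. One inclusion is free: since $\mu_C^{abs}\le\mu_C$ as measures, $\mathrm{supp}(\mu_C^{abs})\subseteq\mathrm{supp}(\mu_C)$, and because $\mathrm{supp}(\gamma)=\big[\tfrac1{\varphi(0)},\infty\big)$, Theorem~\ref{thm:arch_full_support_archimax} gives $\mathrm{supp}(\mu_C)=S_{L,R}$. Hence everything reduces to $S_{L,R}\subseteq\mathrm{supp}(\mu_C^{abs})$, and exactly as in the proof of Theorem~\ref{thm:arch_full_support_archimax} it suffices to show that every non-empty open rectangle contained in $S_{L,R}$ has positive $\mu_C^{abs}$-mass. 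By eq.~\eqref{eq:def_abs_dis_sing_copula} this follows once I prove: for every $x\in(0,1)$ and every interval $(c,d)\subseteq\big(\max\{f^0(x),g^L(x)\},g^R(x)\big)$ one has $K_C^{abs}(x,(c,d))>0$, where $K_C^{abs}(x,\cdot)$ is the absolutely continuous part of $K_{\psi,A}(x,\cdot)$ (which coincides with the absolutely continuous sub-kernel of $K_C$ for $\lambda$-a.e.\ $x$).

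Fix $x\in(0,1)$, set $w_x(y):=\tfrac{\varphi(x)}{\varphi(x)+\varphi(y)}$ and $F(y):=K_{\psi,A}(x,[0,y])=H_x(y)\,G_A(w_x(y))$ by eq.~\eqref{eq:alternative_markov_kernel}. Since $\gamma$ is absolutely continuous, Lemma~\ref{lem:help_regularity_H_psi_A}$(i)$ gives $\nu_{H_x}\ll\lambda$, so $H_x(y)=\int_0^y h_x\,\mathrm{d}\lambda$ for some density $h_x\ge0$, whereas $y\mapsto G_A(w_x(y))$ is merely non-decreasing, right-continuous and $[0,1]$-valued. The key elementary estimate is that for $0\le a<b\le1$,
\begin{align*}
F(b)-F(a)&=H_x(b)G_A(w_x(b))-H_x(a)G_A(w_x(a))\ge G_A(w_x(b))\big(H_x(b)-H_x(a)\big)\\
&\ge\int_a^b G_A(w_x(y))\,h_x(y)\,\mathrm{d}\lambda(y),
\end{align*}
using monotonicity of $y\mapsto G_A(w_x(y))$ and $h_x\ge0$. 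Thus the absolutely continuous measure $E\mapsto\int_E G_A(w_x(y))h_x(y)\,\mathrm{d}\lambda(y)$ is dominated by the Lebesgue--Stieltjes measure of $F$, hence also by its absolutely continuous part $K_C^{abs}(x,\cdot)$ (being absolutely continuous, it cannot charge the $\lambda$-null set carrying the singular and discrete parts of $F$); that is, $K_C^{abs}(x,E)\ge\int_E G_A(w_x(y))h_x(y)\,\mathrm{d}\lambda(y)$ for all $E\in\mathcal{B}(\mathbb{I})$.

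It remains to make this lower bound strictly positive. Given $(c,d)\subseteq\big(\max\{f^0(x),g^L(x)\},g^R(x)\big)$, pick $[c',d']\subseteq(c,d)$. From $c'>g^L(x)$ one checks $w_x(c')>L$, so Lemma~\ref{lem:regularity_fcts}$(iii)$ gives $G_A(w_x(y))\ge G_A(w_x(c'))>0$ for $y\ge c'$. Moreover, since $c'>f^0(x)$ and $d'<g^R(x)$, the strict decreasingness of $h_A$ on $(0,R]$ (Lemma~\ref{lem:regularity_fcts}$(i)$) and of $\varphi$ on $(0,1]$ show that $y\mapsto\big(\varphi(x)h_A(w_x(y))\big)^{-1}$ maps $(f^0(x),g^R(x))$ strictly increasingly onto $\big(\tfrac1{\varphi(0)},\tfrac1{\varphi(x)}\big)$; combined with $\mathrm{supp}(\gamma)=\big[\tfrac1{\varphi(0)},\infty\big)$ and the fact that $t\mapsto\int_{[0,t]}u\,\mathrm{d}\gamma(u)$ is strictly increasing on $\big[\tfrac1{\varphi(0)},\infty\big)$, this forces $H_x$ to be strictly increasing on $(f^0(x),g^R(x))\supseteq[c',d']$, so $H_x(d')-H_x(c')>0$. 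Hence $K_C^{abs}(x,(c,d))\ge G_A(w_x(c'))\big(H_x(d')-H_x(c')\big)>0$, and disintegrating as in Theorem~\ref{thm:arch_full_support_archimax} yields $S_{L,R}\subseteq\mathrm{supp}(\mu_C^{abs})$, which completes the proof.

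The main obstacle is the second paragraph: one must extract the absolutely continuous component out of the product $H_x\cdot(G_A\circ w_x)$ although \emph{no} regularity is assumed on $\vartheta$ (so $G_A$ may be discrete, singular or mixed), and then verify genuine strict monotonicity of $H_x$ on the interior of the fibre — this last point is precisely where the hypothesis $\mathrm{supp}(\gamma)=\big[\tfrac1{\varphi(0)},\infty\big)$ is indispensable, since without it only the inclusion $\mathrm{supp}(\mu_C)\subseteq S_{L,R}$ of Lemma~\ref{lemma:support} survives. The degenerate cases $L=0$, $R=1$, and non-strict versus strict $\psi$ are absorbed by the conventions $a\cdot\infty=\infty$, $\tfrac a0=\infty$, $\tfrac a\infty=0$ and are routine.
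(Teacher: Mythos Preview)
Your proof is correct and the overall scaffold (reduce to showing each conditional absolutely continuous part charges every subinterval of the fibre, then disintegrate) matches the paper. The technical core, however, is genuinely different. The paper computes the pointwise derivative of $y\mapsto K_{\psi,A}(x,[0,y])=H_x(y)\,G_A(w_x(y))$ via the product and chain rules on a full-measure set where $H_x$, $G_A$ and $\varphi$ are all differentiable, obtains
\[
\mathcal{k}_{\psi,A}(x,s)=H_x'(s)\,G_A(w_x(s))+H_x(s)\,G_A'(w_x(s))\,w_x'(s),
\]
and argues that the first summand is positive on a set of positive measure (using that $H_x$ is strictly increasing and absolutely continuous, hence $H_x'>0$ on a set of positive measure in any subinterval). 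Your route sidesteps all differentiability of $G_A$: from the monotonicity of $G_A\circ w_x$ you extract the elementary minorant $\mu_F\ge\int_{(\cdot)}G_A(w_x)h_x\,\mathrm{d}\lambda$, then invoke the general fact that any absolutely continuous measure dominated by $\mu_F$ is already dominated by $\mu_F^{abs}$. This is cleaner in that it never touches $G_A'$ or the chain rule (so no care is needed about whether the preimage under $w_x$ of the differentiability set of $G_A$ has full measure), and the strict positivity is read off directly from $H_x(d')-H_x(c')>0$ rather than from pointwise positivity of a density. The paper's computation, on the other hand, displays the actual Radon--Nikod\'ym density and makes the structure of the two contributions (from $\gamma$ and from $\vartheta$) explicit.
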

\begin{proof}
Let $\gamma \in \mathcal{P}_\mathcal{W}$, $\psi$ be the corresponding generator, $\vartheta \in \mathcal{P}_\mathcal{A}$, $A$ be its Pickands dependence function, $C_{\psi,A} \in \mathcal{C}_{am}$ and $K_{\psi,A}$ be the Markov kernel of $C_{\psi,A}$ defined in eq. \eqref{eq:markov-kernel}. We fix $x \in (0,1)$ and prove that the absolutely continuous component $K_{\psi,A}^{abs}(x,\cdot)$ of $K_{\psi,A}(x,\cdot)$ fulfills $\mathrm{supp}(K_{\psi,A}^{abs}(x,\cdot)) = [\max\{\fg^0(x),\ff^L(x)\},\ff^R(x)]$. The desired result then follows using disintegration.\\
Let $(y_1,y_2) \subseteq (\max\{\fg^0(x),\ff^L(x)\},\ff^R(x))$ be an arbitrary open interval. We prove that $K_{\psi,A}^{abs}(x,(y_1,y_2)) > 0$. Applying Lemma \ref{lem:help_regularity_H_psi_A} and proceeding analogously to the proof of Lemma \ref{thm:arch_full_support_archimax}, we obtain that the function $H_x$ (defined in eq. \eqref{eq:function_H_x}) is strictly increasing and absolutely continuous and therefore there exists a set $\Lambda_1 \in \mathcal{B}(\mathbb{I})$ with $\lambda((y_1,y_2) \cap \Lambda_1) > 0$ such that $H_x'(s) > 0$ for every $s \in (y_1,y_2) \cap \Lambda_1$. Using monotonicity and convexity implies that the set $\Lambda_2 \in \mathcal{B}(\mathbb{I})$ of all points $s \in \mathbb{I}$ at which the functions $G_A$ and $\varphi$ are differentiable fulfills $\lambda(\Lambda_2) = 1$. Considering $s \in (y_1,y_2) \cap \Lambda_1 \cap \Lambda_2$, calculating the derivative $\partial_2K_{\psi,A}(x,[0,s])$ and simplifying the result yields 
$$
\mathcal{k}_{\psi,A}(x,s) := \underbrace{H_x'(s)G_A\left(\frac{\varphi(x)}{\varphi(x) + \varphi(s)}\right)}_{=: z_1} + \underbrace{H_x(s)G_A'\left(\frac{\varphi(x)}{\varphi(x) + \varphi(s)}\right)\frac{-\varphi(x)\varphi'(s)}{(\varphi(x) + \varphi(s))^2}}_{=: z_2}.
$$
Obviously, all the factors in $z_2$ are non-negative. Observing that $G_A\left(\frac{\varphi(x)}{\varphi(x) + \varphi(s)}\right) > 0$ (see Lemma \ref{lem:regularity_fcts}) and using that $H_x'(s) > 0$, we have that $z_1 > 0$ and therefore $\mathcal{k}_{\psi,A}(x,s)>0$ holds. Furthermore, using the fact that $\lambda( (y_1,y_2) \cap \Lambda_1 \cap \Lambda_2) > 0$, we obtain that $K_{\psi,A}^{abs}(x,(y_1,y_2)) > 0$. Finally, taking into account that $(y_1,y_2)$ was arbitrary and using that $K_{\psi,A}(x,[0,s]) = 0$ for every $s \in [0,\max\{\fg^0(x),\ff^L(x)\})$ as well as $K_{\psi,A}(x,[0,s]) = 1$ for every $s \in (\ff^R(x),1]$, it follows that 
$$
\mathrm{supp}(K_{\psi,A}^{abs}(x,\cdot)) = [\max\{\fg^0(x),\ff^L(x)\},\ff^R(x)].
$$
\end{proof}
We prove that if the Williamson measure $\gamma$ is discrete and $\mathrm{supp}(\gamma) = \left[\frac{1}{\varphi(0)},\infty\right)$, then the support of the discrete component $\mu_C^{dis}$ of the doubly stochastic measure $\mu_C$ coincides with the support of $\mu_C$.
\begin{Lemma}\label{lem:support_dis}
Let $\gamma \in \mathcal{P}_\mathcal{W}^{dis}$ with $\mathrm{supp}(\gamma) = \left[\frac{1}{\varphi(0)},\infty\right)$, $\vartheta \in \mathcal{P}_\mathcal{A}$ and $C$ be the corresponding Archimax copula.
Then,
$$
\mathrm{supp}(\mu_{C}^{dis}) = \mathrm{supp}(\mu_{C}).
$$
\end{Lemma}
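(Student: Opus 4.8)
The plan is to follow the pattern of the proof of Lemma~\ref{lem:support_abs}, replacing density points by atoms of $\gamma$. Let $\psi,\varphi$ be the generator and pseudo-inverse associated with $\gamma$, let $A$ be the Pickands dependence function associated with $\vartheta$, set $C:=C_{\psi,A}$ and let $K_{\psi,A}$ be its Markov kernel, written in the form~\eqref{eq:alternative_markov_kernel} as $K_{\psi,A}(x,[0,y])=H_x(y)\,G_A\bigl(\tfrac{\varphi(x)}{\varphi(x)+\varphi(y)}\bigr)$. If $L=R$ then $C=M$ and $K_{\psi,A}(x,\cdot)=\delta_x$ is discrete, so $\mu_C^{dis}=\mu_C$ and there is nothing to prove; thus assume from now on that $L<R$. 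Since $\mathrm{supp}(\mu_C)=S_{L,R}$ by Theorem~\ref{thm:arch_full_support_archimax} and $\mathrm{supp}(\mu_C^{dis})\subseteq\mathrm{supp}(\mu_C)$ holds trivially (the discrete sub-kernel is dominated by $K_{\psi,A}$, so any open $\mu_C$-null set is $\mu_C^{dis}$-null), it suffices to prove $S_{L,R}\subseteq\mathrm{supp}(\mu_C^{dis})$.

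Exactly as in Lemma~\ref{lem:support_abs} I would fix $x\in(0,1)$ and show that the discrete sub-kernel satisfies $\mathrm{supp}(K_{\psi,A}^{dis}(x,\cdot))=[\max\{f^0(x),g^L(x)\},g^R(x)]$; disintegration then yields the claim. The inclusion $\subseteq$ is immediate, since $K_{\psi,A}(x,[0,y])=0$ for $y<\max\{f^0(x),g^L(x)\}$ and $K_{\psi,A}(x,[0,g^R(x)])=1$, both established in the proof of Lemma~\ref{lemma:support}. For the reverse inclusion, recall from the proof of Theorem~\ref{thm:arch_full_support_archimax} that, because $\mathrm{supp}(\gamma)=\bigl[\tfrac{1}{\varphi(0)},\infty\bigr)$, the function $y\mapsto H_x(y)$ is strictly increasing on $(f^0(x),g^R(x))$; on the other hand, the second assertion of Lemma~\ref{lem:help_regularity_H_psi_A} gives that $\nu_{H_x}$ is discrete, i.e.\ $H_x$ is a pure-jump distribution function. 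A non-decreasing pure-jump function that is strictly increasing on an open interval must have its jump points dense there (otherwise it would be constant on some open subinterval). Hence every open interval $(y_1,y_2)\subseteq(\max\{f^0(x),g^L(x)\},g^R(x))$ contains a jump point $y_0$ of $H_x$.

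The decisive point is that such a $y_0$ is automatically an atom of $K_{\psi,A}(x,\cdot)$. Writing $s(y):=\tfrac{\varphi(x)}{\varphi(x)+\varphi(y)}$, which is continuous and strictly increasing in $y$, eq.~\eqref{eq:alternative_markov_kernel} gives
\[
K_{\psi,A}(x,\{y_0\})=H_x(y_0)G_A(s(y_0))-H_x(y_0-)G_A(s(y_0)-)\;\geq\;\bigl(H_x(y_0)-H_x(y_0-)\bigr)G_A(s(y_0)),
\]
using $0\le H_x(y_0-)\le H_x(y_0)$ and $0\le G_A(s(y_0)-)\le G_A(s(y_0))$. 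Since $y_0>g^L(x)$ forces $s(y_0)>L$, the third assertion of Lemma~\ref{lem:regularity_fcts} gives $G_A(s(y_0))>0$, while $H_x(y_0)-H_x(y_0-)>0$ by the choice of $y_0$; hence $K_{\psi,A}(x,\{y_0\})>0$ and therefore $K_{\psi,A}^{dis}(x,(y_1,y_2))>0$. As $(y_1,y_2)$ was arbitrary (the degenerate case $\max\{f^0(x),g^L(x)\}=g^R(x)$, in which $K_{\psi,A}(x,\cdot)$ is a point mass, being trivial), this establishes $\mathrm{supp}(K_{\psi,A}^{dis}(x,\cdot))=[\max\{f^0(x),g^L(x)\},g^R(x)]$. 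Finally, for every non-empty open rectangle $(a,b)\times(c,d)\subseteq S_{L,R}$ with $\lambda_2((a,b)\times(c,d))>0$ one has $(c,d)\subseteq(\max\{f^0(x),g^L(x)\},g^R(x))$ for all $x\in(a,b)$, so by disintegration $\mu_C^{dis}((a,b)\times(c,d))=\int_{(a,b)}K_{\psi,A}^{dis}(x,(c,d))\,\mathrm{d}\lambda(x)>0$; thus every non-empty open rectangle in $S_{L,R}$ carries positive $\mu_C^{dis}$-mass, giving $S_{L,R}\subseteq\mathrm{supp}(\mu_C^{dis})$ and completing the proof.

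The step I expect to require the most care — handled above — is the positivity of $K_{\psi,A}(x,\{y_0\})$: a jump of $H_x$ must survive multiplication by the factor $y\mapsto G_A(s(y))$, which may itself be continuous or vanish, and this is precisely why one localizes to $y_0>g^L(x)$, where $G_A(s(y_0))>0$. The remaining points are routine: the elementary observation that a pure-jump function strictly increasing on an interval has its jumps dense there, and the fact that the only property of $\gamma$ used beyond discreteness is the strict monotonicity of $H_x$ on $(f^0(x),g^R(x))$, which Theorem~\ref{thm:arch_full_support_archimax} already derives from $\mathrm{supp}(\gamma)=\bigl[\tfrac{1}{\varphi(0)},\infty\bigr)$.
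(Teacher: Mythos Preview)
Your proof is correct and takes a genuinely different route from the paper's. The paper enumerates the atoms $s_i$ of $\gamma$, sets $t_i=\psi(1/s_i)$, shows via monotonicity of $z\mapsto f^z(x)$ that the points $f^{t_i}(x)$ are dense in $[\max\{f^0(x),g^L(x)\},g^R(x)]$, and then invokes Lemma~\ref{lem:kernel_jump_on_graph_arch} to obtain $K_{\psi,A}(x,\{f^{t_i}(x)\})>0$; because that lemma only holds off a countable exceptional set, the paper must restrict to $x\notin\bigcup_{i}\bigcup_{r\in I}\{\psi(\varphi(t_i)/h_A(r))\}$ before disintegrating. You instead work abstractly through the factorization $K_{\psi,A}(x,[0,y])=H_x(y)\,G_A(s(y))$: Lemma~\ref{lem:help_regularity_H_psi_A}(ii) gives that $H_x$ is pure-jump, the strict monotonicity of $H_x$ on $(f^0(x),g^R(x))$ (borrowed from the proof of Theorem~\ref{thm:arch_full_support_archimax}) forces its jumps to be dense, and the clean inequality $K_{\psi,A}(x,\{y_0\})\geq(H_x(y_0)-H_x(y_0-))G_A(s(y_0))$ converts each jump of $H_x$ directly into an atom of $K_{\psi,A}(x,\cdot)$ without any exceptional $x$-set. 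Your argument is shorter and avoids the bookkeeping of Lemma~\ref{lem:kernel_jump_on_graph_arch}; the paper's version, on the other hand, makes the geometric picture explicit by locating each atom on a level curve $\Gamma(f^{t_i})$.
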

\begin{proof}
Let $\gamma \in \mathcal{P}_\mathcal{W}^{dis}$, $\psi$ be its generator, $\vartheta \in \mathcal{P}_\mathcal{A}$, $A$ be the corresponding Pickands dependence function, $C_{\psi,A} \in \mathcal{C}_{am}$ and $K_{\psi,A}$ be the Markov kernel of $C_{\psi,A}$ defined as in eq. \eqref{eq:markov-kernel}. Let $L$, $R$ be defined as in eq. \eqref{eq:definition_L_R}. If $L=R$, then $C_{\psi,A} = M$ and therefore the statement of the Lemma holds.\\
We now assume that $L < R$. Having  $\gamma \in \mathcal{P}_\mathcal{W}^{dis}$ and using that $\mathrm{supp}(\gamma) = \left[\frac{1}{\varphi(0)},\infty\right)$, there exists a countable dense set of $s_1,s_2, ... \in \left[\frac{1}{\varphi(0)}, \infty\right)$ with $\gamma(\{s_i\}) > 0$ for every $i \in \mathbb{N}$. Note, that in the case of $\gamma$ being strict, $s_i > 0$ holds for every $i \in \mathbb{N}$. Applying the fact that $\varphi$ is strictly decreasing, we obtain that for every $s_i$ there exists a $t_i \in [0,1)$ such that $s_i = \frac{1}{\varphi(t_i)}$ and therefore $t_i = \psi(\frac{1}{s_i})$. Denoting by $I$ the set of all point masses of $\vartheta$, we
fix $x \in (0,1) \setminus \bigcup_{i \in \mathbb{N}}\bigcup_{r \in I}\left\{\psi\left(\frac{\varphi(t_i)}{h_A(r)}\right)\right\}$.
The fact that the points $\bigcup_{i \in \mathbb{N}}\bigcup_{r \in I}\left\{\psi\left(\frac{\varphi(t_i)}{h_A(r)}\right)\right\}$ are excluded from the interval $(0,1)$, ensures that Lemma \ref{lem:kernel_jump_on_graph_arch} can be applied. Similar to the proof of Lemma \ref{lem:support_abs}, we show that $\mathrm{supp}(K_{\psi,A}^{dis}(x,\cdot)) = [\max\{\fg^0(x),\ff^L(x)\},\ff^R(x)]$. Obviously, it suffices to prove that $K_{\psi,A}(x,\{y\}) > 0$ for a dense set of $y \in (\max\{\fg^0(x),\ff^L(x)\},\ff^R(x))$. Using denseness of $\{s_1, s_2, ...\}$ in $\left[\frac{1}{\varphi(0)}, \infty\right)$, it is straightforward to see that the set of $t_i$, fulfilling $\psi\left(\frac{(1-L)\varphi(x)}{L}\right) < t_i < x$, is dense in $\left(\psi\left(\frac{(1-L)\varphi(x)}{L}\right), x\right)$. Moreover, applying the properties in Lemma \ref{lem:regularity_fcts}, the function $z \mapsto \fg^z(x)$ (defined as in eq. \eqref{eq:fct_g_t}) is strictly increasing on $\left(\psi\left(\frac{(1-L)\varphi(x)}{L}\right), x\right)$. Combining the afore-mentioned findings, we therefore conclude that the set
$$
\Upsilon_x := \left\{\fg^{t_i}(x) \colon \psi\left(\frac{(1-L)\varphi(x)}{L}\right) \leq t_i < x, \, i\in \mathbb{N}\right\}
$$
is dense in $(\max\{\fg^0(x),\ff^L(x)\},\ff^R(x))$.\\
Furthermore, noting that the inequality $\psi\left(\frac{(1-L)\varphi(x)}{L}\right) < t_i < x$ implies $x \in \left(t_i,\psi\left(\frac{L\varphi(t_i)}{1-L}\right)\right)$ and applying Lemma \ref{lem:kernel_jump_on_graph_arch} yields that $K_{\psi,A}(x,\{y\}) > 0$ for every $y \in \Upsilon_x$. Combined with the fact that $K_{\psi,A}(x,[0,\max\{\fg^0(x),\ff^L(x)\})\cup (\ff^R(x),1]) = 0$, this leads to the conclusion that
$$
\mathrm{supp}(K_{\psi,A}^{dis}(x,\cdot)) = [\max\{\fg^0(x),\ff^L(x)\},\ff^R(x)].
$$
Finally, the result follows using $\lambda\left(\bigcup_{i \in \mathbb{N}}\bigcup_{r \in I}\left\{\psi\left(\frac{\varphi(t_i)}{h_A(r)}\right)\right\}\right) = 0$ and applying disintegration.
\end{proof}
\begin{Lemma}\label{lem:support_sing}
Let $\gamma \in \mathcal{P}_\mathcal{W}^{sing}$ with $\mathrm{supp}(\gamma) = \left[\frac{1}{\varphi(0)},\infty\right)$, $\vartheta \in \mathcal{P}_\mathcal{A}$ and $C$ be the corresponding Archimax copula. Furthermore, let $L$, $R$ be defined as in eq. \eqref{eq:definition_L_R_measure} with $L < R$. Then,
$$
\mathrm{supp}(\mu_{C}^{sing}) = \mathrm{supp}(\mu_{C}).
$$
\end{Lemma}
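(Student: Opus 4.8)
The plan is to follow the template of Lemma \ref{lem:support_abs} and Lemma \ref{lem:support_dis}. Write $\psi$ and $A$ for the generator and Pickands dependence function attached to $\gamma$ and $\vartheta$, and $K_{\psi,A}$ for the Markov kernel of $C = C_{\psi,A}$ from eq. \eqref{eq:markov-kernel}. Fixing $x \in (0,1)$, I would establish that the singular sub-kernel satisfies $\mathrm{supp}\big(K_{\psi,A}^{sing}(x,\cdot)\big) = \big[\max\{\fg^0(x),\ff^L(x)\},\ff^R(x)\big]$; the assertion of the lemma then follows by disintegration together with $\mathrm{supp}(\mu_C) = S_{L,R}$ (Theorem \ref{thm:arch_full_support_archimax}), exactly as at the end of the proof of Lemma \ref{lem:support_abs}. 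First I would recall from eq. \eqref{eq:alternative_markov_kernel} that $K_{\psi,A}(x,[0,y]) = H_x(y)\,q(y)$, where $q(y) := G_A\big(\tfrac{\varphi(x)}{\varphi(x)+\varphi(y)}\big)$. Since $\gamma$ is singular, Lemma \ref{lem:help_regularity_H_psi_A}$(iii)$ gives that $H_x$ is singular, hence continuous, with associated measure $\nu_{H_x}$ atomless and concentrated on a $\lambda$-null Borel set; moreover, since $\mathrm{supp}(\gamma) = \big[\tfrac{1}{\varphi(0)},\infty\big)$, the argument in the proof of Theorem \ref{thm:arch_full_support_archimax} shows that $H_x$ is strictly increasing on $(\fg^0(x),\ff^R(x))$. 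By Lemma \ref{lem:regularity_fcts}$(iii)$, $q$ is non-decreasing and right-continuous with $q\equiv 0$ on $[0,\ff^L(x))$, $q\equiv 1$ on $[\ff^R(x),1]$, and $q(s-)>0$ for every $s>\ff^L(x)$.

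The heart of the argument is to isolate a singular-continuous piece of $K_{\psi,A}(x,\cdot)$. Since $q$ has bounded variation and $H_x$ is continuous, the product rule for Lebesgue--Stieltjes measures gives, for all $0 \leq a \leq b \leq 1$,
\begin{equation*}
K_{\psi,A}(x,(a,b]) = H_x(b)q(b) - H_x(a)q(a) = \int_{(a,b]} H_x(s)\,\mathrm{d}q(s) + \int_{(a,b]} q(s-)\,\mathrm{d}H_x(s).
\end{equation*}
Splitting $\mathrm{d}q$ into its absolutely continuous, discrete and singular parts, the first integral contributes the absolutely continuous, discrete and part of the singular mass of $K_{\psi,A}(x,\cdot)$, while the measure $E \mapsto \int_E q(s-)\,\mathrm{d}H_x(s)$ is absolutely continuous with respect to $\nu_{H_x}$ and hence, being atomless and $\lambda$-singular, is purely singular-continuous; by uniqueness of the decomposition \eqref{eq:lebesgue_decomp_markov}, $K_{\psi,A}^{sing}(x,\cdot)$ dominates it. Consequently, for an arbitrary non-empty open subinterval $(y_1,y_2)$ of $\big(\max\{\fg^0(x),\ff^L(x)\},\ff^R(x)\big)$ and any choice $y_1 < y_1' < y_2' < y_2$,
\begin{equation*}
K_{\psi,A}^{sing}(x,(y_1,y_2)) \geq \int_{(y_1',y_2']} q(s-)\,\mathrm{d}H_x(s) \geq q(y_1')\,\nu_{H_x}((y_1',y_2']) > 0,
\end{equation*}
using $q(y_1')>0$ and that $H_x$ is continuous and strictly increasing on $(\fg^0(x),\ff^R(x))$. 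Together with $K_{\psi,A}(x,[0,\max\{\fg^0(x),\ff^L(x)\})) = 0$ and $K_{\psi,A}(x,(\ff^R(x),1]) = 0$ from the proof of Lemma \ref{lemma:support} (which force the same identities for $K_{\psi,A}^{sing}(x,\cdot)$), this yields the claimed support of $K_{\psi,A}^{sing}(x,\cdot)$, and disintegration then concludes.

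I expect the main obstacle to be the middle step: correctly identifying the Lebesgue decomposition of the Lebesgue--Stieltjes measure of the product $H_x \cdot q$ of two distribution functions, one of which ($H_x$) is singular-continuous while the other ($q$, hence $\vartheta$) is left arbitrary. What makes the lower bound on $K_{\psi,A}^{sing}$ robust is precisely that $\gamma$, and therefore $H_x$, is singular \emph{without point masses} (Lemma \ref{lem:help_regularity_H_psi_A}$(iii)$), so that $q(s-)\,\mathrm{d}H_x(s)$ is a genuine singular-continuous measure and is not partly absorbed into the discrete component. Everything else — the disintegration, and the reduction $\mathrm{supp}(\mu_C^{sing}) = S_{L,R} = \mathrm{supp}(\mu_C)$ via Theorem \ref{thm:arch_full_support_archimax} — is routine and parallels Lemma \ref{lem:support_abs}.
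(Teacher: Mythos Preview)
Your argument is correct and reaches the same conclusion as the paper, but the route differs in the core step. The paper isolates the singular component of $K_{\psi,A}(x,\cdot)$ by invoking an auxiliary lemma (Lemma~\ref{lem:help_singular_comp_full_support}), which shows in general that if $F$ is strictly increasing with singular associated measure and $P$ is a positive measure generating function, then the singular component of the product $F\cdot P$ has full support; that lemma is proved by a case split according to whether the measure induced by $P$ is absolutely continuous, discrete, or singular. You instead use the Lebesgue--Stieltjes product rule directly to write $\mathrm{d}(H_x q) = H_x\,\mathrm{d}q + q(\cdot-)\,\mathrm{d}H_x$ and observe that the second summand is automatically singular-continuous because $\nu_{H_x}$ is, regardless of the type of $q$. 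This bypasses the case analysis entirely and gives a uniform lower bound $K_{\psi,A}^{sing}(x,\cdot)\ge q(\cdot-)\,\mathrm{d}H_x$, which is arguably cleaner and slightly more conceptual; the paper's approach, on the other hand, packages the product argument into a reusable standalone statement. Either way, the remaining steps (strict increasingness of $H_x$ from $\mathrm{supp}(\gamma)=[\tfrac{1}{\varphi(0)},\infty)$, positivity of $q$ above $\ff^L(x)$, vanishing of the kernel outside $[\max\{\fg^0(x),\ff^L(x)\},\ff^R(x)]$, and disintegration via Theorem~\ref{thm:arch_full_support_archimax}) are identical.
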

\begin{proof}
Let $\gamma \in \mathcal{P}_\mathcal{W}^{sing}$, $\psi$ be its generator, $\vartheta \in \mathcal{P}_\mathcal{A}$, $A$ be the corresponding Pickands dependence function, $C_{\psi,A} \in \mathcal{C}_{am}$ and $K_{\psi,A}$ be the Markov kernel of $C_{\psi,A}$ defined as in eq. \eqref{eq:markov-kernel}. 
 We fix $x \in (0,1)$ and prove that $\mathrm{supp}(K_{\psi,A}^{sing}(x,\cdot)) = [\max\{\fg^0(x),\ff^L(x)\},\ff^R(x)]$. Applying Lemma \ref{lem:help_regularity_H_psi_A} and proceeding as in the proof of Lemma \ref{thm:arch_full_support_archimax}, we obtain that the measure generating function $H_x$ defined as in eq. \eqref{eq:function_H_x} is strictly increasing and singular on the interval $[\max\{\fg^0(x),\ff^L(x)\},\ff^R(x)]$. Furthermore, using that the measure generating function $y \mapsto G_A\left(\frac{\varphi(x)}{\varphi(x) + \varphi(y)}\right)$ is positive on $[\max\{\fg^0(x),\ff^L(x)\},\ff^R(x)]$ and utilizing eq. \eqref{eq:alternative_markov_kernel}, applying Lemma \ref{lem:help_singular_comp_full_support} and the fact that $K_{\psi,A}(x,[0,\max\{\fg^0(x),\ff^L(x)\})\cup (\ff^R(x),1)) = 0$ yields that
$$
\mathrm{supp}(K_{\psi,A}^{sing}(x,\cdot)) = [\max\{\fg^0(x),\ff^L(x)\},\ff^R(x)].
$$
Disintegration proves the desired result.
\end{proof}
The next theorem summarizes Lemma \ref{lem:support_abs}, Lemma \ref{lem:support_dis} and Lemma \ref{lem:support_sing}.
\begin{theorem}\label{thrm:support_components}
Let $\gamma \in \mathcal{P}_\mathcal{W}$ with $\mathrm{supp}(\gamma) = \left[\frac{1}{\varphi(0)},\infty\right)$, $\vartheta\in\mathcal{P}_\mathcal{A}$ and $C$ be the corresponding Archimax copula. Moreover, let $L$, $R$ be defined as in eq. \eqref{eq:definition_L_R_measure} with $L < R$. Then the following assertions hold:
\begin{itemize}
    \item[$(i)$] If $\gamma \in \mathcal{P}_{\mathcal{W}}^{abs}$, then $\mathrm{supp}(\mu_{C}^{abs}) = \mathrm{supp}(\mu_{C}).$
    \item[$(ii)$] If $\gamma \in \mathcal{P}_{\mathcal{W}}^{dis}$, then $\mathrm{supp}(\mu_{C}^{dis}) = \mathrm{supp}(\mu_{C}).$
    \item[$(iii)$] If $\gamma \in \mathcal{P}_{\mathcal{W}}^{sing}$, then $\mathrm{supp}(\mu_{C}^{sing}) = \mathrm{supp}(\mu_{C}).$
\end{itemize}
\end{theorem}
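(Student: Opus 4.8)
The plan is to obtain Theorem~\ref{thrm:support_components} as an immediate consequence of Lemma~\ref{lem:support_abs}, Lemma~\ref{lem:support_dis} and Lemma~\ref{lem:support_sing}. Under the standing hypotheses of the theorem, namely $\gamma \in \mathcal{P}_\mathcal{W}$ with $\mathrm{supp}(\gamma) = \left[\frac{1}{\varphi(0)},\infty\right)$, $\vartheta \in \mathcal{P}_\mathcal{A}$ and $L < R$, the additional assumption placed on $\gamma$ in each of the three assertions is precisely what is needed to invoke the corresponding lemma. Concretely, for assertion $(i)$ all hypotheses of Lemma~\ref{lem:support_abs} are met, so $\mathrm{supp}(\mu_C^{abs}) = \mathrm{supp}(\mu_C)$; for $(ii)$ the hypotheses of Lemma~\ref{lem:support_dis} are met (there the assumption $L<R$ is not even required, as the case $L=R$ is treated separately in its proof), giving $\mathrm{supp}(\mu_C^{dis}) = \mathrm{supp}(\mu_C)$; and for $(iii)$ the hypotheses of Lemma~\ref{lem:support_sing} are met, yielding $\mathrm{supp}(\mu_C^{sing}) = \mathrm{supp}(\mu_C)$.

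If instead a more self-contained argument were desired, I would run it through the common skeleton underlying the three lemma proofs: fix $x \in (0,1)$; using the factorization of the Markov kernel in eq.~\eqref{eq:alternative_markov_kernel}, together with Lemma~\ref{lem:help_regularity_H_psi_A} and (in the singular case) Lemma~\ref{lem:help_singular_comp_full_support}, show that the relevant sub-kernel $K_{\psi,A}^{\bullet}(x,\cdot)$ for $\bullet \in \{abs,dis,sing\}$ has support exactly $[\max\{\fg^0(x),\ff^L(x)\},\ff^R(x)]$; then disintegrate, exactly as in the proof of Theorem~\ref{thm:arch_full_support_archimax}, to pass from this fiberwise statement to $\mathrm{supp}(\mu_C^{\bullet}) = S_{L,R}$, and finally invoke Theorem~\ref{thm:arch_full_support_archimax} to identify $S_{L,R} = \mathrm{supp}(\mu_C)$ since $\mathrm{supp}(\gamma) = \left[\frac{1}{\varphi(0)},\infty\right)$.

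There is essentially no obstacle at the level of Theorem~\ref{thrm:support_components} itself: all the technical weight has been front-loaded into the three lemmas, which in turn rest on the regularity-transfer results (Lemmas~\ref{lem:help_regularity_H_psi_A} and~\ref{lem_help_regularity}) and on the full-support result Theorem~\ref{thm:arch_full_support_archimax}. The single point worth a sentence of care is the dual role of the hypothesis $\mathrm{supp}(\gamma) = \left[\frac{1}{\varphi(0)},\infty\right)$: it is what forces $H_x$ to be strictly increasing on the fiber interval (so that the component sub-kernel inherits the whole fiber as its support), and it is also, via Theorem~\ref{thm:arch_full_support_archimax}, what identifies $\mathrm{supp}(\mu_C)$ with $S_{L,R}$ to begin with. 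In particular, specializing to $\psi(z)=\exp(-z)$ recovers \citep[Corollary 5]{evc-mass} for Extreme Value copulas with $L<R$.
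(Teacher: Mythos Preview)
Your proposal is correct and matches the paper's approach exactly: the paper presents Theorem~\ref{thrm:support_components} simply as a summary of Lemmas~\ref{lem:support_abs}, \ref{lem:support_dis} and~\ref{lem:support_sing}, with no separate proof given. Your additional sketch of the common fiberwise skeleton is an accurate synopsis of those lemma proofs and adds helpful context that the paper leaves implicit.
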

The following result for EVCs \citep[Corollary 5]{evc-mass} is an immediate consequence of Theorem \ref{thrm:support_components}.
\begin{Cor}[Trutschnig et al. (2016)]
Let $\vartheta \in \mathcal{P}_\mathcal{A}$, $A$ be its corresponding Pickands dependence function, $C_{A} \in \mathcal{C}_{ev}$ and $L$, $R$ be defined as in eq. \eqref{eq:definition_L_R} with $L<R$. Then,
$$
\mathrm{supp}(\mu_{C_A}^{abs}) = \mathrm{supp}(\mu_{C_A}) = \{(x,y)\in \mathbb{I}^2\colon \ff^L(x) \leq y \leq  \ff^R(x)\}
$$
with $\ff^L(x) = x^{\frac{1}{L}-1}$ and $\ff^R(x) = x^{\frac{1}{R}-1}$ for every $x \in \mathbb{I}$.
\end{Cor}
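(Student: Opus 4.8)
The plan is to derive this as a special case of Theorem~\ref{thrm:support_components}, using the representation of Extreme Value copulas as Archimax copulas built from the exponential generator. Concretely, I would set $\psi(z) := \exp(-z)$ for $z \in [0,\infty)$, which is a strict convex generator with pseudo-inverse $\varphi(y) = -\log(y)$ and $\varphi(0) = \infty$, so that $C_{\psi,A} = C_A \in \mathcal{C}_{ev}$ by the identity noted after eq.~\eqref{eq:def_archimax}. Let $\gamma \in \mathcal{P}_\mathcal{W}$ denote the Williamson measure attached to $\psi$. The whole argument then hinges on showing that this particular $\gamma$ is absolutely continuous and satisfies $\mathrm{supp}(\gamma) = \left[\tfrac{1}{\varphi(0)},\infty\right)$; once that is in place, Theorem~\ref{thrm:support_components}$(i)$ applies (since $L<R$ by hypothesis) and gives $\mathrm{supp}(\mu_{C_A}^{abs}) = \mathrm{supp}(\mu_{C_A})$, while Corollary~\ref{cor:mai_scherer_supp_evc} identifies this common set explicitly.

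First I would pin down $\gamma$. Since $D^-\psi(z) = -\exp(-z)$, eq.~\eqref{eq:rel_deriv_psi_gamma} gives $\int_{[0,1/z]} t \,\mathrm{d}\gamma(t) = \exp(-z)$ for every $z>0$; after the substitution $w = 1/z$ and differentiation in $w$, this forces $\gamma$ to have the Lebesgue density $p(w) = w^{-3}\exp(-1/w)$ on $(0,\infty)$ (and $p \equiv 0$ elsewhere). The change of variables $s = 1/w$ confirms $\int_0^\infty p(w)\,\mathrm{d}w = \int_0^\infty s\,\mathrm{e}^{-s}\,\mathrm{d}s = 1$, so $\gamma \in \mathcal{P}_\mathcal{W}^{abs}$, with distribution function $F_\gamma(z) = \exp(-1/z)\bigl(1 + \tfrac{1}{z}\bigr)$ --- exactly the measure already identified in the proof of Corollary~\ref{cor:mai_scherer_supp_evc}. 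Because $p$ is strictly positive throughout $(0,\infty)$ and $\tfrac{1}{\varphi(0)} = \tfrac{1}{\infty} = 0$ by the standing convention, we get $\mathrm{supp}(\gamma) = [0,\infty) = \left[\tfrac{1}{\varphi(0)},\infty\right)$; in particular $\gamma$ is strict.

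Assembling the pieces: with $\gamma \in \mathcal{P}_\mathcal{W}^{abs}$, support $\left[\tfrac{1}{\varphi(0)},\infty\right)$, and the hypothesis $L<R$, Theorem~\ref{thrm:support_components}$(i)$ yields $\mathrm{supp}(\mu_{C_A}^{abs}) = \mathrm{supp}(\mu_{C_A})$. Corollary~\ref{cor:mai_scherer_supp_evc} then states $\mathrm{supp}(\mu_{C_A}) = \{(x,y) \in \mathbb{I}^2 \colon \ff^L(x) \le y \le \ff^R(x)\}$, and by Example~\ref{ex:evc_fct_f_t} the functions $\ff^L$, $\ff^R$ from eq.~\eqref{eq:fct_f} reduce to $\ff^L(x) = x^{1/L-1}$ and $\ff^R(x) = x^{1/R-1}$; chaining these equalities gives the claim. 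I expect the only genuinely substantive step to be the short density computation establishing absolute continuity and full support of the Williamson measure of $\exp(-\cdot)$ --- everything downstream is a direct appeal to results already proved in this paper.
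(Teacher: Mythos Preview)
Your proposal is correct and follows essentially the same route as the paper's own proof: set $\psi(z)=\exp(-z)$, verify that the associated Williamson measure $\gamma$ is absolutely continuous with $\mathrm{supp}(\gamma)=[0,\infty)=\left[\tfrac{1}{\varphi(0)},\infty\right)$, and then invoke Theorem~\ref{thrm:support_components}$(i)$ together with Corollary~\ref{cor:mai_scherer_supp_evc}. The only difference is that you spell out the density computation explicitly, whereas the paper records the distribution function $F_\gamma(z)=\exp(-1/z)(1+1/z)$ and declares the absolute continuity and full support ``straightforward''.
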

\begin{proof}
Defining $\psi(z) := \exp(-z)$ for every $z \in [0,\infty)$, the corresponding $\gamma \in \mathcal{P}_\mathcal{W}$ is given by $\gamma([0,z]) = \exp(-\frac{1}{z}) + \exp(-\frac{1}{z})\frac{1}{z}$ for every $z \in [0,\infty)$. It is straightforward to see that $\gamma$ has full support and is absolutely continuous. Having that $C_A = C_{\psi,A} \in \mathcal{C}_{am}$ and applying Corollary \ref{cor:mai_scherer_supp_evc} as well as Theorem \ref{thrm:support_components}, proves the result.
\end{proof}


\section{Appendix}
  \subsection{Auxiliary lemmas}
 We prove the inverse function rule and the chain rule for left-hand derivatives. The proofs are similar to the proofs for classical derivatives. The Lemma is provided for the sake of completeness.
\begin{Lemma}\label{lem:properties_left_hand_der}
  Let $I_1,I_2,I_3$ be open sub-intervals of $\mathbb{R}$ and let $\iota \colon I_1 \rightarrow I_2$ be a function whose left-hand derivative $D^-\iota$ exists. Moreover, let $\eta \colon I_2 \rightarrow I_3$ be an arbitrary function. Then the following assertions hold:
      \begin{itemize}
          \item[$(i)$] If $\eta$ is strictly decreasing and convex, then
          $$
          D^-\eta^{-1}(z) = \frac{1}{D^+\eta(\eta^{-1}(z))}
          $$
          holds for every $z \in I_3$, whereby $\eta^{-1}$ denotes the inverse function of $\eta$.
          \item[$(ii)$] If $D^-\eta$ exists and $\iota$ is a non-decreasing function, then
          $$
          D^-(\eta\circ\iota)(t) = D^-\eta(\iota(t)) \cdot D^-\iota(t)
          $$
          holds for every $t \in I_1$.
        \item[$(iii)$] If $D^-\eta$ exists and $\iota$ is a non-increasing function, then
          $$
          D^-(\eta\circ\iota)(t) = D^+\eta(\iota(t)) \cdot D^-\iota(t)
          $$
          holds for every $t \in I_1$.
      \end{itemize}
  \end{Lemma}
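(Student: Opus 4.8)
The plan is to establish all three assertions through the Carathéodory difference-quotient device, which reduces each claim to a one-sided continuity statement for a single auxiliary function together with elementary facts about convex and monotone functions.

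For $(i)$ I would first collect the standing facts: a strictly decreasing convex $\eta$ on an open interval is continuous, so $\eta^{-1}$ is well-defined, continuous and strictly decreasing on $I_3$; and $D^+\eta(x)<0$ for every $x\in I_2$, since $D^+\eta$ is non-decreasing by convexity while $\eta(x_1)-\eta(x_0)=\int_{x_0}^{x_1}D^+\eta$, so a vanishing right derivative anywhere would contradict strict monotonicity. Fixing $z\in I_3$, put $x:=\eta^{-1}(z)$ and, for $h<0$, $x_h:=\eta^{-1}(z+h)$; strict monotonicity gives $x_h>x$, continuity of $\eta^{-1}$ gives $x_h\to x$ as $h\to 0^-$, and by construction $\eta(x_h)-\eta(x)=h$. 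Hence
$$
\frac{\eta^{-1}(z+h)-\eta^{-1}(z)}{h}=\frac{x_h-x}{\eta(x_h)-\eta(x)}=\left(\frac{\eta(x+t_h)-\eta(x)}{t_h}\right)^{-1},\qquad t_h:=x_h-x>0,
$$
and since $t\mapsto(\eta(x+t)-\eta(x))/t$ is non-decreasing for $t>0$, the bracketed quotient tends to $D^+\eta(x)$ as $t_h\downarrow 0$; taking reciprocals yields $D^-\eta^{-1}(z)=1/D^+\eta(\eta^{-1}(z))$.

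For $(ii)$ and $(iii)$ I would introduce $\phi\colon I_2\to\mathbb{R}$ by $\phi(s):=(\eta(s)-\eta(\iota(t)))/(s-\iota(t))$ for $s\neq\iota(t)$, with $\phi(\iota(t)):=D^-\eta(\iota(t))$ in case $(ii)$ and $\phi(\iota(t)):=D^+\eta(\iota(t))$ in case $(iii)$. The identity
$$
(\eta\circ\iota)(t+h)-(\eta\circ\iota)(t)=\phi(\iota(t+h))\bigl(\iota(t+h)-\iota(t)\bigr)
$$
holds for every $h$ (trivially when $\iota(t+h)=\iota(t)$, by definition of $\phi$ otherwise), so for $h<0$
$$
\frac{(\eta\circ\iota)(t+h)-(\eta\circ\iota)(t)}{h}=\phi(\iota(t+h))\cdot\frac{\iota(t+h)-\iota(t)}{h}.
$$
Because $D^-\iota(t)$ exists and is finite, $\iota(t+h)-\iota(t)=\frac{\iota(t+h)-\iota(t)}{h}\cdot h\to 0$, so $\iota(t+h)\to\iota(t)$ and the second factor tends to $D^-\iota(t)$; in case $(ii)$ monotonicity of $\iota$ forces $\iota(t+h)\le\iota(t)$, so $\iota(t+h)$ approaches $\iota(t)$ from the left and $\phi(\iota(t+h))\to D^-\eta(\iota(t))$, while in case $(iii)$ it forces $\iota(t+h)\ge\iota(t)$ and the approach is from the right, giving $\phi(\iota(t+h))\to D^+\eta(\iota(t))$. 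Multiplying the two limits gives the stated chain rules.

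The step needing genuine care — and the main obstacle — is the convergence $\phi(\iota(t+h))\to\phi(\iota(t))$: since $\iota$ is only monotone, $\iota(t+h)$ may equal $\iota(t)$ for some $h$ and differ from it for others, so this is not a plain substitution into a one-sided limit of $\phi$. I would argue it along an arbitrary sequence $h_n\uparrow 0$: on indices with $\iota(t+h_n)=\iota(t)$ the term equals the prescribed value $\phi(\iota(t))$, and on the remaining indices it is an honest difference quotient of $\eta$ based at $\iota(t)$ whose argument tends to $\iota(t)$ monotonically from the correct side, hence converges to the same one-sided derivative of $\eta$; the two subsequences thus share this common limit. Finally I would remark that $(iii)$ is understood under the tacit hypothesis — satisfied throughout this paper, e.g.\ when $\eta$ is a convex generator or a Pickands dependence function — that $D^+\eta(\iota(t))$ exists, since otherwise the right-hand side of the stated formula is not defined.
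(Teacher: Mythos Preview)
Your proof is correct and follows essentially the same approach as the paper: for $(i)$ you invert the difference quotient and pass to the right-hand derivative of $\eta$, and for $(ii)$--$(iii)$ you use the same Carath\'eodory auxiliary function (the paper calls it $w$, you call it $\phi$) to factor the composite difference quotient. If anything, you are more careful than the paper about the case where $\iota(t+h)$ may coincide with $\iota(t)$ along part of the sequence, and your closing remark on the tacit existence of $D^+\eta$ in $(iii)$ is a fair observation that the paper leaves implicit.
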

  \begin{proof}
      We prove the first assertion, fix $z \in I_3$ and $\varepsilon < 0$ with $z + \varepsilon \in I_3$ and define $\delta := \eta^{-1}(z+\varepsilon) - \eta^{-1}(z)$. Since $\eta^{-1}$ is strictly decreasing as the inverse of a strictly decreasing function, $\delta > 0$ follows. We obtain that
      $$
      \frac{\eta^{-1}(z+\varepsilon) - \eta^{-1}(z)}{\varepsilon} = \frac{1}{\frac{\eta(\eta^{-1}(z+\varepsilon)) - \eta(\eta^{-1}(z)))}{\eta^{-1}(z+\varepsilon) - \eta^{-1}(z)}} = \frac{1}{\frac{\eta(\eta^{-1}(z) + \delta) - \eta(\eta^{-1}(z))}{\delta}}
      $$
      and therefore using the fact that $\eta$ is convex yields that
      \begin{align*}
      D^-\eta^{-1}(z) &= \lim_{\varepsilon \uparrow 0}\frac{\eta^{-1}(z+\varepsilon) - \eta^{-1}(z)}{\varepsilon} = \lim_{\varepsilon \uparrow 0}\frac{1}{\frac{\eta(\eta^{-1}(z+\varepsilon)) - \eta(\eta^{-1}(z)))}{\eta^{-1}(z+\varepsilon) - \eta^{-1}(z)}} \\&= \lim_{\delta \downarrow 0}\frac{1}{\frac{\eta(\eta^{-1}(z) + \delta) - \eta(\eta^{-1}(z))}{\delta}}= \frac{1}{D^+\eta(\eta^{-1}(z))},
      \end{align*}
      for every $z \in I_3$, whereby the denominator in the last expression is unequal to zero, since $\eta$ is strictly decreasing and convex.\\
      We prove the second assertion, consider  $s,t \in I_1$ with $s<t$ and assume for now that $\iota(s) < \iota(t)$. We define $r := \iota(s) - \iota(t) < 0$ and obtain that
      $$
      \frac{\eta(\iota(s)) - \eta(\iota(t))}{\iota(s) - \iota(t)} = \frac{\eta(\iota(t) + r) - \eta(\iota(t))}{r},
      $$
      which together with the assumption that $\iota(s) < \iota(t)$ for every $s$ near $t$ yields that
      $$
      \lim_{s \uparrow t}\frac{\eta(\iota(s)) - \eta(\iota(t))}{\iota(s) - \iota(t)} = \lim_{r \uparrow 0}\frac{\eta(\iota(t) + r) - \eta(\iota(t))}{r} = D^-\eta(\iota(t)).
      $$
      Considering the general case, we define the function
      $$
      w(y) := \begin{cases}
          \frac{\eta(y) - \eta(\iota(t))}{y-\iota(t)}, &\text{ if } y \neq \iota(t),\\
          D^-\eta(\iota(t)), &\text{ if } y = \iota(t)
      \end{cases}
      $$
      and observe that obviously $\frac{\eta(\iota(s)) - \eta(\iota(t))}{s-t} = w(\iota(s)) \cdot \frac{\iota(s) - \iota(t)}{s-t}$ holds. Taking the limit and using the fact that the left-hand derivative $D^-\iota$ of $\iota$ exists finally yields that
      $$
      \lim_{s \uparrow t}\frac{\eta(\iota(s)) - \eta(\iota(t))}{s-t} = \lim_{s \uparrow t} w(\iota(s)) \cdot \lim_{s \uparrow t}\frac{\iota(s) - \iota(t)}{s-t} = D^-\eta(\iota(t)) \cdot D^-\iota(t).
      $$
      The proof of the third assertion follows similarly.
  \end{proof}
Proving Theorem \ref{thm:convexity_level_curves}, the next lemma will be essential.
\begin{Lemma}\label{lem:auxiliary_conv_level_set}
Let $A \in \mathcal{A}$ and $R\in [\frac{1}{2},1]$ be defined according to eq. \eqref{eq:definition_L_R}. Then the function 
$$
s \mapsto \frac{D^+A(s)}{D^+A(s)s - A(s)}
$$ 
is non-increasing on $(0,R)$.
\end{Lemma}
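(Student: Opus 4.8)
The plan is to reduce the statement to two facts that are easy to isolate: that the denominator $sD^+A(s)-A(s)$ is strictly negative on $(0,R)$, and that after an algebraic rearrangement the quotient differs from a constant only by a ratio whose monotonicity follows from Lemma \ref{lem:regularity_fcts} together with the convexity of $A$. First I would fix $s\in(0,R)$ and use Lemma \ref{lem:regularity_fcts}$(i)$: since $h_A$ is strictly decreasing on $(0,R]$ with $h_A(R)=1$, we have $h_A(s)=\frac{A(s)}{s}>1$, i.e.\ $A(s)>s$; as $D^+A(s)\le 1$ on all of $\mathbb{I}$ (Section \ref{subsec:intro_evc}), this gives $A(s)-sD^+A(s)\ge A(s)-s>0$, so that $sD^+A(s)-A(s)<0$ and the map $p(s):=\frac{D^+A(s)}{sD^+A(s)-A(s)}$ is well defined on $(0,R)$. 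A direct computation with $G_A$ as in \eqref{eq:funct_f_a} then yields the identity
\begin{equation*}
p(s)=1-\frac{A(s)+(1-s)D^+A(s)}{A(s)-sD^+A(s)}=1-\frac{G_A(s)}{A(s)-sD^+A(s)},\qquad s\in(0,R).
\end{equation*}

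It remains to show that $s\mapsto\frac{G_A(s)}{A(s)-sD^+A(s)}$ is non-decreasing on $(0,R)$. For the numerator, Lemma \ref{lem:regularity_fcts}$(iii)$ gives that $G_A$ is non-negative and non-decreasing on $\mathbb{I}$. For the denominator I would prove that $s\mapsto A(s)-sD^+A(s)$ is non-increasing on $(0,1]$: for $0<s_1<s_2$, the supporting-slope inequality $A(s_1)\ge A(s_2)+D^+A(s_2)(s_1-s_2)$ (convexity of $A$) gives, after a short computation,
\begin{equation*}
\bigl(A(s_1)-s_1D^+A(s_1)\bigr)-\bigl(A(s_2)-s_2D^+A(s_2)\bigr)\ \ge\ s_1\bigl(D^+A(s_2)-D^+A(s_1)\bigr)\ \ge\ 0,
\end{equation*}
using that $D^+A$ is non-decreasing. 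Combining a non-negative non-decreasing numerator with a strictly positive non-increasing denominator, for $0<s_1\le s_2<R$ one obtains
\begin{equation*}
\frac{G_A(s_1)}{A(s_1)-s_1D^+A(s_1)}\ \le\ \frac{G_A(s_2)}{A(s_1)-s_1D^+A(s_1)}\ \le\ \frac{G_A(s_2)}{A(s_2)-s_2D^+A(s_2)},
\end{equation*}
and the claimed monotonicity of $p$ then follows from the displayed identity.

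The calculations involved are all short, so I do not anticipate a serious obstacle; the only point that needs care is the bookkeeping with one-sided derivatives — invoking the supporting-slope inequality for $D^+A$ rather than a classical derivative of $A$, and using that $D^+A$ is non-decreasing with $D^+A\le 1$ on $\mathbb{I}$, both recorded in Section \ref{subsec:intro_evc}. I would also double-check the algebraic identity and the sign $A(s)-sD^+A(s)>0$ on $(0,R)$, since the whole argument rests on them.
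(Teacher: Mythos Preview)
Your proof is correct and takes a genuinely different, and somewhat cleaner, route than the paper's. The paper proceeds by a direct comparison: after noting that the denominator is negative on $(0,R)$, it fixes $s_1<s_2$, puts $p(s_2)-p(s_1)$ over a common denominator, and then shows the resulting numerator
\[
Q(s_1,s_2)=D^+A(s_2)\bigl(D^+A(s_1)s_1-A(s_1)\bigr)-D^+A(s_1)\bigl(D^+A(s_2)s_2-A(s_2)\bigr)
\]
is $\leq 0$ via a case split according to the sign of $D^+A(s_2)$. Your algebraic identity $p(s)=1-\frac{G_A(s)}{A(s)-sD^+A(s)}$ bypasses this case split entirely: once the numerator is recognised as $G_A$ (whose non-negativity and monotonicity are already recorded in Lemma~\ref{lem:regularity_fcts}$(iii)$), only the monotonicity of the denominator $A(s)-sD^+A(s)$ remains, and that is your one-line supporting-slope argument. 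The paper's approach is more self-contained but requires handling the two sign regimes of $D^+A$ separately; yours leverages the existing lemma and a single algebraic observation to avoid the dichotomy altogether.
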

\begin{proof}
The fact that $D^+A(s) \leq 1 < \frac{A(s)}{s}$ holds for every $s \in (0,R)$ implies that $D^+A(s)s - A(s) < 0$  for every $s \in (0,R)$. We fix $s_1,s_2 \in (0,R)$ with $s_1 < s_2$ and prove that
     \begin{align*}
     &\frac{D^+A(s_2)}{D^+A(s_2)s_2 - A(s_2)} - \frac{D^+A(s_1)}{D^+A(s_1)s_1 - A(s_1)} \\&= \frac{D^+A(s_2)(D^+A(s_1)s_1 - A(s_1)) - D^+A(s_1)(D^+A(s_2)s_2 - A(s_2))}{(D^+A(s_1)s_1 - A(s_1))(D^+A(s_2)s_2 - A(s_2))} \leq 0.
         \end{align*}
     Since $(D^+A(s_1)s_1 - A(s_1))(D^+A(s_2)s_2 - A(s_2)) > 0$, it suffices to show that
     $$
     Q(s_1,s_2) := D^+A(s_2)(D^+A(s_1)s_1 - A(s_1)) - D^+A(s_1)(D^+A(s_2)s_2 - A(s_2)) \leq 0.
     $$
     We distinguish two cases:
     \begin{itemize}
         \item[$(a)$] At first, we consider the case that $D^+A(s_2)\geq 0$ and obtain using convexity of $A$ and $D^+A(s_1) \leq D^+A(s_2)$ that
         \begin{align*}
             Q(s_1,s_2) &\leq D^+A(s_2)[D^+A(s_1)s_1 - D^+A(s_2)s_2 + A(s_2) - A(s_1)] \\&\leq
             D^+A(s_2) [D^+A(s_2)(s_1-s_2) + A(s_2) - A(s_1)] \\& \leq
             D^+A(s_2) [A(s_1) - A(s_1)] = 0.
         \end{align*}
        \item[$(b)$] If $D^+A(s_2)\leq 0$, then using similar arguments as before yields that
        \begin{align*}
             Q(s_1,s_2) &= (-D^+A(s_2))[D^+A(s_1)(s_2-s_1) + A(s_1)] + D^+A(s_1)A(s_2) \\&\leq
             -D^+A(s_2)A(s_2) + D^+A(s_1)A(s_2) \\& =
             A(s_2)(D^+A(s_1) - D^+A(s_2)) \leq 0.
         \end{align*}
        \end{itemize}
         In either case $Q(s_1,s_2) \leq 0$ holds and therefore the function $s \mapsto \frac{D^+A(s)}{D^+A(s)s - A(s)}$ is non-increasing on $(0,R)$.
\end{proof}
We prove that the functions $\fg^t$ defined as in eq. \eqref{eq:fct_g_t} are left-continuous in $1$.
\begin{Lemma}\label{lem:left_cont_g^t}
    Let $\psi \in \mathbf{\Psi}$, $A \in \mathcal{A}$ and $t \in [0,1)$. Then the function $\fg^t$ defined according to eq. \eqref{eq:fct_g_t} is left-continuous in $1$.
\end{Lemma}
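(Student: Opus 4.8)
The plan is to prove left-continuity of $\fg^t$ at $1$ by showing that the argument of $\psi$ appearing in eq.~\eqref{eq:fct_g_t} converges to $\varphi(t)$ as $x\uparrow 1$, and then invoking continuity of $\psi$ together with the identity $\psi(\varphi(t))=t$. First I would clear the degenerate cases: if $\psi$ is strict then $t=0$ is excluded from eq.~\eqref{eq:fct_g_t} (there $\fg^0\equiv 0$ is trivially left-continuous), so it remains to treat $t\in(0,1)$ for an arbitrary generator $\psi\in\mathbf{\Psi}$ and, in addition, $t=0$ when $\psi$ is non-strict. In each of these cases $\varphi(t)\in(0,\infty)$; moreover, since $\psi$ is continuous with $\psi(0)=1$ and is strictly decreasing on a neighbourhood of $0$, one has $\varphi(x)\downarrow\varphi(1)=0$ as $x\uparrow 1$, and $\psi(\varphi(t))=t$ holds.

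Next, for $x<1$ close enough to $1$ that $z:=\tfrac{\varphi(t)}{\varphi(x)}>1$, I would write $\fg^t(x)=\psi(\xi^t(x))$ with $\xi^t(x)=\bigl(\tfrac{1}{h_A^{[-1]}(z)}-1\bigr)\varphi(x)$ as in eq.~\eqref{eq:fct_xi_t}. The key input is the two-sided bound $\tfrac{1}{z+1}\le h_A^{[-1]}(z)\le\tfrac1z$ for $z>1$: the lower bound is recorded in Section~\ref{subsec:intro_evc}, and for the upper bound I would use that $h_A(s)=\tfrac{A(s)}{s}\le\tfrac1s$ for every $s\in(0,1]$ (because $A(s)\le 1$), so that the value $s:=h_A^{[-1]}(z)=h_A^{-1}(z)\in(0,R)$ satisfies $z=h_A(s)\le\tfrac1s$, i.e.\ $s\le\tfrac1z$. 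Feeding these bounds into $\xi^t(x)$ and using $z\,\varphi(x)=\varphi(t)$ then yields
\[
\varphi(t)-\varphi(x)\ \le\ \xi^t(x)\ \le\ \varphi(t).
\]

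Finally, letting $x\uparrow 1$ forces $\varphi(x)\to 0$, hence $\xi^t(x)\to\varphi(t)$, and continuity of $\psi$ gives $\fg^t(x)=\psi(\xi^t(x))\to\psi(\varphi(t))=t=\fg^t(1)$, which is precisely left-continuity in $1$. The only mildly delicate step is the upper estimate $h_A^{[-1]}(z)\le\tfrac1z$, since only the lower estimate is stated earlier in the paper; an alternative that sidesteps it is to combine monotonicity of $\xi^t$ (Lemma~\ref{lem:auxiliary_lvl_curves_ii}) with the lower bound to see that $\ell:=\lim_{x\uparrow 1}\xi^t(x)$ exists with $\ell\le\varphi(t)$, and then to rule out $\ell<\varphi(t)$ via the contradiction $h_A(s)\ge\tfrac{\varphi(t)}{\ell}\cdot\tfrac{1-s}{s}>\tfrac1s$ for all sufficiently small $s$, which is impossible since $h_A(s)\le\tfrac1s$.
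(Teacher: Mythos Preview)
Your argument is correct and follows essentially the same route as the paper's proof: both hinge on the bound $h_A(s)\le 1/s$ (equivalently $A\le 1$) to control $\xi^t(x)$ near $x=1$, with the paper packaging this as an $\varepsilon$--$\delta$ estimate ($\xi^t(x)>\varphi(t+\varepsilon)$ once $x>\psi(\varphi(t)-\varphi(t+\varepsilon))$) while you package it as the sandwich $\varphi(t)-\varphi(x)\le\xi^t(x)\le\varphi(t)$. Your presentation is arguably cleaner, and the alternative via Lemma~\ref{lem:auxiliary_lvl_curves_ii} is sound but unnecessary given that your direct derivation of $h_A^{[-1]}(z)\le 1/z$ already closes the argument.
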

\begin{proof}
Let $t \in (0,1)$ and $\varepsilon > 0$ with $t + \varepsilon \in (0,1)$ be arbitrary. Then we choose $\delta := 1- \psi(\varphi(t) - \varphi(t+\varepsilon))$ and fix $x \in (t,1)$ with $1-x <\delta$ arbitrarily, which implies that $\varphi(t + \varepsilon) + \varphi(x) < \varphi(t)$. Using this together with the fact that $A(x) \leq 1$ for every $x \in \mathbb{I}$ yields
$$
h_A\left(\frac{\varphi(x)}{\varphi(t+\varepsilon) + \varphi(x)}\right) \leq \frac{\varphi(t + \varepsilon) + \varphi(x)}{\varphi(x)} < \frac{\varphi(t)}{\varphi(x)}.
$$
Utilizing the pseudo-inverse $h_A^{[-1]}$ then implies $\frac{1}{h_A^{[-1]}\left(\frac{\varphi(t)}{\varphi(x)}\right)} > \frac{\varphi(t + \varepsilon)}{\varphi(x)} + 1$ and therefore
$$
\left(\frac{1}{h_A^{[-1]}\left(\frac{\varphi(t)}{\varphi(x)}\right)}-1\right)\varphi(x) > \varphi(t+\varepsilon),
$$
which yields
$$
|\fg^t(1) - \fg^t(x)| = \fg^t(x) - t < \varepsilon,
$$
proving that $\fg^t$ is left-continuous in $1$. For $\fg^0$, given that $\fg^0(x) = 0$ for every $x \in \mathbb{I}$ when $\psi$ is strict, left continuity of $\fg^0$ is obvious. If $\psi$ is non-strict, left-continuity of $\fg^0$ follows applying the same reasoning used in the case $t \in (0,1)$.
\end{proof}
The following auxiliary lemma is used to prove Theorem \ref{thm:markov_kernel}.
\begin{Lemma}\label{lem:auxiliary_markov_kernel}
Let $\psi \in \mathbf{\Psi}$, $\varphi$ be its pseudo-inverse, $A \in \mathcal{A}$ and $L$ be defined according to eq. \eqref{eq:definition_L_R}. Then for fixed $y \in (0,1)$ the function
$$
s \mapsto (\varphi(s) + \varphi(y)) \cdot A\left(\frac{\varphi(s)}{\varphi(s) + \varphi(y)}\right)
$$
is strictly decreasing on $\left(0,\psi\left(\frac{L\varphi(y)}{1-L}\right)\right)$.
\end{Lemma}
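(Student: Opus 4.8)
The plan is to strip away $\psi$ and $\varphi$ via the substitution $t=\tfrac{\varphi(s)}{\varphi(s)+\varphi(y)}$, which turns the displayed function into a constant multiple of $\tfrac{A(t)}{1-t}$ and the claimed monotonicity in $s$ into strict monotonicity of $t\mapsto\tfrac{A(t)}{1-t}$ on $(L,1)$; the latter is then immediate from Lemma~\ref{lem:regularity_fcts}$(iii)$.

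Concretely, I would first fix $y\in(0,1)$, set $b:=\varphi(y)$ (finite and positive, since $\varphi$ is continuous and strictly decreasing on $(0,1]$ with $\varphi(1)=0$), and dispose of the degenerate case $\psi\!\left(\tfrac{Lb}{1-L}\right)=0$ (possible only if $\psi$ is non-strict with $\tfrac{Lb}{1-L}\ge\varphi(0)$), where the interval is empty and nothing is to prove. Assuming $\tfrac{Lb}{1-L}<\varphi(0)$, on $\left(0,\psi\!\left(\tfrac{Lb}{1-L}\right)\right)$ the map $s\mapsto\varphi(s)$ is a continuous strictly decreasing bijection onto $\left(\tfrac{Lb}{1-L},\varphi(0)\right)$, hence $s\mapsto t(s):=\tfrac{\varphi(s)}{\varphi(s)+b}$ is a continuous strictly decreasing bijection onto $(L,t^\star)$ with $t^\star:=\tfrac{\varphi(0)}{\varphi(0)+b}\le 1$; a one-line computation gives $\varphi(s)+b=\tfrac{b}{1-t(s)}$ and shows $t(s)=L$ exactly at $s=\psi\!\left(\tfrac{Lb}{1-L}\right)$. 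Thus the function under study equals $\tfrac{b\,A(t(s))}{1-t(s)}$, and, since $t(\cdot)$ is strictly decreasing, it is strictly decreasing on $\left(0,\psi\!\left(\tfrac{Lb}{1-L}\right)\right)$ if and only if $v(t):=\tfrac{A(t)}{1-t}$ is strictly increasing on $(L,t^\star)\subseteq(L,1)$.

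To finish, I would show $v$ is strictly increasing on all of $(L,1)$. On any compact $[t_1,t_2]\subset(L,1)$ the function $v$ is absolutely continuous, being the product of the convex (hence locally Lipschitz) map $A$ and the smooth map $t\mapsto(1-t)^{-1}$, and where $A$ is differentiable $v'(t)=\tfrac{A'(t)(1-t)+A(t)}{(1-t)^2}=\tfrac{G_A(t)}{(1-t)^2}$ with $G_A$ as in eq.~\eqref{eq:funct_f_a}. By Lemma~\ref{lem:regularity_fcts}$(iii)$, $G_A$ is non-decreasing and strictly positive on $(L,1]$, so for $L<t_1<t_2<1$
\[
v(t_2)-v(t_1)=\int_{t_1}^{t_2}\frac{G_A(t)}{(1-t)^2}\,\mathrm{d}\lambda(t)\ \ge\ G_A(t_1)\int_{t_1}^{t_2}\frac{\mathrm{d}\lambda(t)}{(1-t)^2}\ >\ 0 .
\]
Combining the two steps proves the lemma.

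The argument is essentially bookkeeping; the only genuinely delicate point is to notice that the prescribed right endpoint $\psi\!\left(\tfrac{L\varphi(y)}{1-L}\right)$ is exactly the value of $s$ for which $t(s)=L$, so that the relevant range of $t$ stays inside $(L,1)$ — precisely where Lemma~\ref{lem:regularity_fcts}$(iii)$ supplies $G_A>0$; a secondary point is the justification of the term-by-term differentiation of $v$, handled by the local absolute continuity noted above. (Alternatively one can avoid the integral representation and deduce $v(t_2)(1-t_1)>v(t_1)(1-t_2)$ directly from convexity of $A$ together with $A(t_1)>1-t_1$ for $t_1>L$ and $D^+A\ge-1$, but the route above is shorter.)
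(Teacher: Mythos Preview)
Your proof is correct and follows essentially the same strategy as the paper's: both reduce the claim to the positivity of $G_A$ on $(L,1]$ from Lemma~\ref{lem:regularity_fcts}$(iii)$. The only cosmetic difference is that the paper differentiates the displayed function directly in $s$ (invoking the one-sided chain rule of Lemma~\ref{lem:properties_left_hand_der} to obtain $D^-\varphi(s)\cdot G_A\!\left(\tfrac{\varphi(s)}{\varphi(s)+\varphi(y)}\right)<0$), whereas you first change variables to $t=\tfrac{\varphi(s)}{\varphi(s)+\varphi(y)}$ and then analyze $v(t)=A(t)/(1-t)$; your route avoids the one-sided chain rule machinery at the cost of an absolute-continuity argument, but the substance is identical. (Incidentally, since $L\le\tfrac12$ forces $\tfrac{L\varphi(y)}{1-L}\le\varphi(y)<\varphi(0)$, the ``degenerate'' empty-interval case you single out never actually occurs.)
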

\begin{proof}
    Fixing $y \in (0,1)$, using the fact that the function $s\mapsto \frac{\varphi(s)}{\varphi(s) + \varphi(y)}$ is strictly decreasing and applying Lemma \ref{lem:properties_left_hand_der}, calculating the left-hand derivative w.r.t. the variable $s$ yields that 
    \begin{align*}
    \partial_s^-&\left[(\varphi(s) + \varphi(y))A\left(\frac{\varphi(s)}{\varphi(s) + \varphi(y)}\right)\right] \\&= D^-\varphi(s)\left[A\left(\frac{\varphi(s)}{\varphi(s) + \varphi(y)}\right) + D^+A\left(\frac{\varphi(s)}{\varphi(s) + \varphi(y)}\right)\frac{\varphi(y)}{\varphi(s) + \varphi(y)}\right] \\&=
    D^-\varphi(s) G_A\left(\frac{\varphi(s)}{\varphi(s) + \varphi(y)}\right),
     \end{align*}
     for every $s \in (0,1)$, whereby $G_A$ is defined according to eq. \eqref{eq:fct_f}. If $s \in \left(0,\psi\left(\frac{L\varphi(y)}{1-L}\right)\right)$, then $L < \frac{\varphi(s)}{\varphi(s) + \varphi(y)}$ and therefore applying Lemma \ref{lem:regularity_fcts} and the fact that $D^-\varphi$ is negative yields that $D^-\varphi(s) \cdot G_A\left(\frac{\varphi(s)}{\varphi(s) + \varphi(y)}\right) <0$ for every $s \in \left(0,\psi\left(\frac{L\varphi(y)}{1-L}\right)\right)$. Together with the fact that the function $s \mapsto (\varphi(s) + \varphi(y))A\left(\frac{\varphi(s)}{\varphi(s) + \varphi(y)}\right)$ is continuous, this implies that  $s \mapsto (\varphi(s) + \varphi(y))A\left(\frac{\varphi(s)}{\varphi(s) + \varphi(y)}\right)$ is strictly decreasing on $\left(0,\psi\left(\frac{L\varphi(y)}{1-L}\right)\right)$.
\end{proof}
Proving Lemma \ref{lem:support_sing}, the following technical lemma will be used.
\begin{Lemma}\label{lem:help_singular_comp_full_support}
    Let $[a,b] \subseteq \mathbb{I}$ with $a<b$ be an interval and $F\colon [a,b] \rightarrow \mathbb{I}$ be a strictly increasing, measure generating function with corresponding singular measure $m_F$ fulfilling $m_F([a,b])\leq 1$, and let $P \colon [a,b] \rightarrow \mathbb{I}$ be a positive measure generating function with corresponding measure $m_P$ fulfilling $m_P([a,b])\leq 1$. Furthermore, let $m_Q$ be the measure induced by the measure generating function $Q \colon [a,b] \rightarrow \mathbb{I}$ defined by $Q := F\cdot P$. Then the singular component $m_Q^{sing}$ of $m_Q$ fulfills that
    $$
    \mathrm{supp}(m_Q^{sing}) = [a,b].
    $$
\end{Lemma}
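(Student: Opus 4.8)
The plan is to decompose $m_Q$ by a Stieltjes product formula into a summand governed by $m_F$ and a summand governed by $m_P$, to recognise the first summand as a singular measure equivalent to $m_F$, and then to use strict monotonicity of $F$ to conclude full support. First I would record that $F$ and $P$ are non-decreasing and right-continuous, hence of bounded variation on $[a,b]$, so the Lebesgue--Stieltjes integration-by-parts formula applies: writing $F_{-}$ for the left-continuous modification of $F$, one has for every $a\le c<d\le b$
\begin{equation*}
m_Q((c,d])=Q(d)-Q(c)=\int_{(c,d]}P\,\mathrm{d}m_F+\int_{(c,d]}F_{-}\,\mathrm{d}m_P .
\end{equation*}
Since the half-open intervals $(c,d]$ form a generating $\pi$-system of $\mathcal{B}((a,b])$, this yields the identity of measures $m_Q=P\cdot m_F+F_{-}\cdot m_P$ on $(a,b]$, where $P\cdot m_F$ and $F_{-}\cdot m_P$ denote the measures with $m_F$-density $P$ and $m_P$-density $F_{-}$, respectively. (The left endpoint $a$ plays no role below, since the support claim only requires positivity on open subintervals of the dense set $(a,b)$.)

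Next I would show that $P\cdot m_F$ is a singular measure in the sense of Section~\ref{subsection:general_notation}: it has no point masses, because $(P\cdot m_F)(\{x\})=P(x)\,m_F(\{x\})=0$ as $m_F$ is atomless, and it is concentrated on the same Lebesgue-null set as $m_F$ because $P\cdot m_F\ll m_F$. Writing the decomposition $F_{-}\cdot m_P=\mu_{ac}+\mu_{dis}+\mu_{sing}$ into its absolutely continuous, discrete and singular parts, the identity from the previous step becomes $m_Q=\mu_{ac}+\mu_{dis}+(P\cdot m_F+\mu_{sing})$. Here $P\cdot m_F+\mu_{sing}$ is again singular (a sum of atomless measures is atomless, and a union of two Lebesgue-null sets is Lebesgue-null), so by uniqueness of the decomposition into absolutely continuous, discrete and singular components it follows that $m_Q^{sing}=P\cdot m_F+\mu_{sing}\ge P\cdot m_F$.

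Finally, since $P$ is non-decreasing and positive we have $P(x)\ge P(a)>0$ on $[a,b]$, so $\int_B P\,\mathrm{d}m_F=0$ holds if and only if $m_F(B)=0$; thus $P\cdot m_F$ and $m_F$ share the same null sets and hence the same support. Because $F$ is strictly increasing, $m_F((c,d))=F(d-)-F(c)>0$ for every $a\le c<d\le b$, so $\mathrm{supp}(m_F)=[a,b]$. Combining these facts with the previous step, $[a,b]=\mathrm{supp}(P\cdot m_F)\subseteq\mathrm{supp}(m_Q^{sing})\subseteq[a,b]$, which is the assertion.

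The step that requires the most care is the product formula: one must treat the jump terms in the integration-by-parts identity correctly so that the $\mathrm{d}m_F$-part is precisely $P\cdot m_F$ --- this is exactly where atomlessness of $m_F$ enters, since it allows one to replace $P_{-}$ by $P$ under $\int\cdot\,\mathrm{d}m_F$ without error --- and one must fix a convention for the possible atom of $m_Q$ at the left endpoint; the remaining arguments are routine measure theory.
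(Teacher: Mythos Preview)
Your argument is correct and takes a genuinely different route from the paper. The paper proceeds by a case distinction on the type of $m_P$ (absolutely continuous, discrete, singular), handling each case by hand: in the absolutely continuous case via a strict integral inequality, in the discrete case by explicitly subtracting the jump part of $Q$ and checking that the remainder is continuous, strictly increasing and has derivative zero a.e., and in the singular case by invoking that products of singular functions are singular; the general case is then reduced to one of the three. Your approach bypasses this entirely: the Stieltjes product formula $m_Q=P\cdot m_F+F_{-}\cdot m_P$ (legitimate because continuity of $F$ kills the simultaneous-jump correction and lets you replace $P_{-}$ by $P$ under $\mathrm{d}m_F$) isolates in one step a summand $P\cdot m_F$ that is already singular, equivalent to $m_F$, and hence of full support; uniqueness of the three-part Lebesgue decomposition then forces $m_Q^{sing}\ge P\cdot m_F$. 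This is shorter and more conceptual, and it also sidesteps a mild looseness in the paper's reduction of the general case (the individual components $P^{abs},P^{dis},P^{sing}$ need not themselves be positive on all of $[a,b]$). The paper's argument, in turn, is more elementary in that it avoids the integration-by-parts machinery and stays at the level of distribution functions.
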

\begin{proof}
    We prove the following three assertions:
    \begin{itemize}
        \item[$(i)$] If $m_P$ is absolutely continuous, then $\mathrm{supp}(m_Q^{sing}) = [a,b]$.
        \item[$(ii)$] If $m_P$ is discrete, then $\mathrm{supp}(m_Q^{sing}) = [a,b]$.
        \item[$(iii)$] If $m_P$ is singular, then $\mathrm{supp}(m_Q^{sing}) = [a,b]$.
    \end{itemize}
    Proving the first assertion and assuming that $m_P$ is absolutely continuous, we denote the Radon-Nikodym density of $m_P$ by $p$ and therefore obtain that
    $$
    Q'(x) = F'(x)P(x) + F(x)P'(x) = F(x)p(x)
    $$
    for $\lambda$-a.e. $x \in [a,b]$. We fix an arbitrary interval $(x_1,x_2) \subseteq [a,b]$ with $x_1 < x_2$ and show that $m_Q^{sing}((x_1,x_2)) > 0$. If $P$ is constant on $(x_1,x_2)$, then obviously $m_Q^{sing}((x_1,x_2)) > 0$ holds. We assume that $P(x_1) < P(x_2)$. Suppose that $m_Q^{sing}((x_1,x_2)) = 0$, then $m_Q$ would be absolutely continuous on $(x_1,x_2)$ with density $F\cdot p$. Applying the fact that $F$ is strictly increasing yields that
    \begin{align*}
        m_Q((x_1,x_2)) &= \int_{(x_1,x_2)} F(s)\cdot p(s) \mathrm{d}\lambda(s) \\& <
        F(x_2)[P(x_2) - P(x_1)] \\& <
        F(x_2)P(x_2) - F(x_1)P(x_1) \\&=
        m_Q((x_1,x_2)).
    \end{align*}
    A contradiction. Therefore, $m_Q^{sing}((x_1,x_2)) > 0$ and thus, since the interval $(x_1,x_2)$ was arbitrary, $\mathrm{supp}(m_Q^{sing}) = [a,b]$ holds.\\
    We prove the second assertion and assume that $m_P$ is discrete, i.e., that there exist $\alpha_1, \alpha_2, ... \in \mathbb{I}$ with $\sum_{i \in \mathbb{N}}\alpha_i \leq 1$ and $q_1, q_2, ... \in [a,b]$, such that $m_P = \sum_{i \in \mathbb{N}}\alpha_i\delta_{q_i}$ holds. Then $Q$ can be represented as
    $$
    Q(x) = F(x) \cdot P(x) =\sum_{i\in\mathbb{N}}\alpha_iF(x)\delta_{q_i}([0,x])
    $$
    for every $x \in [a,b]$. Since $Q$ is continuous for every $x \not\in \{q_i \colon i \in \mathbb{N}\}$, the discrete component of $m_Q$ is given by $m_Q^{dis} := \sum_{i\in\mathbb{N}}\alpha_i F(q_i)\delta_{q_i}$. We denote by $Q^{dis}$ the measure generating function corresponding to $m_Q^{dis}$, define the function $Q^{sing}(x) := Q(x) - Q^{dis}(x)$ for every $x \in [a,b]$ and prove that $Q^{sing}$ is singular. Obviously, singularity of $F$, discreteness of $P$ and $Q^{dis}$ yield that
    $$
    (Q^{sing})'(x) = F'(x)P(x) + F(x)P'(x) - (Q^{dis})'(x) = 0
    $$
    for $\lambda$-a.e. $x \in [a,b]$. Moreover, having that
    \begin{align*}
     Q^{sing}(x-) &= \sum_{\substack{i \in \mathbb{N},\\q_i < x}} \alpha_i[F(x-) - F(q_i)] \\&= \sum_{\substack{i \in \mathbb{N},\\q_i < x}} \alpha_i[F(x) - F(q_i)] \\&= \sum_{\substack{i \in \mathbb{N},\\q_i \leq x}} \alpha_i[F(x) - F(q_i)] = Q^{sing}(x), 
    \end{align*}
    for every $x \in (a,b]$, we obtain continuity of $Q^{sing}$. Using that $F$ is strictly increasing and $P$ is positive, the fact that $Q^{sing}$ is strictly increasing follows immediately and therefore $\mathrm{supp}(m_Q^{sing}) = [a,b]$ holds.\\
Applying the fact that products of singular functions are singular and utilizing strict increasingness of $F$ and positivity of $P$ yields the third assertion.\\
In the case that $m_P$ is arbitrary, at least one of the three components $m_P^{abs}$, $m_P^{dis}$ or $m_P^{sing}$ is non-degenerated and thus the desired result follows from one of the assertions $(i)$, $(ii)$ or $(iii)$.
\end{proof}
  \subsection{Proofs of Sections \ref{section:notation}, \ref{sec:lvl_sets_archimax} and \ref{section:regularity_results}\label{sec:appendix_proofs}}
  \subsubsection*{Proof of Lemma \ref{lem:regularity_fcts}:\label{proof:regularty_fct}}
  \begin{proof}
    Proving the first assertion, the fact that $h_A$ is strictly decreasing on $(0,R]$ and that $h_A(s) = 1$ for every $s \in [R,1]$ has already been established in \citep[Lemma 5]{evc-mass}. Considering convexity of $h_A$, we calculate the right-hand derivative $D^+h_A$ of $h_A$ and obtain that
    $$
    D^+h_A(t) = \frac{D^+A(t)t -A(t)}{t^2}
    $$
    for every $t \in (0,1)$. We fix $t_1,t_2 \in (0,1)$ with $t_1 < t_2$ arbitrarily and show that $D^+h_A(t_1) \leq D^+h_A(t_2)$
    holds. Since the afore-mentioned inequality is equivalent to
    \begin{equation*}\label{eq:euival_convex}
    \frac{t_1^2[D^+A(t_2)t_2-A(t_2)]-t_2^2[D^+A(t_1)t_1-A(t_1)]}{t_1^2t_2^2} \geq 0,
    \end{equation*}
    it suffices to prove that $t_1^2[D^+A(t_2)t_2-A(t_2)]-t_2^2[D^+A(t_1)t_1+A(t_1)] \geq 0$. Using convexity of $A$ and applying the fact that $D^+A(t_1) \leq D^+A(t_2)$ yields that
    \begin{align*}
    D^+A(t_2)t_2 -A(t_2) - &[D^+A(t_1)t_1 - A(t_1)] \\&\geq D^+A(t_2)t_2 -A(t_2) - D^+A(t_1)t_1 + D^+A(t_2)(t_1-t_2) + A(t_2) \\&=
    t_1(D^+A(t_2) - D^+A(t_1)) \geq 0.
     \end{align*}
     Having that $h_A$ is non-increasing, we conclude that $D^+h_A(s) \leq 0$ for every $s \in (0,1)$ and therefore $D^+A(s)s - A(s) \leq 0$ holds for every $s \in (0,1)$.
     Using this fact together with the previous inequality and applying that $t_1 < t_2$, implies that
     \begin{align*}
     t_1^2[D^+A(t_2)t_2-A(t_2)]-&t_2^2[D^+A(t_1)t_1-A(t_1)] \\&\geq t_2^2[D^+A(t_2)t_2 -A(t_2) - (D^+A(t_1)t_1 - A(t_1))] \geq 0.
        \end{align*}
  Therefore, $D^+h_A$ is non-decreasing on $(0,1)$, which, together with the fact that $h_A$ is continuous, yields convexity of $h_A$ on $(0,1]$ \citep[Appendix C]{Pollard2001}.\\
   We prove the second assertion. Since $h_A$ is strictly decreasing on $(0,R]$, $h_A^{[-1]}$ is strictly decreasing on $[1,\infty)$, as it is the inverse of a strictly decreasing function. Proving that $h_A^{[-1]}$ is convex, we show that the left-hand derivative $D^-h_A^{[-1]}$ is non-decreasing on $(1,\infty)$. Using that $h_A$ is strictly decreasing and convex on $(0,R)$ and applying Lemma \ref{lem:properties_left_hand_der}, we obtain that
   $$
   D^-h_A^{[-1]}(t) = \frac{1}{D^+h_A\left(h_A^{[-1]}(t)\right)}
   $$
   for every $t \in (1,\infty)$. Note that $h_A^{[-1]}(t) < R$ holds for every $t \in (1,\infty)$ and therefore $D^+h_A\left(h_A^{[-1]}(t)\right) < 0$ for every $t \in (1,\infty)$. Applying the fact that $D^+h_A$ is non-decreasing on $(0,1)$ and that $h_A^{[-1]}$ is strictly decreasing on $(1,\infty)$, we conclude that $D^-h_A^{[-1]}$ is non-decreasing on $(1,\infty)$ and therefore \citep[Appendix C]{Pollard2001} $h_A^{[-1]}$ is convex on $(1,\infty)$. Continuity of $h_A^{[-1]}$ yields convexity of $h_A^{[-1]}$ on $[1,\infty)$.\\
    Considering the third assertion, the fact that $G_A$ is non-negative and non-decreasing has already been established in \citep[Lemma 5]{evc-mass}. Right-continuity of $G_A$ immediately follows from the fact that $D^+A$ right-continuous. We show that $G_A$ is positive on $(L,1]$. Fix $s \in (L,1]$ arbitrarily and suppose that $D^+A(s) = -1$. Using that $D^+A(w) \leq D^+A(s)$, $D^+A(w) \in [-1,1]$ for every $w \in [0,s]$ and applying \citep[Appendix C]{Pollard2001} would imply that the subsequent identity holds:
    $$
    A(s) = 1 + \int_{[0,s]} D^+A(w) \mathrm{d}\lambda(w) = 1-s.
    $$ 
    This contradicts the fact that $L$ is the largest element with the afore-mentioned property and therefore $D^+A(s) > -1$ holds. Overall we conclude that
    $$
    G_A(s) = D^+A(s)(1-s) + A(s) > -(1-s) + (1-s) = 0
    $$
    and hence $G_A$ is positive on $(L,1]$.
    Let $L>0$. Then having that $A(t) = 1-t$ and $D^+A(t) = -1$ for every $t \in [0,L)$ yields that $G_A(t) = 0$ for every $t \in [0,L)$. Applying that $A(t) = t$ and $D^+A(t) = 1$ for every $t \in [R,1]$ finally implies that $G_A(t) = 1$ for every $t \in [R,1]$.
\end{proof}
\subsubsection*{Proof of Lemma \ref{lem:0-lvl_sets}:\label{proof:0-lvl_sets}}
\begin{proof}
    Let $\psi \in \mathbf{\Psi}$, $\varphi$ be its pseudo-inverse, $A \in \mathcal{A}$ and $h_A$ be defined according to eq. \eqref{eq:fct_h}. If $(x,y) \in \{1\} \times (0,1]$ or $(x,y) \in (0,1] \times \{1\}$, then obviously $(x,y) \notin L_{\psi,A}^0$ and since $\fg^0(1) = 0$ as well as $\fg^0(x) <1$, also $(x,y) \notin \{(u,v) \in \mathbb{I} \colon v \leq f^0(x)\}\cup (\{0\}\times [\ff^R(0),1])$ holds. Assuming that $(x,y) \in \{0\}\times \mathbb{I}$ or $(x,y) \in \mathbb{I}\times \{0\}$, applying that $\ff^R(0) \leq y$ or $y < \ff^R(0) = \fg^0(0)$ yields $(x,y) \in L_{\psi,A}^0$ if, and only if $(x,y) \in \{(u,v) \in \mathbb{I} \colon v \leq f^0(x)\} \cup (\{0\}\times [\ff^R(0),1])$. Now, we assume that $(x,y) \in (0,1)^2$ and obtain that $(x,y) \in L_{\psi,A}^0$ is equivalent to
    \begin{equation}\label{eq:equiv_lvl_set}
        h_A\left(\frac{\varphi(x)}{\varphi(x) + \varphi(y)}\right)\varphi(x) \geq \varphi(0).
    \end{equation}
    We consider two cases: 
    \begin{itemize}
        \item[$(a)$]  If $R \leq \frac{\varphi(x)}{\varphi(x) + \varphi(y)}$, then $h_A(\frac{\varphi(x)}{\varphi(x) + \varphi(y)}) = 1$, implying that eq. \eqref{eq:equiv_lvl_set} holds if, and only if $x = 0$. Together with the fact that $R \leq \frac{\varphi(x)}{\varphi(x) + \varphi(y)}$ is equivalent to $\ff^R(x) \leq y$, eq. \eqref{eq:equiv_lvl_set} therefore holds if, and only if $(x,y) \in \{0\} \times [\ff^R(0),1]$.
        \item[$(b)$]If $\frac{\varphi(x)}{\varphi(x) + \varphi(y)} < R$, then following a similar approach to the proof of Lemma \ref{lem:t-lvl_sets}, eq. \eqref{eq:equiv_lvl_set} is equivalent to $y \leq \fg^0(x)$.
          \end{itemize}
   If $(x,y) \in L_{\psi,A}^0$, then combining the previous arguments directly yields $(x,y) \in \{(u,v) \in \mathbb{I}^2\colon v \leq \fg^0(u)\} \cup (\{0\}\times [\ff^R(0),1])$. If $(x,y) \in \{0\}\times [\ff^R(0),1]$, we obviously have $(x,y) \in L_{\psi,A}^0$. Finally, considering $(x,y) \in \{(u,v) \in \mathbb{I}^2 \colon v \leq \fg^0(u)\}$, we obtain that $y \leq \fg^0(x) \leq \ff^R(x)$ and therefore $\frac{\varphi(x)}{\varphi(x) + \varphi(y)} \leq R$. If $\frac{\varphi(x)}{\varphi(x) + \varphi(y)} < R$, the fact that $(x,y) \in L_{\psi,A}^0$ follows form $(b)$. If $\frac{\varphi(x)}{\varphi(x) + \varphi(y)} = R$, then $y = \ff^R(x) \leq \fg^0(x)$ and therefore $x = 0$, implying $(x,y) \in \{0\} \times [\ff^R(0),1]$. The fact that $(x,y) \in L_{\psi,A}^0$ is then an immediate consequence of $(a)$.\\
    The second assertion follows applying the same chain of arguments as in the proof of Lemma \ref{lem:t-lvl_sets}.
\end{proof}
\subsubsection*{Proof of Lemma \ref{lem:auxiliary_lvl_curves_ii}:\label{proof:auxiliary_lvl_curves_ii}}
\begin{proof}
    We fix $t \in (0,1)$ in the case that $\psi$ is strict and $t \in [0,1)$ in the case that $\psi$ is non-strict.\\
    If $0<L<R$ and $x \in \left[\psi\left(\frac{L\varphi(t)}{1-L}\right),1\right)$, then obviously $h_A(L) = \frac{1-L}{L} \leq \frac{\varphi(t)}{\varphi(x)}$ holds. Using the fact that $h_A^{[-1]}\left(z\right) = \frac{1}{1 + z}$ holds for every $z \in \left[\frac{1-L}{L},\infty\right)$ implies that $h_A^{[-1]}\left(\frac{\varphi(t)}{\varphi(x)}\right) = \frac{1}{1 + \frac{\varphi(t)}{\varphi(x)}}$ and therefore $\xi^t(x) = \varphi(t)$ follows. This proves the second part of the second assertion.\\
    Proving that $\xi^t$ is non-decreasing, we calculate the left-hand derivative $D^-\xi^t$ of $\xi^t$ and show that it is non-negative. Throughout this prove we frequently make use of the fact that the functions $\varphi$, $h_A$ and $h_A^{[-1]}$ are convex and thus, the involved left-hand derivatives exist.
    Applying the product/quotient rule for left-hand derivatives yields that
       \begin{align*}
    D^-\xi^t(x) &=
    \partial_x^-\left[\frac{1}{h_A^{[-1]}\left(\frac{\varphi(t)}{\varphi(x)}\right)} - 1\right]\varphi(x) + \left[\frac{1}{h_A^{[-1]}\left(\frac{\varphi(t)}{\varphi(x)}\right)} - 1\right]D^-\varphi(x) \\& =
    \frac{-\partial_x^{-}\left[h_A^{[-1]}\left(\frac{\varphi(t)}{\varphi(x)}\right)\right]\varphi(x) + \left(1-h_A^{[-1]}\left(\frac{\varphi(t)}{\varphi(x)}\right)\right)h_A^{[-1]}\left(\frac{\varphi(t)}{\varphi(x)}\right)D^-\varphi(x)}{h_A^{[-1]}\left(\frac{\varphi(t)}{\varphi(x)}\right)^2}
    \end{align*}
    for every $x\in (t,1)$.
    Using the fact that $x \mapsto \frac{\varphi(t)}{\varphi(x)}$ is strictly increasing, that $h_A$ is strictly decreasing on $(0,R)$ and applying Lemma \ref{lem:properties_left_hand_der}, we conclude that
    $$
    \partial_x^{-}\left[h_A^{[-1]}\left(\frac{\varphi(t)}{\varphi(x)}\right)\right] = \frac{-\varphi(t) D^-\varphi(x)}{D^+h_A\left(h_A^{[-1]}\left(\frac{\varphi(t)}{\varphi(x)}\right)\right)\varphi(x)^2}
    $$
    for every $x\in (t,1)$. Combining the two most recent calculations, we find that
    \begin{align*}
    D^-\xi^t(x) =
    \frac{D^-\varphi(x)}{h_A^{[-1]}\left(\frac{\varphi(t)}{\varphi(x)}\right)^2}\frac{\frac{\varphi(t)}{\varphi(x)}+ \left(1-h_A^{[-1]}\left(\frac{\varphi(t)}{\varphi(x)}\right)\right)h_A^{[-1]}\left(\frac{\varphi(t)}{\varphi(x)}\right)D^+h_A\left(h_A^{[-1]}\left(\frac{\varphi(t)}{\varphi(x)}\right)\right)}{D^+h_A\left(h_A^{[-1]}\left(\frac{\varphi(t)}{\varphi(x)}\right)\right)}
    \end{align*}
    for every $x \in (t,1)$. Note that $h_A^{[-1]}\left(\frac{\varphi(t)}{\varphi(x)}\right) \leq R$ together with the fact that $h_A$ is strictly decreasing and convex on $(0,R]$ implies $D^+h_A\left(h_A^{[-1]}\left(\frac{\varphi(t)}{\varphi(x)}\right)\right) < 0$.
    Using both $h_A(s) = \frac{A(s)}{s}$ and $D^+h_A(s) = \frac{D^+A(s)s -A(s)}{s^2}$ for every $s \in (0,1)$, it follows that
    $$
    h_A(s) + (1-s)sD^+h_A(s) = D^+A(s)(1-s) + A(s) = G_A(s)
    $$
    for every $s \in (0,1)$, whereby $G_A$ is defined as in eq. \eqref{eq:funct_f_a}.
    Utilizing the preceding equation and applying the identity  $h_A\left(h_A^{[-1]}\left(\frac{\varphi(t)}{\varphi(x)}\right)\right) = \frac{\varphi(t)}{\varphi(x)}$, we obtain that the numerator of the second factor of 
 $D^-\xi^t$ simplifies to
    \begin{align*}
 h_A\left(h_A^{[-1]}\left(\frac{\varphi(t)}{\varphi(x)}\right)\right) &+ \left(1-h_A^{[-1]}\left(\frac{\varphi(t)}{\varphi(x)}\right)\right)h_A^{[-1]}\left(\frac{\varphi(t)}{\varphi(x)}\right)D^+h_A\left(h_A^{[-1]}\left(\frac{\varphi(t)}{\varphi(x)}\right)\right)\\&= G_A\left(h_A^{[-1]}\left(\frac{\varphi(t)}{\varphi(x)}\right)\right)
    \end{align*}
for every $x \in (t,1)$. Combining all of the above, we observe that
    \begin{align*}
    D^-\xi^t(x) &= \frac{D^-\varphi(x)\cdot G_A\left(h_A^{[-1]}\left(\frac{\varphi(t)}{\varphi(x)}\right)\right)}{h_A^{[-1]}\left(\frac{\varphi(t)}{\varphi(x)}\right)^2\cdot D^+h_A\left(h_A^{[-1]}\left(\frac{\varphi(t)}{\varphi(x)}\right)\right)}
     \end{align*}
     for every $x \in (t,1)$. Using the properties given in Lemma \ref{lem:regularity_fcts} and the fact that $\varphi$ is strictly decreasing and convex, it follows that $D^-\xi^t(x) \geq 0$. Therefore, applying the afore-mentioned continuity of $\xi^t$, the function $\xi^t$ is non-decreasing. This proves the first assertion.\\
     If $0<L<R$, then applying Lemma \ref{lem:regularity_fcts} again, we find that $G_A$ is positive on $(L,1]$. Consequently, $D^-\xi^t(x) > 0$ holds for every $x \in  \left(t,\psi\left(\frac{L\varphi(t)}{1-L}\right)\right)$, which combined with the continuity of $\xi^t$ shows that $\xi^t$ is strictly increasing on $\left(t,\psi\left(\frac{L\varphi(t)}{1-L}\right)\right)$, proving the first part of the second assertion.\\
    We prove that $D^-\xi^t$ is non-increasing on $(t,1)$. Applying that $D^+h_A(s) = \frac{D^+A(s)s -A(s)}{s^2}$ and $G_A(s) = D^+A(s)(1-s) + A(s)$ holds for every $s \in (0,1)$, we obtain that
       \begin{align}\label{eq:der_xi}
    \nonumber D^-\xi^t(x) &= \frac{D^-\varphi(x)\cdot G_A\left(h_A^{[-1]}\left(\frac{\varphi(t)}{\varphi(x)}\right)\right)}{D^+A\left(h_A^{[-1]}\left(\frac{\varphi(t)}{\varphi(x)}\right)\right)h_A^{[-1]}\left(\frac{\varphi(t)}{\varphi(x)}\right)-A\left(h_A^{[-1]}\left(\frac{\varphi(t)}{\varphi(x)}\right)\right)} \\&=
    D^-\varphi(x) \cdot \left[-1 + \frac{D^+A\left(h_A^{[-1]}\left(\frac{\varphi(t)}{\varphi(x)}\right)\right)}{D^+A\left(h_A^{[-1]}\left(\frac{\varphi(t)}{\varphi(x)}\right)\right)h_A^{[-1]}\left(\frac{\varphi(t)}{\varphi(x)}\right)-A\left(h_A^{[-1]}\left(\frac{\varphi(t)}{\varphi(x)}\right)\right)}\right]
    \end{align}
     for every $x \in (t,1)$. According to Lemma \ref{lem:auxiliary_conv_level_set}, the function $s \mapsto\frac{D^+A(s)}{D^+A(s)s - A(s)}$ is non-increasing on $(0,R)$, which, together with the fact that $x \mapsto h_A^{[-1]}\left(\frac{\varphi(t)}{\varphi(x)}\right)$ is non-increasing, implies that the second factor in eq. \eqref{eq:der_xi} is non-positive and non-decreasing. Using that $D^-\varphi$ is non-decreasing and negative on $(0,1)$, we finally conclude that $D^-\xi^t$ is non-increasing on $(t,1)$. This proves the third assertion.
\end{proof}
\subsubsection*{Proof of Lemma \ref{lem:help_regularity_H_psi_A}:\label{proof:help_regularity_H_psi_A}}
\begin{proof}
Throughout this proof we let $x \in (0,1)$ be arbitrary. We prove the first assertion and assume that $\gamma \in \mathcal{P}_\mathcal{W}^{abs}$. It is straightforward to see that $z \mapsto \int_{[0,z]} t \mathrm{d}\gamma(t)$ is absolutely continuous on every compact interval $[0,a]$. Using convexity of $\varphi$ and $h_A$ yields that the function $y \mapsto h_A\left(\frac{\varphi(x)}{\varphi(x) + \varphi(y)}\right)$ is locally Lipschitz continuous on $(0,1]$ and therefore absolutely continuous on every compact interval $[b,1]$, which in turn implies (both, in the case that $\varphi$ is strict and in the case that $\varphi$ is non-strict) that the function defined by 
$$
\iota(y) := \frac{1}{\varphi(x) h_A\left(\frac{\varphi(x)}{\varphi(x) + \varphi(y)}\right)}
$$ 
for every $y \in \mathbb{I}$ is absolutely continuous on $\mathbb{I}$. Furthermore, utilizing that the function $\iota$ is non-decreasing and applying \citep[Proposition 129]{pap2002}, we conclude that $H_x$ is absolutely continuous. Consequently, the corresponding probability measure $\nu_{H_x}$ is also absolutely continuous.\\
Prior to proofing the remaining two assertions we notice that $y \in [0,\fg^0(x))$ implies that $\frac{1}{\varphi(x)h_A\left(\frac{\varphi(x)}{\varphi(x) + \varphi(y)}\right)} < \frac{1}{\varphi(0)}$ and therefore $H_x(y) = 0$ holds for every $y \in [0,\fg^0(x))$. Moreover, assuming $y \in [\ff^R(x),1]$ yields that $R \leq \frac{\varphi(x)}{\varphi(x) + \varphi(y)}$ and hence $H_x(y) = 1$ holds for every $y \in [\ff^R(x),1]$.\\
We prove the second assertion and assume that $\gamma \in \mathcal{P}_\mathcal{W}^{dis}$. To be more precise, we assume that there exist $\alpha_1,\alpha_2,... \in \mathbb{I}$ with $\sum_{i \in \mathbb{N}} \alpha_i = 1$ and $s_1,s_2,...\in (0,\infty)$ such that $\gamma = \sum_{i \in \mathbb{N}}\alpha_i \delta_{s_i}$. Note, that in the case that $\gamma$ is non-strict, $s_i \geq \frac{1}{\varphi(0)}$ holds for every $i \in \mathbb{N}$.
Using the fact that $s_i \in I_y$ is equivalent to $y \geq \fg^{\psi(\frac{1}{s_i})}(x)$ for every $y \in [\fg^0(x),\ff^R(x))$, it therefore follows that
$$
H_x(y) = \frac{\int_{I_y}t \mathrm{d}\gamma(t)}{\int_{I_1}t \mathrm{d}\gamma(t)} = \sum_{i\colon s_i \in I_1}\beta_i\mathbf{1}_{[\fg^{\psi(\frac{1}{s_i})}(x),1]}(y),
$$
for every $y \in [\fg^0(x),\ff^R(x))$, whereby $\beta_i := \frac{\alpha_is_i}{\int_{I_1}t \mathrm{d}\gamma(t)}$. Working with the afore-mentioned representation of $H_x$ and using that $H_x(y) \in \{0,1\}$ for every $y \in [0,\fg^0(x)) \cup [\ff^R(x),1]$, it is now straightforward to see that $H_x$ induces a discrete probability measure with mass $\beta_i$ in $\fg^{\psi(\frac{1}{s_i})}(x)$.\\
Proving the third assertion, we consider $\gamma \in \mathcal{P}_\mathcal{W}^{sing}$. Showing that $\nu_{H_x}$ is singular, it suffices to prove that $H_x$ is continuous and $H_\psi'(y) = 0$ holds for $\lambda$-a.e. $y \in \mathbb{I}$. Since $\gamma$ has no point masses, continuity of $H_x$ follows immediately.\\
We show that $H_x'(y) = 0$ holds for $\lambda$-a.e. $y \in \mathbb{I}$ and proceed as follows: We define the $\sigma$-finite measure $m$ by
$$
m(E) := \int_E t \mathrm{d}\gamma(t)
$$
for every $E \in \mathcal{B}([0,\infty))$ and observe that due to the singularity of $\gamma$ there exists a set $O \in \mathcal{B}([0,\infty))$ such that $m([0,\infty)\setminus O) = 0$ and $\lambda(O) = 0$. We denote by $Q_m\colon I_{0,1} \rightarrow [0,\infty)$ the corresponding measure generating function restricted to $I_{0,1}$ defined by $Q_m(z) := m([0,z])$ for every $z \in I_{0,1}$, whereby $I_{0,1} := \left[\frac{1}{\varphi(x)h_A(\frac{\varphi(x)}{\varphi(x) + \varphi(0)})}, \frac{1}{\varphi(x)}\right]$. Applying that $m$ is singular yields that $Q_m'(z) = 0$ for $\lambda$-a.e. $z \in I_{0,1}$. Defining the set
$$
\Lambda := \{z \in I_{0,1} \colon Q_m'(z) \text{ exists and } Q_m'(z) = 0\},
$$
we therefore have $\lambda(\Lambda) = \lambda(I_{0,1})$. In the case that $y \in (\ff^R(x),1)$, $H_x(y) = 1$ holds and hence $H_x'(y) = 0$ follows. Considering the case that $y \in [0,\ff^R(x)]$ we define the set 
$$
\Upsilon := \{y \in [0,\ff^R(x)] \colon \iota(y) \in \Lambda\}
$$
and let $\eta \colon I_{0,1} \rightarrow [0,\ff^R(x)]$ denote the inverse of $\iota\big|_{[0,\ff^R(x)]}$ given by $\eta(z) = \fg^{\psi(\frac{1}{z})}(x)$ for every $z \in I_{0,1}$. Then obviously $\eta(\Lambda) = \Upsilon$ holds. Since $\psi$ and $h_A^{[-1]}$ are convex (see Lemma \ref{lem:regularity_fcts}), $\eta$ is locally Lipschitz continuous and therefore locally absolutely continuous, implying that \citep[Theorem 3.41]{leoni2024} $\eta$ maps sets of full $\lambda$-measure to sets of full $\lambda$-measure. Thus, we obtain that $\lambda(\Upsilon) = \lambda([0,\ff^R(x)])$. Combining all of the afore-mentioned facts, we conclude that
\begin{align*}
H_x'(y) &= \frac{Q_m'(\iota(y))\iota'(y)}{Q_m(\iota(1))} \\&=
\frac{Q_m'(\iota(y))}{Q_m(\iota(1))}\frac{h_A'\left(\frac{\varphi(x)}{\varphi(x) + \varphi(y)}\right)\varphi'(y)}{h_A\left(\frac{\varphi(x)}{\varphi(x) + \varphi(y)}\right)^2(\varphi(x) + \varphi(y))^2} = 0
\end{align*}
for every $y \in \Upsilon \cup (\ff^R(x),1)$. Since $\lambda(\Upsilon \cup (\ff^R(x),1)) = 1$, this proves the result.
\end{proof}
\subsubsection*{Proof of Lemma \ref{lem_help_regularity}:\label{proof:reg_G_A}}
\begin{proof}
We prove the first assertion and assume that $\vartheta \in \mathcal{P}_\mathcal{A}^{abs}$. Applying eq. \eqref{eq:right_der_pickands_measure} yields that $D^+A(t) = 2\vartheta([0,t]) - 1$ holds for every $t \in [0,1)$ and thus, using the fact that $D^+A(1) = D^+A(1-)$ obviously implies that $D^+A$ is absolutely continuous on $\mathbb{I}$. Since $A$ is Lipschitz continuous, absolute continuity of $G_A$ follows using that sums of absolutely continuous functions are absolutely continuous.\\
We prove the second assertion and assume that $\vartheta$ is discrete with $\vartheta(\{0\}) = 0 = \vartheta(\{1\})$ and therefore assume that there exist $\alpha_1, \alpha_1, ... \in \mathbb{I}$ with $\sum_{i \in \mathbb{N}}\alpha_i = 1$ and $s_1,s_2,... \in (0,1)$ such that $\vartheta = \sum_{i \in \mathbb{N}}\alpha_i\delta_{s_i}$. Applying eq. \eqref{eq:rel_pickands_fct_measure} yields that
\begin{equation}\label{eq:A_dis}
A(t) = 1-t + 2t\sum_{\substack{i \in \mathbb{N} \\ s_i \leq t}}\alpha_i - 2\sum_{\substack{i \in \mathbb{N} \\ s_i \leq t}}\alpha_is_i.
\end{equation}
Furthermore, using eq. \eqref{eq:right_der_pickands_measure} implies that
\begin{equation}\label{eq:der_A_dis}
D^+A(t) = 2\sum_{\substack{i \in \mathbb{N} \\ s_i \leq t}}\alpha_i - 1
\end{equation}
for every $t \in \mathbb{I}$. Combining eq. \eqref{eq:A_dis} and eq. \eqref{eq:der_A_dis} we now conclude that
\begin{align*}
G_A(t) &= 1-t + 2t\sum_{\substack{i \in \mathbb{N} \\ s_i \leq t}}\alpha_i - 2\sum_{\substack{i \in \mathbb{N} \\ s_i \leq t}}\alpha_is_i +
\left(2\sum_{\substack{i \in \mathbb{N} \\ s_i \leq t}}\alpha_i - 1\right)(1-t)\\&=
2\left(\sum_{\substack{i \in \mathbb{N} \\ s_i \leq t}}\alpha_i - \sum_{\substack{i \in \mathbb{N} \\ s_i \leq t}}\alpha_is_i\right) \\&=
2 \sum_{i \in \mathbb{N}}\alpha_i(1-s_i)\delta_{s_i}([0,t])
\end{align*}
for every $t \in \mathbb{I}$ and thus, $G_A$ induces a discrete probability measure $\kappa_{G_A}$.\\
Proving the third assertion, we assume that $\vartheta\in \mathcal{P}_\mathcal{A}^{sing}$. Observing that $\vartheta$ has no point masses and applying eq. \eqref{eq:right_der_pickands_measure} yields that $D^+A$ is continuous and therefore continuity of $G_A$ follows. We prove that $G_A'(t)$ exists and $G_A'(t) = 0$ for $\lambda$-a.e. $t \in \mathbb{I}$. Applying convexity of $A$ yields that $A'(t)$ exists for all but at most countably many $t \in \mathbb{I}$, as well as $D^+A = A'$ wherever $A'$ exists. Now, using singularity of $\vartheta$ together with eq. \eqref{eq:right_der_pickands_measure} implies that $DD^+A(t) = 0$ for $\lambda$-a.e. $t \in \mathbb{I}$. Altogether, we therefore conclude that
\begin{align*}
    G_A'(t) = D^+A(t) + DD^+A(t)(1-t) -D^+A(t) = DD^+A(t)(1-t) = 0
\end{align*}
for $\lambda$-a.e. $t \in \mathbb{I}$. This shows that the probability measure $\kappa_{G_A}$ is singular and therefore the result is proved.
\end{proof}

\noindent
\textbf{Acknowledgements.} The author gratefully acknowledges the support of Red Bull GmbH within the ongoing Data Science collaboration with the university of Salzburg.
\vspace{0.5cm}
\\
\noindent 
\textbf{Author contributions} 
All the results are obtained by the author.
\vspace{0.5cm}
\\
\textbf{Funding} Financial support from IDA Lab Salzburg is kindly acknowledged.
\vspace{0.5cm}
\\
\textbf{Data Availability} No datasets were analyzed in the current paper.

\section*{Declarations}

\textbf{Competing interests} The author declares that he has no conflicts of interest to this work.

\bibliographystyle{plainnat}

\end{document}